\date{25 November, 2021}
\newcommand{\cec}{\color{violet}}
\newcommand{\opnorm}[1]{{\left\vert\kern-0.25ex\left\vert\kern-0.25ex\left\vert #1 
		\right\vert\kern-0.25ex\right\vert\kern-0.25ex\right\vert}}
\newcommand{\triplenorm}[1]{\|#1\|_{B(V)}}
\title[Fluctuations of balanced urns with infinitely many colours]
{Fluctuations of balanced urns with infinitely many colours}
\subjclass[2020]{60F05, 60F25, 60J80, 62L20}
\numberwithin{equation}{section}
\theoremstyle{plain}
\newtheorem{theorem}{Theorem}[section]
\newtheorem{lemma}[theorem]{Lemma}
\newtheorem{proposition}[theorem]{Proposition}
\newtheorem{corollary}[theorem]{Corollary}
\theoremstyle{definition}
\newtheorem{exampleqqq}[theorem]{Example}
\newenvironment{example}{\begin{exampleqqq}}
  {\hfill\qedsymbol\end{exampleqqq}}
\newtheorem{remarkqqq}[theorem]{Remark}
\newenvironment{remark}{\begin{remarkqqq}}
  {\hfill\qedsymbol\end{remarkqqq}}
\newtheorem{definition}[theorem]{Definition}
\newtheorem{problem}[theorem]{Problem}
\theoremstyle{remark}
\newenvironment{romenumerate}[1][-10pt]{
\addtolength{\leftmargini}{#1}\begin{enumerate}
 }{\end{enumerate}}
\newenvironment{iiromenumerate}[1][-10pt]{
\addtolength{\leftmargini}{#1}\begin{enumerate}
 }{\end{enumerate}}
\newenvironment{PXenumerate}[1]{
\begin{enumerate}
 }{\end{enumerate}}
\newenvironment{PQenumerate}[1]{
\medskip
\begin{enumerate}
 }{\end{enumerate}
\medskip}
\newcounter{oldenumi}
\newenvironment{romenumerateq}
{\setcounter{oldenumi}{\value{enumi}}
\begin{romenumerate} \setcounter{enumi}{\value{oldenumi}}}
{\end{romenumerate}}
\newcounter{thmenumerate}
\newenvironment{thmenumerate}
{\setcounter{thmenumerate}{0}%
 \def\item{\par
 \refstepcounter{thmenumerate}\textup{(\roman{thmenumerate})\enspace}}
}
{}
\newcounter{xenumerate}   
\newcommand\pfitemx[1]{\par#1:}
\newcommand\pfitemref[1]{\pfitemx{\ref{#1}}}
\newcommand{\refT}[1]{Theorem~\ref{#1}}
\newcommand{\refTs}[1]{Theorems~\ref{#1}}
\newcommand{\refC}[1]{Corollary~\ref{#1}}
\newcommand{\refL}[1]{Lemma~\ref{#1}}
\newcommand{\refLs}[1]{Lemmas~\ref{#1}}
\newcommand{\refR}[1]{Remark~\ref{#1}}
\newcommand{\refS}[1]{Section~\ref{#1}}
\newcommand{\refSS}[1]{Section~\ref{#1}}
\newcommand{\refSSs}[1]{Sections~\ref{#1}}
\newcommand{\refP}[1]{Proposition~\ref{#1}}
\newcommand{\refD}[1]{Definition~\ref{#1}}
\newcommand{\refE}[1]{Example~\ref{#1}}
\newcommand{\refApp}[1]{Appendix~\ref{#1}}
\xdef\klockan{\the\count1.0\the\count255}
\xdef\klockan{\the\count1.\the\count255}\fi
\newcommand{\sumko}{\sum_{k=0}^\infty}
\newcommand{\sumi}{\sum_{i=1}^\infty}
\newcommand{\sumk}{\sum_{k=1}^\infty}
\newcommand{\sumin}{\sum_{i=1}^n}
\newcommand{\sumion}{\sum_{i=0}^n}
\newcommand{\sumjn}{\sum_{j=1}^n}
\newcommand{\sumjp}{\sum_{j=1}^p}
\newcommand{\sumlL}{\sum_{\ell=0}^L}
\newcommand\set[1]{\ensuremath{\{#1\}}}
\newcommand\bigset[1]{\ensuremath{\bigl\{#1\bigr\}}}
\newcommand\Bigset[1]{\ensuremath{\Bigl\{#1\Bigr\}}}
\newcommand\xpar[1]{(#1)}
\newcommand\bigpar[1]{\bigl(#1\bigr)}
\newcommand\Bigpar[1]{\Bigl(#1\Bigr)}
\newcommand\biggpar[1]{\biggl(#1\biggr)}
\newcommand\lrpar[1]{\left(#1\right)}
\newcommand\bigsqpar[1]{\bigl[#1\bigr]}
\newcommand\Bigsqpar[1]{\Bigl[#1\Bigr]}
\newcommand\lrsqpar[1]{\left[#1\right]}
\newcommand\xcpar[1]{\{#1\}}
\newcommand\abs[1]{\lvert#1\rvert}
\newcommand\bigabs[1]{\bigl\lvert#1\bigr\rvert}
\newcommand\Bigabs[1]{\Bigl\lvert#1\Bigr\rvert}
\newcommand\biggabs[1]{\biggl\lvert#1\biggr\rvert}
\newcommand\lrabs[1]{\left\lvert#1\right\rvert}
\def\rompar(#1){\textup(#1\textup)}    
\newcommand\xfrac[2]{#1/#2}
\newcommand\parfrac[2]{\lrpar{\frac{#1}{#2}}}
\newcommand\bigparfrac[2]{\bigpar{\frac{#1}{#2}}}
\newcommand\Bigparfrac[2]{\Bigpar{\frac{#1}{#2}}}
\newcommand\biggparfrac[2]{\biggpar{\frac{#1}{#2}}}
\newcommand\innprod[1]{\langle#1\rangle}
\def\xexp(#1){\ee^{#1}}
\newcommand\ceil[1]{\lceil#1\rceil}
\newcommand\floor[1]{\lfloor#1\rfloor}
\newcommand\ntoo{\ensuremath{{n\to\infty}}}
\newcommand\Ntoo{\ensuremath{{N\to\infty}}}
\newcommand\ktoo{\ensuremath{{k\to\infty}}}
\newcommand\bmin{\wedge}
\newcommand\norm[1]{\lVert#1\rVert}
\newcommand\bignorm[1]{\bigl\lVert#1\bigr\rVert}
\newcommand\upto{\nearrow}
\newcommand\half{{\nicefrac12}}
\newcommand\punkt{\xperiod}    
\newcommand\iid{i.i.d\punkt}    
\newcommand\ie{i.e\punkt}
\newcommand\eg{e.g\punkt}
\newcommand\cf{cf\punkt}
\newcommand{\as}{a.s\punkt}
\newcommand{\aex}{a.e\punkt}
\newcommand\ii{\mathrm{i}}
\newcommand\ee{\mathrm{e}}
\newcommand{\tend}{\longrightarrow}
\newcommand\dto{\overset{\sss\mathrm{d}}{\tend}}
\newcommand\pto{\overset{\sss\mathrm{p}}{\tend}}
\newcommand\asto{\overset{\sss\mathrm{a.s.}}{\tend}}
\newcommand\eqd{\overset{\mathrm{d}}{=}}
\newcommand\bbR{\mathbb R}
\newcommand\bbC{\mathbb C}
\newcommand\bbN{\mathbb N}
\newcommand\bbT{\mathbb T}
\newcounter{CC}
\newcommand{\CC}{\stepcounter{CC}\CCx} 
\newcommand{\CCx}{C_{\arabic{CC}}}     
\newcommand{\CCdef}[1]{\xdef#1{\CCx}}     
\newcommand{\CCname}[1]{\CC\CCdef{#1}}    
\newcounter{cc}
\renewcommand\Re{\operatorname{Re}}
\renewcommand\Im{\operatorname{Im}}
\newcommand\E{\operatorname{\mathbb E{}}}
\renewcommand\P{\operatorname{\mathbb P{}}}
\newcommand\Ge{\operatorname{Ge}}
\newcommand\ga{\alpha}
\newcommand\gb{\beta}
\newcommand\gd{\delta}
\newcommand\gD{\Delta}
\newcommand\gf{\varphi}
\newcommand\gam{\gamma}
\newcommand\gG{\Gamma}
\newcommand\kk{\kappa}
\newcommand\gl{\lambda}
\newcommand\gL{\Lambda}
\newcommand\gs{\sigma}
\newcommand\gss{\sigma^2}
\newcommand\gth{\theta}
\newcommand\gY{\Upsilon}
\newcommand\gthv{\vartheta}
\newcommand\eps{\varepsilon}
\newcommand\cA{\mathcal A}
\newcommand\cB{\mathcal B}
\newcommand\cE{\mathcal E}
\newcommand\cF{\mathcal F}
\newcommand\cK{\mathcal K}
\newcommand\cL{{\mathcal L}}
\newcommand\cM{\mathcal M}
\newcommand\cN{\mathcal N}
\newcommand\cP{\mathcal P}
\newcommand\cR{{\mathcal R}}
\newcommand\cS{{\mathcal S}}
\newcommand\cT{{\mathcal T}}
\newcommand\cX{{\mathcal X}}
\newcommand\cY{{\mathcal Y}}
\newcommand\cZ{{\mathcal Z}}
\newcommand\tB{\tilde B}
\newcommand\indic[1]{\boldsymbol1\xcpar{#1}}
\newcommand\etta{\boldsymbol1}
\newcommand\qw{^{-1}}
\newcommand\qww{^{-2}}
\newcommand\qq{^{\nicefrac12}}
\newcommand\qqw{^{-\nicefrac12}}
\newcommand\intoi{\int_0^1}
\newcommand\intoo{\int_0^\infty}
\newcommand\oi{\ensuremath{[0,1]}}
\newcommand\ooo{[0,\infty)}
\newcommand\setoi{\set{0,1}}
\newcommand\dd{\,\mathrm{d}}
\newcommand\ddx{\mathrm{d}}
\newcommand{\gsf}{$\gs$-field}
\newcommand{\ui}{uniformly integrable}
\newcommand\lhs{left-hand side}
\newcommand\rhs{right-hand side}
\newcommand{\sss}{\relax}
\newcommand\xoo{_1^\infty}
\newcommand\zz[1]{^{(#1)}}
\newcommand\kkk{\zz{k}}
\newcommand\nn{^{\sss (n)}}
\newcommand\nnx[1]{^{\sss (#1)}}
\newcommand\nnz{\nnx{n+1}}
\newcommand\nni{\nnx{i}}
\newcommand\tm{\widetilde m}
\newcommand\Rx{\RR^*}
\newcommand\cME{{\cM(E)}}
\newcommand\BE{{B(E)}}
\newcommand\zzeta{\hat\zeta}
\newcommand\fin{f_{i,n}}
\newcommand\xx{^{**}}
\newcommand\rhoT{\rho(T)}
\newcommand\restr{|}
\newcommand\BB{\mathbf{B}}
\newcommand\RR{\mathbf{R}}
\newcommand\II{\mathbf{I}}
\newcommand\BBB{\widetilde{\mathbf{B}}}
\newcommand\BC{\mathbf{C}}
\newcommand\bbNo{\bbN_0}
\newcommand\Rq{\hat \RR}
\newcommand\gDM{\gD M}
\newcommand\cmc{\cM} 
\newcommand\cmr{\cM_{\bbR}}
\newcommand\cmp{\cM_{+}}
\newcommand\cmpp{\cM_{>0}}
\newcommand\mvpp{{\sc mvpp}}
\newcommand\RYx[1]{R\nnx{#1}_{Y_{#1}}}
\newcommand\RYi{\RYx{i}}
\newcommand\RYn{\RYx{n}}
\newcommand\RYni{\RYx{n+1}}
\newcommand\rrr{^{\sss(1)}} 
\newcommand\RA{R\rrr}
\newcommand\RAn{R^{(n)}}
\newcommand\RAni{R^{(n+1)}} 
\newcommand\raa{\fr\rrr}
\newcommand\ER{\overline R}
\newcommand\Eq{\E\,}
\newcommand\BHN{\ref{as:balance}, \ref{as:BW}, and \ref{as:nu2}}
\newcommand\BHNa{\ref{as:balance}, \ref{as:BW}, and \ref{as:nu2}}
\newcommand\BW{B(W)}
\newcommand\ww{w}
\newcommand\gthd{\gth_D}
\newcommand\nnii{n}
\newcommand\Ceps{C_{\eps}}
\newcommand\fj{\Pi_{\gl_j}f}
\renewcommand\opnorm{\norm}
\newcommand\gox{\chi}
\newcommand\goy{\gox}
\newcommand\fm{\mathfrak m}
\newcommand\fr{\mathfrak r}
\newcommand\fv{\mathfrak{v}}
\newcommand\tfm{\widetilde{\mathfrak m}}
\newcommand\normbwqq[1]{\norm{#1}_{\BW}}
\newcommand\gthh{\gth'}
\newcommand\gthhh{\gth''}
\newcommand\Dx{{D'}}
\newcommand\Ciii{C}  
\newcommand\floorx[1]{\ceil{#1}-1}
\newcommand\ba{a}
\newcommand\cd{d}
\newcommand\iia{\ii\ga}
\newcommand\rhooo{\rho_\infty}
\newcommand\sphatx{\sphat\;}
\newcommand\bigdot{{\hbox{\LARGE$\cdot$}}}
\newcommand\tf{\tilde f}
\newcommand\hD{\widehat D}
\newcommand\nuae{$\nu$-a.e.}
\newcommand\Sigmaq{\Sigma}
\newcommand\hG{\widehat G}
\newcommand\hmu{\widehat{\mu}}
\newcommand\hf{\widehat{f}}
\newcommand\Xcase[1]{(The case #1.)}
\newcommand\Xslqc{simply logarithmically quasi-compact}
\newcommand\slqc{slqc}
\newcommand\aslqc{an \slqc}
\newcommand\xq{^{\nicefrac 1q}}
\newcommand\PPP{\mathbf{P}}
\newcommand\bmu{\boldsymbol{\mu}}
\newcommand{\Holder}{H\"older}
\newcommand{\Polya}{P\'olya}
\newcommand{\bs}{\boldsymbol}
\begin{document}

\author{Svante Janson}
\address{Department of Mathematics, Uppsala University, PO Box 480, SE-751 06 Uppsala,
    Sweden}
\email{svante.janson@math.uu.se}
\thanks{SJ is supported by the Knut and Alice Wallenberg Foundation}

\author{C\'ecile Mailler} 
\address{University of Bath, Claverton Down, Bath BA2 7AY, UK}
\email{c.mailler@bath.ac.uk}
\thanks{CM is grateful to EPSRC for support through the fellowship EP/R022186/1.}

\author{Denis Villemonais}
\address{Université de Lorraine, CNRS, Inria, IECL, F-54000 Nancy, France}
\email{denis.villemonais@univ-lorraine.fr}


\begin{abstract} 
In this paper, we prove convergence and fluctuation results 
for measure-valued P\'olya processes (MVPPs, 
also known as P\'olya urns with infinitely-many colours).
Our convergence results hold almost surely and in $L^2$, 
under assumptions that are different from that of other convergence results in the literature. 
Our fluctuation results are the first second-order results in the literature on MVPPs;
they generalise classical fluctuation results from the literature on finitely-many-colour P\'olya urns.
As in the finitely-many-colour case, the order and shape of the fluctuations depend on whether the ``spectral gap is small or large''.

To prove these results, we show that MVPPs are stochastic approximations taking values in the set of measures on a measurable space $E$ (the colour space). We then use martingale methods and standard operator theory to prove convergence and fluctuation results for these stochastic approximations.
\end{abstract}

\maketitle

\section{Introduction}\label{Sintro}
\subsection{A brief overview of the theory of P\'olya urns}\label{sub:intro_urns}
A $d$-colour P\'olya urn is a stochastic process that 
describes the evolution of an urn containing balls of $d$ different colours. 
It is a Markov process that depends on two parameters: the initial
composition of the urn $\frak u_0 \in \mathbb N^d$ and a replacement
matrix $\frak r = (\frak r_{x,y})_{1\leq x, y\leq d}$, which has integer
entries.  
At time zero, the urn contains $\frak u_{n,x}$ balls of colour $x$, for all $1\leq x\leq d$.
At every discrete time-step, we pick a ball uniformly at random in the urn, and if it is of colour $x$, 
we replace it in the urn together with an additional $\frak r_{x,y}$ balls of colour $y$, for all $1\leq y\leq d$. 
The quantity of interest is the process $(\frak u_n)_{n\geq 0}$, where, for all $n\geq 0$, the vector $\frak u_n = (\frak u_{n,1}, \ldots, \frak u_{n,d})$ is the composition of the urn at time $n$.

As expected, the behaviour of the composition vector at large times depends on the replacement matrix. The case when the replacement matrix is the identity was studied by Markov~\cite{Markov} 
and then P\'olya and Eggenberger~\cite{EP23}. It is well-known that, in this case, $\frak u_n/n$ converges almost surely to a $d$-dimensional Dirichlet random variable of parameter~$\frak u_0$.
The fluctuations around this limit are Gaussian, conditioned on the limit.
(See~\cite[Section~2.3.1]{Noela}.) 

P\'olya urns whose replacement matrix is irreducible 
(the irreducibility assumption can be weakened, 
see Janson~\cite{SJ154})
exhibit a drastically different behaviour, 
see e.g.\ Athreya and Karlin~\cite{AK}: 
in that case, 
if for simplicity all replacements $\frak r_{x,y}$ are non-negative
(this too can be relaxed), 
the
Perron--Frobenius theorem implies that 
the spectral radius $\frak s$ of $\frak r$ is also a simple eigenvalue of $\frak r$, 
and that there exists a unit left-eigenvector~$v$ 
associated to $\frak s$ whose coordinates are all non-negative. 
Then, as $n$ goes to infinity, $\frak u_n/n$ converges almost surely 
to~$\frak s v$.
Interestingly, the fluctuations around this limit are either Gaussian and of order $\sqrt n$, or non-Gaussian and of higher order, depending on the spectral gap of~$\frak r$ (see, e.g. Janson~\cite{SJ154} or Pouyanne~\cite{Pouyanne2008}).

The main differences between the identity and the irreducible cases are that 
(1) the limit of $\frak u_n/n$ is random in the identity case, and deterministic in the irreducible case,  (2) it depends on the initial composition in the identity case, while it does not in the irreducible case, and
(3) the irreducible case sometimes exhibits non-Gaussian fluctuations.

Since these seminal results, the model of P\'olya urns 
has been extended and more precise asymptotic results have been proved.
The most natural extension is to allow balls to be removed from the urn:
It is standard to allow the diagonal coefficients of the replacement matrix to equal $-1$,
meaning that the ball that was drawn is removed from the urn.
One can also allow other coefficients of the replacement matrix to be negative and work conditionally on ``tenability'', which is the event that all coefficients of the composition vector stay non-negative at all times.
The model can also be extended by 
allowing the replacement matrix to be random 
(at each time step, we use a new realisation of this matrix), 
different colours to have different weights or activities 
(a ball is drawn with probability proportional to its weight).
These three generalisations are for example considered in~\cite{SJ154} 
(see~Remark~4.2 therein for ball substractions).

\subsection{Measure-valued P\'olya processes}
Measure-valued P\'olya processes were introduced 
by Bandyopadhyay and Thacker~\cite{BT}, and shortly after by Mailler and Marckert~\cite{MM17}, 
as a generalisation of P\'olya urns to infinitely many colours. 
They both considered cases that can be seen as corresponding to the irreducible case in Section~\ref{sub:intro_urns}.
In fact, the generalisation to infinitely many colours in the diagonal case is much older and dates back to~\citet{BMQ}.

In the analogue of the irreducible case, 
the theory is very recent and, as far as we know, there are only five papers on the subject:
Bandyopadhyay and Thacker~\cite{BT}, Mailler and Marckert~\cite{MM17}, Janson~\cite{SJ327},
Mailler and Villemonais~\cite{MV19}, and Bandyopadhyay, Janson and Thacker~\cite{SJ340}.
The main difficulty is that the linear algebra tools used in the 
study of P\'olya urns are replaced by operator theory in an infinite dimensional space.

In the model introduced by \cite{BT} and \cite{MM17}, 
a measure-valued P\'olya process ({\sc mvpp}) is defined as a Markov process $(\frak m_n)_{n\geq 0}$ 
taking values in the set of positive measures on a measurable space $E$ of
colours. 
The process depends on two parameters again: the initial composition measure $\frak m_0$
and the replacement kernel $(R_x)_{x\in E}$ 
(a family of positive measures on $E$;
see \refApp{Akernels} for measurability issues).

At every discrete time-step $n\geq 1$, a random colour $Y_n$ 
is drawn at random in $E$ with probability distribution 
$\frak m_{n-1}/\frak m_{n-1}(E)$, 
and then $\frak m_n$ is defined as $\frak m_{n-1} + R_{Y_n}$
(see \refS{Smodel} for details).

The authors of \cite{BT} and \cite{MM17} see the {\sc mvpp} 
as a branching version of the $E$-valued Markov chain $(\frak w_n)_{n\geq 0}$
with transition kernel $(R_x)_{x\in E}$. 
They assume that the {\sc mvpp} is ``balanced'', i.e., that the $R_x$'s are
all probability measures, which makes the Markov chain well defined.  
They use this representation to prove that, if $(\frak w_n)_{n\geq 0}$
is ``ergodic'' (in a general sense that allows renormalisations),
then a renormalised version of $\frak m_n/\frak m_n(E)$ converges in probability 
to the limiting distribution of $(\frak w_n)_{n\geq 0}$.
The ``ergodicity'' assumption in this {\sc mvpp} case 
can be seen as the equivalent of the ``irreducibility'' assumption 
in the finitely-many-colour case.
This result is improved by Janson~\cite{SJ327}, who allows the replacement kernel to be random.

Bandyopadhyay, Janson and Thacker~\cite{SJ340} later built on these methods to prove that the convergence results of~\cite{BT} and~\cite{MM17} hold almost surely, under a condition that they call ``uniform ergodicity'' on the underlying Markov chain $(\frak w_n)$, and if the set of colours is countable.

Using a different approach, Mailler and Villemonais~\cite{MV19}
were able to consider non-balanced, weighted {\sc mvpp}s, 
also with random replacements; these are three
generalisations that are classical in the finitely-many-colour case 
and that extend the range of applications.
In the non-balanced case, $R_x$ may be a defective measure, so
the underlying Markov 
chain $(\frak w_n)_{n\geq 0}$ has an absorbing ``cemetery'' state.
The authors show that, if the continuous-time version of the underlying
Markov chain 
admits a quasi-stationary distribution (and under other important assumptions), 
then $\frak m_n/n$ converges almost surely to this quasi-stationary distribution.
They use stochastic approximation methods, 
which is difficult since the stochastic approximation takes values in
a non-compact space as soon as the space of colours is non-compact 
(which is desirable for many applications), 
but which gives almost sure convergence instead of 
the convergence in probability of \cite{BT} and \cite{MM17}. 
The difficulty coming from the fact that the stochastic approximation takes values in a non-compact space is overcome by a Lyapunov-type assumption.
The main drawback of this method is that the Markov chain needs 
to be ``quasi-ergodic'' without any renormalisations, 
whereas renormalisations were allowed in~ \cite{BT} and \cite{MM17}.

\subsection{Our contribution}
In this paper, we prove  limit theorems 
for the fluctuations of an {\sc mvpp} around its almost sure limit:
we are able to generalise the fluctuations results of~\cite{SJ154}
to the infinitely-many-colour case.
Our framework is close to that of~\cite{MV19}, 
although we restrict ourselves to the balanced case; 
we expect the non-balanced case to be more challenging and leave it open for now.

Interestingly, our results do not use the results of~\cite{MV19}: 
they are totally self-contained, 
and our methods also give almost sure convergence of $\frak m_n/\frak m_n(E)$ to its limit, 
under a set of assumptions that are different from those of~\cite{MV19}.
Similarly to~\cite{MV19}, we use a Lyapunov-type assumption to deal with the
fact that, in general, $\frak m_n/\frak m_n(E)$ 
takes values in a non-compact space.

To prove these results, we use stochastic approximation and thus martingale
methods, together with standard operator theory (in particular, we refer
several times to the book of Conway~\cite{Conway}  on the subject).

\subsection{Some notation and conventions}\label{S:not}
``Positive'' is used in the weak sense, i.e., non-negative.

The notation $1$ stands for the usual number, and also for
the function that is constant equal to~$1$ on $E$. Indicator functions are denoted by
$\etta$. 

 $\II$ stands for the identity operator. As usual, for any
complex number $z\in\mathbb{C}$ and for any operator ${T}$, 
the operator ${T}+z$ stands for
${T}+z\II$. 

If $T$ is a bounded operator in a Banach space $\cX$, 
and $\gD$ is a clopen (closed and open) subset of its spectrum $\gs(T)$,
let $\Pi_\gD=\Pi_\gD(T)$ 
denote the corresponding spectral projection in $\cX$.
(See \eg{} 
\cite[VII.3.17--20]{Dunford-Schwartz} or
\cite[Exercise VII.4.9 and VII.(6.9)]{Conway}.) 
In particular, if $\gl$ is an isolated point in $\gs(T)$,
$\Pi_{\lambda}:=\Pi_{\set{\gl}}$ is a projection onto the corresponding generalized
eigenspace.
Note that $T$ commutes with $\Pi_\gD$, and thus
$T$ maps the range $\Pi_\gD (\cX)$ into itself (\ie, $\Pi_\gD(\cX)$ is an
invariant subspace);
moreover the spectrum of the restriction  
of $T$ to $\Pi_\gD\cX$ equals $\gD$ 
\cite[after Equation VII.6.9]{Conway}.

For any non-negative integer $n\geq 1$, $\E_n$ is the conditional
expectation with respect to $\cF_n$, the  
\gsf{} generated by all events up to time $n$, \ie, by $Y_i$ and
$R\nni_{Y_i}$ for $1\le i\le n$.

Let $\cmc(E)$ be the space of complex
measures on $E$ (recall that these are finite by definition),
and let $\cmr(E), \cmp(E), \cmpp(E), \cP(E)$
denote the subsets of finite signed (\ie, real-valued) measures, 
finite positive measures, finite positive non-zero measures, 
and probability measures, respectively.
These sets can all be regarded as measurable spaces, with the $\gs$-fields
generated by the mappings $\mu\mapsto\mu(A)$, $A\in\cE$.

If $\mu$ is a (possibly signed or complex) measure on $E$ and $f$ is a
measurable function, 
then $\mu f:=\int f\dd\mu$ (whenever this is defined).

For a complex measure $\mu$ on $E$, let
$|\mu|$ denotes its total variation measure, and $\norm{\mu}=|\mu|(E)$
its total variation.
If $\ww$ is a positive function on $E$,
then $\cM(\ww)$ is the Banach space of complex measures $\mu$
on $E$, such that the
norm $\|\mu\|_{\ww}:=|\mu|\ww$ is finite.
$\cP(\ww):=\cP(E)\cap\cM(\ww)$ is the subset
of probability measures in $\cM(\ww)$.  

For any positive function $\ww$ on $E$, we define the complex Banach space
\begin{align}
  B(\ww):= \Big\{g:E\to \mathbb C \ \big|\  
\text{$g$ is measurable and }
\|g\|_{B(\ww)}:=\sup_{x\in E} \frac{|g(x)|}{\ww(x)}<+\infty\Big\}
.\end{align}
In the special case $\ww=1$ we write $B(E)$, the space of bounded measurable
functions on~$E$.
Note that $\cM(\ww)$ can be regarded (isometrically)
as a subspace of the dual space $B(\ww)^*$ in the obvious way.

  If $\gb=(\gb_x)_x$ is a  kernel from $E$ to a
measurable space $F$ (see \refApp{Akernels} for definition), 
then $\mu \gb$ denotes the measure on $F$ given by
\begin{align}\label{varin}
  \mu \gb(A):=\int_E \gb_x(A)\dd\mu(x).
\end{align}
(This is the projection onto $F$ of the measure $\mu\otimes\gb$ defined in
\eqref{ker}.)
We extend \eqref{varin} to complex measures $\mu$ and signed kernels $\gb$
such that $\int_E\norm{\gb_s}\dd|\mu|(s)<\infty$.

If ${T}$ is a  bounded operator on $B(w)$ 
such that its adjoint maps $\cM(w)$ into itself,
then we write the adjoint as
${T}$  acting on the right on measures; 
we then have the associativity
\begin{align}\label{ass}
  (\mu{T})f=\mu({T} f)
\end{align}
for (suitable) measures $\mu$ and functions $f$ on $E$. 

For a Banach space $D$, we use $\norm{\cdot}_D$ both for the norm of
elements of $D$, and for the operator norm of operators $D\to D$.

We also make use of the following usual notations and conventions: 
$x\vee y:=\max\set{x,y}$;  $x\land y:=\min\set{x,y}$; $xy\wedge z=(xy)\wedge z$;
empty sums are $0$ and empty products are $1$;
$\inf\emptyset:=+\infty$
and $\sup\emptyset:=-\infty$.

We let $C$ and $\frak C$ denote unspecified constants whose meaning may change
from one occurrence to the next. 
We use $\frak C$ for constants that may depend on $\frak m_0$ 
while $C$ denotes constants that do not depend on $\frak m_0$.
Subscripts may be used to identify specific constants.


\subsection{Plan of the paper}
In Section~\ref{Smodel}, 
we define our model, state and discuss our assumptions and our main results.
Our main results are two main theorems: Theorem~\ref{T1} states convergence of the MVPP, Theorem~\ref{T2} gives the fluctuations of the MVPP around its limit.
In Section~\ref{Spf}, we prove Theorem~\ref{T1}, and in Section~\ref{Spf2}, we prove Theorem~\ref{T2}.
In Section~\ref{Spf3}, we prove Theorems~\ref{T210}-\ref{T230}, 
which give conditions for the limits in Theorem~\ref{T2} to be non-degenerate.
In Section~\ref{Sex}, we apply our main result to four examples:
the out-degree profile of the random recursive tree, 
the heat kernel on the square,
a branching random walk, and reinforced processes on a countable state-space.

Finally, we have three appendices.
In Appendix~\ref{Akernels}, 
we discuss the construction of the MVPP and measurability issues.
In Appendix~\ref{AppFA}, we state some general results on the spectra
of operators on Banach spaces, which are useful for our proofs.
Appendix~\ref{AppC}, we prove a technical lemma that is used in the proof of
Theorem~\ref{T2}.

\section{Model and main results}\label{Smodel}

Let $(E,\mathcal E)$ be a measurable space,
$\RA = (\RA_x)_{x\in E}$
be a set of finite (possibly signed)
random measures on $E$ indexed by $x\in E$,
and let $\fm_0$ be a (non-random) finite measure on $E$.
($E$ may be called the colour space.)
We define the 
measure-valued \Polya{} process ({\sc mvpp}) 
$(\fm_n)_{n\geq 0}$ of initial composition $\fm_0$ and random replacement kernel
$\RA$ as the Markov process given by the following recursion.
See \refApp{Akernels} for 
some technical details, including measurability assumptions.

Given $\fm_n$ with $n\geq 0$,
first sample $Y_{n+1}\in E$ such that $Y_{n+1}$ is a random variable  
whose conditional distribution on $E$,
given $\fm_n$ and  the previous history, 
is 
\begin{align}\label{tmn}
\tfm_n := \fm_n/\fm_n(E).  
\end{align}
Then, let
\begin{align}\label{mn}
\fm_{n+1} := \fm_n + R^{\sss  (n+1)}_{Y_{n+1}},   
\end{align}
where 
$R^{\sss  (n+1)}_{Y_{n+1}}$,
conditioned on $\fm_n$, $Y_{n+1}=y$ and the previous history, 
has the distribution $\cR_y:=\cL(\RA_y)$.

We assume that $\RA_x$ is positive on $E\setminus\{x\}$ but allow 
$\RA_x(\set{x})\in(-\infty,\infty)$. 
We assume that the urn is \emph{tenable}, i.e.\ that almost surely, 
$\fm_n$ is a non-zero positive measure for all $n\geq 0$, so $\tfm_n$ and
$Y_{n+1}$ are well defined.
This is the case if, for example, $\fm_0$  is a non-zero positive measure 
and each $\RA_x$ \as{} is a positive measure.

\begin{remark}\label{Rm0}
We assume for convenience that $\fm_0$ is non-random (except when we
explicitly say otherwise).
Extensions to random $\fm_0$ follow by conditioning on $\fm_0$, 
see Remark~\ref{rem:m0rand} for details.
To enable such extensions, some of the results 
are stated with constants that do not depend on $\fm_0$
(they may depend on the distribution of the replacement kernel $\RA$),
so that the dependence on $\fm_0$ is explicit.
Recall that, by convention, $C$ does not depend on $\fm_0$ while $\mathfrak C$ may depend on $\fm_0$.
The reader who is interested only in a non-random $\fm_0$ may simplify some
expressions and arguments by allowing all constants to depend on $\fm_0$.
\end{remark}

Throughout the paper, we also make the following 
assumptions \BHN.

\medskip
We assume that the urn is
{\it balanced}:
\begin{PQenumerate}{B}
	\item\label{as:balance} For all $x\in E$, $\RA_x(E) = 1$ almost surely. 
\end{PQenumerate}
Note that \ref{as:balance} implies that the total mass is deterministic  a.s.:
\begin{align}\label{mass}
  \fm_n(E)=\fm_0(E)+n.
\end{align}

As said above, $\RA_x$ does not have to be a positive measure. Nevertheless,
we will see that \ref{as:balance} and  our assumption \ref{as:BWii} below imply
that, for every $x\in E$, 
\begin{align}
  \label{fin2}
\Eq \norm{\RA_x}<+\infty
\end{align}
Hence, we can define the expectation $\E \RA_x$ of the random signed measure
$\RA_x$, 
which we denote by $\ER_x$, i.e.,
\begin{align}\label{ER}
  \ER_x(A):=\E \bigsqpar{\RA_x(A)},
\qquad A\in \cE.
\end{align}
 It follows from \eqref{fin2} that $\ER_x$ is a finite signed measure on $E$
 and from \ref{as:balance} that $\ER_x(E)=1$.  
Moreover, $\ER_x$ is positive on $E\setminus\set{x}$, and
it will follow from Assumption~\ref{as:BW} below that
\begin{align}
    \label{eq:ER2}
\sup_{x\in E} |\ER_x(\{x\})|<+\infty.
\end{align}
In particular, $\ER_x f$ is well defined for all non-negative measurable
functions $f:E\to[0,+\infty)$. 
Note also that $\ER$ is a signed kernel from $E$ to $E$ (see \refR{RER}),

\medskip
Let $W:E\to [1,+\infty)$ be a fixed function and 
let $V:=W^q:E\to [1,+\infty)$ for some fixed 
$q>2$. 
We assume that $V$ and $W$ satisfy the following.
\begin{PQenumerate}{H}
	\item\label{as:BW} 
      \begin{iiromenumerate}
        
      \item \label{as:BWi}
There exists $\vartheta\in(0,1)$ and $\CCname{\CCHi}\geq 0$ 
such that, 
for all $x\in E$,
	\begin{align}
	\label{lyapv}
	\ER_xV &\leq \vartheta V(x)+\CCHi
.	\end{align}
      \item \label{as:BWii}
There exists $\CCname{\CCHii}>0$ such
 that, for 
    every $x\in E$, 
    \begin{align}\label{hg}
    \E\left[\left(|\RA_x|W\right)^{q}\right]\leq C_2 W(x)^q=C_2\,V(x)
.    \end{align}
      \item \label{as:BWiii}
 In addition, $\fm_0 V<+\infty$.
      \end{iiromenumerate}
\end{PQenumerate}


\begin{remark}\label{RW=1}
An important case is simply to choose $W=1$, and thus $V=1$.
Note that for $W=1$, \ref{as:BW} is equivalent to assuming that
there exists a constant $C_2>0$ such that 
$\E[\|\RA_x\|^{q}]\leq C_2$.
In particular, if $W=1$ and $\RA_x$ is positive (\as{}) for every $x\in E$,
and thus $\norm{\RA_x}=1$ by \ref{as:balance}, 
then \ref{as:BW} holds automatically.
\end{remark}

\begin{remark}\label{RBWii}
If 
$\RA_x$ a.s.\ is a positive measure, 
and thus a probability measure by \ref{as:balance},
then Jensen's inequality yields
\begin{align}
  \bigpar{|\RA_x|W}^q = \bigpar{\RA_x W}^q \le \RA_x W^q =\RA_x V.
\end{align}
Hence, if also \eqref{lyapv} holds, then
\begin{align}
\E\bigsqpar{ \bigpar{|\RA_x|W}^q} \le \E\bigsqpar{\RA_x V}
=\ER_x V \le \vartheta V(x)+ C_1
\le (\vartheta+C_1)V(x),
\end{align}
i.e., \eqref{hg} holds with $C_2=\vartheta+C_1$.
Consequently, if $\RA_x$ \as{} is a positive measure (for every $x\in E$), then
\ref{as:BWii} follows from \ref{as:BWi} and \ref{as:balance}.

More generally, if we assume that for some constant $C$ and every $x\in E$,
\begin{align}\label{subC}
  |\RA_x\set{x}|\le C \qquad\text{a.s.}
\end{align}
(in other words, subtractions are uniformly bounded),
then \ref{as:balance} implies
\begin{align}\label{subC2}
  \norm{\RA_x}=|\RA_x|(E)\le \RA_x(E)+2 |\RA_x\set{x}|\le C \qquad\text{a.s.}
\end{align}
and \Holder's inequality yields
\begin{align}
  |\RA_x|W \le \bigpar{|\RA_x|W^q}^{\nicefrac 1q} \bigpar{|\RA_x|1}^{1-\nicefrac 1q} 
\le C \bigpar{|\RA_x|V}^{\nicefrac1q}.
\end{align}
Hence, using \eqref{subC} again,
\begin{align}
\bigpar{|\RA_x|W}^q 
\le C |\RA_x| V
\le C \RA_x V + 2C |\RA_x\set{x}|V(x)
\le C \RA_x V + C V(x),
\end{align}
and 
taking expectations, we obtain
\begin{align}
\E\bigsqpar{\bigpar{|\RA_x|W}^q }
\le C \ER_x V + C V(x).
\end{align}
Consequently, if \eqref{subC} holds, then \ref{as:BWii} follows from
\ref{as:BWi} and \ref{as:balance}.
\end{remark}

\begin{remark}\label{RH}
The example in \refS{SSRRT} shows that the assumption \ref{as:BW} is 
important for our results and cannot be weakened much. In particular, it is
not enough to take $q<2$ above, see \refR{Rbest}.
We do not know whether our results hold with $q=2$, and leave that as an
open problem.
\end{remark}

\begin{remark}\label{rk:bounded_H}
    By Jensen's inequality, it follows from \ref{as:BWii} that
 \begin{align}
        \label{eq:dom}
\E[|\RA_x|W]
        \leq \E\big[\big(|\RA_x|W\big)^{q}\big]^{\nicefrac1q}\leq
      C_2^{\nicefrac{1}{q}}\,V(x)^{\nicefrac{1}{q}}=\Ciii W(x)
.   \end{align}
    In particular, this
    implies \eqref{fin2} above. Moreover, it also implies that 
    \begin{align}  
|\ER_x(\{x\})|W(x)
\leq |\ER_x|W
\leq \E[|\RA_x|W]
\leq \Ciii W(x),
    \end{align}
    so that $|\ER_x(\{x\})|\leq \Ciii$, which entails~\eqref{eq:ER2}.
\end{remark}


\medskip
Finally, we assume that, with notation as in \refSS{S:not},
\begin{PQenumerate}{N}
	\item\label{as:nu2} There exists a probability measure $\nu$ such that
   $\nu \ER = \nu$ and $\nu V <+\infty$. 
\end{PQenumerate}

Let
\begin{align}\label{RR}
\RR : f\mapsto (x\in E\mapsto \ER_xf)
\end{align}
be the operator corresponding to 
$\ER$.
Since $\ER$ is a signed kernel from $E$ to $E$, 
$\RR$ maps suitable
(\eg{} bounded) measurable functions on $E$ to measurable functions on $E$.
As remarked above, the balance assumption \ref{as:balance}
implies that $\ER_x(E)=1$ for every $x\in E$, \ie, 
\begin{align}\label{R1}
    \RR 1=1,
\end{align}
and Assumption \ref{as:nu2} yields
\begin{align}\label{nurr}
  \nu\RR=\nu.
\end{align}
We also see that~\eqref{lyapv} can be written
$\RR V \le \vartheta V + \CCHi$.

It follows from~\eqref{eq:dom} that $\RR$ defines a bounded operator on $B(W)$;
by default, we regard $\RR$ as an operator on $B(W)$ unless we say otherwise.
In particular, we let $\sigma(\RR)$ denote the spectrum of $\RR$ on $B(W)$, 
i.e.\ the set of all
$\lambda\in\mathbb C$ such that $\RR-\lambda \II$ is not invertible. 

In the following theorems, which are our main results, 
we increase the generality by considering
$\RR$ as a bounded operator on a closed 
subspace~$D$ of the Banach space~$B(W)$
such that $\RR(D)\subseteq D$ (\ie, $D$ is \emph{stable}, or \emph{invariant});
the most important case is simply $D=\BW$.
 We denote by~$\RR_D$ the
restriction of~$\RR$ to~$D$, and denote its spectrum by $\sigma(\RR_D)$.

To state our main results, we use the following definitions.
\begin{definition}\label{Dsmall}
We say that a bounded operator $T$ on a Banach space $\cX$ is 
\emph{\Xslqc} (\emph{\slqc}) if 
	\begin{PXenumerate}{QC}
		\item \label{ergo3}	
1 is an isolated point in $\gs(T)$, and
the corresponding spectral projection $\Pi_1=\Pi_1(T)$ has rank 1.
		\item \label{ergo2} We have 
		$
		\gs(T)\setminus\set1\subset \set{\gl:\Re\gl<1}$.
	\end{PXenumerate}
\end{definition}
The reason for our name is that the conditions say that the operator $\mathrm e^T$ is
quasi-compact (see \refR{Rqc}) with a single dominating eigenvalue that has a
one-dimensional generalized eigenspace; for convenience, we assume 
that $T$ is normalised such that its dominating eigenvalue  is 1.

By \ref{ergo3}, 
$T$ maps the one-dimensional subspace $\Pi_1(\cX)$ into itself.
Since the restriction of $T$ to this subspace has spectrum \set{1},
it follows that 1 is an eigenvalue of $T$;
moreover, the corresponding eigenvectors are precisely
the non-zero elements of $\Pi_1\cX$; thus the eigenvector 
 is unique up to a scalar factor.
(We can regard \ref{ergo3} as a generalisation of the
finite-dimensional condition that the eigenvalue 1 has algebraic
multiplicity 1.)

\begin{definition}
	We say that an
operator $T$ is \emph{small} if it is \slqc{} and in addition
		\begin{PQenumerate}{S}
\item \label{small1} 
\quad$\gs(T)\setminus\set1\subset \set{\gl:\Re\gl<\frac12}$.
		\end{PQenumerate}
\end{definition}

\begin{remark}\label{Rsmall}
	This definition of \emph{small} operator is analogous to the terminology used in the context of finitely-many-colour urns: 
	a P\'olya urn whose spectral gap is at least half of 
its spectral radius is called a \emph{small} P\'olya urn 
(see, e.g.,~\cite{Pouyanne2008} where this vocabulary is first used).
	We comment later on the similarities and differences between our results 
	and the fluctuation results of~\cite{SJ154} for P\'olya urns.
\end{remark}

We define, for a closed invariant subspace $D\subseteq\BW$,
\begin{align}\label{gthd}
\theta_D:=\sup \Re\bigpar{\sigma(\RR_D)\setminus\{1\}},  
\end{align}
and, in particular, 
\begin{align}\label{gth}
\theta:=\theta_{\BW}=\sup \Re\bigpar{\sigma(\RR)\setminus\{1\}}.
\end{align}
Note that if $T$ is a bounded operator on a complex Banach
space,  
then its spectrum $\gs(T)$ is compact \cite[Theorem~VII.3.6]{Conway}.
This gives immediately: 
\begin{lemma}\label{Lth}
  \begin{thmenumerate}
    
  \item\label{Ltha}   If the operator $\RR_D$ is \slqc, then
$\gthd<1$.

\item\label{Lthb}
If\/ $\RR_D$ is \slqc,
then $\RR_D$ is small if and only if $\gthd<\frac12$.
  \end{thmenumerate}
\qed
\end{lemma}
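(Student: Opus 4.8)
The plan is to reduce both statements to a single observation: under the \slqc{} assumption the set $\gs(\RR_D)\setminus\{1\}$ is compact, so the supremum defining $\gthd$ is either $-\infty$ (when the set is empty) or a genuine maximum attained at a point of the spectrum.

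First I would recall that $\gs(\RR_D)$ is compact by \cite[Theorem~VII.3.6]{Conway}. Since $\RR_D$ is \slqc, condition \ref{ergo3} says $1$ is isolated in $\gs(\RR_D)$, so $\{1\}$ is relatively open in $\gs(\RR_D)$ and hence $\gs(\RR_D)\setminus\{1\}$ is relatively closed in $\gs(\RR_D)$, thus closed and bounded in $\bbC$, i.e.\ compact. I then split into two cases. If $\gs(\RR_D)\setminus\{1\}=\emptyset$, then $\gthd=\sup\emptyset=-\infty$ by the conventions of \refSS{S:not}; in particular $\gthd<1$ and $\gthd<\tfrac12$, and \ref{small1} holds vacuously, so $\RR_D$ is small. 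This settles both claims in this case. If $\gs(\RR_D)\setminus\{1\}\neq\emptyset$, then the continuous function $\gl\mapsto\Re\gl$ attains its maximum on this nonempty compact set at some $\gl_0$, so $\gthd=\Re\gl_0$.

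With this maximum in hand, part (i) is immediate: \ref{ergo2} gives $\Re\gl_0<1$, i.e.\ $\gthd<1$. For part (ii), by definition $\RR_D$ is small if and only if \ref{small1} holds, i.e.\ if and only if every $\gl\in\gs(\RR_D)\setminus\{1\}$ satisfies $\Re\gl<\tfrac12$; since the maximum of $\Re$ over this set is exactly $\gthd$, this is equivalent to $\gthd<\tfrac12$.

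I do not expect a genuine obstacle here; the entire content is the compactness of the spectrum of a bounded operator together with the isolatedness of $1$, which upgrades the relevant supremum to a maximum (or $-\infty$). The only point needing a little care is the degenerate case $\gs(\RR_D)=\{1\}$, which is disposed of via the $\sup\emptyset=-\infty$ convention as above.
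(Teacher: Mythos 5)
Your argument is correct and is exactly the route the paper intends: the paper states the lemma with a \qed immediately after noting that $\gs(\RR_D)$ is compact, and your write-up simply fills in the details the paper elides (that $\{1\}$ being isolated makes $\gs(\RR_D)\setminus\{1\}$ compact, so the supremum is attained or is $-\infty$, after which both claims follow from \ref{ergo2} and the definition of small). Nothing more is needed.
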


The first theorem gives several versions of a law of large numbers
for $\tfm_n$.

\begin{theorem}\label{T1}
	Let $(\fm_n)_{n\geq 0}$ be a {\sc mvpp} with initial composition $\fm_0$ 
	and random replacement kernel $\RA$.
	Suppose that $\RA$ satisfies  
	Assumptions \BHN.
	Let~$D$ be a closed invariant subspace of~$B(W)$ such that
	$1\in D$ and the restriction $\RR_D$ of $\RR$ to $D$
	is  \slqc.
    \begin{romenumerate}[-20pt]
        \item \label{T1a} 
Then $\theta_D<1$ and, for every $ \delta\in (0,1-\theta_D)$, 
        there exists a constant $C_\delta$ such that, for any $f\in D$,
        \begin{align}
        \label{eqT1first}
        \E |\tfm_n f- \nu f|^2 
        \le C_\delta  \,\tfm_0 V\,\left(\frac{\fm_0(E)+1}{\fm_0(E)+n}\right)^{2\delta\wedge 1}\normbwqq{f}^2,\qquad \forall n\geq 1.
        \end{align}
        If, in addition, $\gd<\nicefrac12$, then
        \begin{align}
        \label{eqT1firstbis}
        n^{ \delta}|\tfm_n f- \nu f|\xrightarrow[n\to+\infty]{a.s.} 0.
        \end{align}
        \item \label{T1b}
If in addition $\RR$ is \aslqc{} operator on $\BW$, 
        then $\gth<1$
        and, 
        for all $\delta\in(0,{ 1-\theta})$, 
        for all $f\in B(W^2)$, 
         \begin{align}\label{eqT1sec}
        \E\left|\tfm_n f- \nu f\right|&
        \leq C_\gd \,\tfm_0 V\,\left(\frac{\fm_0(E)+1}{\fm_0(E)+n}\right)^{({  2\delta}\wedge 1)\frac{q/2-1}{q-1}}\,\|f\|_{B(W^2)},\qquad \forall n\geq 1.
        \end{align}
    \end{romenumerate}
    \end{theorem}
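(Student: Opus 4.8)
\textbf{Proof plan for Theorem~\ref{T1}.}
The plan is to set up a stochastic approximation (Robbins--Monro type) recursion for the measure $\tfm_n$ and extract an almost-supermartingale for the scalar quantities $\tfm_n f - \nu f$, exploiting the spectral decomposition of $\RR_D$. First I would write $\tfm_{n+1} = \tfm_n + \frac{1}{\fm_0(E)+n+1}\bigpar{R^{(n+1)}_{Y_{n+1}} - \tfm_n}$, using \eqref{mass}, and take conditional expectation: $\E_n[R^{(n+1)}_{Y_{n+1}}] = \tfm_n \ER$, so $\E_n \tfm_{n+1} = \tfm_n + \frac{1}{\fm_0(E)+n+1}\bigpar{\tfm_n\RR - \tfm_n}$ (reading $\ER$ as the adjoint operator via \eqref{ass}). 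Pairing with $f \in D$ and writing $Z_n(f) := \tfm_n f - \nu f$, the fixed-point relations \eqref{R1} and \eqref{nurr} give $\E_n Z_{n+1}(f) = Z_{n+1-\text{drift}}$; more precisely $\E_n[\tfm_{n+1}f] = \tfm_n f + \frac{1}{\fm_0(E)+n+1}(\tfm_n\RR f - \tfm_n f)$. Since $\RR_D$ is \slqc{} with $\Pi_1$ of rank $1$ and $\Pi_1 = 1\otimes\nu$ on $D$ (as $\RR 1 = 1$, $\nu\RR = \nu$), decompose $f = (\nu f)1 + f_0$ with $f_0 \in (\II-\Pi_1)D$; then $\tfm_n f - \nu f = \tfm_n f_0$ and on the complementary subspace $\RR_D$ has spectral radius $\le e^{\gthd}<1$ by \refL{Lth}\ref{Ltha}, so $\norm{\RR_D^k f_0}_{B(W)} \le C_\delta (1-\delta)^k\norm{f_0}_{B(W)}$ for the relevant $\delta$ (using \cite{Conway} on spectral radius and a Neumann-series / resolvent bound for the norm of powers just below the spectral radius).

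The core estimate \eqref{eqT1first} then follows by iterating the recursion and a telescoping/Gr\"onwall argument in $L^2$. The key step is to control the martingale increment $\gD M_{n+1} := \tfm_{n+1}f - \E_n\tfm_{n+1}f = \frac{1}{\fm_0(E)+n+1}\bigpar{R^{(n+1)}_{Y_{n+1}}f - \tfm_n\RR f}$; its conditional second moment is bounded, using \eqref{hg} and Cauchy--Schwarz on $|f| \le \norm{f}_{B(W)}W$, by $\frac{C}{(\fm_0(E)+n+1)^2}\,\tfm_n V\,\norm{f}_{B(W)}^2$. So I also need an a priori bound $\E[\tfm_n V] \le \mathfrak C$ (or better $\le \tfm_0 V + C$): this comes from the Lyapunov inequality \eqref{lyapv}, i.e.\ $\E_n[\tfm_{n+1}V] \le \tfm_n V + \frac{1}{\fm_0(E)+n+1}\bigpar{\vartheta\tfm_n V + C_1 - \tfm_n V}$, which since $\vartheta<1$ makes $\tfm_n V$ a positive almost-supermartingale-like quantity with bounded expectation; this is where \ref{as:BWi} and \ref{as:BWiii} enter. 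Combining: expand $Z_{n+1}(f_0)$ via the recursion, use the contraction of $\RR_D$ on the complementary subspace to get a factor $(1 - \frac{\delta'}{\fm_0(E)+n+1})$ per step for suitable $\delta' < 1-\gthd$, and solve the resulting scalar recursion $a_{n+1} \le (1-\frac{2\delta'}{\fm_0(E)+n+1})a_n + \frac{C}{(\fm_0(E)+n+1)^2}$ to obtain $a_n = \E|Z_n(f)|^2 = O\bigpar{(\fm_0(E)+n)^{-(2\delta'\wedge 1)}}$, with the $\wedge 1$ because the noise term $\sum (\fm_0(E)+k)^{-2}$ contributes an $(\fm_0(E)+n)^{-1}$ floor. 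Tracking the $\tfm_0 V$ dependence through the noise bound gives the stated form. The almost sure statement \eqref{eqT1firstbis} for $\delta<\nicefrac12$ then follows from \eqref{eqT1first} by a standard Borel--Cantelli / Doob maximal inequality argument along the martingale $M_n$ (summability of $\sum n^{2\delta}\cdot n^{-(2\delta'\wedge 1)}$ along a subsequence plus monotonicity control between subsequence points), once $\delta$ is chosen slightly below $\delta' \wedge \nicefrac12$.

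For part~\ref{T1b}, the point is to upgrade the $L^2$ bound on $D = B(W)$ to an $L^1$ bound valid for the larger test class $f \in B(W^2)$, at the cost of a worse exponent. The strategy I would use is interpolation: given $f \in B(W^2)$, truncate to $f_K := f\cdot\indic{W \le K}$ (or a smooth cutoff), so $f_K \in B(W)$ with $\norm{f_K}_{B(W)} \le K\norm{f}_{B(W^2)}$, while the tail $\norm{f - f_K}_{B(W^2)}$ is controlled and, more importantly, $|\tfm_n(f - f_K)| \le \norm{f}_{B(W^2)}\,\tfm_n(W^2\indic{W>K}) \le K^{2-q}\norm{f}_{B(W^2)}\,\tfm_n V$, which has bounded expectation by the Lyapunov bound above (here $q > 2$ is used crucially, and this is the analogue of the role $q>2$ plays throughout — cf.\ \refR{RH}). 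Then $\E|\tfm_n f - \nu f| \le \E|\tfm_n f_K - \nu f_K| + \E|\tfm_n(f-f_K)| + |\nu(f - f_K)| \le C_\delta K (\fm_0(E)+n)^{-\delta}\tfm_0 V\norm{f}_{B(W^2)} + C K^{2-q}\norm{f}_{B(W^2)}\,\mathfrak C$, using \eqref{eqT1first} for the first term ($f_K$ need not lie in $D$, so I apply part~\ref{T1a} with $D = B(W)$, which is why the extra hypothesis ``$\RR$ \slqc{} on $B(W)$'' is imposed) and optimising over $K$. Balancing $K(\fm_0(E)+n)^{-\delta} \asymp K^{2-q}$ gives $K \asymp (\fm_0(E)+n)^{\delta/(q-1)}$ and a rate $(\fm_0(E)+n)^{-\delta(q-2)/(q-1)} = (\fm_0(E)+n)^{-\delta\frac{q/2-1}{q-1}\cdot 2}$; reconciling the $2\delta\wedge 1$ cap from part~\ref{T1a} yields exactly the exponent $(2\delta\wedge 1)\frac{q/2-1}{q-1}$ in \eqref{eqT1sec}.

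The main obstacle, I expect, is not the scalar recursion manipulations (routine Gr\"onwall/Chung-lemma bookkeeping) but rather the operator-theoretic step of converting ``spectral radius of $\RR_D$ restricted to the complement of $\Pi_1$ is $< 1$'' into a uniform quantitative bound $\norm{(\II-\Pi_1)\RR_D^k}_{B(W)} \le C_\delta(1-\delta)^k$ — i.e.\ controlling the transient in the spectral radius formula $\lim_k \norm{T^k}^{1/k} = r(T)$ — together with making sure every step respects the $\tfm_0 V$-weighted bookkeeping (so that constants genuinely split into $\mathfrak C$ vs.\ $C$ as in \refR{Rm0}) and that $\tfm_n V$ being only in $L^1$ (not $L^2$) is enough everywhere it is used; this last point is what forces the $L^1$, not $L^2$, conclusion in part~\ref{T1b}.
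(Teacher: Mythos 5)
Your decomposition $f = (\nu f)1 + f_0$ with $f_0 \in (\II - \Pi_1)D$, your use of the Lyapunov bound on $\E[\tfm_n V]$, the martingale structure of $\Delta M_n$, and the truncation argument for part~\ref{T1b} are all aligned with the correct argument. But the central spectral step is wrong. You assert that on $(\II-\Pi_1)D$ the operator $\RR_D$ has ``spectral radius $\le e^{\gthd}<1$'', hence $\|\RR_D^k f_0\|_{B(W)} \le C_\delta(1-\delta)^k\|f_0\|_{B(W)}$. This is false: the \slqc{} hypothesis constrains $\Re\,\sigma(\RR_D)$, not $|\sigma(\RR_D)|$. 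For a balanced kernel $\sigma(\RR_D)\subseteq\{|\lambda|\le 1\}$, and $\sigma(\RR_D)\setminus\{1\}$ can touch the unit circle (e.g.\ a periodic chain has $-1\in\sigma$ with $\Re(-1)<1$ but $|-1|=1$); then $\|\RR_D^k\|$ on $(\II-\Pi_1)D$ does not decay at all. The name ``logarithmically quasi-compact'' is the hint: it is $e^{\RR}$, not $\RR$, that is quasi-compact. What actually admits geometric-type decay is the \emph{product} $B_{i,n}:=\prod_{j=i}^{n-1}\bigl(\II+\gamma_j(\RR-\II)\bigr)\approx (n/i)^{\RR-\II}$, restricted to $(\II-\Pi_1)D$: one gets $\|B_{i,n}\restr_{(\II-\Pi_1)D}\|\le C_\delta(i/n)^\delta$ by applying the Riesz holomorphic functional calculus to the polynomial $b_{m,n}(z)=\prod_{j}\bigl(1+\gamma_j(z-1)\bigr)$ along a contour encircling $\sigma\bigl(\RR\restr_{(\II-\Pi_1)D}\bigr)$, where $\Re z < 1-\delta$ makes $(n/m)^{z-1}$ small.

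Relatedly, your per-step Grönwall recursion $a_{n+1}\le\bigl(1-\tfrac{2\delta'}{\fm_0(E)+n+1}\bigr)a_n+\tfrac{C}{(\fm_0(E)+n+1)^2}$ with $a_n:=\E|\tfm_n f_0-\nu f_0|^2$ does not close. The one-step identity is $\fv_{n+1}f_0=\fv_n g_n+\gamma_n\Delta M_{n+1}f_0$ with $g_n:=(\II+\gamma_n(\RR-\II))f_0$ a \emph{different} test function, and there is no inequality $\E|\fv_n g_n|^2\le(1-\gamma_n\delta')^2\,\E|\fv_n f_0|^2$ unless $f_0$ is an eigenfunction; the operator-norm per-step contraction $\|\II+\gamma(\RR-\II)\|\le 1-\gamma\delta$ also fails in general from a spectral hypothesis alone. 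The fix for both issues is structural: unroll the recursion fully to $\fv_n f=\fv_0 B_{0,n}f+\sum_{i=1}^n\gamma_{i-1}\Delta M_i B_{i,n}f$, use martingale orthogonality of the $\Delta M_i$ to split the $L^2$ norm into a sum of second moments, and bound each term by the whole-product estimate $\|B_{i,n}f_0\|_{B(W)}\le C_\delta(i/n)^\delta\|f_0\|_{B(W)}$; the transient in the spectral-radius formula is absorbed once into $C_\delta$, not per step. Your part~\ref{T1b} truncation and your subsequence/maximal-inequality scheme for the a.s.\ statement are the right ideas, but the latter also needs the product structure: the relevant martingale for Doob is $(\fv_n B_{n,N}f)_{0\le n\le N}$, and controlling the range $N-N^\tau\le n\le N$ requires inverting $B_{n,N}$ via the same functional calculus rather than any bound on $\|\RR^k\|$.
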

    
    \begin{remark}
%
        In the case of a metric space $E$ and $D=\BW$,
        \eqref{eqT1firstbis} implies $\tfm_n\asto\nu$ in the usual
        weak topology, but it is stronger since it also implies
        $\tfm_n(A)\asto \nu(A)$ for every measurable set $A$. 
 In particular, we recover and improve on the results obtained in~\cite{MV19} 
        in the balanced non-weighted case.
    \end{remark}

\begin{remark}\label{RcomplexG}
  In the following theorem, we consider the asymptotic distribution of
$\tfm_n f$ for a general complex-valued function $f\in D$. 
In parts (1) and (2) below, the limit is a complex normal distribution, which we
describe by identifying $\bbC$ with $\bbR^2$; we thus give the covariance
matrix of its real and imaginary parts in \eqref{t11} and \eqref{t12}.
Note  that this complex normal distribution 
in \eqref{t11} and \eqref{t12} 
can equivalently be characterised as the distribution of
a complex normal variable
$\zeta$ with 
\begin{align}\label{zeta}
\E\zeta=0,&& \E\zeta^2=\chi(f), && \E|\zeta|^2=\gss(f).&
\end{align}
In applications, we
usually consider real $f$, and then the results simplify since the imaginary
parts disappear; 
in fact, in this case, $\gox(f)=\gss(f)$ is always real, 
and the limit distributions in \eqref{t11} and \eqref{t12} are just
$\mathcal N\bigpar{0,\gss(f)}$.

If $D$ is closed under complex conjugation, for example if $D=\BW$, then
the results for complex $f$ follow easily from the results for real $f$ by
considering real and imaginary parts (and the Cram\'er--Wold device).
Our formulation allows for other interesting domains $D$, for example
$D=\Pi_{\gl}\BW+\bbC1$ where $\gl$ is a non-real isolated point in the spectrum
$\gs(\RR)$.  (See also \refE{Egl}.)
\end{remark}

The second theorem treats the fluctuations around the limit.
As in the finite colour case (see \eg{} \cite{SJ154,Pouyanne2008}), 
there are (under some additional hypotheses)
three cases depending on the size of the spectral gap
(or, equivalently, on $\gthd$); 
in the theorem below we indicate 
the range of $\gthd$ for each case.
Recall that we regard $\RR$ as an operator on $B(W)$.

\begin{theorem}\label{T2}
    	Let $(\fm_n)_{n\geq 0}$ be a {\sc mvpp} with initial composition $\fm_0$ 
    and random replacement kernel $\RA$.
    We assume that $\RA$ satisfies  
    Assumptions \BHN.
    Let~$D$ be a closed invariant subspace of~$B(W)$ such that
    $1\in D$ and the restriction $\RR_D$ of $\RR$ to $D$
    is  \slqc. Then,
 the following hold.
	\begin{enumerate}[{\rm (1)}]
		\item \label{T2.1}\Xcase{$\gthd<1/2$}
If\/ $\RR_D$ is small and $\RR$ is  \slqc, then
for any $f\in D$,
		\begin{align}\label{t11}
		n^{\nicefrac12} (\tfm_n f - \nu f) \dto \mathcal N\left({0,\frac12
	\begin{pmatrix}
	\sigma^2(f)+\Re(\gox(f))&\Im(\gox(f))\\
	\Im(\gox(f))&\sigma^2(f)-\Re(\gox(f))
	\end{pmatrix}	
	}\right),
		\end{align}
		where
		\begin{equation}\label{goxf}
		\gox(f):=\intoo \nu{ \BB\bigpar{\ee^{s\RR}(f-\nu f)}}\ee^{-s}\dd s
		= \int_E\intoo  \BB_x\bigpar{\ee^{s\RR}(f-\nu f)}\ee^{-s}\dd s \dd\nu(x)
		\end{equation}
		and
		\begin{equation}\label{gssf}
		\sigma^2(f):=\intoo \nu{ \mathbf{C}\bigpar{\ee^{s\RR}(f-\nu f)}}\ee^{-s}\dd s
		= \int_E\intoo  \mathbf{C}_x\bigpar{\ee^{s\RR}(f-\nu f)}\ee^{-s}\dd s \dd\nu(x)
		\end{equation}
		with
		\begin{equation}\label{BB}
		\BB(f):  x\mapsto \BB_x(f):=\E \big[\bigpar{\RA_xf}^2\big]
\text{\quad and\quad }
\mathbf{C}(f):  x\mapsto \mathbf{C}_x(f):=\E \big[\bigabs{\RA_xf}^2\big]
		\end{equation}
		and with absolutely convergent integrals.

		\item\label{T2.2} \Xcase{$\gthd=1/2$}
If\/ $\RR_D$ and $\RR$ are \slqc{} and the spectrum of $\RR_D$ is given by  
		\begin{align}
\sigma(\RR_D)=\{1,\lambda_1,\ldots,\lambda_p\}\cup \Delta
\end{align} 
for some $p\ge1$,
where 
	$\Re(\lambda_1)=\cdots=\Re(\lambda_p)=1/2$ and
	$\sup\Re(\Delta) < 1/2$, 
let
\begin{align}\label{kkj}
  \kk_j:=\min\bigset{k\ge1:(\RR_D-\gl_j\II)^k =0 \text{ on }\Pi_{\gl_j}D},
\qquad 1\le j\le p,
\end{align}
and $\kk:=\max_{j\le p}\kk_j$.
Assume that $\kk<\infty$.
Then, for any $f\in D$,
	\begin{align}\label{t12}
	\frac{n^{\nicefrac12}}{(\log n)^{\kappa-\nicefrac12}}  \big(\tfm_n f - \nu f\big)\dto \mathcal N\left({0,\frac12
		\begin{pmatrix}
		\gs^2(f)+\Re(\goy(f))&\Im(\goy(f))\\
		\Im(\goy(f))&\gs^2(f)-\Re(\goy(f))
		\end{pmatrix}	
	}\right),
	\end{align}
	where
	\begin{align}\label{goxf2}
\goy(f):= \sum_{j,j'=1}^p  
\frac{\etta_{\kappa_j=\kappa_{j'}=\kappa,\
      \overline{\lambda_j}=\lambda_{j'}}}{(2\kk-1)((\kappa-1)!)^2}
\nu\BBB\left((\RR-\lambda_j \II)^{\kappa-1}\Pi_{\lambda_j} f,(\RR-\lambda_{j'} \II)^{\kappa-1}\Pi_{\lambda_{j'}} f\right)		
	\end{align}
	and
	\begin{align}\label{gssf2}
	\gs^2(f):= \sum_{j=1}^p  \frac{\etta_{\kappa_j=\kappa}}
{(2\kk-1)((\kappa-1)!)^2}{\nu\mathbf{C}}\left((\RR-\lambda_j \II)^{\kappa-1}\Pi_{\lambda_j} f\right)
	\end{align}
with
$\BC$ as in \eqref{BB} and
\begin{equation}\label{BBB}
\BBB(f,g):  x\mapsto \BBB_x(f,g):=\E [\RA_x f\cdot \RA_xg],
\end{equation}

	\item\label{T2.3} \Xcase{$\gthd>1/2$}
 If\/ $\RR_D$ 
is \slqc{} and the spectrum
      of $\RR_D$ is given by  
\begin{align}\label{ingemar}
\sigma(\RR_D)=\{1,\lambda_1,\ldots,\lambda_p\}\cup \Delta
\end{align} 
for some $ p\geq 1$,
where 
$\Re(\lambda_1)=\cdots=\Re(\lambda_p)\in(1/2,1)$, 
and
$\sup\Re(\Delta) < \Re(\lambda_1)$, 
let $\kk_j$ $(1\le j\le p)$ be defined by \eqref{kkj} and 
let $\kk:=\max_{j\le p}\kk_j$.
Assume that $\kk<\infty$.
Then, for any $f\in D$, there exist 
complex random variables
$\gL_1,\dots,\gL_p\in L^2$ such that
	\begin{align}\label{hildegran}
\frac{n^{1-\Re\lambda_1}}{(\log n)^{\kappa-1}}\bigpar{\tfm_n f - \nu f}
- 	\sum_{j=1}^p  n^{\ii \Im\gl_j} \gL_j
\to 0
	\end{align}
\as{} and in $L^2$.
Furthermore, 
\begin{align}
  \label{solveig}
\E\gL_j
=\frac{\gG(\fm_0(E)+1)}{(\kk-1)!\,\gG(\fm_0(E)+\gl_j)}
 \tfm_0(\RR-\gl_j)^{\kk-1}\Pi_{\gl_j}f.
\end{align}
	\end{enumerate}
\end{theorem}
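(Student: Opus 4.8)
The plan is to treat the {\sc mvpp} as a stochastic approximation and analyse the martingale that drives it, splitting off the contribution of the dominant spectral component near the critical line $\Re\gl = \Re\gl_1 > 1/2$.

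First I would write the basic recursion. Set $Z_n := \tfm_n f - \nu f = \tfm_n(f - \nu f)$ (using $\nu f$ constant and $\tfm_n 1 = 1$). From \eqref{mn} and \eqref{mass}, with $\tau_n := \fm_0(E)+n$,
\begin{align}\label{recZ}
\tfm_{n+1} = \frac{\tau_n}{\tau_{n+1}}\,\tfm_n + \frac{1}{\tau_{n+1}}\, R^{(n+1)}_{Y_{n+1}}.
\end{align}
Applying $f$, subtracting $\nu f$, and taking $\E_n$, one gets that, up to the $\tau$-prefactors, the conditional increment of $\tfm_n f$ is governed by $\tfm_n \RR f = (\RR^T \tfm_n) f$ (here using \eqref{ass} and the fact that $\RR$'s adjoint maps $\cM(W)$ to itself, which follows from \eqref{eq:dom}). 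Precisely, writing $g := f - \nu f$ and $\RR g = g + (\RR-\II)g$, and using $\nu\RR = \nu$, I'd derive
\begin{align}\label{mgrec}
\tfm_{n+1} g = \frac{\tau_n}{\tau_{n+1}}\,\tfm_n(\RR g) + \frac{1}{\tau_{n+1}}\,\Delta M_{n+1}
= \Bigl(1 + \frac{1}{\tau_{n+1}}(\RR - \II)\Bigr)^{\!T}\!\!\text{-acting version} + \text{martingale increment},
\end{align}
where $\Delta M_{n+1} := R^{(n+1)}_{Y_{n+1}}g - \tfm_n(\RR g)$ is a martingale difference with $\E_n[\Delta M_{n+1}] = 0$. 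The point is that $\tfm_n$, projected onto the finite-dimensional space $\bigoplus_j \Pi_{\gl_j} D$, evolves like a linear recursion with matrix $\II + \tfrac{1}{\tau_{n+1}}(\RR_D|_{\text{this space}})$ plus noise.

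The main step is then a Jordan-block analysis. Let $N_n := \prod_{k=1}^n (\II + \tfrac1{\tau_k}\RR_D)$-type product restricted to $\Pi_{\gl_j}D$; by standard asymptotics of such products (as in \cite{SJ154} in finite dimensions, or via $\prod(1 + a/\tau_k) \sim C\tau_n^{a}$), and using that on $\Pi_{\gl_j}D$ the operator $\RR$ acts as $\gl_j\II + \cN_j$ with $\cN_j$ nilpotent of index $\kk_j$, one finds $N_n \sim c_j \tau_n^{\gl_j}(\log \tau_n)^{\kk_j - 1}$ in the appropriate sense (the top Jordan vector dominates). Define $\gL_j$ via the terminal value of the normalised martingale $\tau_n^{-\gl_j}(\log\tau_n)^{-(\kk_j-1)} \Pi_{\gl_j}^T\tfm_n g$ plus its compensator; the $L^2$-boundedness of this martingale is where Assumption \ref{as:BW} with $q>2$ enters, through the moment bound \eqref{hg} which controls $\E_n\|\Delta M_{n+1}\|_W^2 \le \E_n[(|R^{(n+1)}_{Y_{n+1}}|W)^q]^{2/q}\cdot(\dots) \lesssim \tfm_n V \lesssim \fm_0(E)+n$ via the Lyapunov bound \eqref{lyapv} and \refT{T1}\ref{T1a} (which gives $\tfm_n V = O(1)$ in $L^1$, or at least the needed control). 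Since $2\Re\gl_j > 1$, the series $\sum \tau_n^{-2\Re\gl_j}(\log)^{-2(\kk_j-1)}\cdot\tau_n$ converges, so the martingale converges a.s.\ and in $L^2$; its limit is $\gL_j$ (after accounting for the nilpotent structure, only the blocks with $\kk_j = \kk$ survive in the leading order, but each $\gL_j$ is still defined, possibly zero). The complementary part of $\tfm_n g$ — the projection onto $\Pi_1 D$ (which is $0$ since $\nu g = 0$ and $\Pi_1$ has rank $1$) plus the projection onto the part of the spectrum with real part $< \Re\gl_1 - \eps$, handled by the \slqc{} spectral gap and the decay estimate behind \refT{T1} — is $o\bigl(\tau_n^{\Re\gl_1 - 1}(\log\tau_n)^{\kk-1}\bigr)$, which gives \eqref{hildegran} after dividing by $n^{1-\Re\gl_1}/(\log n)^{\kk-1}$ and noting $\tau_n^{\ii\Im\gl_j} = n^{\ii\Im\gl_j}(1 + o(1))$ can be absorbed, or kept as the oscillating prefactor $n^{\ii\Im\gl_j}$.

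Finally, for the mean formula \eqref{solveig}: taking expectations in \eqref{mgrec} kills the martingale term, leaving the deterministic recursion $\E[\tfm_{n+1}g] = (\II + \tfrac1{\tau_{n+1}}(\RR-\II))^T$-version applied to $\E[\tfm_n g]$; restricting to $\Pi_{\gl_j}$ and iterating gives $\Pi_{\gl_j}^T\E[\tfm_n g] = \bigl(\prod_{k=1}^n(\II + \tfrac{\gl_j - 1}{\tau_k}\II + \tfrac1{\tau_k}\cN_j)\bigr)^T \Pi_{\gl_j}^T \tfm_0 g$, and the leading term of this product is $\tfrac{1}{(\kk-1)!}\bigl(\tfrac{\log\tau_n}{1}\bigr)^{\kk-1}\cdot\tfrac{\gG(\tau_n+\gl_j-1+1)}{\gG(\tau_n)\gG(\gl_j)\cdots}$-type expression; matching constants with $\tau_0 = \fm_0(E)$ and $\prod_{k=1}^\infty \tfrac{(\tau_k)(\tau_k + \gl_j - 1)}{\cdots}$ collapsing to $\gG(\fm_0(E)+1)/\gG(\fm_0(E)+\gl_j)$ by the standard Gamma-function product identity yields \eqref{solveig} after dividing by $\tau_n^{\gl_j-1}(\log\tau_n)^{\kk-1}$ and using $\E\gL_j = \lim \tau_n^{-(\gl_j - 1)}(\log\tau_n)^{-(\kk-1)}\E[\Pi_{\gl_j}^T\tfm_n g]$ together with $\Pi_{\gl_j}^T\tfm_0 g = \tfm_0(\RR - \gl_j)^{\kk-1}\Pi_{\gl_j}f$ rewritten appropriately (the $(\RR-\gl_j)^{\kk-1}$ picks out the top nilpotent component). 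The hard part will be the precise Jordan-block product asymptotics with non-diagonalisable $\RR_D$ and making the error terms uniform enough to conclude both a.s.\ and $L^2$ convergence; this is exactly the technical lemma deferred to \refApp{AppC}, and I would invoke it here rather than reprove it inline.
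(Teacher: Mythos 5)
Your proposal addresses only case~(3) of the theorem. Cases~(1) and~(2), which assert convergence in distribution to a Gaussian limit with explicit (complex) covariance, require a completely different line of argument: a martingale central limit theorem for triangular arrays, applied to $\zzeta_{n,i}=\ba_n\gam_{i-1}\gD M_i B_{i,n}f$ with $\ba_n=n^{1/2}$ resp.\ $\ba_n=n^{1/2}/(\log n)^{\kk-1/2}$. One needs to verify the conditional Lindeberg condition and the convergence in probability of both $\sum_i\E_{i-1}[\zzeta_{n,i}^2]$ and $\sum_i\E_{i-1}[|\zzeta_{n,i}|^2]$; the limits of these sums are what produce the integrals \eqref{goxf}--\eqref{gssf} in case~(1), via a Riemann-sum/dominated-convergence argument using $b_{\ceil{nx},n}(z)\to x^{1-z}$, and the discrete sums \eqref{goxf2}--\eqref{gssf2} in case~(2), via the Jordan-block decomposition of $B_{m,n}$ on $\Pi_{\gl_j}D$ and the asymptotics of $\sum_{i\le n} i^{-1-\ii\ga}\log^k(n/i)$ (this is the actual content of the technical lemma in \refApp{AppC}, which you cite for a different purpose than it serves). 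None of this appears in your sketch, so the bulk of the theorem is simply unproven.

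For case~(3) itself your strategy — exhibit a martingale out of the normalised process on $\Pi_{\gl_j}D$, prove $L^2$-boundedness from the moment bound and $2\Re\gl_j>1$, then decompose $f$ along $\Pi_1,\Pi_{\gl_j},\Pi_\gD$ and dispose of the remainder using the \slqc\ gap — is exactly the paper's route, so the high-level idea is right. However, two details need care. First, $\tau_n^{-\gl_j}(\log\tau_n)^{-(\kk_j-1)}\Pi_{\gl_j}^*\tfm_n g$ ``plus its compensator'' is not a clean martingale; the construction only works cleanly when you invert the \emph{exact} operator $B_{0,n}=b_{0,n}(\RR)$ rather than its leading asymptotic, giving the genuine martingale $\fv_n B_{0,n}^{-1}f = \fv_0 f + \sum_{i\le n}\gam_{i-1}\gD M_i B_{0,i}^{-1}f$; the transfer to $n^{1-\gl}/(\log n)^{\kk-1}$ then goes through a Taylor expansion of $b_{0,n}^{(k)}(\gl)$ on the nilpotent block. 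Second, your bound $\E\|\gD M_{n+1}\|_W^2\lesssim\fm_0(E)+n$ is wrong: what holds (and what you need, and indeed what you parenthetically correct to) is $\E_{i-1}|\gD M_i g|^q\le C\|g\|^q\,\tfm_{i-1}V$ with $\E[\tfm_n V]=O(1)$ uniformly in $n$, from which $L^2$-boundedness follows because $\sum_i i^{-2}\,\|B_{0,i}^{-1}\|^2\lesssim \sum_i i^{-2\Re\gl+2\eps}<\infty$. With those fixes and an actual computation for \eqref{solveig} via the Gamma-quotient identity $b_{0,n}(z)=\gG(n+\fm_0(E)+z)\gG(\fm_0(E)+1)/\bigl(\gG(n+\fm_0(E)+1)\gG(\fm_0(E)+z)\bigr)$, your case~(3) argument would be sound — but cases (1) and (2) still need to be supplied from scratch.
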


\begin{remark}
        \label{rem:m0rand}
To adapt our results to a random~$\fm_0$, 
we make the same assumptions as in~Theorems~\ref{T1} and~\ref{T2}, 
and we assume that~\ref{as:BWiii} holds almost surely.
Under these assumptions, Theorems~\ref{T1} and~\ref{T2} apply conditionally on $\fm_0$.
This implies that, under these assumptions, 
the almost-sure convergences in Theorems~\ref{T1} and~\ref{T2} 
still hold for random~$\fm_0$.
Furthermore, since the limiting distributions in Theorem~\ref{T2}~\eqref{T2.1} and~\eqref{T2.2} 
do not depend on $\fm_0$, it also implies
that the convergences in distribution in Theorem~\ref{T2}(\ref{T2.1}) and~(\ref{T2.2}) 
hold if $\fm_0$ is random.
If in addition $\E\tfm_0 V<+\infty$, then, by dominated convergence, 
the left-hand-side terms of~\eqref{eqT1first} and~\eqref{eqT1sec} also converge to~$0$ 
when $n\to+\infty$.

Under the assumptions of Theorem~\ref{T2}\eqref{T2.3}, 
conditioning on $\fm_0$ shows that \eqref{hildegran} still
holds \as{} for a random $\fm_0$,
with \eqref{solveig} replaced by
    \begin{align}
    \label{solveigm0rand}
    \E\gL_j
    =\E\left[\frac{\gG(\fm_0(E)+1)}{(\kk-1)!\,\gG(\fm_0(E)+\gl_j)}
    \tfm_0(\RR-\gl_j)^{\kk-1}\Pi_{\gl_j}f\right].
    \end{align}
Moreover, it follows from the proof that 
under the additional assumption that 
\begin{align}\label{m0rand+}
\E\left[(\fm_0(E)+1)^{2(1-\Re\gl_1)}\tfm_0 V\right]<\infty,   
\end{align}
\eqref{hildegran} holds also in $L^2$,
see \refR{rem:m0rand2}.
(Note that $\E\left[(\fm_0(E)+1)\tfm_0 V\right]<\infty$ suffices
for~\eqref{m0rand+}.)
\end{remark}

\begin{remark}\label{RBB}
	The operator $\BB$ defined in~\eqref{BB} is the quadratic operator
    corresponding to the bilinear operator $\BBB$ in \eqref{BBB}, \ie,
    $\BB(f)=\BBB(f,f)$. Similarly, $\BC(f)=\BBB(f,\overline f)$.
It follows from \eqref{hg} that the bilinear map
$\BBB$ is bounded, and thus continuous, as a mapping $\BW\times\BW\to B(W^2)$. 
Indeed, for all $f,g\in B(W)$ such that $\|f\|_{B(W)}=\|g\|_{B(W)}=1$, we have, for all $x\in E$,
\begin{align}
\E [|\RA_x f\cdot \RA_xg|]\leq \E\left[(|\RA_x|W)^2\right]\leq \E\left[(|\RA_x|W)^q\right]^{\nicefrac2q}\leq C\,W(x)^2,
\end{align}
where we used Jensen's inequality and~\eqref{hg}.
Hence, $\BB$ and $\BC$ are continuous maps $\BW\to B(W^2)$, and
\begin{align}\label{rbb}
  \norm{\BB(f)}_{B(W^2)}\le C \norm{f}_{\BW}^2,
\qquad
  \norm{\BC(f)}_{B(W^2)}\le C \norm{f}_{\BW}^2
.\end{align}
\end{remark}

\begin{remark}\label{Rsub}
Just as in the finitely-many-colour case,
the limit result \eqref{hildegran} implies convergence in distribution 
of 
$\frac{n^{1-\Re\lambda_1}}{(\log n)^{\kappa-1}}\bigpar{\tfm_n f - \nu f}$
for  suitable subsequences, but in general not for the full sequence. 
\end{remark}

\begin{remark}\label{Rjoint}
The asymptotic normality in parts (1) and (2) extends immediately to joint
convergence for any  number of $f\in D$, 
by the Cram\'er--Wold device \cite[Theorem 5.10.5]{Gut};
the asymptotic covariances are given by obvious bilinear analogues of the
variance formulas in the theorem (\cf{} \refR{RBB}).

In part (3), joint (subsequence) convergence in distribution 
for several $f\in D$ is immediate from the \as{}
convergence in \eqref{hildegran}.
\end{remark}

\begin{remark}\label{Rkk}
  The assumption $\kk<\infty$ in parts (2) and (3) holds,
in particular, 
if we have $\dim (\Pi_{\lambda_j}D)<\infty$ for each $j\le p$.
To see this,
let $D_j:=\Pi_{\gl_j}D$, and note that if $\dim(D_j)<\infty$, then 
the restriction $R_{D_j}$ is an operator in the finite-dimensional vector
space $D_j$ with spectrum $\gs(R_{D_j})=\set{\gl_j}$; hence the operator
$R_{D_j}-\gl_j\II$ is nilpotent (as is shown by the Jordan decomposition),
and thus $\kk_j$ in \eqref{kkj} is finite; in fact,
\begin{align}\label{kkd}
\kk_j\le \dim(D_j).   
\end{align}
Hence, $\kk\le\max_j \dim(D_j)<\infty$ if all $D_j=\Pi_{\gl_j}D$
have finite dimensions.
\end{remark}

\begin{remark}\label{RD}
	Note that allowing a domain $D\subset B(W)$ leads to a
    more complete result. 
    For instance, if $\Delta$ is a
    clopen subset of $\sigma(\RR)$, then one can
    consider the operator $\RR_D$ acting on
    $D=\Pi_{\Delta}B(W)+ \mathbb{C}1$, whose spectrum is
    $\{1\}\cup\{\Delta\}$ which may be 
    strictly included in~$\sigma(\RR)$. In that case, the assumptions
 in Theorem~\ref{T1}(1)--(3) on $\RR_D$ become assumptions on $\gD$,
and then the theorem yields results for $f\in D$, even if the assumptions are
not satisfied for $\gs(\RR)$.

For another example where subspaces are useful, see \refR{Rhomo}.
\end{remark}

\begin{example}\label{Egl}
  We give a simple example; further examples are given in \refS{Sex}.
Suppose that $\RR$ is \slqc{} in $\BW$, and that $f\in\BW$ 
is an eigenfunction: $\RR f=\gl f$ with $\gl\neq1$.
Then \refT{T1} applies to the two-dimensional space $D$ spanned by $f$ and~$1$.

If $\Re\gl<\half$, then (1) yields the asymptotic normality \eqref{t11}.
We have $\nu f=0$ and $\ee^{s\RR}f=\ee^{s\gl}f$, and thus \eqref{goxf} and
\eqref{gssf} yield $\gox(f)=(1-2\gl)\qw\nu\BB(f)$ and 
$\gss(f)=(1-2\Re\gl)\qw\nu\BC(f)$.

If $\Re\gl=\half$, then (2) applies instead, with $p=1$ and $\kk=1$;
\eqref{goxf2} and \eqref{gssf2} yield $\goy(f)=\nu\BB(f)$ and
$\gs^2(f)=\nu\BC(f)$. 

Finally, if $\Re\gl>\half$, then (3) applies, with $\sigma(\RR_D) = \{1,
\lambda\}$ and $\kk=1$;
\eqref{hildegran} shows that there exists a complex random variable
$\Lambda$ such that
$n^{1-\mathrm{Re}(\lambda)}(\tilde{\frak m}_n f - \nu f) - n^{\mathrm{iIm}(\lambda)}\Lambda \to 0$, {and hence $n^{1-\lambda}(\tilde{\frak m}_n f - \nu f) \to \Lambda$,}
almost surely and in $L^2$ when $n\to+\infty$.
\end{example}

\begin{remark}
    In Theorem~\ref{T2}, the assumption that $1\in D$ is in fact not
    necessary. We make this assumption for convenience and because, in
    practice, as one can see in Example~\ref{Egl}, $1$ can always be
    added to~$D$ to enter the setting of our results. 
\end{remark}

\begin{example}\label{ex:finite_nb_colours}
The classical generalised P\'olya urn model with finitely-many colours 
is given by $E = \{1, \ldots, d\}$ and
$\RA_x = (\raa_{x,1}\delta_1 + \cdots + \raa_{x,d}\delta_d)/S$, 
where $\raa = (\raa_{x,y})_{1\leq x, y\leq d}$
is a (possibly random) matrix of  integers,  
with $\raa_{x,y}\ge0$ when $x\neq y$,  
$\raa_{x,x}\geq -1$ for all $1\leq x\leq d$,
$\gd_x$ is a point mass (Dirac measure) at $x$,
and $S$ is a scaling factor (for convenience).
We apply our results to that case and compare the outcome to results from
the literature. 
This model satisfies 
\begin{itemize}
\item[\ref{as:balance}] if and only if the replacement matrix $\raa$ 
is ``balanced'', i.e.\ if all row sums are equal to $S$ (a.s.);
\item[\ref{as:BW}] 
always when \ref{as:balance} holds, since then $-1\le \raa_{x,y}\le S+1$ a.s.
(We take $V\equiv 1$.)
\item[\ref{as:nu2}]
always when \ref{as:balance} holds,
since  then the non-negative matrix
$\E\raa+\II=(\E[\raa_{x,y}+\gd_{x,y}])_{x,y}$ 
has a positive right eigenvector with eigenvalue $S+1$ (viz.\ $1$),
and therefore it follows from the 
the Perron--Frobenius theorem  that it also
has a non-negative left eigenvector $u=(u_x)_1^d$ with this eigenvalue;
we may assume that $\sum_x u_x=1$ and then take $\nu=\sum_x u_x\gd_x$.
\end{itemize}
Furthermore, $B(1)=B(E)$ is the space of all functions from $\{1, \ldots,
d\}$ to $\mathbb C$,  
i.e.\ $\mathbb C^d$.
Under~\ref{as:balance}, 
the operator $\RR$
defined by $\bar{\fr}/S = \mathbb E[\raa]/S$ on $\mathbb C^d$ is \slqc{} if
and only if the eigenvalue~1 has (algebraic) multiplicity~1. 
Under these assumptions, \eqref{eqT1first}--\eqref{eqT1firstbis} 
imply that, if $\frak u_n$
is the composition vector of the urn at time~$n$, i.e. the vector whose
$i$-th coordinate is the number of balls of colour $i$ in the urn at time
$n$, then
\begin{align}
\|\frak u_n - v\| 
= o(n^{-{ \delta}})\quad\text{a.s.\ and in $L^2$ as }n\to\infty,
\end{align}
for all $\delta \in (0, (1-\theta)\land{\nicefrac12})$, 
where $\theta$ is the maximum of the real parts of the eigenvalues of $\frak
r$ excluding~1.
Furthermore, Theorem~\ref{T2}(1) and (2) allow us to recover versions of the 
limit theorems \cite[Theorems~3.22, 3.23 and~3.24]{SJ154}: 
the only caveat is that we make the additional 
assumption that the replacement matrix is balanced.
\end{example}

\begin{remark}\label{Rweights}
As mentioned in the introduction, it is standard in the theory of P\'olya 
urns to associate different weights (or activities) to the balls of
different colours. 
In this generalisation, when picking a ball at random at the $n$-th step, 
one pick each of the balls with probability proportional to its weight, 
and then applies the replacement rule as normal depending on the colour of the drawn ball.

In~\cite{MV19}, the authors generalise this concept of weight in the infinitely-many-colour case: 
for all positive kernel ${\bf P} = (P_x)_{x\in E}$, they define the {\sc
  mvpp} $\frak m_n$ as in~\eqref{mn}, except that, conditionally on $\frak
m_n$, $Y_{n+1}$ is drawn according to the distribution 
${\frak m}_n {\bf P}/{\frak m}_n {\bf P}(E)$.

One can apply our main results (Theorems~\ref{T1} and~\ref{T2}) to $\frak m'_n := \frak m_n{\bf P}$, 
which is an {\sc mvpp} of replacement kernel~${\bf RP}$. 
Our assumptions require in particular that ${\bf RP}$ satisfies the balance assumption \ref{as:balance}. 
From our main results applied to $\frak m'_n$, if the operator induced by ${\bf P}$ is invertible, 
one can deduce a fluctuation result for the original weighted {\sc mvpp} $\frak m_n$.

Even if ${\bf P}$ were non-invertible, it would be straightforward to 
generalise our proofs to the weighted case under the assumption that ${\bf RP}$ is balanced;
since our proofs are already technical, and since the balance assumption restricts greatly the set of weighted kernels one could use, we do not extend our framework to include this case. 
\end{remark}

\begin{remark}\label{Rspaces}
In  the theorems above, we regard $\RR$ as an operator on $B(W)$, where the
possibility to choose a suitable $W$ gives additional flexibility. 
(Warning: the spectrum $\gs(\RR)$, and thus \eg{} $\gth$, may change if we
change $W$, see the example in \refSS{SSRRT}.)
The space $B(W)$ seems natural and convenient for applications, but it is
not the only reasonable choice of a function space.

First,
in typical cases, we may ignore functions that are 0 \nuae\ and it is then
equivalent to consider $\RR$ as an operator on the quotient space of $B(W)$
modulo functions that are 0 \nuae, which we denote by
$L^\infty(W;\nu)$; see \refL{Las} which implies that $\RR$ always is well
defined on $L^\infty(W;\nu)$. However, \refE{Enull} shows that there are
(exceptional) cases when null sets and functions cannot be completely ignored.

More importantly, the examples in \refSSs{SSheat} and \ref{SSBRW}
use Fourier analysis and it is then convenient to consider $\RR$ as an
operator on $L^2(E,\nu)$. In these examples we transfer spectral properties
of $\RR$ from $L^2(E,\nu)$ to $B(E)$ and then apply the theorems above.
However, for these and other similar examples, it would be desirable to have
extensions of the theorems above 
where $B(W)$ is replaced by a more general function space on $E$, including 
$L^2(E,\nu)$ as a possible choice. (Other choices might also be useful in other
applications.) 
In the present paper, however, we consider only the theorems as stated
above, with $\RR$ acting on $\BW$ (and invariant subspaces thereof).
\end{remark}

\subsection{Degenerate limits?}\label{Snull}

The limit results in \refT{T2} are less interesting when the limit
distribution is identically 0.
We  characterize here these degenerate cases, 
and begin by showing that in part (1) of
\refT{T2}, the limit is non-degenerate except in trivial cases.
Proofs are given in \refS{Spf3}.

\begin{theorem}\label{T210}
  Suppose that the conditions of \refT{T2}(1) hold, and let $f\in D$.
Let $\Sigmaq(f)$ be the covariance matrix in \eqref{t11}. 
Then the following are equivalent:
\begin{romenumerate}
  
\item \label{T210a}$\Sigmaq(f)=0$.
\item \label{T210b}$\gss(f)=0$.
\item \label{T210c} $\nu \BC(f-\nu f)=0$.
\item \label{T210d} $\RA_x f = \nu f$ \as, for \nuae~$x$.
\end{romenumerate}
\end{theorem}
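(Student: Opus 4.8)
The plan is to prove the cycle of equivalences \ref{T210a} $\Leftrightarrow$ \ref{T210b} $\Leftrightarrow$ \ref{T210c} $\Leftrightarrow$ \ref{T210d}, exploiting the explicit formulas \eqref{t11}, \eqref{gssf}, and the definition \eqref{BB} of $\BC$. First, the equivalence \ref{T210a} $\Leftrightarrow$ \ref{T210b} is essentially linear algebra: the covariance matrix $\Sigmaq(f)$ in \eqref{t11} is $\frac12\smatrixx{\gss(f)+\Re\gox(f)&\Im\gox(f)\\\Im\gox(f)&\gss(f)-\Re\gox(f)}$, which is a positive semidefinite matrix (being a genuine covariance matrix of the limiting complex normal $\zeta$ from \eqref{zeta}); its trace is $\gss(f)$, so it vanishes iff its trace vanishes iff $\gss(f)=0$. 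Equivalently, $\E|\zeta|^2=\gss(f)=0$ forces $\zeta=0$ a.s., hence $\Sigmaq(f)=0$; the converse is trivial. I would spell this out in one short paragraph, perhaps noting $|\gox(f)|\le\gss(f)$ as the Cauchy--Schwarz consequence $|\E\zeta^2|\le\E|\zeta|^2$ that guarantees positive semidefiniteness.

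Next, \ref{T210b} $\Leftrightarrow$ \ref{T210c}. Write $g:=f-\nu f$, so $\nu g = 0$. From \eqref{gssf}, $\gss(f)=\intoo \nu\bigpar{\BC(\ee^{s\RR}g)}\ee^{-s}\dd s$. Since $\BC_x(h)=\E|\RA_x h|^2\ge 0$ for every $h$ and every $x$, and $\nu$ is a positive measure, the integrand $s\mapsto \nu(\BC(\ee^{s\RR}g))$ is non-negative; hence $\gss(f)=0$ iff $\nu(\BC(\ee^{s\RR}g))=0$ for a.e.\ $s\ge0$, and by continuity in $s$ (the integrals are absolutely convergent and $s\mapsto \ee^{s\RR}g$ is continuous $\BW\to\BW$, with $\BC$ continuous $\BW\to B(W^2)$ by \refR{RBB}) for \emph{all} $s\ge 0$; in particular at $s=0$ this gives $\nu\BC(g)=0$, which is \ref{T210c}. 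For the converse \ref{T210c} $\Rightarrow$ \ref{T210b}, I would argue that $\nu\BC(g)=0$ means, by non-negativity of $\BC_x(g)$, that $\BC_x(g)=0$ for $\nu$-a.e.\ $x$, i.e.\ $\RA_x g=0$ a.s.\ for $\nu$-a.e.\ $x$ --- but this is already statement \ref{T210d}, so it is cleanest to prove \ref{T210c} $\Leftrightarrow$ \ref{T210d} directly (they are literally the same condition unwound, since $\BC_x(g)=\E|\RA_x g|^2$) and then close the loop by showing \ref{T210d} $\Rightarrow$ \ref{T210b}: if $\RA_x f=\nu f$ a.s.\ for $\nu$-a.e.\ $x$, then $\ER_x f = \nu f$ for $\nu$-a.e.\ $x$, and one shows inductively (using $\nu\RR=\nu$, \eqref{nurr}) that $\RR^k g$ vanishes $\nu$-a.e.\ for all $k\ge 0$; more carefully, $\BC_x(\ee^{s\RR}g)$ involves $\ee^{s\RR}g$ evaluated pointwise, and one needs $\RA_x(\ee^{s\RR}g)=0$ a.s.\ for $\nu$-a.e.\ $x$.

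The genuine obstacle is the last implication, \ref{T210d} $\Rightarrow$ \ref{T210b} (equivalently, \ref{T210c} $\Rightarrow$ \ref{T210b}): one must propagate the pointwise-a.e.\ vanishing of $\BC_x(g)$ to pointwise-a.e.\ vanishing of $\BC_x(\ee^{s\RR}g)$ for all $s>0$. The key identity I would use is $\nu\BC(\RR h)\le \nu\BC(h)$-type monotonicity, or more precisely: since $\BC_x(h)=\E|\RA_x h|^2 \ge |\E\RA_x h|^2 = |\ER_x h|^2 = |(\RR h)(x)|^2$ by Jensen, we get $\nu\BC(h)\ge \nu(|\RR h|^2)\ge |\nu(\RR h)|^2$... but that's the wrong direction for iteration. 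Instead, the right route is: \ref{T210d} says $\RA_x f=\nu f$ a.s.\ for $\nu$-a.e.\ $x$; since the MVPP dynamics and $\nu$ are such that the ``measure'' $\nu\otimes\RA$ is concentrated where $\RA_x=\ER_x$, one shows $\RA_x h = \ER_x h$ a.s.\ for $\nu$-a.e.\ $x$ for the specific functions $h$ in the orbit $\{\ee^{s\RR}(f-\nu f)\}$, using that $\ER_x f=\nu f$ $\nu$-a.e.\ and $\nu$ is $\RR$-invariant so the orbit stays ``$\nu$-a.e.\ equal to the constant $\nu f$'' --- concretely, if $h=\nu f$ $\nu$-a.e.\ then $\RR h=\nu f$ $\nu$-a.e.\ (since $\RR 1=1$ and $\nu$-a.e.\ equality is preserved under $\RR$, which requires the ``$\RR$ is well defined on $L^\infty(W;\nu)$'' remark, i.e.\ \refL{Las} alluded to in \refR{Rspaces}), hence $\ee^{s\RR}h=\nu f$ $\nu$-a.e., hence $\BC_x(\ee^{s\RR}g)=\E|\RA_x(\ee^{s\RR}g)|^2=0$ a.s.\ $\nu$-a.e., giving $\gss(f)=0$. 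I would present this carefully, flagging the use of \refL{Las} (that $\RR$ descends to $L^\infty(W;\nu)$ so that $\nu$-null sets are genuinely ignorable in the orbit computation) as the one non-trivial input; everything else is Jensen, Fubini/Tonelli for the $s$-integral, and non-negativity.
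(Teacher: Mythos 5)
Your decomposition of the equivalences and your identification of Lemma~\ref{Las} as the crucial input for \ref{T210d}$\Rightarrow$\ref{T210b} match the paper's proof closely, and the easy parts (\ref{T210a}$\iff$\ref{T210b} via positive-semidefiniteness of the complex Gaussian covariance, \ref{T210b}$\Rightarrow$\ref{T210c} by non-negativity and continuity in $s$, \ref{T210c}$\iff$\ref{T210d} by unwinding $\BC_x$) are all fine. But your write-up of the hard direction contains a genuine gap. You assert that the induction gives ``$\RR^k g$ vanishes $\nu$-a.e.\ for all $k\ge 0$'' and that the orbit ``$\{\ee^{s\RR}(f-\nu f)\}$ stays $\nu$-a.e.\ equal to the constant $\nu f$''. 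Neither is true. With $g:=f-\nu f$, the hypothesis \ref{T210d} gives $\RA_x g = 0$ a.s.\ for $\nu$-a.e.\ $x$; taking expectations yields $\RR g=0$ $\nu$-a.e., and iterating via Lemma~\ref{Las}\ref{Lasa} gives $\RR^k g=0$ $\nu$-a.e.\ only for $k\ge1$, \emph{not} for $k=0$. In general $g$ itself need not vanish $\nu$-a.e.\ under \ref{T210d}: see \refE{Enull}, where $\RA_x=\mu=\nu$ for every $x\neq 0$, so \ref{T210d} holds for \emph{every} $f\in B(E)$, yet $f$ is of course not $\nu$-a.e.\ constant. Consequently $\ee^{s\RR}g = g$ $\nu$-a.e., not $0$ $\nu$-a.e., and you cannot apply Lemma~\ref{Las}\ref{Lasb} directly to $\ee^{s\RR}g$.

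The missing step --- which is exactly the paper's move --- is to apply Lemma~\ref{Las}\ref{Lasb} to $\ee^{s\RR}g - g = \sum_{k\ge1}\frac{s^k}{k!}\RR^k g$, which \emph{does} vanish $\nu$-a.e.\ This gives $\RA_x\bigpar{\ee^{s\RR}g-g}=0$ a.s.\ for $\nu$-a.e.\ $x$. You then \emph{combine} with the hypothesis $\RA_x g = 0$ a.s.\ for $\nu$-a.e.\ $x$ to deduce $\RA_x\bigpar{\ee^{s\RR}g} = \RA_x\bigpar{\ee^{s\RR}g - g} + \RA_x g = 0$ a.s.\ for $\nu$-a.e.\ $x$, hence $\BC_x\bigpar{\ee^{s\RR}g}=0$ for $\nu$-a.e.\ $x$, hence $\nu\BC\bigpar{\ee^{s\RR}g}=0$ for every $s\ge0$, and finally $\gss(f)=0$. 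Without this decomposition-and-recombination the argument has a hole; the rest of your proof is correct and follows the paper's route.
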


On the contrary, in \refT{T2}(2), the asymptotic distribution depends only
on $\Pi_{\gl_1}f,\dots,\Pi_{\gl_p}f$, and thus it degenerates to 0 for many
$f$. (For such $f$, it might be possible to apply the theorem with a smaller
space $D$.)
In fact, in typical applications, the projections $\Pi_{\gl_j}$ project onto
finite-dimensional subspaces, and thus their kernels are very large.
We have the following characterization.

\begin{theorem}\label{T220}
  Suppose that the conditions of \refT{T2}(2) hold, and let $f\in D$.
Let $\Sigmaq(f)$ be the covariance matrix in \eqref{t12}. 
Then the following are equivalent:
\begin{romenumerate}
  
\item \label{T220a}$\Sigmaq(f)=0$.
\item \label{T220b}$\gss(f)=0$.
\item \label{T220c} $(\RR-\gl_j\II)^{\kk-1}\Pi_{\gl_j} f =0$ \nuae, for
every  $j=1,\dots,p$.
\item \label{T220d} $(\RR-\gl_j\II)^{\kk-1}\Pi_{\gl_j} f =0$ \nuae, for
every  $j=1,\dots,p$ such that $\kk_j=\kk$.
\end{romenumerate}
\end{theorem}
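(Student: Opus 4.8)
The plan is to prove the three equivalences \ref{T220a}$\Leftrightarrow$\ref{T220b}, \ref{T220c}$\Leftrightarrow$\ref{T220d} and \ref{T220b}$\Leftrightarrow$\ref{T220d}, which together give the statement. Throughout I write $g_j:=(\RR-\gl_j\II)^{\kk-1}\Pi_{\gl_j}f\in D$ for $1\le j\le p$, so that \ref{T220c} and \ref{T220d} assert that $g_j=0$ $\nu$-a.e.\ for all $j$, respectively for all $j$ with $\kk_j=\kk$.

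First the formal reductions. If $\kk_j<\kk$ then $\kk-1\ge\kk_j$, and since $(\RR_D-\gl_j\II)^{\kk_j}$ vanishes on $\Pi_{\gl_j}D$ by the definition \eqref{kkj} of $\kk_j$, we get $g_j=0$ identically; hence \ref{T220c}$\Leftrightarrow$\ref{T220d}, and the sum in \eqref{gssf2} really only runs over the indices with $\kk_j=\kk$. For such an index $j$, $g_j$ is moreover a genuine eigenfunction of $\RR$: $(\RR-\gl_j\II)g_j=(\RR-\gl_j\II)^{\kk}\Pi_{\gl_j}f=0$, so $\RR g_j=\gl_j g_j$, and here $\gl_j\ne0$ because $\Re\gl_j=\tfrac12$. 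For \ref{T220a}$\Leftrightarrow$\ref{T220b} I note that $\Sigmaq(f)$ is the covariance matrix of the complex normal limit distribution in \eqref{t12}, hence positive semidefinite, and that its trace equals $\gss(f)\ge0$; a positive semidefinite matrix vanishes exactly when its trace does. (Alternatively one may verify the Cauchy--Schwarz bound $|\goy(f)|\le\gss(f)$ directly from \eqref{goxf2}--\eqref{gssf2}.)

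The heart of the matter is \ref{T220b}$\Leftrightarrow$\ref{T220d}. By \eqref{gssf2}, \eqref{BB} and the paragraph above, $\gss(f)$ is a sum over the indices $j$ with $\kk_j=\kk$ of the quantities $(2\kk-1)^{-1}((\kk-1)!)^{-2}\,\nu\BC(g_j)$, and each of these is $\ge0$ since $\nu\BC(g_j)=\int_E\E\bigl[|\RA_x g_j|^2\bigr]\dd\nu(x)$ (these integrals are finite thanks to $\nu V<\infty$, $q>2$ and \eqref{rbb}). Hence $\gss(f)=0$ if and only if, for each such $j$, $\RA_x g_j=0$ \as{} for $\nu$-a.e.\ $x$. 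If $\gss(f)=0$, take expectations --- legitimate since $\E[|\RA_x g_j|]\le\Ciii\|g_j\|_{\BW}\,W(x)$ by \eqref{eq:dom} --- to obtain $(\RR g_j)(x)=\ER_x g_j=0$ for $\nu$-a.e.\ $x$; together with $\RR g_j=\gl_j g_j$ and $\gl_j\ne0$ this forces $g_j=0$ $\nu$-a.e., which is \ref{T220d}. The reverse implication is where I expect the one real obstacle: given $g_j=0$ outside a $\nu$-null set $N$, I must deduce $\RA_x g_j=0$ \as{} for $\nu$-a.e.\ $x$. This is precisely where Assumption~\ref{as:nu2} enters: for $\nu$-a.e.\ $x$ one has $x\notin N$, so $\RA_x$ is a positive measure on $N\subseteq E\setminus\{x\}$, and $\int_E\E[\RA_x(N)]\dd\nu(x)=(\nu\ER)(N)=\nu(N)=0$ forces $\E[\RA_x(N)]=0$, hence $|\RA_x|(N)=0$ \as, for $\nu$-a.e.\ $x$; since $g_j$ is supported on $N$ and $\int_N W\dd|\RA_x|\le|\RA_x|W<\infty$ \as, it follows that $\RA_x g_j=0$ \as{} for $\nu$-a.e.\ $x$, whence $\nu\BC(g_j)=0$ and $\gss(f)=0$. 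This computation is essentially the content of the well-definedness of $\RR$ (and of $\BC$) on the quotient space $L^\infty(W;\nu)$ recorded in \refL{Las}, which I would simply cite if it is stated strongly enough. Putting the three equivalences together completes the proof; the only genuinely analytic ingredient is this two-way comparison between ``$g_j=0$ $\nu$-a.e.'' and ``$\RA_x g_j=0$ \as{} for $\nu$-a.e.\ $x$'', everything else being linear algebra of the spectral projections and positive semidefiniteness of a covariance matrix.
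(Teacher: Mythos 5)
Your proof is correct and follows essentially the same route as the paper: reduce (iii)$\Leftrightarrow$(iv) via $\kk_j<\kk\Rightarrow g_j=0$, observe each $g_j$ with $\kk_j=\kk$ is an eigenfunction with eigenvalue $\gl_j\neq0$, and go back and forth between ``$g_j=0$ $\nu$-a.e.'' and ``$\RA_x g_j=0$ a.s.\ for $\nu$-a.e.\ $x$''. The only cosmetic differences are that you phrase (i)$\Leftrightarrow$(ii) via the trace of a PSD matrix rather than via the representation \eqref{zeta} (equivalent), and that you re-derive the content of \refL{Las}\ref{Lasb} inline rather than citing it --- which, as you suspect, is stated strongly enough to be invoked directly.
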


 In \refT{T2}(3), the situation is similar to
\refT{T2}(2). The characterization is more technical,
partly because the limit distribution now also depends on the initial values
$\fm_0$;
we give several equivalent conditions.
Note that the sum $\sum_j n^{\ii \Im \gl_j}\gL_j$ in \eqref{hildegran}
vanishes for all $n$ if and only if $\gL_j=0$ for each $j=1,\dots,p$.
In typical cases, the conditions below are satisfied only for $g_j=0$,
but \refE{Enull} gives an example where $g_j$ is non-zero and 
the conditions are satisfied for some, but not all, $\fm_0$.

\begin{theorem}\label{T230}
  Suppose that the conditions of \refT{T2}(3) hold, and let $f\in D$.
Let $\gL_j$ be as in \eqref{hildegran},
and let $g_j:=(\RR-\gl_j)^{\kk-1}\Pi_{\gl_j}f$.
%
Then the following are equivalent, for each $j=1,\dots,p$:
\begin{romenumerate}
  
\item \label{T230a}
$\gL_j$ is (a.s.)\ non-random.

\item \label{T230b}
$\gL_j=\E\gL_j$ a.s. 

\item \label{T230o}
$\gL_j=0$ a.s.

\item \label{T230g}
$\fm_n g_j =0$ a.s., for every $n\ge0$.

\item \label{T230h}
$\fm_0 g_j =0$ and 
$\RYni g_j =0$ a.s., for every $n\ge0$.

\item \label{T230p}
$\fm_n |g_j| =0$ a.s., for every $n\ge0$.

\item \label{T230q}
$\fm_n\set{x:|g_j(x)|\neq0}=0$ a.s., for every $n\ge0$.

\item \label{T230r}
$\fm_0 \RR^n\abs{g_j}=0$, for every $n\ge0$.
\end{romenumerate}
Moreover, 
if\/ $\RR$ is \slqc{} on $\BW$, then
\ref{T230a}--\ref{T230r} imply 
\begin{romenumerateq}
\item \label{T230nu}
$g_j=0$ \nuae
\end{romenumerateq}
Conversely, if $\fm_0$ is absolutely continuous w.r.t.\ $\nu$, then
\ref{T230nu} implies
\ref{T230a}--\ref{T230r}.
\end{theorem}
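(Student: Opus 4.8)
The plan is to prove the equivalence of \ref{T230a}--\ref{T230r} via a cycle of implications, then handle the two one-directional statements involving \ref{T230nu} separately. The statement \eqref{hildegran} from \refT{T2}(3) is the main tool: it tells us that $\gL_j$ is the $L^2$-limit of an explicit martingale-like quantity built from $\tfm_n g_j$ (this will be visible once we look at the proof of \refT{T2}(3), but for the purpose of this plan I'll use it as given). The key reduction is that everything is governed by $g_j = (\RR-\gl_j)^{\kk-1}\Pi_{\gl_j}f$, which satisfies $\RR g_j = \gl_j g_j$ (since $(\RR-\gl_j)^{\kk}=0$ on $\Pi_{\gl_j}D$ and $\Pi_{\gl_j}$ commutes with $\RR$), so $g_j$ is an eigenfunction of $\RR$ with eigenvalue $\gl_j$.

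First I would establish the ``easy'' block \ref{T230g} $\Leftrightarrow$ \ref{T230h} $\Leftrightarrow$ \ref{T230p} $\Leftrightarrow$ \ref{T230q} $\Leftrightarrow$ \ref{T230r}, which is purely about the dynamics of the urn and does not involve the limit variable. The equivalence \ref{T230p} $\Leftrightarrow$ \ref{T230q} is immediate since $\fm_n$ is a positive measure. For \ref{T230g} $\Rightarrow$ \ref{T230h}: from $\fm_{n+1} = \fm_n + \RYni$, if $\fm_{n+1}g_j = \fm_n g_j = 0$ for all $n$ then $\RYni g_j = 0$ a.s.; the base case gives $\fm_0 g_j = 0$. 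Conversely \ref{T230h} $\Rightarrow$ \ref{T230g} by induction on $n$. For the link to $|g_j|$, note that $\fm_n$ is positive, so $\fm_n|g_j|=0$ forces $\fm_n g_j = 0$; for the reverse one uses that $\fm_n$ is built up from $\fm_0$ and the $\RYni$'s, together with the fact (from \refT{T2}(3)'s hypotheses and \refApp{Akernels}) that $\RA_x$ is positive off $\set{x}$ — more carefully, $\fm_n g_j = 0$ for all $n$ propagates to $\RYni g_j = 0$ a.s., and since $Y_{n+1}$ has distribution $\tfm_n$, this means $g_j=0$ holds $\tfm_n$-a.s.\ for all relevant points, letting one upgrade $\fm_n g_j=0$ to $\fm_n |g_j|=0$ by a careful bootstrapping on the support; the cleanest route is probably \ref{T230g}$\Rightarrow$\ref{T230q} directly. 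Finally \ref{T230g} $\Leftrightarrow$ \ref{T230r}: taking expectations in $\fm_{n+1} = \fm_n + \RYni$ and using $\E_n[\RYni] = \tfm_n \ER = \tfm_n \RR$ (the kernel acting on the right), one gets $\E[\fm_{n+1} h] = \E[\fm_n h] + \E[\fm_n \RR h]/\fm_n(E)$; iterating with $h = |g_j|$ and using positivity of everything shows $\E[\fm_n |g_j|] = 0$ for all $n$ iff $\fm_0 \RR^k |g_j| = 0$ for all $k$, which is \ref{T230r} (here I use $\RR|g_j| \ge |g_j|$'s replacement by the actual recursion; this is the one spot needing care with the eigenfunction relation $\RR g_j = \gl_j g_j$ versus the modulus).

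Next, the block involving the limit: \ref{T230o} $\Rightarrow$ \ref{T230b} $\Rightarrow$ \ref{T230a} is trivial, and \ref{T230a} $\Rightarrow$ \ref{T230o} follows once we know $\E\gL_j = 0$ would be forced — but that is false in general, so instead the route is \ref{T230a} $\Rightarrow$ \ref{T230g}: if $\gL_j$ is a.s.\ constant, then the martingale whose limit is $\gL_j$ (whatever it is in the proof of \refT{T2}(3)) has vanishing increments, and each increment is a multiple of $\tfm_n g_j - \tfm_{n-1}\,(\text{something})$; tracing this back forces $\fm_n g_j = 0$ for all $n$. More robustly: from \eqref{hildegran} and the $L^2$-convergence, $\gL_j = \lim_n n^{\gl_j - 1}\fm_n(E)^{-1}\fm_n g_j \cdot c_n$ for appropriate deterministic $c_n$ (modulo the $(\log n)$ factors and the decomposition into $n^{\ii\Im\gl_j}$ modes), so $\gL_j$ non-random and the fact that the pre-limit quantities are genuine random variables driven by the $Y_i$'s forces them all to be deterministic, hence (by a conditional-variance argument using \refT{T2}'s $L^2$ bounds, or directly from the martingale structure) forces $\fm_n g_j = 0$. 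Then \ref{T230g} closes the cycle back to \ref{T230o} by re-running \eqref{solveig}/\eqref{hildegran}: if $\fm_n g_j = 0$ for all $n$ then the pre-limit in \eqref{hildegran} is identically $0$, so $\gL_j = 0$. I expect this step — extracting from ``$\gL_j$ non-random'' the conclusion ``$\fm_n g_j \equiv 0$'' — to be the \textbf{main obstacle}, since it requires knowing enough about the martingale constructed in the (not-yet-seen) proof of \refT{T2}(3); the resolution should be that the relevant martingale $M_n$ satisfies $M_\infty = \gL_j$ in $L^2$ and $M_n - M_{n-1}$ is, up to deterministic factors, $\tfm_{n-1}(\RA_{Y_n} - \ER_{Y_n})g_j$-type noise whose conditional variance is controlled, so $\Var(M_\infty) = 0$ iff all increments vanish a.s.

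Finally, the two one-directional statements. For ``\slqc\ on $\BW$ implies \ref{T230a}--\ref{T230r} $\Rightarrow$ \ref{T230nu}'': assuming \ref{T230r}, $\fm_0 \RR^n |g_j| = 0$ for all $n$; since $\RR$ is \slqc\ on $\BW$, $\RR^n h \to (\Pi_1 h)$ pointwise-tested-against-$\nu$, i.e.\ $\nu(\RR^n h) \to \nu h$ — more precisely $\RR^n \to \Pi_1$ on $\BW$ in the sense that $\RR^n h - \nu h \cdot 1 \to 0$ in $\BW$-norm (by \refL{Lth} and the spectral decomposition, since $\gs(\RR)\setminus\set1$ lies in $\Re\gl<1$ and by \slqc-ness the rest of the spectrum is strictly inside the unit disk after subtracting the rank-one part). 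Applying $\fm_0$: $\fm_0 \RR^n |g_j| \to \fm_0(1)\cdot \nu|g_j|$. Since the left side is $0$, we get $\nu|g_j| = 0$, i.e.\ $g_j = 0$ \nuae. For the converse ``$\fm_0 \ll \nu$ implies \ref{T230nu} $\Rightarrow$ \ref{T230a}--\ref{T230r}'': if $g_j = 0$ \nuae\ and $\fm_0 \ll \nu$, then $\fm_0 g_j = 0$; moreover $\RR g_j = \gl_j g_j = 0$ \nuae, and iterating (using $\nu\RR = \nu$, so $\RR$ preserves \nuae-classes in the relevant direction) gives $\RR^n g_j = 0$ \nuae\ hence $\fm_0\RR^n|g_j|$... — wait, one needs $\RR^n|g_j| = 0$ \nuae, which follows since $|g_j| = 0$ \nuae\ and $\RR$ maps the \nuae-zero cone to itself (as $\nu\RR = \nu$ and $\RR$ is positive off-diagonal; formally $\nu\RR^n|g_j| = \nu|g_j| = 0$ and $\RR^n|g_j|\ge 0$... no — $\RR$ is not positive, only positive off-diagonal, so I would instead argue: $\nu(\RR^n|g_j|) = \nu|g_j| = 0$ and then use that $\fm_0\ll\nu$ to conclude $\fm_0(\RR^n|g_j|)$ is controlled — actually the clean statement is that \ref{T230nu} plus $\fm_0\ll\nu$ gives $\fm_0|g_j| = 0$, i.e.\ \ref{T230p} at $n=0$, and then by the recursion and the fact that $Y_1$ has law $\tfm_0$ concentrated where $g_j = 0$, one propagates to all $n$). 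This last propagation is the technical heart of the converse and mirrors the \ref{T230h}$\Leftrightarrow$\ref{T230g} argument.
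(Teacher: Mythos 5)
Your overall plan (reduce everything to the eigenfunction $g_j$ with $\RR g_j=\gl_j g_j$, separate a ``dynamics block'' from a ``limit block'', and close the cycle through the martingale representation of $\gL_j$) matches the paper's architecture, and you correctly identify that the martingale from the proof of Theorem~\ref{T2}(3) is the essential tool, and that $\gL_j$ non-random must force the martingale increments to vanish. However, the execution has several genuine gaps. The most serious is the passage from ``$\RYni g_j=0$ a.s.'' to ``$g_j=0$ $\tfm_n$-a.e.'': this is \emph{not} automatic, because $\RA_xg_j=0$ a.s.\ says nothing directly about $g_j(x)$ — the measure $\RA_x$ need not be concentrated at $x$. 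The paper's route is: take expectations to get $\RR g_j(x)=\E\RA_x g_j=0$ for $\tfm_n$-a.e.\ $x$, \emph{then} use $\RR g_j=\gl_j g_j$ with $\gl_j\neq0$ to conclude $g_j(x)=0$. You state the eigenfunction property at the outset but never invoke it at this step, where it is indispensable. The same confusion about $\RA_x$ being ``concentrated where $g_j=0$'' reappears in your \ref{T230nu}$\Rightarrow$\ref{T230r} argument; the paper instead proves Lemma~\ref{Las} (which crucially uses positivity of $\RA_x$ \emph{off} $\set x$ and the invariance $\nu\RR=\nu$) and applies it inductively. Second, the key extraction from ``all martingale increments vanish'' to ``$\fm_n g_j=0$'' is not a soft consequence of the increments being ``noise''. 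Vanishing increments only give $\RYni g_j=\tfm_n\RR g_j$ a.s., which does \emph{not} immediately say $\fm_ng_j=0$; the paper then runs a fixed-point argument — conditioning, getting a deterministic $a:=\tfm_0\RR g_j$ with $\RA_xg_j=a$ for $\tfm_n$-a.e.\ $x$, comparing the two expressions for $\fm_ng_j$ (once via the recursion, once via $\tfm_ng_j=a/\gl_j$), and deducing $a=a/\gl_j$ hence $a=0$ because $\gl_j\neq1$. You flag this step as the ``main obstacle'' but do not supply the argument; without it the cycle does not close.

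Third, the argument you propose for \ref{T230r}$\Rightarrow$\ref{T230nu} does not work as stated. You claim that $\RR$ being \slqc{} on $\BW$ gives $\RR^n h\to\nu h\cdot 1$ in norm, i.e.\ that the iterates of $\RR$ converge; but \ref{ergo2} only controls $\Re\gl<1$ on the rest of the spectrum, not $|\gl|<1$, so $\RR^n$ need not converge (the spectrum can touch the unit circle away from $1$). The paper instead applies Theorem~\ref{T1} to the function $|g_j|\in\BW$, giving $\tfm_n|g_j|\asto\nu|g_j|$ (this uses the averaging built into the urn dynamics, not the raw powers of $\RR$), and combines with \ref{T230p} to deduce $\nu|g_j|=0$. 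Finally, your \ref{T230g}$\iff$\ref{T230r} argument appeals to ``positivity of everything'', but $\RR$ is not a positive operator here (the kernels $\RA_x$ can be negative on $\set x$); the correct and clean argument — which the paper uses — is that $\E\tfm_n=\tfm_0 b_{0,n}(\RR)$ where the $b_{0,n}$ are polynomials of strictly increasing degrees $n$, so $\tfm_0 b_{0,n}(\RR)|g_j|=0$ for all $n$ is equivalent to $\tfm_0\RR^n|g_j|=0$ for all $n$. In summary: the skeleton is sound, but every non-trivial implication hinges on the eigenfunction relation $\RR g_j=\gl_j g_j$ and on taking expectations over the replacement kernel — two ingredients you acknowledge but never actually deploy where they are needed.
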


\begin{remark}\label{Rjoint2}
It follows from \refTs{T220} and \ref{T230} that
when considering joint limits
for several $f\in D$ in 
parts (2) and (3) of \refT{T2}
(see \refR{Rjoint}), 
the limit distribution is supported on 
a subspace of dimension at most 
\begin{align}
    \sum_{j=1}^p\dim\bigsqpar{(\RR-\gl_j)^{\kk-1}\Pi_{\gl_j}(D)}
\end{align}
with equality in typical cases (we leave the details to the reader).
Note that this is always at most $\sum_{j=1}^p\dim\bigsqpar{\Pi_{\gl_j}(D)}$.
\end{remark}

\section{Proof of Theorem~\ref{T1}}\label{Spf}

We assume throughout this section that Assumptions \BHN{} hold. 
Recall that $\fm_0$ is non-random
(unless we explicitly say otherwise),
and that constants $C$
do not depend on $\fm_0$. 
The claims about $\theta$ and $\gth_D$ in Theorem~\ref{T1}
follow by \refL{Lth}.

\subsection{Preliminary results}\label{SSprel}

Define,	for $n\ge0$, the random signed measure
    \begin{align}
      \label{vn}
\fv_n:= \tfm_n - \nu.
    \end{align}

\begin{lemma}\label{lem:algo_sto}
 For all $n\geq 0$,
	\begin{align}\label{a1}
		\fv_n=\fv_0B_{0,\nnii}+\sumin\gamma_{i-1}\gD M_i B_{i,\nnii},
	\end{align}
	where, for all $n\geq 0$ and $0\le i\le n$,
	\begin{align}
		B_{i,n}&:=\prod_{j=i}^{n-1} (\II+\gamma_j(\RR-\II)),\label{Bmn}\\
		\gD M_{n+1} &
		:= \RYni-\E_{n} \RYni
		= \RYni-\E_{n} \ER_{Y_{n+1}}
		= \RYni-\tfm_{n} \RR,
\label{a2}\\
		\gam_n&
		:=\frac{1}{n+1+\fm_{0}(E)}.\label{gamma}
	\end{align}
\end{lemma}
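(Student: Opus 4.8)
The plan is to derive the stochastic-approximation recursion~\eqref{a1} directly from the defining recursion~\eqref{mn} for the \mvpp{}, by first writing down a one-step update for $\tfm_n$ and then iterating it.

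\emph{Step 1: one-step recursion for $\tfm_n$.} Recall from~\eqref{mass} that $\fm_n(E)=\fm_0(E)+n$, so that with $\gam_n$ as in~\eqref{gamma} we have $\fm_{n+1}(E)=(n+1+\fm_0(E))=\gam_n^{-1}$ and $\fm_n(E)=\gam_n^{-1}-1$. Dividing~\eqref{mn} by $\fm_{n+1}(E)$ gives
\begin{align}
  \tfm_{n+1}
  =\frac{\fm_n+\RYni}{\fm_{n+1}(E)}
  =\frac{\fm_n(E)}{\fm_{n+1}(E)}\,\tfm_n+\gam_n\RYni
  =(1-\gam_n)\tfm_n+\gam_n\RYni .
\end{align}
Subtracting $\nu$ from both sides and using $\fv_n=\tfm_n-\nu$, together with $\nu\RR=\nu$ from~\eqref{nurr} (so that $\nu=(\II-\gam_n(\RR-\II))\nu$ read as a right action, i.e.\ $\nu-\gam_n(\nu\RR-\nu)=\nu$), I would write
\begin{align}
  \fv_{n+1}
  =(1-\gam_n)\tfm_n+\gam_n\RYni-\nu
  =\fv_n+\gam_n\bigpar{\RYni-\tfm_n}
  =\fv_n+\gam_n\bigpar{\tfm_n\RR-\tfm_n}+\gam_n\gD M_{n+1},
\end{align}
where in the last equality I insert and subtract $\tfm_n\RR$ and recognise $\gD M_{n+1}=\RYni-\tfm_n\RR$ as in~\eqref{a2}; the three displayed forms of $\gD M_{n+1}$ agree because $\E_n\RYni=\E_n\ER_{Y_{n+1}}=\tfm_n\ER=\tfm_n\RR$ (the conditional law of $Y_{n+1}$ is $\tfm_n$, and the conditional law of $\RYni$ given $Y_{n+1}=y$ has mean $\ER_y$). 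Finally, since $\fv_n\RR-\fv_n=\tfm_n\RR-\tfm_n$ (because $\nu\RR-\nu=0$), this rearranges to
\begin{align}\label{onestep}
  \fv_{n+1}=\fv_n\bigpar{\II+\gam_n(\RR-\II)}+\gam_n\gD M_{n+1},
\end{align}
acting on the right, which is the claimed one-step recursion with transition operator $\II+\gam_n(\RR-\II)$.

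\emph{Step 2: iteration.} Now I iterate~\eqref{onestep} from $0$ to $n$. Writing $B_{i,n}=\prod_{j=i}^{n-1}(\II+\gam_j(\RR-\II))$ as in~\eqref{Bmn} (with the convention $B_{n,n}=\II$, an empty product), a straightforward induction on $n$ gives
\begin{align}
  \fv_n=\fv_0 B_{0,n}+\sum_{i=1}^n\gam_{i-1}\,\gD M_i\,B_{i,n},
\end{align}
which is exactly~\eqref{a1}. The induction step is immediate: assuming the formula at level $n$, apply~\eqref{onestep}, note $B_{i,n}(\II+\gam_n(\RR-\II))=B_{i,n+1}$ and $\II+\gam_n(\RR-\II)=B_{n+1,n+1}\cdot(\II+\gam_n(\RR-\II))=\gam_n^{-1}\gam_n(\ldots)$—more simply, the new martingale increment $\gam_n\gD M_{n+1}$ comes with factor $B_{n+1,n+1}=\II$, so it matches the $i=n+1$ term.

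\emph{Main obstacle.} There is no real obstacle; this is a bookkeeping lemma. The only points requiring a little care are: (i) justifying $\E_n\RYni=\tfm_n\RR$, which uses the definition of the conditional laws in~\eqref{tmn}--\eqref{mn}, the kernel notation~\eqref{varin}, and the integrability~\eqref{fin2} (guaranteed by~\ref{as:BW}) so that $\E\RYni=\ER_{Y_{n+1}}$ makes sense and Fubini applies; and (ii) being consistent about operators acting on the right on measures, as set up in~\refSS{S:not}, equation~\eqref{ass}, so that expressions like $\fv_n B_{i,n}$ and $\gD M_i B_{i,n}$ are well defined—here one uses that $\RR$ is a bounded operator on $B(W)$ whose adjoint preserves $\cM(W)$, and that $\fv_n,\gD M_i\in\cM(W)$ since $\tfm_n V<\infty$ (from $\fm_0 V<\infty$ and~\ref{as:BWii}) and $\nu V<\infty$ (Assumption~\ref{as:nu2}).
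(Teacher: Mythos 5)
Your proof is correct and follows essentially the same route as the paper: derive the one-step affine recursion $\fv_{n+1}=\fv_n(\II+\gam_n(\RR-\II))+\gam_n\gD M_{n+1}$ from the update \eqref{mn} and the balance identity \eqref{mass}, using $\nu\RR=\nu$, and then iterate. The only differences are cosmetic (the paper first writes the recursion for $\tfm_{n+1}$ and then subtracts $\nu$, while you subtract $\nu$ a step earlier), and the parenthetical in Step 2 beginning ``$\II+\gam_n(\RR-\II)=B_{n+1,n+1}\cdot\ldots$'' is garbled, but you immediately replace it with the correct remark that the new increment carries the empty product $B_{n+1,n+1}=\II$.
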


\begin{proof}
	By definition, for any $n\geq 0$, we have
\eqref{mn},
	where the conditional distribution of $Y_{n+1}$ given $\fm_n$ is $\tfm_n$.
Furthermore, \ref{as:balance} implies, see \eqref{mass},
\begin{align}
  \label{baln+1}
\fm_{n+1}(E) = \fm_0(E) + n+1 = \nicefrac1{\gamma_n}.
\end{align}
 Together with \eqref{a2}, this implies
	\begin{align}\label{tm1}
		\tfm_{n+1}&
		=\frac{\fm_{n+1}}{\fm_{n+1}(E)}
		=\frac{\fm_n}{\fm_{n+1}(E)}+\gamma_nR_{Y_{n+1}}\nnx{n+1}
		\notag\\&
		=\frac{\fm_n(E)}{\fm_{n+1}(E)}\cdot \tfm_n+\gam_n R_{Y_{n+1}}\nnx{n+1}
		=(1-\gam_{n}) \tfm_n+\gam_n R_{Y_{n+1}}\nnx{n+1}
		\notag\\&
		= \tfm_n+\gam_n\tfm_n(\RR-\II)+\gam_n \gD M_{n+1},
	\end{align}
 By definition, $\fv_n=\tfm_n-\nu$, and by 
\eqref{nurr}, we have
$\nu(\RR-\II)=\nu\RR-\nu=0$; therefore, \eqref{tm1} implies
	\begin{align}\label{vn+1}
		\fv_{n+1}&= \fv_n+\gam_n \fv_n(\RR-\II)+\gam_n \gD M_{n+1}
= \fv_n\bigpar{\II+ \gamma_n(\RR-\II)}+\gam_n \gD M_{n+1},
	\end{align}
and \eqref{a1} follows by induction.
\end{proof}

As noted above,
it follows from \eqref{eq:dom}
that $\RR$ is a bounded operator on
$\BW$; hence every $B_{i,n}$ is too.  
Dually, $\RR$ and $B_{n,i}$ are bounded operators on $\mathcal M(W)$ 
(acting on the right). Moreover:

\begin{lemma}\label{Lmwqq}
A.s., for every $n\ge0$, we have $\fm_n, \tfm_n, \fv_n, \RYni, \gDM_{n+1}\in\mathcal M(W)$.
Moreover,  
\begin{equation}\label{lmwqq}    
\E \bigsqpar{\fm_n W} <\infty
\qquad\text{and}\qquad
\E \bigsqpar{|\fv_n| W} <\infty,
\end{equation}
and 
\begin{equation}
\sup_{n\geq 0} \E \tfm_n V  \leq C\tfm_0 V
\quad\text{ and }\quad
\sup_{n\geq 1} \E V(Y_n) \leq C\tfm_0 V. \label{lmv4a}
\end{equation}
Finally, there exists a constant $C$ such that for every $g\in\BW$
\begin{align}
  \label{lg0}
 |\fv_0g|^2 &\le  C\normbwqq{g}^2\,(\tfm_0 W)^2
 \le  C\normbwqq{g}^2\,\tfm_0 V,
\\
  \label{lg}
\E\bigabs{\gD M_i g}^2
&\le C\normbwqq{g}^2\,\tfm_0 V,
\qquad i \ge 1,\\
\label{lgnew}
\E_{i-1}\bigabs{\gD M_i g}^q 
& \leq  C\normbwqq{g}^q\,\tfm_{i-1}(V),
\qquad i \ge 1.
\end{align}
\end{lemma}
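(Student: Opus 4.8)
The plan is to establish the membership statements and the four estimates one at a time, working from the recursion \eqref{mn} and the assumptions \ref{as:BW}, exploiting the Lyapunov inequality \eqref{lyapv} to control the $V$-moments.

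First I would prove $\fm_n,\tfm_n\in\cM(W)$ and the bound $\E[\fm_n W]<\infty$ by induction on $n$: from \eqref{mn} we have $\fm_{n+1}W = \fm_n W + \RYni W$, and conditionally on $\cF_n$ and $Y_{n+1}=y$ the term $\RYni W$ has expectation $|\ER_y|W \le \Ciii W(y)$ by \eqref{eq:dom}; integrating against the conditional law $\tfm_n$ of $Y_{n+1}$ and using $\fm_n(E)=\fm_0(E)+n$ gives $\E_n[\fm_{n+1}W]\le \fm_n W + \Ciii\,\tfm_n W = (1+\Ciii/(\fm_0(E)+n))\fm_n W$, so $\E[\fm_n W]<\infty$ follows from $\fm_0 W\le \fm_0 V<\infty$ (assumption \ref{as:BWiii}). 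For the stronger $V$-estimate \eqref{lmv4a} I would run the same argument with $V$ in place of $W$, now using the Lyapunov bound in the form $\RR V\le\vartheta V+\CCHi$: one gets $\E_n[\tfm_{n+1}V] = (1-\gam_n)\tfm_n V + \gam_n\,\tfm_n(\RR V) \le (1-\gam_n)\tfm_n V + \gam_n(\vartheta\,\tfm_n V+\CCHi) = (1-(1-\vartheta)\gam_n)\tfm_n V + \gam_n\CCHi$. Since $\gam_n=1/(n+1+\fm_0(E))$, taking expectations and iterating this affine recursion yields a uniform bound $\sup_n\E\,\tfm_n V\le C\,\tfm_0 V$ (the $\vartheta<1$ contraction dominates the additive term; the constant does not depend on $\fm_0$ because $\tfm_0 V\ge \tfm_0 1=1$). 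The bound on $\E V(Y_n)$ then follows since $\E V(Y_n)=\E[\E_{n-1}V(Y_n)]=\E[\tfm_{n-1}V]$. Once $\tfm_n\in\cM(W)$ a.s., the remaining memberships are immediate: $\fv_n=\tfm_n-\nu$ with $\nu V<\infty$ by \ref{as:nu2}; $\RYni\in\cM(W)$ a.s.\ since $\E_n|\RYni|W<\infty$ by \ref{as:BWii}; and $\gDM_{n+1}=\RYni-\tfm_n\RR$ is a difference of two elements of $\cM(W)$.

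Next, for \eqref{lg0} I would use $|\fv_0 g|\le |\fv_0|\,\normbwqq{g}W \le \normbwqq{g}(\tfm_0 W+\nu W)$ and then $(\tfm_0 W)^2\le \tfm_0 W^2=\tfm_0 V$ by Jensen (since $\tfm_0$ is a probability measure), absorbing $\nu W\le(\nu V)^{1/2}\le C$ into the constant. For \eqref{lgnew} I would write, using $|\gDM_i g|\le |\gDM_i|\,\normbwqq{g}W$ and the elementary bound $|a-b|^q\le 2^{q-1}(|a|^q+|b|^q)$ applied to $\gDM_i=\RYi-\tfm_{i-1}\RR$: conditionally on $\cF_{i-1}$ and $Y_i=y$, $\E[(|\RYi|W)^q\mid Y_i=y]\le C_2 V(y)$ by \eqref{hg}, while $|(\tfm_{i-1}\RR)g|\le \normbwqq{g}\,\tfm_{i-1}(|\RR|W)$ and $|\RR|W\le \Ciii W$ by \eqref{eq:dom}, so $(|\tfm_{i-1}\RR|W(y))^q$ contributes at most $C\,(\tfm_{i-1}W)^q W(y)^q\le C\,\tfm_{i-1}(W^q)W(y)^q = C\,\tfm_{i-1}(V)V(y)$ after Jensen; integrating $V(y)$ against $\tfm_{i-1}$ and using $\tfm_{i-1}V\ge 1$ gives $\E_{i-1}|\gDM_i g|^q\le C\,\normbwqq{g}^q\,\tfm_{i-1}(V)$. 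Finally \eqref{lg} follows from \eqref{lgnew}: by Jensen (conditionally), $\E_{i-1}|\gDM_i g|^2\le (\E_{i-1}|\gDM_i g|^q)^{2/q}\le C\,\normbwqq{g}^2\,(\tfm_{i-1}V)^{2/q}$; taking full expectations and applying Jensen again (now $2/q<1$) gives $\E|\gDM_i g|^2\le C\,\normbwqq{g}^2\,(\E\,\tfm_{i-1}V)^{2/q}\le C\,\normbwqq{g}^2\,(\tfm_0 V)^{2/q}\le C\,\normbwqq{g}^2\,\tfm_0 V$, the last step because $\tfm_0 V\ge1$.

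The main obstacle is the uniform-in-$n$ control of $\E\,\tfm_n V$ in \eqref{lmv4a}: it is the one place where the Lyapunov hypothesis \ref{as:BWi} is genuinely used, and one must be careful that the affine recursion $u_{n+1}\le(1-(1-\vartheta)\gam_n)u_n+\gam_n\CCHi$ with harmonically decaying step sizes $\gam_n$ does indeed produce a bounded (not merely slowly growing) sequence — this is where the strict contraction $\vartheta<1$, as opposed to $\vartheta=1$, is essential, and where one must keep the constant independent of $\fm_0$. Everything else is a routine combination of the conditional moment bounds \eqref{hg}, \eqref{eq:dom} and repeated use of Jensen's inequality, together with the trivial lower bound $\tfm_0 V\ge\tfm_0 1 = 1$ which lets one replace powers $(\tfm_0 V)^{2/q}$ by $\tfm_0 V$.
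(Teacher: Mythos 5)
Your proposal is correct and follows essentially the same approach as the paper: prove the memberships and moment bounds by induction using the conditional estimate $\E_n|\RYni|W\le C\,\tfm_n W$, use the Lyapunov contraction $\RR V\le\vartheta V+\CCHi$ with $\vartheta<1$ for the uniform bound $\sup_n\E\,\tfm_n V\le C\,\tfm_0 V$, derive \eqref{lgnew} from \ref{as:BWii}, and then pass from the $q$-th to the $2$-nd moment by Jensen together with $\tfm_0 V\ge1$. One small slip in your \eqref{lgnew} computation: the term $|\tfm_{i-1}\RR g|^q$ is $\cF_{i-1}$-measurable, so it should not acquire a stray factor of $W(y)^q$ (equivalently $V(y)$) before integrating against the law of $Y_i$ --- the correct chain is simply $|\tfm_{i-1}\RR g|^q\le \normbwqq{g}^q\Ciii^q(\tfm_{i-1}W)^q\le \normbwqq{g}^q\Ciii^q\,\tfm_{i-1}V$ by Jensen with no further integration; if you keep the extra $V(y)$ and integrate it against $\tfm_{i-1}$ you over-bound to $(\tfm_{i-1}V)^2$, and $\tfm_{i-1}V\ge1$ only makes that \emph{larger} than $\tfm_{i-1}V$, not smaller. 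The paper sidesteps the explicit split of $\gDM_i$ by applying the martingale-centering inequality $\E_{i-1}|X-\E_{i-1}X|^q\le C\,\E_{i-1}|X|^q$ directly, which yields $\E_{i-1}(|\RYi|W)^q\le C_2\,\tfm_{i-1}V$ in one step and avoids having to bound the two halves separately.
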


\begin{proof}
{\bf We start by proving $\fm_n, \tfm_n, \fv_n, \RYni, \gDM_{n+1}\in\mathcal
  M(W)$ and~\eqref{lmwqq}.}
By construction,
the conditional distribution of $R^{\sss (n+1)}_{Y_{n+1}}$ 
given $\cF_n$ and $Y_{n+1}=y$, for some $y\in E$,
equals the distribution of $\RA_y$. 
Hence, using \ref{as:BWii} through its consequence \eqref{eq:dom},
\begin{align}\label{palma}
  \E\bigsqpar{\abs{\RYni W} \bigm| \cF_n,Y_{n+1}=y}& 
=  \E\bigabs{\RA_y W}
\le  \E\bigsqpar{|\RA_y| W}
\le C W(y).
\end{align}
In other words, a.s.,
\begin{align}\label{palmb}
 \E\bigsqpar{\abs{\RYni W} \bigm| \cF_n,Y_{n+1}}& 
\le C  W(Y_{n+1}).  
\end{align}
Furthermore,
$Y_{n+1}$ has the conditional distribution $\tfm_{n}$ 
given $\cF_{n}$, 
and thus, by taking the conditional expectation $\E_n$ in \eqref{palmb},
\begin{align}\label{palmc}
 \E_n{\bigabs{\RYni W}}&
= \E\bigsqpar{\abs{\RYni W} \bigm| \cF_n}
\le C \E\bigsqpar{ W(Y_{n+1})\bigm|\cF_n}
=C\int_E W(y) \dd\tfm_n(y)
\notag\\&
=C \tfm_n W.
\end{align}
Hence, by \eqref{mn}, \eqref{tmn} and \eqref{mass},
\begin{align}
\E \fm_{n+1} W&
=\E \fm_n W+
 \E{\RYni W}
\le
\E \fm_n W+ C\E\tfm_n W
\notag\\&
=\Bigpar{1+\frac{C}{\fm_0(E)+n}}\E \fm_n W
.\end{align}
Hence, the first part of \eqref{lmwqq} 
follows by induction,
since $\fm_0W\leq \fm_0 V<\infty$  by \ref{as:BWiii} (recall that $W =
V^{\nicefrac1q}$, $q>2$, and $V\ge1$).

Consequently, for every $n$, 
\as,  $\fm_n W<\infty$ 
and thus $\fm_n\in\mathcal M(W)$,
recalling that $\fm_n$ is a positive measure.
Hence, also
$\tfm_n\in\mathcal M(W)$ and, by \eqref{mn}, $\RYni\in\mathcal M(W)$.

Since $\RR$
acts on $\mathcal M(W)$ as noted above, we further obtain
$\tfm_n\RR\in\mathcal M(W)$ and thus
\eqref{a2} yields $\gD M_{n+1}\in\mathcal M(W)$ a.s.

Finally, 
\eqref{vn} implies that  $|\fv_n|\le\tfm_n+\nu$
and
\ref{as:nu2} shows that
$\nu\in\cM(V)\subseteq\mathcal M(W)$;
hence, $\fv_n W\in \mathcal M(W)$ \as{} and $\E \bigsqpar{|\fv_n| W} <\infty$ 
follow from the results for $\fm_n$ and $\tfm_n$.

\medskip
{\bf We now prove~\eqref{lmv4a}.}
Recall that, by Assumption \ref{as:BW}, $\RR V \le \vartheta V + \CCHi$.
Taking expectations in \eqref{tm1}, since $\E\gD M_{n+1}=0$, we obtain
\begin{align}\label{annika}
  \E \tfm_{n+1} = \E \tfm_n + \gam_n\E\tfm_n(\RR-\II)
= (1-\gam_n)\E \tfm_n + \gam_n\E\tfm_n\RR
\end{align}
and thus 
\begin{align}\label{bertil}
  \E \tfm_{n+1}V &
= (1-\gam_n)\E \tfm_nV + \gam_n\E\tfm_n\RR V
\le  (1-\gam_n)\E \tfm_nV + \gam_n\E\tfm_n(\vartheta V + \CCHi)
\notag\\&
=
 (1-\gam_n+\gam_n\vartheta)\E \tfm_nV + \gam_n \CCHi.
\end{align}
Recall that $\tfm_0 V<\infty$  by \ref{as:BWiii}.
 Let $C_0:= \tfm_0 V\vee \frac{\CCHi}{1-\vartheta}$.
Then $\E \tfm_n V \le C_0$ by induction; indeed, the induction hypothesis and
\eqref{bertil} yield
\begin{align}\label{berth}
  \E \tfm_{n+1} V \le (1-(1-\vartheta)\gam_n) C_0 + \gam_n\CCHi
= C_0 + \gam_n(\CCHi - (1-\vartheta)C_0) \le C_0.
\end{align}
 Because $\tfm_0 V\geq 1$, we have that $C_0\leq (1+\frac{\CCHi}{1-\vartheta}) \tfm_0 V$, 
which proves the first part of \eqref{lmv4a}.
Finally, since $Y_n$ has the conditional distribution $\tfm_{n-1}$ given
$\cF_{n-1}$, this implies
\begin{align}
  \E V(Y_n) = \E [\E_{n-1} V(Y_n)] 
= \E [\tfm_{n-1} V] 
 \le C\tfm_0 V.
\end{align}

{\bf It only remains to prove~\eqref{lg0},~\eqref{lg}  and~\eqref{lgnew}.}
For \eqref{lg0} note that $\fv_0=\tfm_0-\nu$  and thus
\begin{align}
|\fv_0 g|^2&\leq 2\left(\tfm_0|g|\right)^2+2(\nu|g|)^2\leq 2\left((\tfm_0 W)^2+(\nu W)^2\right) \normbwqq{g}^2
\notag\\
&\leq C (\tfm_0 W)^2 \normbwqq{g}^2
 \le  C\tfm_0 V\,\normbwqq{g}^2
,\end{align}
where we used the fact that $(\tfm_0 W)^2\geq 1$ and  
$(\nu W)^2\leq \nu(W^2)\leq \nu V<+\infty$ by~\ref{as:nu2},
 and similarly
$(\tfm_0 W)^2\leq \tfm_0(W^2)\leq \tfm_0 V$. 

For $i\ge1$,
by the definition \eqref{a2},
  \begin{align}
  \label{lg1a}
\E_{i-1}\bigabs{\gD M_i g}^q
&=\E_{i-1}\bigabs{\RYx{i} g -\E_{i-1}\RYx{i}g}^q
\le
C\,\E_{i-1}\bigabs{\RYx{i} g}^q
\leq C \normbwqq{g}^q \E_{i-1}\bigpar{|\RYx{i}| W}^q
.\end{align}
Furthermore, arguing as in \eqref{palma}--\eqref{palmc}, 
now using \eqref{hg}, gives
\begin{align}\label{lg1d}
  \E_{i-1}\bigpar{|\RYx{i}| W}^q
\le 
C_2\int_E V(x) \dd \tfm_{i-1}(x) = 
C_2\tfm_{i-1} V.
\end{align}
%
Therefore, \eqref{lgnew} follows by \eqref{lg1a} and \eqref{lg1d}.
By taking the expectation in \eqref{lgnew} and 
using \eqref{lmv4a}, we obtain
\begin{align}\label{lg2}
  \E\bigabs{\gD M_i g}^q \le C\normbwqq{g}^q \,\tfm_0 V.
\end{align}
Since $q>2$, \eqref{lg} follows from \eqref{lg2} by 
Jensen's inequality
 and since $\tfm_0V\ge1$. 
\end{proof}

Lemma~\ref{lem:algo_sto} implies 
that if $f\in B(W)$, then
(with all terms \as{} finite and integrable by \refL{Lmwqq})
\begin{align}\label{alf}
	\fv_n f=\fv_0B_{0,\nnii}f+\sumin\gamma_{i-1}\gD M_i B_{i,\nnii}f.
\end{align}
Note that the sequence of partial sums of \eqref{alf} is a martingale, since
$\E_{i-1}\gD M_i=0$. 
For later use, note that \eqref{R1} and \eqref{nurr} imply
\begin{align}
	B_{i,n}1 &= 1, \label{B1}
	\\
	\nu B_{i,n}&=\nu.\label{nuB}
\end{align}
We write \eqref{alf} as
\begin{align}\label{ba}
	\fv_nf=\zeta_{n,0}+\sumin \zeta_{n,i},
\end{align}
where
\begin{align}\label{zeta0}
	\zeta_{n,0}&=  \zeta_{n,0}(f)
	:= \fv_0B_{0,\nnii}f, 
	\\
	\zeta_{n,i}&=  \zeta_{n,i}(f)
	:= \gam_{i-1}\gD M_iB_{i,\nnii}f, \qquad 1\le i\le n.
	\label{zetai}
\end{align}
The main part of the proof is to use the assumptions to show that
the random variables $\zeta_{n,i}$ are suitably small.
Note that 
\begin{align}\label{vn1}
\fv_n1=\tfm_n1-\nu 1=0, 
\end{align}
since both $\tfm_n$ and $\nu$ are
probability measures. Note also that \ref{as:balance} implies that 
\begin{align}\label{noa}
  \gD M_i1 =  R\nni_{Y_i}1 - \E_{i-1} R\nni_{Y_i}1 = 1-1 =0
\qquad\text{a.s.}
\end{align}
Hence \eqref{zeta0}--\eqref{zetai} and \eqref{B1} show that taking $f=1$,
we obtain $\zeta_{n,i}(1)=0$ \as{} for every $i\ge0$. Consequently, by
linearity, for any $i\ge0$ and any constant $c$,
\begin{align}\label{fc}
  \zeta_{n,i}(f) = \zeta_{n,i}(f-c)
\qquad\text{a.s.}
\end{align}

Recall (see \cite[(VII.4.5)]{Conway} or \cite[Section VII.3]{Dunford-Schwartz})
that if $T$ is a bounded operator on a complex Banach space, with spectrum
$\gs(T)$, and $h$ is a function that is analytic in a neighbourhood of
$\gs(T)$,
then $h(T)$ is the bounded operator defined by
\begin{align}\label{riesz}
	h(T):=\frac{1}{2\pi\ii}\oint_\gG h(z) (z-T)\qw\dd z,
\end{align}
integrating over a union $\gG$ of rectifiable closed curves that encircle
each component of $\gs(T)$ once in the positive direction,
such that furthermore $h$ is analytic on $\gG$ and in the interior of each
of the curves. 
For properties of the map $h\mapsto h(T)$
see \cite[Theorem VII.4.7]{Conway}.
In particular, note that if $h=h_1h_2$, with $h_1$ and $h_2$ analytic in a
neighbourhood of $\gs(T)$, then
\begin{align}
  \label{h1h2}
h(T)=h_1(T)h_2(T).
\end{align}
Furthermore,
the resolvent $z\mapsto (z-T)^{-1}$ is analytic outside $\gs(T)$
\cite[Theorem~VII.3.6]{Conway}, 
and thus $\norm{(z-T)\qw}$ is bounded on $\gG$;
hence \eqref{riesz} implies the existence of a constant $C_\gG$ (depending
also on $T$) such that
\begin{align}\label{||h||}
  \norm{h(T)} \le C_\gG\sup_{z\in\gG}|h(z)|.
\end{align}
Recall also
that
\eqref{riesz} extends the elementary
definition of $h(T)$ for polynomials $h$.
Hence,
\begin{align}\label{Bbmn}
	B_{m,n}  
	=b_{m,n}(\RR),
\end{align}
where $b_{m,n}(z)$ is the  polynomial
\begin{align}\label{bmn}
	b_{m,n}(z):=  \prod_{k=m}^{n-1} (1+\gamma_k(z-1)).
\end{align}
Moreover, for any complex $c$, the function $\ee^{cz}$ is entire so $\ee^{cT}$
can be defined by \eqref{riesz} as a bounded operator; this agrees
with the  definition using the usual power series expansion. 
In particular, if $t>0$, this defines
$t^T=\ee^{(\log t)T}$.

\begin{lemma}\label{L1}
	For each compact set $K\subset\bbC$, we have uniformly 
	for $z\in K$ and $0\le m\le n$,
	with $n\ge1$,
	\begin{align}\label{l1}
		 |b_{m,n}(z)| \le C \parfrac{\fm_0(E)+n}{\fm_0(E)+(m\vee1)}^{\!\!\Re z-1}.
	\end{align}
Furthermore, there exists a family of analytic functions
$h_{m,n}:\mathbb{C}\to\mathbb{C}$ 
defined by
	\begin{align}\label{l1o}
		b_{m,n}(z)
		&=(1+h_{m,n}(z))\,\biggparfrac{\fm_0(E)+n}{\fm_0(E)+(m\vee1)}^{z-1}\\
        \label{l1oline2}
&=(1+h_{m,n}(z))\,\exp\big[(z-1)\big(\log (\fm_0(E)+n)-\log (\fm_0(E)+(m\vee1))\big)\big]
	\end{align}
such that uniformly for $z\in K$ and $0\le m\le n$,
\begin{align}\label{l1h}
| h_{m,n}(z)| \le \frac{C}{m\vee1}.
\end{align}
\end{lemma}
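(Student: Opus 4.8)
The plan is to reduce $b_{m,n}(z)$ to an explicit product, dispose of the finitely many ``small-index'' factors directly, and treat the tail by a single logarithm. I would write $a:=\fm_0(E)\ge0$, so that $\gamma_k=(k+1+a)^{-1}$ and each factor is $1+\gamma_k(z-1)=\frac{k+a+z}{k+1+a}$, whence $b_{m,n}(z)=\prod_{k=m}^{n-1}\frac{k+a+z}{k+1+a}$. Given the compact set $K$, put $M:=\sup_{z\in K}\abs z$ and pick an integer $k_0=k_0(K)$ large enough that $\gamma_k\abs{z-1}\le\tfrac12$ for all $k\ge k_0$ and $z\in K$ (and, after enlarging, $C_K/k_0\le1$ for the constant $C_K$ below); this $k_0$ is independent of $\fm_0$ because $a\ge0$ forces $\gamma_k\le(k+1)^{-1}$. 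It suffices to prove the factorization \eqref{l1o}--\eqref{l1oline2} together with \eqref{l1h}: indeed \eqref{l1oline2} is just the definition of the (real-base) complex power, and \eqref{l1} follows by taking moduli in \eqref{l1o} and using $\abs{1+h_{m,n}(z)}\le1+C$ from \eqref{l1h}.

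For the tail, $m\ge k_0$, every factor $1+\gamma_k(z-1)$ with $z\in K$ lies in $\{\abs{w-1}\le\tfrac12\}$, so I would form the function $g_{m,n}(z):=\sum_{k=m}^{n-1}\bigl(\log(1+\gamma_k(z-1))-\gamma_k(z-1)\bigr)+(z-1)\bigl(\sum_{k=m}^{n-1}\gamma_k-\log\tfrac{n+a}{m+a}\bigr)$ with principal logarithms, which is analytic near $K$ and satisfies $\ee^{g_{m,n}(z)}=b_{m,n}(z)\bigl(\tfrac{n+a}{m+a}\bigr)^{1-z}$. Since $z\mapsto\bigl(\tfrac{n+a}{m+a}\bigr)^{z-1}=\exp\!\bigl((z-1)\log\tfrac{n+a}{m+a}\bigr)$ is entire and zero-free, I would \emph{define} $h_{m,n}(z):=b_{m,n}(z)\bigl(\tfrac{n+a}{m+a}\bigr)^{1-z}-1$ as an entire function; on $K$ it equals $\ee^{g_{m,n}(z)}-1$, giving \eqref{l1o} (with $m\vee1=m$). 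For \eqref{l1h} it then remains to bound $g_{m,n}$ on $K$: using $\abs{\log(1+w)-w}\le\abs w^2$ for $\abs w\le\tfrac12$ with $\sum_{k\ge m}\gamma_k^2\le\sum_{k\ge m}(k+1)^{-2}\le m^{-1}$, the first sum is $O_K(m^{-1})$; and the integral comparison $\log\tfrac{n+1+a}{m+1+a}\le\sum_{k=m}^{n-1}\gamma_k\le\log\tfrac{n+a}{m+a}$ gives $\bigabs{\sum_{k=m}^{n-1}\gamma_k-\log\tfrac{n+a}{m+a}}\le Cm^{-1}$, uniformly in $a\ge0$ and $n\ge m$. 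As $\abs{z-1}\le M+1$ on $K$, this yields $\abs{g_{m,n}(z)}\le C_Km^{-1}$, hence $\abs{h_{m,n}(z)}=\abs{\ee^{g_{m,n}(z)}-1}\le C_Km^{-1}$.

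For the head, $0\le m<k_0$, I would split $b_{m,n}=b_{m,m_1}b_{m_1,n}$ with $m_1:=k_0\wedge n$ (clear from \eqref{bmn}). Here $b_{m,m_1}$ is a product of at most $k_0$ affine factors, each with $\bigabs{\tfrac{k+a+z}{k+1+a}}\le1+M$ on $K$ uniformly in $a\ge0$ and $k\ge0$, so $\abs{b_{m,m_1}(z)}\le(1+M)^{k_0}$; and $b_{m_1,n}$ is either $1$ (if $n\le k_0$) or covered by the tail case. Writing $\bigl(\tfrac{n+a}{m_1+a}\bigr)^{z-1}=\bigl(\tfrac{(m\vee1)+a}{m_1+a}\bigr)^{z-1}\bigl(\tfrac{n+a}{(m\vee1)+a}\bigr)^{z-1}$ and using that $\tfrac{(m\vee1)+a}{m_1+a}$ stays in $[k_0^{-1},k_0]$ uniformly in $a\ge0$ (because $1\le m\vee1\le m_1\le k_0$), I would set $h_{m,n}(z):=b_{m,m_1}(z)(1+h_{m_1,n}(z))\bigl(\tfrac{(m\vee1)+a}{m_1+a}\bigr)^{z-1}-1$, which is entire, yields \eqref{l1o} with the required base $\tfrac{n+a}{(m\vee1)+a}$, and is bounded on $K$ by a constant $C_K$, hence by $C_Kk_0/(m\vee1)$. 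Combined with the tail case this gives \eqref{l1o}--\eqref{l1h}, and then \eqref{l1} as above.

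The main point requiring care is the uniformity of $C$ in $\fm_0$: the symbol $C$ in \eqref{l1} and \eqref{l1h} must not depend on $\fm_0$, so the cutoff $k_0$, the single-factor bound $1+M$, the integral comparison for $\sum_{k=m}^{n-1}\gamma_k$, and the range of the base ratios $\tfrac{(m\vee1)+a}{m_1+a}$ all have to be checked to depend only on $K$. This works precisely because increasing $a=\fm_0(E)\ge0$ only shrinks each $\gamma_k$ and keeps those ratios in a fixed compact subset of $(0,\infty)$. A secondary, purely technical point is to avoid branch-cut issues, which I sidestep by defining $h_{m,n}$ directly as the entire function $b_{m,n}(z)\bigl(\tfrac{n+a}{(m\vee1)+a}\bigr)^{1-z}-1$ and using the series $g_{m,n}$ only as a means of estimating it on $K$.
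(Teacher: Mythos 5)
Your proof is correct and follows essentially the same approach as the paper: factor $b_{m,n}(z)=\prod_{k=m}^{n-1}\frac{k+a+z}{k+1+a}$, take logarithms factor by factor, bound the quadratic error $\log(1+w)-w$ by $\sum_{k\ge m}\gamma_k^2|z-1|^2=O_K(1/m)$, compare $\sum_{k=m}^{n-1}\gamma_k$ with $\log\tfrac{n+a}{m+a}$ by an integral sandwich, and dispose of small $m$ by peeling off at most $k_0(K)$ uniformly bounded factors. The extra care you take with branch cuts (defining $h_{m,n}$ directly as the entire function $b_{m,n}(z)\bigl(\tfrac{n+a}{(m\vee1)+a}\bigr)^{1-z}-1$ and using $g_{m,n}$ only to estimate it on $K$) and with uniformity in $\fm_0$ is appropriate and slightly more explicit than the paper's terser treatment of these points, but it is the same argument.
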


\begin{proof}
	The function $h_{m,n}$ defined by 
\eqref{l1o}
is analytic, and hence it only
   remains to prove \eqref{l1h}, since then \eqref{l1} follows from \eqref{l1o}.

	Let $C_K:=\sup_{z\in K}|z-1|$. 
We may in the sequel assume $n\ge m\ge 2C_K$, and in particular that 
$m\vee1=m$.
	The result for smaller $m$ then follows from the result for $m= \ceil{2C_K}$
	because each factor in \eqref{bmn} is bounded  by $1+C_K$ on $K$.
(The case $n< \ceil{2C_K}$ is trivial.)

	For $k\ge m\ge 2C_K$ and $z\in K$, we have $\gam_k\le 1/k\le 1/(2C_K)$, and thus
	$|\gam_k(z-1)|\le 1/2$.
	Hence,
	\begin{align}
		\left|\log\bigpar{1+\gam_k(z-1)}-\gamma_k(z-1)\right|
        &\leq \gamma_k^2|z-1|^2\leq \frac{C_K^2}{k^2}\label{l1a}
	.\end{align}
    Consequently,
	\begin{align}\label{l1b}
		b_{m,n}(z)&
		=\exp\Bigpar{\sum_{k=m}^{n-1}  \log\bigpar{1+\gam_k(z-1)}}
		=\exp\Bigpar{\sum_{k=m}^{n{\cec -1}}{ \frac{z-1}{\fm_0(E)+1+k}}+ O(\nicefrac1m)}
		\notag\\&
		=\exp\big[(z-1){ \big(\log (\fm_0(E)+n) -\log(\fm_0(E)+m)\big)}+O(\nicefrac1m)\big],
	\end{align}
where the implicit constant in $O(\nicefrac1m)$ does not depend on $\fm_0$,
	and the result \eqref{l1h} follows.
\end{proof}

\begin{remark}\label{RGamma}
Alternatively, one can show \eqref{l1} and \eqref{l1h} 
using the exact formula 
	\begin{align}\label{rgamma}
		b_{m,n}(z)=\prod_{k=m}^{n-1} \frac{\fm_0(E)+k+z}{\fm_0(E)+k+1}
		=\frac{\gG(n+\fm_0(E)+z)}{\gG(n+\fm_0(E)+1)}
		\frac{\gG(m+\fm_0(E)+1)}{\gG(m+\fm_0(E)+z)}
	\end{align}
	and Stirling's formula.
\end{remark}

\begin{lemma}\label{L0}
Assume  that the conditions of Theorem~\ref{T1} hold. Then $\Pi_1 f =
    (\nu f)1$, for all $f\in D$. 
As a consequence,
\begin{align}\label{l0}
  (\II-\Pi_1)D
=\set{f\in D:\Pi_1f=0}
=\set{f\in D:\nu f=0}.
\end{align}
In particular, if $\gD$ is a clopen subset of $\gs(\RR_D)\setminus\set1$,
and $f\in\Pi_\gD D$, then $\nu f=0$.
\end{lemma}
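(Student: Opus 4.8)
The plan is as follows. First I would pin down the range of $\Pi_1$. Since $\RR 1 = 1$ by \eqref{R1} and $1\in D$ (as assumed in \refT{T1}), the constant function $1$ is an eigenvector of $\RR_D$ for the eigenvalue $1$, hence lies in the generalized eigenspace $\Pi_1 D$; as $\RR_D$ is \slqc, condition \ref{ergo3} says $\Pi_1$ has rank $1$, so $\Pi_1 D = \bbC 1$. Consequently, for each $f\in D$ there is a scalar $c(f)$, depending linearly on $f$, with $\Pi_1 f = c(f)\,1$, and everything reduces to showing $c(f)=\nu f$.

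For this I would exploit that $\nu$ is a left fixed vector of $\RR$. Recall that $\nu\in\cM(V)\subseteq\cM(W)$ (as used in the proof of \refL{Lmwqq}), so $\nu$ restricts to a bounded linear functional on $D\subseteq B(W)$, and $\nu\RR_D = \nu$ on $D$ by \eqref{nurr} (since $\RR_D f = \RR f$ for $f\in D$). Set $g := (\II-\Pi_1)f$. Because $1$ is isolated in $\gs(\RR_D)$, the set $\gs(\RR_D)\setminus\set1$ is clopen, $\II-\Pi_1$ is its spectral projection, and $M:=(\II-\Pi_1)D$ is an $\RR_D$-invariant subspace on which the restriction of $\RR_D$ has spectrum $\gs(\RR_D)\setminus\set1\not\ni 1$; hence $\RR_D-\II$ is invertible on $M$, so $(\RR_D-\II)M = M$. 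But $\nu\bigpar{(\RR_D-\II)h} = \nu\RR_D h - \nu h = 0$ for every $h\in D$, so $\nu$ annihilates $M$; in particular $\nu g = 0$. Therefore $\nu f = \nu(\Pi_1 f) + \nu g = c(f)\,\nu 1 = c(f)$, using that $\nu$ is a probability measure. (An equivalent route avoids the decomposition: the adjoint $\Pi_1^*$ is the spectral projection of $\RR_D^*$ at $1$, obtained by taking adjoints in \eqref{riesz}, and since $\nu$ is an eigenvector of $\RR_D^*$ at $1$ it lies in the range of $\Pi_1^*$, whence $c(f) = \nu(\Pi_1 f) = (\Pi_1^*\nu)(f) = \nu f$.)

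The two ``as a consequence'' statements are then immediate. The identity $(\II-\Pi_1)D = \ker\Pi_1$ holds for any idempotent, and $\Pi_1 f = (\nu f)1$ vanishes if and only if $\nu f = 0$ since $1\neq 0$; together these give \eqref{l0}. Finally, if $\gD\subseteq\gs(\RR_D)\setminus\set1$ is clopen, then $\set1$ and $\gD$ are disjoint clopen subsets of $\gs(\RR_D)$, so $\Pi_1\Pi_\gD = \Pi_{\set1\cap\gD} = 0$; hence $f\in\Pi_\gD D$ forces $\Pi_1 f = \Pi_1\Pi_\gD f = 0$, and so $\nu f = 0$. I do not expect a genuine obstacle: the only points requiring a word of justification are that $\nu$ defines a bounded functional on $D$ (already noted) and the standard features of the Riesz functional calculus recalled in \refSS{S:not} and around \eqref{riesz}—multiplicativity of spectral projections, additivity over disjoint clopen sets, and the spectrum of a restriction to an invariant spectral subspace—after which the argument is bookkeeping with those facts.
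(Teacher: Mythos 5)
Your argument is correct and fills in every point that needs justification; it is close in spirit to the paper's but takes a slightly different route. The paper defines $\Pi f := (\nu f)1$, checks directly from \eqref{R1} and \eqref{nurr} that $\Pi$ is a rank-1 projection satisfying $\RR\Pi=\Pi=\Pi\RR$, invokes \cite[Proposition VII.4.9]{Conway} to conclude $\Pi$ commutes with $\Pi_1$, and then deduces $\Pi_1 f=\Pi\Pi_1 f=\Pi_1\Pi f=\Pi f$ from the fact that both are rank-1 projections whose ranges contain $1$. You instead first pin down the range $\Pi_1 D=\bbC 1$ from the rank-1 hypothesis and $\RR 1=1$, and then compute the scalar $c(f)$ by showing $\nu$ annihilates the complementary spectral subspace $M=(\II-\Pi_1)D$ via the invertibility of $\RR_D-\II$ on $M$ — an argument that avoids the commutativity lemma from Conway entirely and is arguably more self-contained. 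Your parenthetical alternative via the adjoint projection $\Pi_1^*$ and the eigenvector $\nu$ of $\RR_D^*$ is essentially dual to the paper's commutativity argument and would also be acceptable. The two ``consequence'' statements are handled the same way in both proofs (the paper cites \cite[Corollary VII.3.21]{Dunford-Schwartz} for $\Pi_1\Pi_\gD=\Pi_{\set1\cap\gD}$, which you also use).
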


\begin{proof}
Define $\Pi f:=(\nu f)1$. Then 
$\Pi$ is a bounded operator in $D\subseteq\BW$ because 
$\nu W<\infty$ by \ref{as:nu2} and $1\in D$. 
Furthermore, $\Pi$ is a projection in $D$ (since $\nu1=1$),
	and
	\eqref{R1} and \eqref{nurr} imply that $\RR\Pi =\Pi=\Pi \RR$. 
	Thus $\Pi$ commutes with $\RR$, and therefore with $\Pi_1$
    (see~\cite[Proposition VII.4.9]{Conway}). 
	Furthermore, $\Pi$ and $\Pi_1$ are both projections with rank~1, 
	and the eigenfunction~1 belongs to both their ranges. 
	Hence $\Pi$ and $\Pi_1$ are both projections onto the subspace of constant functions.
	We thus get that, for any $f\in D$,
\begin{align}\label{pippi}
	\Pi_1 f = \Pi \,\Pi_1 f = \Pi_1 \Pi f = \Pi f,
\end{align}
as stated.
The equalities \eqref{l0} follow.
Finally, if $1\notin\gD$ and $f\in\Pi_\gD D$, then
$\Pi_1 f = \Pi_1\Pi_\gD f=\Pi_{\set1\cap\gD}f = \Pi_\emptyset f =0$,
see \eg{} \cite[Corollary VII.3.21]{Dunford-Schwartz},
and thus $\nu f=0$.
\end{proof}

\subsection{Proof of \eqref{eqT1first} and \eqref{eqT1sec} of Theorem~\ref{T1}}
We prove first some lemmata.  
\begin{lemma}  \label{L2bis}
	Assume  that the conditions of Theorem~\ref{T1} hold.
    \begin{thmenumerate}
    \item \label{L2a}
 For every $\delta\in(0,1-\gthd)$,  there exists a constant $C_\delta$ 	such that for every $f\in D$ with $\nu f=0$, $n\ge1$ and $0\le m\le n$, 
	\begin{align}\label{l2bis}
		\bignorm{B_{m,n}f}_{B(W)}\leq C_\delta \parfrac{\fm_0(E)+(m\vee1)}{\fm_0(E)+n}^{\!\!\delta}\norm{f}_{B(W)}.
	\end{align}
  \item \label{L2b}
If\/ $f\in\BW$ and  
$(\RR-\lambda\II)^\kappa f=0$ for some $\lambda\in \mathbb C$  and
$\kappa\geq 1$, then 
for $n\geq 1$ and $0\le m\le n$,
	\begin{align}\label{l2ter}
	\bignorm{B_{m,n}f}_{B(W)} \le
	C_{\lambda, \kappa} \parfrac{\fm_0(E)+(m\vee1)}{\fm_0(E)+n}^{\!\!1-\Re(\lambda)}
\left[1+\left(\log\left(\frac{\fm_0(E)+n}{\fm_0(E)+(m\vee1)}\right)\right)^{\!\!\kappa-1}\right]\,\norm{f}_{B(W)},
\end{align}
for some constant $C_{\lambda, \kappa}$ 
not depending on $f$.
    \end{thmenumerate}
\end{lemma}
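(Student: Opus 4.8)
The plan is to use the identification $B_{m,n}=b_{m,n}(\RR)$ of \refL{L1} (see \eqref{Bbmn}--\eqref{bmn}) together with the estimates \eqref{l1} and \eqref{l1h}, treating the two parts separately. In both cases the key point is that applying the polynomial $b_{m,n}(\RR)$ to a spectrally localised vector only involves the behaviour of $b_{m,n}$ near the relevant part of $\gs(\RR)$, where \refL{L1} gives sharp control.

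\emph{Part~\ref{L2a}.} Since $\RR_D$ is \slqc{} and $\nu f=0$, \refL{L0} gives $\Pi_1 f=0$, so $f$ lies in the range of the spectral projection $\Pi_\gD$ of $\RR_D$ associated with the clopen set $\gD:=\gs(\RR_D)\setminus\set1$; this set is compact and $\sup_{z\in\gD}\Re z=\gthd<1$ by \refL{Lth}. Writing $S$ for the restriction of $\RR_D$ to the invariant subspace $\Pi_\gD D$, we have $\gs(S)=\gD$ and, for $f\in\Pi_\gD D$, $B_{m,n}f=b_{m,n}(S)f$. I would fix once and for all a rectifiable contour $\gG$ — depending only on $\RR_D$ and $\delta$, not on $\fm_0$, $m$, $n$ — that encircles $\gD$, avoids $\gs(\RR_D)$, and lies in the half-plane $\set{z:\Re z\le 1-\delta}$; such a $\gG$ exists exactly because $\gD$ is a compact subset of the open half-plane $\set{\Re z<1-\delta}$ (as $\delta<1-\gthd$), whereas $1$ is not. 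By the holomorphic functional calculus \eqref{riesz}, $b_{m,n}(S)=\frac{1}{2\pi\ii}\oint_\gG b_{m,n}(z)(z-S)\qw\dd z$, hence
\[
\norm{B_{m,n}f}_{\BW}=\norm{b_{m,n}(S)f}_{\BW}\le\frac{\mathrm{len}(\gG)}{2\pi}\Bigpar{\sup_{z\in\gG}|b_{m,n}(z)|}\Bigpar{\sup_{z\in\gG}\norm{(z-S)\qw}}\norm{f}_{\BW}.
\]
On $\gG$ we have $\Re z-1\le-\delta<0$ and $\frac{\fm_0(E)+n}{\fm_0(E)+(m\vee1)}\ge1$, so \eqref{l1} gives $\sup_{z\in\gG}|b_{m,n}(z)|\le C\bigpar{\frac{\fm_0(E)+(m\vee1)}{\fm_0(E)+n}}^{\delta}$, while $\mathrm{len}(\gG)$ and $\sup_{z\in\gG}\norm{(z-S)\qw}$ are finite constants independent of $m,n,\fm_0$; this yields \eqref{l2bis}.

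\emph{Part~\ref{L2b}.} Since $b_{m,n}$ is a polynomial, the Taylor expansion about $\lambda$ is a finite, exact identity: $b_{m,n}(z)=\sum_{j\ge0}\frac{b_{m,n}^{(j)}(\lambda)}{j!}(z-\lambda)^j$. Substituting $z=\RR$ and applying to $f$, every term with $j\ge\kappa$ vanishes because $(\RR-\lambda\II)^j f=(\RR-\lambda\II)^{j-\kappa}(\RR-\lambda\II)^\kappa f=0$, so
\[
B_{m,n}f=\sum_{j=0}^{\kappa-1}\frac{b_{m,n}^{(j)}(\lambda)}{j!}(\RR-\lambda\II)^j f.
\]
It remains to estimate $|b_{m,n}^{(j)}(\lambda)|$ for $0\le j\le\kappa-1$. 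From the factorisation \eqref{l1oline2}, $b_{m,n}(z)=(1+h_{m,n}(z))\ee^{(z-1)L_{m,n}}$ with $L_{m,n}:=\log(\fm_0(E)+n)-\log(\fm_0(E)+(m\vee1))\ge0$; by \eqref{l1h}, $|h_{m,n}|\le C$ on a fixed closed disc around $\lambda$, so Cauchy's integral formula bounds each $|h_{m,n}^{(i)}(\lambda)|$ by a constant. Since the $i$-th derivative of $z\mapsto\ee^{(z-1)L_{m,n}}$ at $\lambda$ equals $L_{m,n}^{\,i}\ee^{(\lambda-1)L_{m,n}}$, whose modulus is $L_{m,n}^{\,i}\bigpar{\frac{\fm_0(E)+(m\vee1)}{\fm_0(E)+n}}^{1-\Re\lambda}$, Leibniz's rule gives
\[
|b_{m,n}^{(j)}(\lambda)|\le C_{\lambda,\kappa}\Bigpar{\frac{\fm_0(E)+(m\vee1)}{\fm_0(E)+n}}^{1-\Re\lambda}\sum_{l=0}^{j}L_{m,n}^{\,l}.
\]
Using $L_{m,n}^{\,l}\le 1+L_{m,n}^{\,\kappa-1}$ for $0\le l\le\kappa-1$, summing over $j\le\kappa-1$, and using that the operators $(\RR-\lambda\II)^j$ are bounded on $\BW$, one obtains \eqref{l2ter}, noting $L_{m,n}=\log\bigpar{\frac{\fm_0(E)+n}{\fm_0(E)+(m\vee1)}}$.

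The bookkeeping is routine once \refL{L1} is available; the one step that needs care is, in part~\ref{L2a}, producing a contour $\gG$ that encircles only the component $\gD$ of the spectrum — not the simple eigenvalue $1$ — while staying inside $\set{\Re z\le 1-\delta}$, which is precisely where the hypothesis $\delta<1-\gthd$ (equivalently, $\RR_D$ being \slqc) is used. In part~\ref{L2b} the only subtlety is observing that it is the $\kappa$ surviving Taylor coefficients — the top one (the $j=\kappa-1$ term) carrying a factor $L_{m,n}^{\,\kappa-1}$ — that produce the logarithmic term of degree $\kappa-1$.
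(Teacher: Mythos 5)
Your proof is correct. Part~\ref{L2a} is essentially identical to the paper's: project off the constants via \refL{L0}, restrict to the invariant complement where the spectrum is $\gs(\RR_D)\setminus\set1$, choose a contour in $\set{\Re z\le 1-\delta}$ encircling it, and combine the Riesz formula \eqref{riesz} with the bound \eqref{l1}.

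For part~\ref{L2b} you take a mildly different but equivalent route. The paper keeps the factorisation \eqref{l1o} at the operator level, writing $B_{m,n}f=\bigl(\II+h_{m,n}(\RR)\bigr)\bigl(\cdot\bigr)^{\RR-\II}f$, bounding $\norm{\II+h_{m,n}(\RR)}_{\BW}$ uniformly via \eqref{||h||}, and then using the nilpotency of $\RR-\gl\II$ on $f$ only to truncate the operator exponential. You instead Taylor-expand the polynomial $b_{m,n}$ at $\gl$ --- which, after substituting $\RR$ and using nilpotency, is precisely the content of the paper's \refL{Lnil} applied with $h=b_{m,n}$ --- and then estimate the $\kappa$ surviving scalar coefficients $b_{m,n}^{(j)}(\gl)$ by combining the scalar factorisation \eqref{l1oline2} with Cauchy estimates for $h_{m,n}^{(i)}(\gl)$ and Leibniz's rule. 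The two routes need the same ingredients (\refL{L1} and nilpotency); yours keeps the bookkeeping scalar at the price of a Leibniz expansion, while the paper's keeps it operator-valued at the price of an extra invocation of \eqref{h1h2}. Both yield the claimed bound with constants independent of $\fm_0$, $m$, $n$, which you were careful to track.
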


\begin{proof} 
\pfitemref{L2a}
Note that $f\in(\II-\Pi_1)D$ by \refL{L0}.
We let $\RR'$ denote the restriction of $\RR$ (or $\RR_D$)
to $(\II-\Pi_1)D$; 
$\RR'$ is a bounded operator 
with spectrum $\gs(\RR')=\gs(\RR_D)\setminus\set1$ 
(see for instance~\cite[Theorem VII.3.20]{Dunford-Schwartz}%
). 

Fix $\delta\in(0,1-\gthd)$.
Then 
$\sup_{z\in \gs(\RR')}\Re(z)=\gthd< 1-\delta$,
 and thus we can find a rectifiable curve $\Gamma$  in $\mathbb{C}$
that encircles  
$\gs(\RR')$ such that $\sup_{z\in \Gamma}\Re(z)\leq 1-\delta$.
Consequently, by \eqref{Bbmn} and \eqref{riesz},
\begin{align}\label{hong}
  B_{m,n}f
=b_{m,n}(\RR)f
=b_{m,n}(\RR')f
=\frac{1}{2\pi\ii}\oint_\gG b_{m,n}(z) (z-\RR')\qw f\dd z.
\end{align}
Furthermore, 
\refL{L1} implies that for $z\in\gG$, 
\begin{align}\label{kong}
  |b_{m,n}(z)| 
\le C \parfrac{\fm_0(E)+n}{\fm_0(E)+(m\vee1)}^{\!\!\Re z-1}
\le C \parfrac{\fm_0(E)+n}{\fm_0(E)+(m\vee1)}^{\!\!-\gd}
= C \parfrac{\fm_0(E)+(m\vee1)}{\fm_0(E)+n}^{\!\!\gd}.
\end{align}
The result \eqref{l2bis} follows from \eqref{hong} and \eqref{kong},
see \eqref{||h||}.
\pfitemref{L2b}
We use the factorization \eqref{l1o} and \eqref{h1h2}.
Thus,
\begin{align}\label{mamba}
  B_{m,n}f=b_{m,n}(\RR)f
=\bigpar{\II+h_{m,n}(\RR)}\parfrac{\fm_0(E)+n}{\fm_0(E)+(m\vee1)}^{\!\!\RR-\II}f.
\end{align}
Furthermore, by \eqref{l1h}, 
the functions $h_{m,n}$, 
 for $n\ge1$ and $0\le m\le n$,
are uniformly bounded on any fixed
compact subset of $\bbC$, and thus \eqref{||h||} implies that the operators
$h_{m,n}(\RR)$ are uniformly bounded on $\BW$  
by a constant that does not depend on $\fm_0$.

Moreover, for all functions~$f\in D$ such that 
$(\RR-\lambda \II)^\kappa f=0$, for $m\ge1$,
\begin{align}\label{boa}
\parfrac{\fm_0(E)+n}{\fm_0(E)+m}^{\!\!\RR-\II}f
&=\parfrac{\fm_0(E)+n}{\fm_0(E)+m}^{\!\!\gl-1}\exp\bigg(\log\parfrac{\fm_0(E)+n}{\fm_0(E)+m}\,(\RR-\gl \II)\bigg)f
\notag\\&
=\left(\frac{\fm_0(E)+m}{\fm_0(E)+n}\right)^{\!\!1-\lambda}
\sum_{k=0}^{\kappa-1} 
\log^k \parfrac{\fm_0(E)+n}{\fm_0(E)+m}\frac{(\RR-\lambda\II)^kf}{k!}.
	\end{align}
The result follows by \eqref{mamba} and \eqref{boa}
 since the operators $(\RR-\lambda\II)^k$ are bounded.
\end{proof}

Recall from Section~\ref{S:not} that we use $\frak C$ for constants that may depend on $\frak m_0$.
\begin{lemma}
	\label{lem:prelem}
Assume  that the conditions of Theorem~\ref{T1} hold. 
\begin{thmenumerate}
\item \label{LPLa}  
For every $\delta\in(0,1-\gthd)$,  there exists a constant
$\frak C_\delta<\infty$
such that for every $f\in D$, 
	\begin{align}
		|\zeta_{n,0}| &\leq \frak C_\delta (\nicefrac{1}{n})^{\delta}\,\norm{f}_{B(W)},\qquad n\geq 1,\label{eq:LP-W-0bis}\\
		\left(\E_{i-1}|\zeta_{n,i}|^q\right)^{\nicefrac1q} & \leq \frac{\frak C_\delta}{i} (\nicefrac{i}{n})^{\delta}\, \,\norm{f}_{B(W)} \,\left(\tfm_{i-1}(V)\right)^{\nicefrac1q},\qquad n\geq i\geq 1.\label{eq:LP-W-1bis}
	\end{align}
\item \label{LPLb}
If\/ $\lambda\in  \mathbb C$  and $\kappa\geq 1$,  there exists a  constant $\frak C_{\lambda,\kk}$
such that
if\/ $f\in\BW$ and $(\RR-\lambda\II)^\kappa f=0$, then
		\begin{align}
	|\zeta_{n,0}| &
\leq \frak C_{\lambda,\kk} (\nicefrac{1}{n})^{1-\Re(\lambda)}(\log n)^{\kappa-1}\,
\norm{f}_{B(W)},\qquad  n\geq 2,\label{eq:LP-W-0ter}\\
 \left(\E_{i-1}|\zeta_{n,i}|^q\right)^{\nicefrac1q} &
\leq \frac{\frak C_{\lambda,\kk}}{i} (\nicefrac{i}{n})^{1-\Re(\lambda)}
\bigsqpar{1+(\log(\nicefrac{n}{i}))^{\kappa-1}}\,\norm{f}_{B(W)} \,
\tfm_{i-1}(V)^{\nicefrac1q},\qquad n\geq i\geq 1.
\label{eq:LP-W-1ter}
	\end{align}
\end{thmenumerate}
\end{lemma}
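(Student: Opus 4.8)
The plan is to derive both parts of Lemma~\ref{lem:prelem} directly from the norm estimates on $B_{m,n}$ established in Lemma~\ref{L2bis}, combined with the moment bounds on the martingale increments $\gD M_i$ from Lemma~\ref{Lmwqq} (specifically \eqref{lg0}, \eqref{lgnew}) and the growth rate $\gam_{i-1}=(i+\fm_0(E))\qw\asymp 1/i$. Recall the definitions \eqref{zeta0}--\eqref{zetai}: $\zeta_{n,0}=\fv_0B_{0,n}f$ and $\zeta_{n,i}=\gam_{i-1}\gD M_i B_{i,n}f$. The key simplification is \eqref{fc}: since $\zeta_{n,i}(f)=\zeta_{n,i}(f-\nu f)$, we may assume without loss of generality that $\nu f=0$, which is exactly the hypothesis needed to apply Lemma~\ref{L2bis}\ref{L2a}. (For part~\ref{LPLb} one checks that $(\RR-\gl\II)^\kk f=0$ with $\gl\neq1$ forces $\nu f=0$ by \refL{L0}, since such $f$ lies in the range of a spectral projection $\Pi_\gD$ with $1\notin\gD$; strictly one applies Lemma~\ref{L2bis}\ref{L2b} directly, which does not even require $\nu f=0$.)

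For part~\ref{LPLa}: to bound $\zeta_{n,0}=\fv_0 B_{0,n}f$, I would write $|\fv_0 B_{0,n}f|\le \norm{\fv_0}_{\cM(W)}\,\norm{B_{0,n}f}_{B(W)}$ — more precisely use $|\fv_0 g|\le C(\tfm_0 W)^2\normbwqq{g}$ type bound, but here the cleanest route is $|\fv_0 B_{0,n}f|\le (\tfm_0+\nu)|B_{0,n}f| \le \bigpar{\tfm_0 W+\nu W}\norm{B_{0,n}f}_{B(W)}\le \frak C\,\norm{B_{0,n}f}_{B(W)}$, the constant depending on $\fm_0$ through $\tfm_0 W\le\tfm_0 V<\infty$ and on $\nu W\le\nu V<\infty$. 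Then Lemma~\ref{L2bis}\ref{L2a} with $m=0$ gives $\norm{B_{0,n}f}_{B(W)}\le C_\gd (\fm_0(E)+1)^\gd(\fm_0(E)+n)^{-\gd}\norm{f}_{B(W)}\le \frak C_\gd (1/n)^\gd\norm f_{B(W)}$, which is \eqref{eq:LP-W-0bis}. For \eqref{eq:LP-W-1bis}, I take the conditional $L^q$-norm: $\bigpar{\E_{i-1}|\zeta_{n,i}|^q}\xq = \gam_{i-1}\bigpar{\E_{i-1}|\gD M_i(B_{i,n}f)|^q}\xq$; apply \eqref{lgnew} with $g=B_{i,n}f$ to get this is $\le C\gam_{i-1}\norm{B_{i,n}f}_{B(W)}\tfm_{i-1}(V)\xq$; then insert Lemma~\ref{L2bis}\ref{L2a}, using $\gam_{i-1}\asymp 1/i$ and $(\fm_0(E)+(i\vee1))/(\fm_0(E)+n)\le \frak C\, i/n$, which yields the factor $\tfrac1i (i/n)^\gd$. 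Part~\ref{LPLb} is entirely parallel, substituting Lemma~\ref{L2bis}\ref{L2b} for~\ref{L2a}: the exponent $\gd$ becomes $1-\Re(\gl)$ and one picks up the logarithmic factor $1+(\log((\fm_0(E)+n)/(\fm_0(E)+(i\vee1))))^{\kk-1}\le \frak C(1+(\log(n/i))^{\kk-1})$ for the $\zeta_{n,i}$ term, and $\le \frak C(\log n)^{\kk-1}$ for the $\zeta_{n,0}$ term (here using $n\ge2$ so $\log n\ge\log2>0$ and absorbing the "$1+$").

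The main obstacle — really the only point requiring care rather than routine bookkeeping — is tracking the $\fm_0$-dependence of the constants consistently: Lemma~\ref{L2bis} produces a genuinely $\fm_0$-free constant $C_\gd$ times a ratio $\bigpar{(\fm_0(E)+(m\vee1))/(\fm_0(E)+n)}^\gd$, and one must verify that replacing this ratio by $(i/n)^\gd$ (resp.\ absorbing the $m=0$ case into $(1/n)^\gd$) costs only a constant $\frak C_\gd$ depending on $\fm_0(E)$ — which holds because $x\mapsto (\fm_0(E)+x)/x$ is decreasing on $[1,\infty)$ with value $\fm_0(E)+1$ at $x=1$, so $(\fm_0(E)+(i\vee1))/(i\vee1)\le\fm_0(E)+1$, hence $(\fm_0(E)+(i\vee1))/(\fm_0(E)+n)\le (\fm_0(E)+1)\,i/(\fm_0(E)+n)\le (\fm_0(E)+1)\,i/n$. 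Likewise for the bound on $\fv_0$, the constant $\frak C$ depends on $\fm_0$ through $\tfm_0 V$. All displayed inequalities are elementary once the right lemma is quoted; I would organize the proof as two short paragraphs mirroring \ref{LPLa} and \ref{LPLb}, invoking \eqref{fc}, then Lemma~\ref{L2bis}, then \eqref{lgnew} and $\gam_{i-1}\asymp 1/i$.
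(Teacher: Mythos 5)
Your proposal is correct and follows essentially the same route as the paper: reduce to $\nu f=0$ via \eqref{fc} and homogeneity, bound $\zeta_{n,0}$ by pairing $\fv_0\in\cM(W)$ against the $B(W)$-norm estimate on $B_{0,n}f$ from Lemma~\ref{L2bis}, and bound $\E_{i-1}|\zeta_{n,i}|^q$ by combining $\gam_{i-1}\le 1/i$, \eqref{lgnew} with $g=B_{i,n}f$, and Lemma~\ref{L2bis}; part~\ref{LPLb} is the same computation with Lemma~\ref{L2bis}\ref{L2b} in place of \ref{L2a}. The extra care you take in tracking the $\fm_0$-dependence of constants (absorbing $(\fm_0(E)+(i\vee1))/(\fm_0(E)+n)\le(\fm_0(E)+1)\,i/n$ into $\frak C_\delta$) is exactly what the paper leaves implicit, and your side remark that Lemma~\ref{L2bis}\ref{L2b} does not require $\nu f=0$ correctly explains why the reduction is harmless in part~\ref{LPLb}.
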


\begin{proof}
By homogeneity, we may without loss of generality  assume 
$\norm{f}_{B(W)}=1$. 
Furthermore, by \eqref{fc} we may replace $f$ by 
$f-\nu f$;
hence we may also assume $\nu f=0$.

\pfitemref{LPLa}
Fix $\delta\in(0,1-\gthd)$.
	 According to  Lemma~\ref{L2bis}, we have  for all 
$n\ge1$ and $0\le i\le n$,
	\begin{align}\label{selma}
	|B_{i,\nnii} f|\leq \frak C'_\delta  \parfrac{i\vee1}{n}^{\!\!\delta} W.
	\end{align}
First, taking $i=0$, we obtain \eqref{eq:LP-W-0bis} from  \eqref{zeta0} and
\eqref{selma}, since $\fv_0\in\cM(W)$ by \refL{Lmwqq}.

For $n\ge i\ge 1$, we have
by \eqref{zetai},
Lemma~\ref{Lmwqq} (Equation~\eqref{lgnew}) and \eqref{selma}, 
\begin{align}\label{gosta}
     \E_{i-1}|\zeta_{n,i}|^q&=\gamma_{i-1}^q \E_{i-1}|\Delta M_i B_{i,n}f|^q
\leq \frac{C}{i^q}\,\|B_{i,n}f\|_{B(W)}^q\,\tfm_{i-1}(V)
\leq \frac{\frak C''_\delta}{i^q}\left(\frac{i}{n}\right)^{\delta q}\,\tfm_{i-1}(V)
.\end{align}
This concludes the proof of~\eqref{eq:LP-W-1bis}.

\pfitemref{LPLb}	
	The same arguments but using~\eqref{l2ter} instead of~\eqref{l2bis} lead
    to~\eqref{eq:LP-W-0ter} and~\eqref{eq:LP-W-1ter}. 
\end{proof}

For technical reasons, we have stated \refT{T1} for an invariant subspace
$D$ containing the constant functions; these are eigenfunctions with
eigenvalue 1 by \eqref{R1}, and thus $1\in\gs(\RR_D)$.
It will now be convenient to consider also invariant subspaces not
containing constants; we then use the generic notation $\Dx$ to help the reader
distinguish the assumptions.

\begin{lemma}  \label{LK1a}
Suppose that $\Dx$ is an $\RR$-invariant subspace of $\BW$ and that
$\gthh\in\bbR$ is such that 
$\sup\Re\gs\xpar{\RR_{\Dx}}<\gthh$.
Then,
 \begin{align}\label{lk1am0rand}
 \E |\fv_n f|^2\le C \,\tfm_0 V\left(\frac{\fm_0(E)+1}{\fm_0(E)+n}\right)^{\!\!1\bmin2(1-\gthh)}\normbwqq{f}^2,
 \qquad f\in \Dx,
 \; n\ge1.
 \end{align}
\end{lemma}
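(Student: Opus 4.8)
The plan is to imitate the proof of the $L^2$ bound \eqref{eqT1first}, but now for an arbitrary closed $\RR$‑invariant subspace and keeping every constant independent of $\fm_0$: decompose $\fv_n f$ by \refL{lem:algo_sto}, bound the two resulting pieces by an operator estimate of the type in \refL{L2bis}\ref{L2a} together with \eqref{lg0} and \eqref{lg}, and then sum. First one disposes of the (trivial) range $\sup\Re\gs(\RR_{\Dx})\ge1$: there $|\fv_n f|\le(\tfm_n W+\nu W)\normbwqq f$ with $(\tfm_n W)^2\le\tfm_n V$ and $(\nu W)^2\le\nu V$, so \eqref{lmv4a} gives $\E|\fv_n f|^2\le C\,\tfm_0 V\,\normbwqq f^2$, which already proves \eqref{lk1am0rand} because $\gthh>1$ makes the exponent $1\bmin2(1-\gthh)$ nonpositive, so the corresponding factor is $\ge1$. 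Hence assume from now on $\sup\Re\gs(\RR_{\Dx})<\gthh\wedge1$, and fix a real $\alpha$ with $\sup\Re\gs(\RR_{\Dx})<\alpha<\gthh\wedge1$ and $\alpha\neq\tfrac12$; this is possible since that interval is nonempty and open.

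The first step is the operator bound: for every $f\in\Dx$ and $0\le m\le n$ with $n\ge1$, $\normbwqq{B_{m,n}f}\le C\,\parfrac{\fm_0(E)+(m\vee1)}{\fm_0(E)+n}^{1-\alpha}\normbwqq f$. Indeed, since $\Dx$ is $\RR$‑invariant and $b_{m,n}$ is a polynomial, $B_{m,n}f=b_{m,n}(\RR_{\Dx})f$, and $b_{m,n}(\RR_{\Dx})=\frac1{2\pi\ii}\oint_\gG b_{m,n}(z)(z-\RR_{\Dx})\qw\dd z$ over a fixed rectifiable curve $\gG$ encircling the compact set $\gs(\RR_{\Dx})$ with $\sup_{z\in\gG}\Re z\le\alpha$; then \refL{L1} bounds $|b_{m,n}(z)|$ on $\gG$ by $C\,\parfrac{\fm_0(E)+n}{\fm_0(E)+(m\vee1)}^{\alpha-1}$ with an $\fm_0$‑free constant, and \eqref{||h||} (whose constant depends only on $\gG$ and $\RR_{\Dx}$) concludes. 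This is essentially verbatim the argument of \refL{L2bis}\ref{L2a}, with $\gthd$ and the $\Pi_1$‑decomposition there replaced by the direct spectral hypothesis on $\RR_{\Dx}$ (which now plays the role of $\RR'$).

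Next, run the martingale decomposition \eqref{ba}--\eqref{zetai}: $\fv_n f=\zeta_{n,0}+\sum_{i=1}^n\zeta_{n,i}$ with $\zeta_{n,0}=\fv_0 B_{0,n}f$ non‑random (as $\fm_0$ is) and $\E_{i-1}\zeta_{n,i}=0$, so by orthogonality of the martingale increments, $\E|\fv_n f|^2=|\zeta_{n,0}|^2+\sum_{i=1}^n\E|\zeta_{n,i}|^2$ (all terms finite by \eqref{lg0}, \eqref{lg} and $\tfm_0 V<\infty$). By \eqref{lg0} and the operator bound, $|\zeta_{n,0}|^2\le C\,\normbwqq{B_{0,n}f}^2\tfm_0 V\le C\,\tfm_0 V\,\parfrac{\fm_0(E)+1}{\fm_0(E)+n}^{2(1-\alpha)}\normbwqq f^2$; by \eqref{lg} and the operator bound, $\E|\zeta_{n,i}|^2=\gam_{i-1}^2\,\E|\gD M_i(B_{i,n}f)|^2\le C\,\gam_{i-1}^2\,\tfm_0 V\,\normbwqq{B_{i,n}f}^2$, and with $\gam_{i-1}=1/(\fm_0(E)+i)$ this gives $\sum_{i=1}^n\E|\zeta_{n,i}|^2\le C\,\tfm_0 V\,\normbwqq f^2\,(\fm_0(E)+n)^{-2(1-\alpha)}\sum_{i=1}^n(\fm_0(E)+i)^{-2\alpha}$. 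One then estimates $\sum_{i=1}^n(\fm_0(E)+i)^{-2\alpha}$ by comparison with an integral --- this is exactly where $\alpha\neq\tfrac12$ is used, to avoid a spurious $\log n$: if $\alpha>\tfrac12$ it is at most the $\fm_0$‑free constant $\sum_{i\ge1}i^{-2\alpha}$, so $\sum_i\E|\zeta_{n,i}|^2\le C\,\tfm_0 V\,\normbwqq f^2\,(\fm_0(E)+n)^{-2(1-\alpha)}$; if $\alpha<\tfrac12$ it is $\le C_\alpha(\fm_0(E)+n)^{1-2\alpha}$, so $\sum_i\E|\zeta_{n,i}|^2\le C\,\tfm_0 V\,\normbwqq f^2\,(\fm_0(E)+n)^{-1}$. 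Finally, since $0<\alpha<1$ one has $(\fm_0(E)+n)^{-\beta}\le\parfrac{\fm_0(E)+1}{\fm_0(E)+n}^{\beta}$ for $\beta\in\set{2(1-\alpha),1}$, and since $\alpha<\gthh$ both $2(1-\alpha)$ and $1$ are $\ge 1\bmin2(1-\gthh)$ while the base is $\le1$; adding the bound for $|\zeta_{n,0}|^2$ to the bound for the sum therefore yields \eqref{lk1am0rand}.

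The main obstacle is not any single estimate but the bookkeeping: choosing $\alpha\neq\tfrac12$ so as to kill the logarithm in the harmonic‑type sum, handling the degenerate range $\sup\Re\gs(\RR_{\Dx})\ge1$ separately, and --- most importantly, in view of the later random‑$\fm_0$ extension --- verifying at each step (notably in \refL{L1} and in the Riesz‑calculus bound \eqref{||h||}) that all constants genuinely do not depend on $\fm_0$.
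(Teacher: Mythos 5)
Your proof is correct and follows essentially the same route as the paper's: the martingale decomposition of Lemma~\ref{lem:algo_sto}, orthogonality of the increments together with \eqref{lg0}--\eqref{lg}, the operator bound $\normbwqq{B_{m,n}}\le C\parfrac{\fm_0(E)+n}{\fm_0(E)+(m\vee1)}^{\beta-1}$ obtained from Lemma~\ref{L1} and \eqref{||h||} by choosing a contour with $\Re z\le\beta$, and then the comparison sum. The only differences are organizational: you fix an intermediate exponent $\alpha\in(\sup\Re\gs(\RR_{\Dx}),\gthh\wedge 1)$ with $\alpha\neq\tfrac12$ \emph{before} summing, whereas the paper works directly with $\gthh$ and handles the logarithmic boundary case $\gthh=\tfrac12$ a posteriori by perturbation; and you explicitly dispose of the range $\gthh\ge1$ via the crude $L^2$ bound $\E|\fv_nf|^2\le C\tfm_0V\normbwqq f^2$ (which is a sensible addition, since the paper's final inequality $(\fm_0(E)+n)^{2\gthh-2}\le C\parfrac{\fm_0(E)+1}{\fm_0(E)+n}^{2(1-\gthh)}$ silently relies on $\gthh\le1$). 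One tiny imprecision: you do not actually know $\alpha>0$, but your argument never uses it; only $\alpha<1$ (so that $2(1-\alpha)>0$) matters in the final comparison.
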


\begin{proof}
The terms in \eqref{alf} are orthogonal, and thus, using \eqref{lg0}
and~\eqref{lg}
in  Lemma~\ref{Lmwqq},
 \begin{align}\label{lkab}
  \E|\fv_n f|^2
&=
\E\left[|\fv_0 B_{0,n}f|^2\right] + \sumin \gam_{i-1}^2 \E \big[\bigabs{\gD M_i B_{i,n}f}^2\big]
\notag\\&
\le C   \tfm_0 V\normbwqq{B_{0,n}f}^2 + C\sumin (\fm_0(E)+i)^{-2}\,\tfm_0 V\,\normbwqq{B_{i,n}f}^2.
\end{align}

We apply \eqref{||h||} to $B_{m,n}=b_{m,n}(\RR)$ as an operator on $\Dx$.
By the assumption,
we may choose a curve $\gG$ encircling $\gs(\RR_{\Dx})$
such that $\Re z\le\gthh$ for $z\in\gG$, and then \eqref{||h||} and
\eqref{l1} yield, uniformly for $0\le m\le n$,
 \begin{align}\label{lkbb}
  \norm{B_{m,n}}_{\Dx} \le C \Bigparfrac{\fm_0(E)+n}{\fm_0(E)+(m\vee1)}^{\!\!\gthh-1}.
\end{align}
By homogeneity, we may assume $\normbwqq{f}=1$, and then
\eqref{lkab} and \eqref{lkbb} yield
 \begin{align}
 \frac{ \E|\fv_n f|^2}{\tfm_0 V}
&
\le C \left(\frac{\fm_0(E)+n}{\fm_0(E)+1}\right)^{\!\!2(\gthh-1)} + C\sumin (\fm_0(E)+i)^{-2} \left(\frac{\fm_0(E)+n}{\fm_0(E)+i}\right)^{\!\!2(\gthh-1)}
\notag\\
&\le C \left(\frac{\fm_0(E)+n}{\fm_0(E)+1}\right)^{\!\!2\gthh-2}+C\left(\fm_0(E)+n\right)^{2\gthh-2}\sumin \left(\fm_0(E)+i\right)^{-2\gthh} 
\notag\\
&\le C \left(\frac{\fm_0(E)+n}{\fm_0(E)+1}\right)^{\!\!2\gthh-2}+ C\begin{cases}
 (\fm_0(E)+n)^{-1} &\text{ if }\gthh <\half\\
 (\fm_0(E)+n)^{-1} \log (\fm_0(E)+n) &\text{ if }\gthh = \half\\
 (\fm_0(E)+n)^{2\gthh-2} &  \text{ if }\gthh > \half.
\end{cases}\label{lkac}
\end{align}
This yields \eqref{lk1am0rand} when $\gthh\neq\half$.
If $\gthh=\half$, then 
one can replace $\gthh$ by some new $\gthh<\half$; then \eqref{lkac}
yields~\eqref{lk1am0rand} in this case too. 
\end{proof}

The estimates~\eqref{eqT1first} and \eqref{eqT1sec} of Theorem~\ref{T1} 
directly follow from the following result.
Recall that $\fv_n:=\tfm_n-\nu$.	
\begin{lemma}\label{LP-Wbis}
		Assume  that the conditions of Theorem~\ref{T1} hold.
        \begin{thmenumerate}
          
        \item \label{LP-Wbisa}
For every $\delta\in(0,{  1-\gthd})$, there exists a constant
$C_\delta<\infty$	such that,
	for any $f\in D$ and any $n\geq 1$,
 	\begin{align}
		\label{eq:LP-W-2bis}
		\E |\fv_nf|^2 \le C_\delta\, \tfm_0 V\left(\frac{\fm_0(E)+1}{\fm_0(E)+n}\right)^{\!\!2\delta\wedge 1}\normbwqq{f}^2.
	\end{align}

        \item \label{LP-Wbisb}
If, furthermore, $\RR$ is \slqc{} on $B(W)$,
then, for every $\delta\in(0,{  1-\theta})$,
 there exists a constant
$C_\delta<\infty$ such that, for all $f\in B(W^2)$,
 	\begin{align}
		\label{eq:LP-W-3bis}
		\E\left|\fv_n f\right|
&\leq C_\delta \,\tfm_0 V\,\left(\frac{\fm_0(E)+1}{\fm_0(E)+n}\right)^{\!\!({2\delta}\wedge 1)\frac{q/2-1}{q-1}}\,\|f\|_{B(W^2)}.
	\end{align}
 \end{thmenumerate}
\end{lemma}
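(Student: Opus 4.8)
The plan is to obtain both parts from \refL{LK1a} together with the preliminary bounds of \refS{SSprel}; by homogeneity I would assume $\norm{f}_{\BW}=1$ throughout \ref{LP-Wbisa} and $\|f\|_{B(W^2)}=1$ throughout \ref{LP-Wbisb}.

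\emph{Part~\ref{LP-Wbisa}} is essentially a repackaging of \refL{LK1a}. Since $\fv_n 1=0$ by \eqref{vn1} we have $\fv_n f=\fv_n(f-\nu f)$, and by \refL{L0} the function $f-\nu f$ belongs to $\Dx:=(\II-\Pi_1)D$, with $\norm{f-\nu f}_{\BW}\le(1+\nu W)\norm{f}_{\BW}$ where $\nu W\le(\nu V)^{\nicefrac1q}<\infty$ is independent of $\fm_0$. The subspace $\Dx$ is closed and $\RR$-invariant (as $\RR$ commutes with $\Pi_1$), and since $\RR_D$ is \slqc{} one has $\gs(\RR_{\Dx})=\gs(\RR_D)\setminus\set1$, so $\sup\Re\gs(\RR_{\Dx})=\gthd$. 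For $\delta\in(0,1-\gthd)$ I would apply \refL{LK1a} to $f-\nu f$ with $\gthh:=1-\delta$ (so that $\gthd<\gthh$): the exponent $1\wedge 2(1-\gthh)$ appearing in \eqref{lk1am0rand} is then exactly $2\delta\wedge 1$, and \eqref{eq:LP-W-2bis} follows.

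\emph{Part~\ref{LP-Wbisb}.} The extra hypothesis lets one take $D=B(W)$ in part~\ref{LP-Wbisa} (so $\gthd=\gth$), which makes \eqref{eq:LP-W-2bis} available for all $f\in B(W)$; the difficulty is that a general $f\in B(W^2)$ need not lie in $B(W)$, and the remedy is truncation. For a parameter $K\ge1$ write $f=f_K+f^K$ with $f_K:=f\indic{W\le K}$ and $f^K:=f\indic{W>K}$. On $\set{W\le K}$ we have $|f_K|\le W^2=W\cdot W\le KW$, so $f_K\in B(W)$ with $\norm{f_K}_{\BW}\le K$; on $\set{W>K}$, using $q>2$, $W^2=W^q W^{2-q}\le K^{2-q}V$, so $|f^K|\le K^{-(q-2)}V$. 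The tail is then controlled, via \eqref{lmv4a} and $\nu V<\infty$, by
\[
\E|\fv_n f^K|\le\E\bigsqpar{\tfm_n|f^K|}+\nu|f^K|\le K^{-(q-2)}\bigpar{\E\tfm_n V+\nu V}\le C\,K^{-(q-2)}\,\tfm_0 V,
\]
and the main part, by part~\ref{LP-Wbisa}, the \CSineq{} and $\tfm_0 V\ge1$, by
\[
\E|\fv_n f_K|\le\bigpar{\E|\fv_n f_K|^2}^{\nicefrac12}\le C_\delta\,\tfm_0 V\,\Bigparfrac{\fm_0(E)+1}{\fm_0(E)+n}^{\!\!\alpha}K,\qquad\alpha:=\delta\wedge\thalf.
\]
Adding the two bounds and choosing $K:=\Bigparfrac{\fm_0(E)+n}{\fm_0(E)+1}^{\!\!\alpha/(q-1)}\ge1$, which balances the $K$ and $K^{-(q-2)}$ terms, gives
\[
\E|\fv_n f|\le C_\delta\,\tfm_0 V\,\Bigparfrac{\fm_0(E)+1}{\fm_0(E)+n}^{\!\!\alpha(q-2)/(q-1)},
\]
and since $\alpha\,\tfrac{q-2}{q-1}=(2\delta\wedge1)\tfrac{q/2-1}{q-1}$ this is \eqref{eq:LP-W-3bis}.

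The only step that is more than bookkeeping is the truncation in \ref{LP-Wbisb}: part~\ref{LP-Wbisa} controls $\fv_n$ only on $B(W)$, whereas $B(W^2)$ is strictly larger, and the argument trades a factor $K$ in the $B(W)$-norm against a gain $K^{-(q-2)}$ on the tail $\set{W>K}$; optimising over $K$ produces the exponent $\tfrac{q/2-1}{q-1}$, and this is precisely where the assumption $q>2$ is needed. Everything else --- finiteness and integrability of the quantities involved, and measurability of $f_K$ and $f^K$ --- is routine once \refL{Lmwqq} is invoked.
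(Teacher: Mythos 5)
Your proof is correct and follows essentially the same route as the paper's: part~\ref{LP-Wbisa} is reduced to Lemma~\ref{LK1a} on $\Dx=(\II-\Pi_1)D$ with $\gthh=1-\delta$, and part~\ref{LP-Wbisb} is obtained from part~\ref{LP-Wbisa} by truncation at a level chosen to balance a factor growing like the truncation parameter against a tail gain coming from $q>2$. The only cosmetic difference is that you split on $\{W\le K\}$ while the paper splits on $\{W^2\le K\}$, which just squares the truncation parameter; your use of $\tfm_0 V\ge1$ to absorb the square root $(\tfm_0 V)^{1/2}$ into $\tfm_0 V$ is the same device the paper uses implicitly.
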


\begin{proof}
\pfitemref{LP-Wbisa}
This is essentially equivalent to \refL{LK1a}.
Recalling \refL{L0}, we define
\begin{align}
\Dx:=(1-\Pi_1)D=\set{f\in D:\nu f=0}.
\end{align}
Then, as in the proof of \refL{L2bis}, 
$\Dx$ is an invariant subspace of $\BW$ and
$\RR_\Dx$ has spectrum $\gs(\RR_\Dx)=\gs(\RR_D)\setminus\set1$, and thus
$\sup\Re\gs(\RR_\Dx)=\gthd$.
We define $\gthh:=1-{  \gd}$, and note that the assumption implies $\gthh>\gthd$.
Hence, \eqref{lk1am0rand} applies and yields \eqref{eq:LP-W-2bis} for $f\in \Dx$.
Finally, for a general $f\in D$, we apply instead \eqref{lk1am0rand} to 
$f-(\nu f)1=(1-\Pi_1)f\in\Dx$ (recalling \refL{L0}), noting that $\fv_n1=0$
by \eqref{vn1}.

\pfitemref{LP-Wbisb}
	We now assume that the operator $\RR$ is \slqc, 
so we may take $D=\BW$ in \ref{LP-Wbisa}.
For an arbitrary $f\in B(W^2)$, we will use truncations:
For all $K\geq 1$,
	\begin{equation}\label{eq:K}
%
	\E\left|\fv_n f\right|
	\leq \bigl|\E[\fv_n (f\bs 1_{W^2\leq K})]\bigr|+\E[\tfm_n |f\bs 1_{W^2> K}|]+\nu|f\bs 1_{W^2> K}|.
	\end{equation}
First, since $|f(x)|\bs 1_{W(x)^2\leq K}\leq \|f\|_{B(W^2)}
W(x)\,\sqrt{K}$, we deduce that $\|f\bs 1_{W^2\leq
  K}\|_{B(W)}\leq \|f\|_{B(W^2)}\,\sqrt{K}$. Therefore, by
~\eqref{eq:LP-W-2bis} applied to $f\bs 1_{W^2\leq K}\in
B(W)$, we get, for any fixed $\delta\in(0,{  1-\theta})$ 
(the constants $C$ below 
do not depend on $f$ or $K$), 
	\begin{equation}\label{eq:Kbound1}
	\E\bigabs{\fv_n (f\bs 1_{W^2\leq K})}
\le
{\E[\abs{\fv_n (f\bs 1_{W^2\leq K})}^2]}^\half
\leq C \, \tfm_0 V \left(\frac{\fm_0(E)+1}{\fm_0(E)+n}\right)^{\!\!\frac{2\delta\wedge 1}{2}} \|f\|_{B(W^2)}\,\sqrt{K}.
	\end{equation}
On the other hand, 
if $W^2>K$ then $VW^{-2}=W^{q-2}>K^{\nicefrac q2-1}$
and thus
	\begin{align}
		\E[\tfm_n |f\bs 1_{W^2> K}|]&
\leq \E[\tfm_n\xpar{ W^2 \bs 1_{W^2> K}}]\,\|f\|_{B(W^2)}
		\leq K^{1-\nicefrac q2}\E[\tfm_n V]\,\|f\|_{B(W^2)}
\notag\\&
		\leq C K^{1-\nicefrac q2}\,{\|f\|_{B(W^2)}}\, \tfm_0 V,\label{eq:Kbound2}
	\end{align}
	where we used \eqref{lmv4a}.
	The same computation also holds for $\nu|f\bs 1_{W^2> K}|$, 
	since, by assumption~\ref{as:nu2}, $\nu V<\infty$.

	Finally, choosing  $K=\left(({\fm_0(E)}+1)/({\fm_0(E)+n})\right)^{-({  2\delta}\wedge 1)/(q-1)}$ and using
    \eqref{eq:Kbound1} and~\eqref{eq:Kbound2}, we deduce 
\eqref{eq:LP-W-3bis}.
\end{proof}

\subsection{Proof of~\eqref{eqT1firstbis}}
We improve the estimate in \refL{LP-Wbis}\ref{LP-Wbisa} to an estimate for a maximum over a restricted range.

\begin{lemma}
  \label{LK1b}
Suppose that $\Dx$ is an $\RR$-invariant subspace of $\BW$ and that
$\gthh>\half$ is such that $\sup\Re\gs\xpar{\RR_{\Dx}}<\gthh$.
Moreover, 
let $0<\tau<1$. Then there exists a constant $C=C(\tau)$ such that
 \begin{align}\label{lk1b}
  \E \sup_{N-N^\tau\le n\le N}|\fv_n f|^2\le C  \tfm_0 V\,\left(\frac{\fm_0(E)+1}{\fm_0(E)+N}\right)^{\!\!2(1-\gthh)+\tau}\,N^\tau\,\normbwqq{f}^2,
\qquad f\in \Dx,
\; N\ge1.
\end{align}
\end{lemma}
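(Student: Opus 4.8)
The plan is to telescope the stochastic approximation recursion \eqref{vn+1} over the short window $[N-N^\tau,N]$, split the resulting increment into the value at the left endpoint of the window, a drift part and a martingale part, and then bound the endpoint and drift parts via the $L^2$-estimate of \refL{LK1a} and the martingale part via Doob's $L^2$-maximal inequality. I would set $m:=\ceil{N-N^\tau}$ (the smallest integer in the window) and fix an absolute constant $N_0=N_0(\tau)$ (for instance $N_0=\ceil{2^{1/(1-\tau)}}$) such that, for every $N\ge N_0$ and every $\fm_0$, one has $m\ge1$ and $\fm_0(E)+m\ge\tfrac12(\fm_0(E)+N)$. The finitely many values $N<N_0$ are disposed of at the end by the crude bound $\E\sup_{m\le n\le N}|\fv_n f|^2\le(N+1)\,\normbwqq{f}^2\sup_{n\ge0}\E\bigpar{|\fv_n|W}^2\le C\tfm_0V\,\normbwqq{f}^2$ (using $\bigpar{|\fv_n|W}^2\le2\tfm_nV+2\nu V$, \eqref{lmv4a} and $\nu V<\infty$), which is absorbed into the right-hand side of \eqref{lk1b} since there $\frac{\fm_0(E)+1}{\fm_0(E)+N}\ge\frac1{N_0}$; so from now on assume $N\ge N_0$.

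Iterating \eqref{vn+1} from $m$ to $n$ (equivalently, telescoping $\fv_{k+1}f-\fv_k f$) gives, for $f\in\Dx$ and $m\le n\le N$,
\begin{align*}
\fv_n f=\fv_m f+\sum_{k=m}^{n-1}\gam_k\,\fv_k g+\sum_{i=m+1}^n\gam_{i-1}\,\gD M_i f,\qquad g:=(\RR-\II)f\in\Dx,
\end{align*}
so that $\sup_{m\le n\le N}|\fv_n f|^2\le3|\fv_m f|^2+3\sup_{m\le n\le N}\bigabs{\sum_{k=m}^{n-1}\gam_k\fv_k g}^2+3\sup_{m\le n\le N}\bigabs{\sum_{i=m+1}^n\gam_{i-1}\gD M_i f}^2$, where $\normbwqq{g}\le C\normbwqq{f}$. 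For the endpoint term, \refL{LK1a} (applied with the given $\gthh$; since $\gthh>\half$ we have $1\wedge2(1-\gthh)=2(1-\gthh)$) together with $\fm_0(E)+m\ge\tfrac12(\fm_0(E)+N)$ gives $\E|\fv_m f|^2\le C\tfm_0V\parfrac{\fm_0(E)+1}{\fm_0(E)+N}^{2(1-\gthh)}\normbwqq{f}^2$. The last term is, the test function $f$ being fixed, a martingale in $n$, and is handled by Doob's $L^2$-inequality and \eqref{lg}: $\E\sup_{m\le n\le N}\bigabs{\sum_{i=m+1}^n\gam_{i-1}\gD M_i f}^2\le4\sum_{i=m+1}^N\gam_{i-1}^2\,C\tfm_0V\,\normbwqq{f}^2\le C\tfm_0V\,\normbwqq{f}^2\,N^\tau(\fm_0(E)+N)^{-2}$, using that the window contains at most $N^\tau$ indices and $\gam_{i-1}\le2/(\fm_0(E)+N)$ there.

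The drift (middle) term is the only non-martingale piece, and it is here that the short window is essential: I would bound $\sup_{m\le n\le N}\bigabs{\sum_{k=m}^{n-1}\gam_k\fv_k g}\le\sum_{k=m}^{N-1}\gam_k|\fv_k g|$, then apply the \CSineq{} with weights $\gam_k$ and \refL{LK1a} to $g$, to obtain $\E\sup_{m\le n\le N}\bigabs{\sum_{k=m}^{n-1}\gam_k\fv_k g}^2\le\bigpar{\sum_{k=m}^{N-1}\gam_k}\sum_{k=m}^{N-1}\gam_k\,\E|\fv_k g|^2\le C\tfm_0V\,\normbwqq{f}^2\parfrac{\fm_0(E)+1}{\fm_0(E)+N}^{2(1-\gthh)}\frac{N^{2\tau}}{(\fm_0(E)+N)^2}$, since $\sum_{k=m}^{N-1}\gam_k\le2N^\tau/(\fm_0(E)+N)$.

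Finally, each of these three bounds has the form $C\tfm_0V\,\normbwqq{f}^2$ times a power of $\fm_0(E)+N$ times a power of $N$, and it remains to check that each is at most $C\tfm_0V\parfrac{\fm_0(E)+1}{\fm_0(E)+N}^{2(1-\gthh)+\tau}N^\tau\normbwqq{f}^2$. Using $1\le N\le\fm_0(E)+N$, $(\fm_0(E)+1)N\ge\fm_0(E)+N$, $2\gthh>\tau$ (as $\gthh>\half>\tfrac{\tau}{2}$) and $2-2\tau>0$, all three comparisons reduce to elementary inequalities of the shape $(\fm_0(E)+N)^a\le(\fm_0(E)+1)^b$ with $a\le b$ and $b\ge0$; summing the three bounds and absorbing the factor $3$ and the case $N<N_0$ into the constant yields \eqref{lk1b}. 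I expect the only genuinely fiddly part to be this bookkeeping of the powers of $\fm_0(E)+N$ and $N$ (and the uniformity of the constants in $\fm_0$); the substantive estimates are routine combinations of \refL{LK1a}, \eqref{lg} and Doob's inequality.
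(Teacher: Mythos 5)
Your proof is correct and gives the same bound, but by a genuinely different route from the paper. The paper's argument uses the observation that $(\fv_n B_{n,N} f)_{0\le n\le N}$ is a martingale in $n$ for fixed $N$ (see \eqref{hq}), applies Doob's $L^2$-inequality to obtain \eqref{lkd}, and then recovers $\fv_n f=\fv_n B_{n,N}(B_{n,N}^{-1}f)$ by expanding $B_{n,N}^{-1}f$ on the short window as a fixed finite linear combination of the functions $(\II-\RR)^\ell f/\ell!$ plus an $O(1/N)$ remainder, via the functional calculus applied to $b_{n,N}^{-1}$ (see \eqref{cq}--\eqref{cw}); the drift term thus never appears explicitly but is absorbed into the change of test function. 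You instead telescope the one-step recursion \eqref{vn+1} forward from $m=\ceil{N-N^\tau}$, writing $\fv_n f=\fv_m f+\sum_{k=m}^{n-1}\gam_k\fv_k g+\sum_{i=m+1}^n\gam_{i-1}\gD M_if$ with $g:=(\RR-\II)f\in\Dx$, and treat the three pieces separately: the endpoint via \refL{LK1a}, the martingale via Doob plus \eqref{lg}, and the drift via the \CSineq{} with weights $\gam_k$ together with \refL{LK1a}. The shortness of the window enters your proof through the Cauchy--Schwarz step (producing the factor $N^{2\tau}/(\fm_0(E)+N)^2$), exactly where the paper uses it in the $b_{n,N}^{-1}$ expansion; the exponent comparisons you defer do close uniformly in $\fm_0$ under $\gthh>\half$ and $0<\tau<1$, and your crude bound for $N<N_0(\tau)$ is also sound. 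Your route avoids the resolvent-expansion machinery at the modest cost of an explicit drift estimate, and is arguably the more elementary of the two.
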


\begin{proof}
If $0\le i\le n\le N$, then the definition \eqref{Bmn} implies
\begin{align}\label{BNN}
B_{i,n}B_{n,N}=B_{i,N}.   
\end{align}

Let $f\in\Dx$.
For $0\le n\le N$, we apply \eqref{alf} to $B_{n,N}f$ and obtain, 
using \eqref{BNN},
\begin{align}\label{hq}
  	\fv_n B_{n,N}f=\fv_0B_{0,N}f+\sumin\gamma_{i-1}\gD M_i B_{i,N}f.
\end{align}
Since $\E_{i-1}\gD M_i=0$, \eqref{hq} shows that 
$\bigpar{ \fv_nB_{n,N} f}_{0\le n\le N}$ is a martingale 
for each fixed $N$.
(Note that the terms on the \rhs{} do not depend on $n$.)
Consequently, Doob's maximal inequality yields, together with $B_{N,N}=\II$ and
\refL{LK1a},
\begin{align}
  \E\bigabs{\sup_{n\le N}\fv_n B_{n,N} f}^2
&\le 4   \E\bigabs{\fv_N B_{N,N} f}^2
=4   \E\bigabs{\fv_N  f}^2
\notag\\
& 
 \leq C \,\tfm_0 V\,\left(\frac{\fm_0(E)+1}{\fm_0(E)+N}\right)^{\!\!2(1-\gthh)}\normbwqq{f}^2\label{lkd}
.\end{align}

Let $K$ be a compact neighbourhood of $\gs(\RR_\Dx)$ such that $\sup\Re K
<\gthh$.
Let $n=N-m$, where $0\le m\le N^\tau$, and suppose that $N$ is so large that
$N^\tau\le N/2$. Also, let $L:=\floor{1/(1-\tau)}$.
Then, $m\le N/2$ and thus
\begin{align}
  \log\parfrac{\fm_0(E)+N}{\fm_0(E)+n}
&=\bigg|\log\parfrac{\fm_0(E)+n}{\fm_0(E)+N}\bigg|
=\bigg|\log\parfrac{\fm_0(E)+N-m}{\fm_0(E)+N}\bigg|
\notag\\
&=\Bigabs{\log\Bigpar{1-\frac{m}{\fm_0(E)+N}}}
\le \frac{2m}{\fm_0(E)+N}
\le 2 (\fm_0(E)+N)^{\tau-1}.\label{cp}
\end{align}
Since $n=N-m\ge N/2$, \eqref{l1h} yields, for all $z\in K$,
\begin{align}\label{da}
  |h_{n,N}(z)| \le \frac{C}{n}  \le \frac{C}{N}.
\end{align}
Assume in the sequel that $N$ and $n\le N$ are as above, and also that $N$
is so large that \eqref{da} implies $|h_{n,N}(z)|\le\half$
when $z\in K$.
Then, \eqref{l1oline2}--\eqref{l1h} imply that, uniformly for 
$z\in K$ and all such $n$ and $N$, 
\begin{align}\label{cq}
\frac1{b_{n,N}(z)}&=
\Bigpar{1+ O\Bigparfrac{1}{N}}\exp\bigg(-(z-1)\log\parfrac{\fm_0(E)+N}{\fm_0(E)+n}\bigg)
\notag\\
&=
\exp\bigg(-(z-1)\log \parfrac{\fm_0(E)+N}{\fm_0(E)+n}\bigg)
+ O\Bigparfrac{1}{N}
\notag\\&
=\sumlL \frac{1}{\ell!}\log^\ell\Bigparfrac{\fm_0(E)+N}{\fm_0(E)+n}(1-z)^\ell  
+ O\Bigpar{\log^{L+1}\Bigparfrac{\fm_0(E)+N}{\fm_0(E)+n}}+O\Bigparfrac1N.
\notag\\&
=\sumlL \frac{1}{\ell!}\log^\ell\Bigparfrac{\fm_0(E)+N}{\fm_0(E)+n}(1-z)^\ell  
+O\Bigparfrac1N,
\end{align}
where the last equality uses \eqref{cp} and $(L+1)(1-\tau)>1$.
In particular, $b_{n,N}\qw(z)$ is finite for $z\in K$, so $b_{n,N}\qw(z)$
is analytic in a neighbourhood of $\gs(\RR_\Dx)$; hence $B_{n,N}$ is
invertible on $\Dx$, with $B_{n,N}\qw=b_{n,N}\qw(\RR)$.
Define the operator on $\Dx$
 \begin{align}\label{cr}
V_{n,N}:=B_{n,N}\qw-\sumlL \frac{1}{\ell!}\log^\ell\Bigparfrac{\fm_0(E)+N}{\fm_0(E)+n}(\II-\RR)^\ell
.\end{align}
It follows from \eqref{cq} and \eqref{||h||} that
\begin{align}
\label{crv}
\norm{V_{n,N}}_{\Dx}=O\left(\frac1N\right).  
\end{align}
Moreover, \eqref{cr} yields, with 
$f_\ell:=(\II-\RR)^\ell f/\ell!$ and
$g_{n,N}:=V_{n,N}f$,
 \begin{align}\label{cs}
  B_{n,N}\qw f = 
\sumlL \log^\ell\Bigparfrac{\fm_0(E)+N}{\fm_0(E)+n}f_\ell + g_{n,N},
\end{align}
and thus 
 \begin{align}\label{ct}
  \fv_n f = \fv_n B_{n,N} B_{n,N}\qw f
=\sumlL \log^\ell\Bigparfrac{\fm_0(E)+N}{\fm_0(E)+n} \fv_n B_{n,N}f_\ell + \fv_n B_{n,N}g_{n,N}.
\end{align}
According to~\eqref{cp}, we have 
 $0\le\log\bigparfrac{\fm_0(E)+N}{\fm_0(E)+n}\leq 2 N^{\tau-1}\leq 2$ and hence,
with constants $C$ depending on $\tau$ in the remainder of
the proof,
\begin{align}\label{cu}
\sup_{N-N^\tau\le n \le N}|\fv_n f|^2 
&\le
C\sumlL \sup_{N-N^\tau\le n \le N}\bigabs{ \fv_n B_{n,N}f_\ell }^2
+C\sup_{N-N^\tau\le n \le N}\bigabs{ \fv_n B_{n,N}g_{n,N}}^2
\notag\\&
\le
C\sumlL \sup_{ n \le N}\bigabs{ \fv_n B_{n,N}f_\ell }^2
+C\sum_{N-N^\tau\le n \le N}\bigabs{ \fv_n B_{n,N}g_{n,N}}^2
.\end{align}
Furthermore, for $n$ and $N$ as above,
\eqref{crv} holds;
hence
\begin{align}\label{cv}
  \normbwqq{g_{n,N}}
=
  \normbwqq{V_{n,N}f}
\le C N\qw \normbwqq{f}
 \le C \frac{\fm_0(E)+1}{\fm_0(E)+N} \normbwqq{f}.
\end{align}
Taking the expectation in \eqref{cu} 
and using \eqref{lkd} and \eqref{cv} yields
 \begin{align}\label{cw}
&\E\sup_{N-N^\tau\le n \le N}|\fv_n f|^2 
\notag\\
&\le
C \sumlL \tfm_0 V\,\left(\frac{\fm_0(E)+1}{\fm_0(E)+N}\right)^{\!\!2(1-\gthh)} \normbwqq{f_\ell}^2
+C \!\!\!\sum_{N-N^\tau\le n \le N} \!\!\!\tfm_0 V\,\left(\frac{\fm_0(E)+1}{\fm_0(E)+N}\right)^{\!\!2(1-\gthh)}\normbwqq{g_{n,N}}^2
\notag\\
&\le
C \tfm_0 V\,\left(\frac{\fm_0(E)+1}{\fm_0(E)+N}\right)^{\!\!2(1-\gthh)}\normbwqq{f}^2
+ C \tfm_0 V\,\left(\frac{\fm_0(E)+1}{\fm_0(E)+N}\right)^{\!\!2(1-\gthh)+2}N^\tau\normbwqq{f}^2
.\end{align}
This shows \eqref{lk1b} when $N$ is large enough since $\tau<1<2$.
The remaining cases are trivial, since \eqref{lk1b} for any fixed $N$
follows from \refL{LK1a}.
\end{proof}

We are now ready to prove~\eqref{eqT1firstbis} and thus conclude the proof
of Theorem~\ref{T1}: 
\begin{lemma}
  \label{LK2}
  \begin{thmenumerate}
  \item\label{LK2a}
    Suppose that $\Dx$ is an $\RR$-invariant subspace of $\BW$ and that
$\gthh>\half$ is such that $\sup\Re\gs\bigpar{\RR_{\Dx}}<\gthh$.
Then, for every $f\in \Dx$, \as{} and in $L^2$ as \ntoo,
\begin{align}
\label{eq:extendedform0random}
n^{1-\gthh}\fv_n f \to 0
.\end{align}
\item \label{LK2b}
Assume that the conditions of \refT{T1} holds, 
and let $\gd\in(0,{  (1-\gthd)\land\nicefrac12})$.
Then 
$n^{  \gd}\fv_n f \to 0$ \as{} and in $L^2$ as \ntoo,
for every $f\in D$.
  \end{thmenumerate}
\end{lemma}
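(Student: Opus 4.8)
The plan is to obtain both parts of \refL{LK2} from the second-moment estimates already in hand: the pointwise bound \refL{LK1a} will give the $L^2$ statements, and the maximal bound \refL{LK1b} will give the almost sure statements through a standard blocks-along-a-subsequence argument. Part \ref{LK2b} will then be the special case $\gthh=1-\gd$, $\Dx=(\II-\Pi_1)D$ of part \ref{LK2a}, so essentially all the work is in \ref{LK2a}.

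For \ref{LK2a}, the first move is to pass from the given $\gthh$ to a slightly smaller exponent: pick $\gthh'$ in the (nonempty, since $\gthh>\half$ and $\gthh>\sup\Re\gs(\RR_{\Dx})$) interval $\bigpar{\max(\half,\sup\Re\gs(\RR_{\Dx})),\gthh}$. Since $\gthh'>\half$ we have $1\wedge 2(1-\gthh')=2(1-\gthh')$, so \refL{LK1a} applied with $\gthh'$ gives $\E|\fv_n f|^2\le C\,\tfm_0 V\,n^{-2(1-\gthh')}\normbwqq f^2$, whence $\E|n^{1-\gthh}\fv_n f|^2\le C\,\tfm_0 V\,n^{-2(\gthh-\gthh')}\normbwqq f^2\to0$; this is the $L^2$ convergence. (Applying \refL{LK1a} with $\gthh$ itself would only give boundedness in $L^2$, which is why the small sacrifice $\gthh'<\gthh$ is needed.) For the almost sure convergence I would then fix $\tau\in(0,1)$ close enough to $1$ that $2(\gthh-\gthh')>1-\tau$, choose $b$ with $\max\bigpar{1,\tfrac1{2(\gthh-\gthh')}}<b<\tfrac1{1-\tau}$, and set $N_k:=\ceil{k^b}$. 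The choice $b<\tfrac1{1-\tau}$ makes the blocks $[N_k-N_k^\tau,N_k]$ eventually cover all of $\bbN$ (because then $N_{k+1}-N_{k+1}^\tau\le N_k$ for large $k$), and applying \refL{LK1b} with $\gthh'$ on each block and inserting $n^{2(1-\gthh)}\le N_k^{2(1-\gthh)}$ yields $\E\sup_{N_k-N_k^\tau\le n\le N_k}|n^{1-\gthh}\fv_n f|^2\le\frak C\,N_k^{-2(\gthh-\gthh')}$; the choice $b>\tfrac1{2(\gthh-\gthh')}$ makes $\sum_kN_k^{-2(\gthh-\gthh')}<\infty$, so by monotone convergence the block-suprema are a.s.\ summable, hence tend to $0$, and since the blocks exhaust $\bbN$ we get $n^{1-\gthh}\fv_n f\to0$ a.s.

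For \ref{LK2b}, take $\Dx:=(\II-\Pi_1)D=\set{f\in D:\nu f=0}$, which is an $\RR$-invariant subspace of $\BW$ with $\sup\Re\gs(\RR_{\Dx})=\gthd$ by \refL{L0} (as already used in the proof of \refL{L2bis}), and apply \ref{LK2a} with $\gthh:=1-\gd$: the hypotheses $\gd<1-\gthd$ and $\gd<\half$ say precisely $\gthh>\gthd$ and $\gthh>\half$, so \ref{LK2a} gives $n^\gd\fv_n g\to0$ (a.s.\ and in $L^2$) for all $g\in\Dx$. For general $f\in D$, put $g:=(\II-\Pi_1)f=f-(\nu f)1\in\Dx$; since $\fv_n1=0$ by \eqref{vn1} we have $\fv_n f=\fv_n g$, and the conclusion follows. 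Combined with \refL{LP-Wbis}, this completes the proof of \refT{T1}.

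The only real point of care is the calibration in \ref{LK2a} of the block-length exponent $\tau$ against the available decay rate $2(\gthh-\gthh')$: one needs $\tau$ large enough that the polynomially spaced sequence $N_k=\ceil{k^b}$ is sparse enough for $\sum_kN_k^{-2(\gthh-\gthh')}$ to converge, yet not so large that consecutive blocks $[N_k-N_k^\tau,N_k]$ fail to overlap. These two requirements are simultaneously satisfiable exactly because $\gthh-\gthh'>0$, i.e.\ because we work strictly inside the spectral half-plane; everything else is bookkeeping.
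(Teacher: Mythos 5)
Your proof is correct and follows essentially the same strategy as the paper's: apply \refL{LK1a} with a slightly smaller exponent $\gthh'<\gthh$ to get the $L^2$ statement, then combine \refL{LK1b} with a Borel--Cantelli argument along a polynomially spaced subsequence for the a.s.\ statement, and deduce \ref{LK2b} from \ref{LK2a} applied to $\Dx=(\II-\Pi_1)D$ with $\gthh=1-\gd$. The only cosmetic difference is your explicit choice $N_k=\ceil{k^b}$ for the subsequence (and the explicit verification that the blocks $[N_k-N_k^\tau,N_k]$ eventually cover $\bbN$), whereas the paper defines $n_{k+1}:=n_k+\floor{n_k^\tau}$ recursively so that the blocks $(n_{k-1},n_k]$ partition $\bbN$ by construction; both grow at the same polynomial rate and yield the same conclusion. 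Your condition $\tau>1-2(\gthh-\gthh')$ is also somewhat weaker than the paper's $\tau>1-(\gthh-\gthhh)$, but both are sufficient and achievable, so the difference is immaterial.
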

\begin{proof}
\pfitemref{LK2a}
Let $\gth:=\sup\Re\gs(\RR_\Dx)$
and choose
$\gthhh\in(\gth\vee\half,\gthh)$.
Then \refL{LK1a} applied with $\gthhh$ yields,
for any $f\in \Dx$,
 \begin{align}
  \E \bigg|\left(\frac{\fm_0(E)+n}{\fm_0(E)+1}\right)^{\!\!1-\gthh}\fv_n f\bigg|^2
&\le C \tfm_0 V\, \left(\frac{\fm_0(E)+n}{\fm_0(E)+1}\right)^{\!\!2(1-\gthh)+2(\gthhh-1)}\normbwqq{f}^2
\notag\\
&=  C  \tfm_0 V\,\left(\frac{\fm_0(E)+n}{\fm_0(E)+1}\right)^{\!\!2(\gthhh-\gthh)}\normbwqq{f}^2
=o(1).\label{eq:m0depend}
\end{align}
This implies the convergence \eqref{eq:extendedform0random} in $L^2$.

To show the convergence \as,
choose $\tau\in(0,1)$ with $\tau>1+\gthhh-\gthh$.
Define an increasing sequence $(n_k)$ by $n_0:=1$ and
$n_{k+1}:=n_k+\floor{n_k^{\tau}}$. 
Then \refL{LK1b} applied with $\gthhh$ yields,
for every $k\ge1$, 
 (here $\frak C$ are constants that may depend on $\fm_0$ and $f$)
\begin{align}\label{cx}
\E \sup_{n_{k-1}< n \le n_k}|n^{1-\gthh}\fv_n f|^2 
&\le
C n_k^{2-2\gthh}\E \sup_{n_{k}-n_k^\tau\le n \le n_k}|\fv_n f|^2
\notag\\
&\le
\frak C n_k^{2-2\gthh+2(\gthhh-1)}
=
\frak C n_k^{2\gthhh-2\gthh}
\notag\\&
\le \frak C \sum_{n=n_{k-1}+1}^{n_{k}} n^{2\gthhh-2\gthh-\tau}
.\end{align}
The exponent in the final sum is 
\begin{align}
2\gthhh-2\gthh-\tau < 2\gthhh-2\gthh -  (1+\gthhh-\gthh) = \gthhh-\gthh-1 <-1.
\end{align}
Consequently,
\begin{align}
\E\sumk \sup_{n_{k-1}< n \le n_k}|n^{1-\gthh}\fv_n f|^2 
\le \frak C \sumk  \sum_{n=n_{k-1}+1}^{n_{k}} n^{2\gthhh-2\gthh-\tau}
=
 \frak C \sum_{n=2}^\infty n^{2\gthhh-2\gthh-\tau}<\infty.
\end{align}
Hence, \as, 
\begin{align}
\sumk \sup_{n_{k-1}< n \le n_k}|n^{1-\gthh}\fv_n f|^2<\infty,   
\end{align}
which implies that
$\sup_{n_{k-1}\le n \le n_k}|n^{1-\gthh}\fv_n f|^2 \to0$ as \ktoo, 
and thus
$n^{1-\gthh}\fv_n f \to 0$ as \ntoo.

\pfitemref{LK2b}
Let, as in the proof of \refL{LP-Wbis},
$\Dx:=(1-\Pi_1)D$ and apply \ref{LK2a} to $\Dx$ 
and $f-(\nu f)1\in\Dx$
with 
$\gthh:=1-{  \gd}>\gthd=\sup\Re\gs\bigpar{\RR_{\Dx}}$.
\end{proof}

\begin{remark}
    \label{rem:m0rand0}
    We observe that, if $\fm_0$ is random, then \eqref{eq:m0depend} holds
conditioned on $\fm_0$.
Hence, if 
$\E\bigsqpar{(\fm_0(E)+1)^{2(1-\gthh)} \tfm_0V}<\infty$, then,
using dominated convergence,
\begin{align}
  \E \bigabs{\bigpar{\fm_0(E)+n}^{1-\gthh}\fv_n f}^2&
\le  C
\E\Bigsqpar{(\fm_0(E)+1)^{2(1-\gthh)} \tfm_0 V\,
 \left(\frac{\fm_0(E)+n}{\fm_0(E)+1}\right)^{\!\!2(\gthhh-\gthh)}}\normbwqq{f}^2
\notag\\&
\to0 
\label{m0dep99}
.\end{align}
Hence, the
    convergence~\eqref{eq:extendedform0random} still holds in $L^2$.
\end{remark}

\section{Proof of Theorem~\ref{T2}}\label{Spf2}

\subsection{Proofs of Theorem~\ref{T2}(1) and (2)}\label{Spf212}
Recall that $\tfm_n-\nu=\fv_n$ and that $\fv_n1=0$ by \eqref{vn1}, 
which implies that $\fv_n f$ is not affected if we
subtract a constant 
from $f$. 
It is also obvious that subtracting a constant from $f$ does not affect 
$\gox(f)$ and $\gss(f)$ in \eqref{goxf}--\eqref{gssf}
and \eqref{goxf2}--\eqref{gssf2}.
Hence, replacing $f$ by $f-\nu f$, we may in the proof assume that
$\nu f=0$.
For convenience, we also assume
$\norm{f}_{B(W)}\le1$, 
as we may by homogeneity. 

We will prove (1) and (2) in parallel, since most the arguments are the same
for both cases.
Our proof relies on a
central theorem for martingales given by
Hall \& Heyde \cite[Corollary 3.1]{HH} (see \cite{HH} for other versions
and references). This theorem in \cite{HH} is stated there for real-valued
variables, but it extends immediately to vector-valued variables (in a
finite-dimensional space) 
by the Cram\'er--Wold device \cite[Theorem 5.10.5]{Gut};
in particular, the theorem holds for complex-valued variables by considering
the real and imaginary parts, and can then be stated as follows.
(In general, $\gox$ and $\gss$ may be random, but we are only interested in
the special case when they are constant.)
\begin{theorem}[{\cite[Corollary 3.1]{HH}}]\label{th:hall-heyde} 
	Let $(\zzeta_{n,i}, n\geq 0, 1\leq i\leq n)$ be a complex-valued
    martingale difference array. 
	If there exist 
$\gox\in \mathbb C$ and 
$\sigma^2\ge 0$ such that, in probability when $n\to\infty$,
	\begin{itemize}
		\item[{(a)}] 
$ \sum_{i=1}^n \mathbb E_{i-1} [|\zzeta_{n,i}|^2 \bs 1_{ |\zzeta_{n,i}|\geq \varepsilon}] \to 0$ for all $\varepsilon>0$, and
\item[{(b)}] $\sum_{i=1}^n \mathbb E_{i-1} [\zzeta_{n,i}^2] \to \gox$, and
\item[{(c)}] $ \sum_{i=1}^n \mathbb E_{i-1} [|\zzeta_{n,i}|^2] \to \sigma^2$,
	\end{itemize}
	then, in distribution when $n\to\infty$, $\sum_{i=1}^n \zzeta_{n,i}
    \Rightarrow \Lambda_1 + i\Lambda_2$ where 
the random vector $(\Lambda_1,\Lambda_2)$  has a centered
 Gaussian distribution with covariance matrix 
	\begin{align}
	\frac12\begin{pmatrix}
	\sigma^2+\Re(\gox)&\Im(\gox)\\
	\Im(\gox)&\sigma^2-\Re(\gox)
	\end{pmatrix}
	\end{align}
\end{theorem}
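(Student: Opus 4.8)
The stated result is the complex-valued version of the Hall--Heyde martingale central limit theorem, and the plan is to deduce it from its real-valued counterpart \cite[Corollary~3.1]{HH} by the Cram\'er--Wold device, as already anticipated in the paragraph preceding the statement. First I would split each array entry into real and imaginary parts, $\zzeta_{n,i}=X_{n,i}+\ii Y_{n,i}$ with $X_{n,i},Y_{n,i}$ real. Since conditional expectation commutes with $\Re$ and $\Im$, both $(X_{n,i})$ and $(Y_{n,i})$ are real martingale difference arrays for the same filtration, and hence so is, for each fixed $(a,b)\in\bbR^2$, the linear combination $\eta_{n,i}:=aX_{n,i}+bY_{n,i}=\Re\bigpar{(a-\ii b)\zzeta_{n,i}}$. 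The goal is then to verify, for every such $(a,b)$, the hypotheses of the scalar theorem for the one-dimensional array $(\eta_{n,i})_{1\le i\le n}$, for which conditions (b) and (c) coincide because $\eta_{n,i}^2=|\eta_{n,i}|^2$.

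Next I would carry out the bookkeeping that turns (a)--(c) for $(\zzeta_{n,i})$ into the scalar hypotheses for $(\eta_{n,i})$. Using $X_{n,i}^2=\tfrac12\bigpar{|\zzeta_{n,i}|^2+\Re\zzeta_{n,i}^2}$, $Y_{n,i}^2=\tfrac12\bigpar{|\zzeta_{n,i}|^2-\Re\zzeta_{n,i}^2}$ and $X_{n,i}Y_{n,i}=\tfrac12\Im\zzeta_{n,i}^2$, one expands $\eta_{n,i}^2$ as a fixed linear combination of $|\zzeta_{n,i}|^2$, $\Re\zzeta_{n,i}^2$ and $\Im\zzeta_{n,i}^2$; summing over $i$ and invoking (b) and (c) shows that $\sum_{i=1}^n\E_{i-1}[\eta_{n,i}^2]$ converges in probability to the quadratic form $v(a,b):=\tfrac{a^2}{2}\bigpar{\gss+\Re\gox}+ab\,\Im\gox+\tfrac{b^2}{2}\bigpar{\gss-\Re\gox}$, which is exactly $(a,b)\,\gS\,(a,b)^{\top}$ for the covariance matrix $\gS$ appearing in the statement (and is automatically nonnegative, being a limit of nonnegative quantities, so $\gS$ is positive semidefinite). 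For the conditional Lindeberg condition I would use $|\eta_{n,i}|\le(|a|+|b|)\,|\zzeta_{n,i}|$, so that $\{|\eta_{n,i}|\ge\eps\}\subseteq\{|\zzeta_{n,i}|\ge\eps/(|a|+|b|)\}$ and $\eta_{n,i}^2\le(|a|+|b|)^2|\zzeta_{n,i}|^2$; the required convergence of the truncated sums then follows from (a), the case $(a,b)=0$ being trivial.

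With these verifications done, \cite[Corollary~3.1]{HH} applied to the real martingale difference array $(\eta_{n,i})$ gives $\sum_{i=1}^n\eta_{n,i}=a\sum_{i=1}^nX_{n,i}+b\sum_{i=1}^nY_{n,i}\dto\mathcal N\bigpar{0,v(a,b)}$. Since this holds for every $(a,b)\in\bbR^2$, the Cram\'er--Wold device \cite[Theorem~5.10.5]{Gut} yields joint convergence $\bigpar{\sum_{i=1}^nX_{n,i},\ \sum_{i=1}^nY_{n,i}}\dto(\Lambda_1,\Lambda_2)$ to a centered Gaussian vector with $\Var\bigpar{a\Lambda_1+b\Lambda_2}=v(a,b)=(a,b)\,\gS\,(a,b)^{\top}$ for all $(a,b)$, i.e.\ with covariance matrix $\gS$; recombining, $\sum_{i=1}^n\zzeta_{n,i}=\sum_{i=1}^nX_{n,i}+\ii\sum_{i=1}^nY_{n,i}\dto\Lambda_1+\ii\Lambda_2$, which is the assertion. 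There is no genuine obstacle here: the only points requiring a little care are the algebraic identification of the limiting conditional variance $v(a,b)$ of each real projection with the quadratic form attached to the claimed covariance matrix, and the (routine) inheritance of the scalar Lindeberg hypothesis from (a).
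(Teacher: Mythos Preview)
Your proof is correct and follows exactly the approach the paper indicates just before the statement: reduce to the real-valued version of \cite[Corollary~3.1]{HH} via the Cram\'er--Wold device by considering real and imaginary parts. The paper does not spell out the details, so your computation of the limiting variance $v(a,b)$ and the verification of the scalar Lindeberg condition are a faithful expansion of what the paper leaves implicit.
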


In 
(1), we assume that $\RR_D$ is a small
operator. In this case, 
recall from \refL{Lth} that $\gthd<1/2$.
We may thus choose 
$\delta\in(\half,1)$ such that $ \gd < 1-\gthd$;
we fix such a $\delta$ for the rest of the proof.

In 
(2), we assume that $\RR_D$ and $\RR$ are \slqc{}
operators and that the spectrum of $\RR_D$ is given by  
\begin{align}
\sigma(\RR_D)=\{1,\lambda_1,\ldots,\lambda_p\}\cup \Delta, 
\end{align} 
where 
$\Re(\lambda_1)=\cdots=\Re(\lambda_p)=1/2$,  and
$\sup\Re(\Delta) < 1/2$. 
Thus $\gthd=\half$.
Let $\gD':=\gD\cup\set1$. 
Then
\begin{align}\label{afa}
  f = \Bigpar{\Pi_{\gD'}+\sumjp\Pi_{\gl_j}}f = \Pi_{\gD'} f +\sumjp \fj.
\end{align}
Furthermore, $\RR$ is a small 
operator in $D':=\Pi_{\gD'}D$.
 Hence, according to \eqref{eqT1first} of
Theorem~\ref{T1} applied to $D'$ with  $\gd=\half$, 
\begin{align}\label{aea}
\E\Bigabs{\frac{\sqrt{n}}{(\log n)^{\kappa-\nicefrac12}}\fv_n (\Pi_{\gD'}f)}^2
\le \frak C(\log n)^{1-2\kk}
  \xrightarrow[n\to+\infty]{} 0,
\end{align}
and hence it is sufficient to prove \eqref{t12} for $f-\Pi_{\gD'} f$ instead
of $f$.  
In other words, in (2) we may assume that
\begin{align}\label{ada}
  f = f-\Pi_{\gD'} f = \sumjp \fj.
\end{align}
Note that $\norm{\fj}_{\BW}\le C$, since
each $\Pi_{\gl_j}$ is a bounded operator.

\smallskip
Returning to treating (1) and (2) together,
we use \eqref{ba}, which we now write as
\begin{align}\label{ba'}
	\ba_n \fv_nf=\sumion \ba_n \zeta_{n,i} = \sumion \zzeta_{n,i}
\end{align}
where 
\begin{align}\label{bn-def}
	\ba_n:=
	\begin{cases}
		n\qq&\text{ under the conditions of (1),}\\
		\frac{n\qq}{(\log n)^{\kappa-\nicefrac12}}&\text{ under the conditions of (2)},
	\end{cases}
\end{align}
and $\zzeta_{n,i}:=\ba_n   \zeta_{n,i}$.
For $1\leq i\leq n$, we also set, 
for (2) considering in the sequel only $n\ge2$,
\begin{align}\label{defc}
\cd_{i,n}:=\begin{cases}
i^{2\delta-2}\,n^{1-2\delta}&\text{ under the conditions of (1),}\\
i^{-1}(\log n)^{-1}&\text{ under the conditions of (2)}.
\end{cases}
\end{align}
By Lemma~\ref{lem:prelem}, we have, using part \ref{LPLa} for (1) and part
\ref{LPLb} together with the decomposition \eqref{ada} for (2), 
recalling \eqref{kkj}, 
\begin{align}
|\zzeta_{n,0}|\leq \frak C\,\cd_{1,n}\qq\xrightarrow[n\to+\infty]{}0,
\end{align}
meaning that the $\zzeta_{n,0}$ may be ignored in~\eqref{ba'}.

We check that the $\zzeta_{n,i}$ satisfy conditions (a), (b) and (c) of
Theorem~\ref{th:hall-heyde}:

\medskip\textbf{Condition (a).} We want to  show 
the conditional Lindeberg condition
\begin{align}\label{lindeberg}
	\sum_{i=1}^n \E_{i-1} \bigsqpar{|\zzeta_{n,i}|^2 \etta_{|\zzeta_{n,i}|\geq \varepsilon}} &\xrightarrow[n\to+\infty]{\mathrm p} 0,
	\qquad \text{for every $\varepsilon>0$}
.\end{align}
From Lemma~\ref{lem:prelem} and \eqref{bn-def},
\begin{align}\label{aga}
\E_{i-1}|\zzeta_{n,i}|^{q}\leq \frak C\,\cd_{i,n}^{\nicefrac q2}\tfm_{i-1}(V),
\end{align}
where $\cd_{i,n}$ is defined in~\eqref{defc}. 
By~\eqref{lmv4a} in \refL{Lmwqq}, this implies 
\begin{align}\label{khan}
	\E\left[|\zzeta_{n,i}|^2\etta_{|\zzeta_{n,i}|\geq \varepsilon}\right]
\le \eps^{2-q}\E \bigabs{\zzeta_{n,i}}^{q}
\le \frak C_{\eps} \cd_{i,n}^{\nicefrac q2},
\end{align}
 for some constant $\frak C_{\eps}$ which may depend on $\fm_0$ and $\eps$.
We deduce that
\begin{align}\label{pong}
\E\sum_{i=1}^n \E_{i-1}\bigsqpar{|\zzeta_{n,i}|^2 \etta_{|\zzeta_{n,i}|\geq \varepsilon}}
&= \sumin 	\E\left[|\zzeta_{n,i}|^2\etta_{|\zzeta_{n,i}|\geq \varepsilon}\right]
\le \frak \Ceps \sumin \cd_{i,n}^{\nicefrac q2}
\notag\\ &
\le \frak \Ceps \bigpar{\max_{i\le n} \cd_{i,n}}^{\nicefrac q2-1}\sumin \cd_{i,n}
\to 0,
\end{align}
as \ntoo, 
since \eqref{defc} implies
$\max_{i\le n} \cd_{i,n}=\cd_{1,n} \to0$ and
$\sumin \cd_{i,n}\le C$.
 Hence, 
\eqref{lindeberg}
holds, which is  Condition~(a) of Theorem~\ref{th:hall-heyde}.

\medskip\textbf{Condition (b).} First note that for $i\ge1$, 
using the fact that $\zzeta_{n,i} = \ba_n\zeta_{n,i}$ and the definition of $\zeta_{n,i}$ in \eqref{zetai}, 
and setting $\fin:=B_{i,\nnii}f$, we obtain
\begin{align}\label{qd}
	\E_{i-1} [\zzeta_{n,i}^2]&
=\ba_n^2\gam_{i-1}^2\E_{i-1}\big[(\gD M_i \fin)^2\big]
=\ba_n^2\gam_{i-1}^2 \E_{i-1}\bigsqpar{\bigpar{R\nni_{Y_i}\fin-\E_{i-1}R\nni_{Y_i}\fin}^2}
	\notag\\&
	=\ba_n^2\gam_{i-1}^2 \bigl(\E_{i-1}\big[\bigpar{R\nni_{Y_i}\fin}^2\big]
	-\xpar{\E_{i-1}R\nni_{Y_i}\fin}^2\bigr)
	\notag\\&
	=\ba_n^2\gam_{i-1}^2 \bigl(\E_{i-1}\BB_{Y_i}(\fin)
	-\xpar{\E_{i-1}R\nni_{Y_i}\fin}^2\bigr)
	\notag\\&
	=\ba_n^2\gam_{i-1}^2 \big(
	\tfm_{i-1}
	\BB(\fin)
	-\bigpar{\tfm_{i-1}\RR\fin\big)^2
	}
	\notag\\&
	=\ba_n^2\gam_{i-1}^2 \big(
	\nu\BB(\fin) + \fv_{i-1}\BB(\fin) -\bigpar{\tfm_{i-1}\RR\fin}^2
	\big)
	.\end{align}
We treat the three terms in the final parenthesis separately.
We start with the third term; by \eqref{nurr} 
and~\eqref{nuB},
\begin{align}\label{qg}
	\nu \RR \fin = \nu\fin
	=\nu B_{i,\nnii}f=\nu f=0.
\end{align}
Hence, according to~\eqref{eq:LP-W-2bis}, 
and using $\fv_0 W<\infty$ when $i=1$,
there exists $\varepsilon>0$ such that
\begin{align}\label{lyra} 
\ba_n^2\gam_{i-1}^2	\E \big[\abs{\tfm_{i-1}\RR\fin}^2\big]
= \ba_n^2\gam_{i-1}^2 \E \big[|\fv_{i-1}\RR\fin|^2\big]
\le \frak C \ba_n^2\gam_{i-1}^2 i^{-\varepsilon} \norm{\fin}_{B(W)}^2
.\end{align}
Furthermore, by \eqref{gamma} and Lemma~\ref{L2bis}, 
again using the decomposition \eqref{ada} for (2),
we have for all $n\geq i\geq 1$,
\begin{align}
	\label{eq:fin-bound}
 \ba_n^2\gam_{i-1}^2\norm{\fin}_{B(W)}^2
= \ba_n^2\gam_{i-1}^2\|B_{i,n}f\|^2
\leq \frak C \cd_{i,n}.
\end{align}
We thus get that
\begin{align}\label{qk3}
\E\Bigabs{	\sumin \ba_n^2\gam_{i-1}^2(\tfm_{i-1}\RR\fin)^2}\le
\E	\sumin \ba_n^2\gam_{i-1}^2\bigabs{\tfm_{i-1}\RR\fin^2}&
	\le \frak C\,\sumin i^{-\varepsilon}\cd_{i,n}\xrightarrow[n\to+\infty]{}0.
\end{align}

\smallskip
We now treat the second term in~\eqref{qd}. 
Using~\eqref{eq:LP-W-3bis} in Lemma~\ref{LP-Wbis}, together with
the fact that $\BB$ is a bounded quadratic operator 
$\BW\to B(W^2)$ (see~\eqref{rbb}),
and \eqref{eq:fin-bound},
	we obtain that there exists $\varepsilon>0$ such that
	\begin{align}\label{qh-W}
		\ba_n^2\gam_{i-1}^2\E |\fv_{i-1}\BB(\fin) | 
		&\le \frak C i^{-\varepsilon} \ba_n^2\gam_{i-1}^2\norm{\BB(\fin)}_{B(W^2)}
		\notag
		\le \frak C i^{-\varepsilon} \ba_n^2\gam_{i-1}^2\norm{\fin}_{B(W)}^2\\
		&\le \frak C i^{-\varepsilon} \,\cd_{i,n}
	\end{align}
	and hence
	\begin{align}\label{qk2-W}
		\sumin \ba_n^2\gam_{i-1}^2   \E \bigabs{\fv_{i-1}\BB(\fin)}&
		\le \frak C \sumin i^{-\varepsilon} \cd_{i,n}
		\xrightarrow[n\to+\infty]{}0.
	\end{align}

\smallskip

We now consider the first term of~\eqref{qd}, 
which needs a different treatment under the conditions of (1) and (2), so we
treat the two cases separately.

\textit{Under the conditions of {\rm (1)},} we rewrite the sum of the terms corresponding to the first term in~\eqref{qd} (note that this sum is non-random) as an integral:
\begin{align}\label{ql}
	\sumin \ba_n^2\gam_{i-1}^2 \nu \BB(\fin)=	\sumin n\gam_{i-1}^2 \nu \BB(\fin)
	=
	\intoi n^2\gam_{\floorx{nx}}^2\nu\BB(f_{\ceil{xn},n}) \dd x.
\end{align}
Using $\nu W^2<\infty$ (implied by \ref{as:nu2}),
\eqref{rbb}, and
\eqref{eq:fin-bound},
we obtain that
\begin{align}\label{qm}
	n^2\gam_{\floorx{nx}}^2\big|\nu\BB(f_{\ceil{nx},n})\big|&
\le n^2\gam_{\floorx{nx}}^2\norm{\BB(f_{\ceil{nx},n})}_{B(W^2)}
	\le C n^2\gam_{\floorx{nx}}^2 \norm{f_{\ceil{nx},n}}_{B(W)}^2
\notag\\&
\le \frak C n \cd_{\ceil{nx},n}
= \frak C {\ceil{nx}}^{2\delta-2}n^{2-2\delta}\leq \frak C x^{2\delta-2}.
\end{align}
Furthermore, for every fixed $x\in(0,1)$, 
we have
by \refL{L1},
uniformly for $z$ in a compact set and all $n\ge1$,
\begin{align}
  b_{\ceil{nx},n}(z)
=\Bigpar{1+O\Bigparfrac{1}{\ceil{nx}}}\parfrac{n}{\ceil{nx}}^{z-1}
=x^{1-z}+O(1/n).
\end{align}
Hence, \eqref{||h||} shows that
\begin{align}\label{qoj}
  \norm{B_{\ceil{nx},n}-x^{1-\RR}} = 
  \norm{b_{\ceil{nx},n}(\RR)-x^{1-\RR}} = O(1/n),
\end{align}
and, in particular,
\begin{align}\label{qo}
	f_{\ceil{nx},n}=B_{\ceil{nx},\nnii}f
	\xrightarrow[n\to+\infty]{} x^{1-\RR}f 
= x x^{-\RR}f  
= x\ee^{-(\log x)\RR}f  
\end{align}
in $B(W)$.
Furthermore, 
$g\mapsto \nu g$ is a continuous linear functional on $B(W^2)$, since 
$\nu W^2<\infty$, and thus, recalling \refR{RBB},
$f\mapsto\nu\BB(f)$ is a continuous quadratic form on
$B(W)$. 
Hence, \eqref{qo} implies
\begin{align}\label{qn0}
	\nu\BB(f_{\ceil{xn},n})
	\xrightarrow[n\to+\infty]{} \nu\BB\bigpar{x\ee^{-(\log x)\RR}f } 
	=
x^2\nu\BB\bigpar{\ee^{-(\log x )\RR}f }. 
\end{align}
Moreover, 
we have $n^2\gam_{\floorx{nx}}^2\to x^{-2}$ when $n\to+\infty$.
Consequently, for every fixed $x\in(0,1)$, 
\begin{align}\label{qn}
	n^2\gam_{\floorx{nx}}^2\nu\BB(f_{\ceil{xn},n})
	\xrightarrow[n\to+\infty]{} 
	\nu\BB\bigpar{\ee^{-(\log x )\RR}f }. 
\end{align}
Consequently, by \eqref{ql} and dominated convergence justified by \eqref{qm},
followed by a change of variables,
\begin{align}\label{qk1}
	\sumin \ba_n^2\gam_{i-1}^2\nu \BB(\fin)&
	\xrightarrow[n\to+\infty]{}
	\intoi \nu\BB\bigpar{\ee^{-(\log x )\RR}f } \dd x
	=
	\intoo \nu\BB\bigpar{\ee^{s R}f } \ee^{-s}\dd s
	\notag\\&
	=\gox(f),
\end{align}
where it also follows that the integral is absolutely convergent as claimed
in Theorem~\ref{T1}. 
The final equality in \eqref{goxf} follows by Fubini's theorem.

\textit{Under the conditions of {\rm (2)},}
we observe that for $n,m\ge1$,
	\begin{align}\label{menlos}
\left(\frac{n}{m}\right)^{\RR-\II}\Pi_{\lambda_j}f
=\left(\frac{n}{m}\right)^{\lambda_j-1}\left(\frac{n}{m}\right)^{\RR-\lambda_j}
\Pi_{\lambda_j}f
=\left(\frac{n}{m}\right)^{\lambda_j-1}
\sum_{k\geq 0}\frac{1}{k!}(\log\nicefrac{n}{m})^{k}(\RR-\lambda_{j})^k\Pi_{\lambda_j} f,
\end{align}
	where the terms with $k\geq \kappa_j$ are null by \eqref{kkj}.
We deduce from
\eqref{mamba}, \eqref{ada} and \eqref{menlos}
that, for  $n\geq m\ge1$, 
	\begin{align}
		\label{Bmndecomp}
f_{m,n}=
	B_{m,n}f 
	= 
	(1+h_{m,n}(\RR))
\sum_{j=1}^p
\sum_{k=0}^{\kappa_j-1}
\left(\frac{n}{m}\right)^{\lambda_j-1}
\frac{1}{k!}\,
\left(\log(\nicefrac{n}{m})\right)^k (\RR-\lambda_j \II)^k \Pi_{\lambda_j} f
	\end{align}
	where $\opnorm{h_{m,n}(\RR)} = \mathcal O(\nicefrac1m)$ on $D$ by
    \eqref{l1h} and \eqref{||h||}.
We deduce that
\begin{align}\label{bidd}
\sumin \ba_n^2\gam_{i-1}^2 \nu \BB(\fin)
=
\sum_{j,j'=1}^p
\sum_{k=0}^{\kappa_j-1}
\sum_{\ell=0}^{\kappa_{j'}-1}
\sum_{i=1}^n a^{\sss (k,\ell,j,j')}_{n,i}x^{\sss (k,\ell,j,j')}_{n,i},
\end{align}
where
\begin{align}\label{qoa}
a^{\sss (k,\ell,j,j')}_{n,i} = 
i^{-\lambda_j-\lambda_{j'}}n^{\lambda_j+\lambda_{j'}-1}(\log \nicefrac{n}{i})^{k+\ell}/(\log n)^{2\kappa-1},
\end{align}
and, using that $\BBB$ is a bounded bilinear operator (\refR{RBB}) and thus
$\nu\BBB$ is a bounded bilinear form on $\BW$,
and also \eqref{gamma},
\begin{align}\label{qox}
x_{n,i}^{\sss (k,\ell,j,j')} 
&= \frac{\gamma_{i-1}^2i^2}{k!\ell!}
\nu\BBB\left((1+h_{i,n}(\RR))(\RR-\lambda_j \II)^k
  \Pi_{\lambda_j}f,(1+h_{i,n}(\RR))(\RR-\lambda_{j'}
  \II)^\ell\Pi_{\lambda_{j'}} f\right)
\notag\\
&=
\frac{\gamma_{i-1}^2i^2}{k! \ell!}
\nu\BBB\left((\RR-\lambda_j
           \II)^k\Pi_{\lambda_j} f,(\RR-\lambda_{j'}
           \II)^\ell\Pi_{\lambda_{j'}} f\right)
+O(\nicefrac{1}{i})
\notag\\
&=
\frac{1}{k! \ell!}
\nu\BBB\left((\RR-\lambda_j
           \II)^k\Pi_{\lambda_j} f,(\RR-\lambda_{j'}
           \II)^\ell\Pi_{\lambda_{j'}} f\right)
+O(\nicefrac{1}{i})
.\end{align}

Fix $k,\ell,j,j'$ as in \eqref{bidd}. By definition of $a^{\sss
  (k,\ell,j,j')}_{n,i}$ (see~\eqref{qoa}), and because $\Re(\lambda_j)=\Re(\lambda_{j'})=\nicefrac12$,
  we get
\begin{align}\label{jul}
	\sum_{i=1}^n a^{\sss (k,\ell,j,j')}_{n,i}
	&= \frac{n^{\ii\Im(\lambda_j+\lambda_{j'})}}{(\log n)^{2\kappa-1}}
   \sum_{i=1}^n i^{-1-\ii\Im(\lambda_j+\lambda_{j'})}\log^{k+\ell}(\nicefrac
   ni)
\notag\\
	&= \begin{cases}
		\frac{1+o(1)}{2\kappa-1}&\text{ if
        }\Im(\lambda_j+\lambda_{j'})=0\text{ and }k=\ell=\kappa-1,
\\
		o(1)&\text{ if }\Im(\lambda_j+\lambda_{j'})\neq 0\text{ or }k<\kappa-1\text{ or }\ell<\kappa-1,
	\end{cases} 
\end{align}
when $n\to+\infty$,
where we refer to Lemma~\ref{lem:tecLemma} for detailed calculations.
Furthermore,
\begin{align}\label{staffan}
	\sum_{i=1}^n \bigabs{a^{\sss (k,\ell,j,j')}_{n,i}}\frac{1}{i}
	&= \frac{1}{(\log n)^{2\kappa-1}}
   \sum_{i=1}^n i^{-2}\log^{k+\ell}(\nicefrac ni)
\le
 \frac{(\log n)^{k+\ell}}{(\log n)^{2\kappa-1}}
   \sum_{i=1}^n i^{-2}
\le \frac{C}{\log n}\to0.
\end{align}
It folllows from \eqref{bidd}, \eqref{qox}, \eqref{jul} and \eqref{staffan}
that, as \ntoo,
\begin{multline}
	\sumin \ba_n^2\gam_{i-1}^2 \nu \BB(\fin)
	\to  \goy(f):=
\\
\sum_{j,j'=1}^p  \frac{\etta_{\kappa_j=\kappa_{j'}=\kappa,\
          \overline{\lambda_j}=\lambda_{j'}}}{(2\kk-1)((\kappa-1)!)^2}
\nu\BBB\left((\RR-\lambda_j \II)^{\kappa-1}\Pi_{\lambda_j} f,(\RR-\lambda_{j'} \II)^{\kappa-1}\Pi_{\lambda_{j'}} f\right).
	\label{eq:condc-claim2}
\end{multline}

\medskip\textbf{Condition (c).} 
Condition~(c) of Theorem~\ref{th:hall-heyde} 
is verified in the same way as  Condition (b) above, with
mainly notational differences.
We therefore omit the details and only give a sketch.
For $i\ge1$, we have, corresponding to \eqref{qd},
\begin{align}\label{qdbis}
	\E_{i-1} [|\zzeta_{n,i}^2|]&
	=\ba_n^2\gam_{i-1}^2 \big(
	\nu\mathbf{C}(\fin) + \fv_{i-1}\mathbf{C}(\fin) -\bigabs{\tfm_{i-1}\RR\fin}^2
	\big),
\end{align}
with $\BC$ defined in \eqref{BB}.
As for (b),
the two last terms can be neglected and, concerning the first term, we have
the following convergence results, which depend on
whether we work under the conditions of (1) or (2):

\textit{Under the conditions of {\rm (1)}.} One shows that
\begin{align}\label{qk1bis}
	\sumin \ba_n^2\gam_{i-1}^2\nu \mathbf{C}(\fin)
	\xrightarrow[n\to+\infty]{}&
	\intoi \nu\mathbf{C}\bigpar{\ee^{-(\log x )\RR}f } \dd x
	=
	\intoo \nu\mathbf{C}\bigpar{\ee^{s R}f } \ee^{-s}\dd s
	\notag\\&
	=\gss(f),
\end{align}
where the integral is absolutely convergent, as claimed in~Theorem~\ref{T1}.

\textit{Under the conditions of {\rm (2)}.} Using the same approach as
above, but conjugating the second argument of $\BBB$, we obtain, as
$n\to+\infty$, 
\begin{align}\label{lucia}
\sumin \ba_n^2\gam_{i-1}^2 \nu \mathbf{C}(\fin)&\to  
\sum_{j,j'=1}^p  \frac{\etta_{\kappa_j=\kappa_{j'}=\kappa,\ \lambda_j=\lambda_{j'}}}
{(2\kk-1)((\kappa-1)!)^2}
\nu\BBB\left((\RR-\lambda_j \II)^{\kappa-1}\Pi_{\lambda_j} f,
\overline{(\RR-\lambda_{j'} \II)^{\kappa-1}\Pi_{\lambda_{j'}} f}\right)
.\end{align}
Note that the condition  $\lambda_j=\overline{\lambda_{j'}}$ 
in \eqref{eq:condc-claim2} has been changed into 
$\lambda_j={\lambda_{j'}}$, so that the sum in \eqref{lucia} really is a
single sum; hence \eqref{lucia} can be written, recalling \eqref{BB},
\begin{align}
	\label{eq:condc-claim2bis}
	\sumin \ba_n^2\gam_{i-1}^2 \nu \mathbf{C}(\fin)&
\xrightarrow[n\to+\infty]{} 
\sum_{j=1}^p  \frac{\etta_{\kappa_j=\kappa}}{(2\kk-1)((\kappa-1)!)^2}
{\nu\mathbf{C}}\left((\RR-\lambda_j \II)^{\kappa-1}\Pi_{\lambda_j} f\right)=\gs^2(f).
\end{align}

\medskip
We have thus checked that under the conditions of either 
Theorem~\ref{T2}(1) or Theorem~\ref{T2}(2), 
the $\zzeta_{n,i}$ ($1\leq i\leq n$) satisfy Conditions (a), (b) and (c) of Theorem~\ref{th:hall-heyde}.
The values of $\gox$ and $\sigma^2$ in (b) and (c) are given by~\eqref{qk1} and~\eqref{qk1bis} under the conditions of (1),
and by~\eqref{eq:condc-claim2} and~\eqref{eq:condc-claim2bis} under the
conditions of (2).
Therefore, since $\ba_n\fv_n f = \sum_{i=0}^n \zzeta_{n,i}$, and since we have
shown that $\zeta_{n,0}\to 0$ a.s., 
\refT{th:hall-heyde} yields the results \eqref{t11} and \eqref{t12}.

\subsection{Proof of Theorem~\ref{T2}(3)}\label{SpfT2}
Next, consider the case of a
generalized eigenfunction corresponding to an eigenvalue $\gl$ with
$\Re\gl>\half$.
We state a lemma under slightly more general assumptions. 

\begin{lemma}
  \label{LIH1}
Suppose that $\Dx$ is an $\RR$-invariant subspace of $\BW$ such that
$\gs(\RR_\Dx)=\set{\gl}$ consists of a single point $\gl$ with
$\half<\Re\gl\le1$. 
Then each operator $B_{m,n}$, $0\le m\le n$, is invertible on $\Dx$. 
If $f\in \Dx$, then there exists a complex random variable $\gL_f$ 
such that
\begin{align}\label{lih1a}
\fv_n B_{0,n}\qw f\to \gL_f  
\end{align}
\as{} and in $L^2$ as \ntoo; moreover,
 for any $0<\eps  <\Re\gl-\half$, there exists a constant $C_\eps>0$
such that
  \begin{align}
  \label{eq:speedconv1}
  \E\left|\fv_n B_{0,n}\qw f-\gL_f  \right|^2\leq \frac{C_\eps\,\tfm_0 V}{n^{2(\Re\lambda-\eps)-1}} \norm{f}_{\BW}^2. 
  \end{align}  
Furthermore, 
\begin{align}\label{lih1b}
  \E\gL_f=\fv_0f=\tfm_0f-\nu f
\end{align}
 and
 \begin{align}\label{lih1bsquare}
 \E |\Lambda_f|^2\leq C\tfm_0 V\,\|f\|_{\BW}^2.
 \end{align}   
\end{lemma}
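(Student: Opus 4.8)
The plan is to realise $\fv_n B_{0,n}\qw f$ as an $L^2$-bounded martingale and then read off \eqref{lih1a}--\eqref{lih1bsquare} from martingale convergence. For the invertibility statement: by \eqref{Bbmn} and \eqref{bmn}, $B_{m,n}=b_{m,n}(\RR)$ with $b_{m,n}(z)=\prod_{k=m}^{n-1}(1+\gam_k(z-1))$, and on $\gs(\RR_\Dx)=\set{\gl}$ every factor satisfies $\Re\bigpar{1+\gam_k(\gl-1)}=1+\gam_k(\Re\gl-1)\ge1-\gam_k>0$, since $\gam_k\in(0,1)$ and $\Re\gl\le1$; hence $1/b_{m,n}$ is holomorphic near $\gs(\RR_\Dx)$ and $B_{m,n}=b_{m,n}(\RR_\Dx)$ is invertible on $\Dx$. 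Note that each $B_{0,i}$ is a non-random operator (it depends only on $\fm_0(E)$).

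Next I would set $Z_n:=\fv_n B_{0,n}\qw f$ (with $B_{0,0}=\II$, so $Z_0=\fv_0 f$) and combine $\fv_{n+1}=\fv_n B_{n,n+1}+\gam_n\gDM_{n+1}$ from \eqref{vn+1} (using $B_{n,n+1}=\II+\gam_n(\RR-\II)$) with $B_{0,n+1}=B_{0,n}B_{n,n+1}$ from \eqref{Bmn}. As all these operators commute and are invertible on $\Dx$, one gets $B_{n,n+1}B_{0,n+1}\qw=B_{0,n}\qw$ on $\Dx$, whence $Z_{n+1}=Z_n+\gam_n(\gDM_{n+1})(B_{0,n+1}\qw f)$ and, by induction,
\begin{align*}
Z_n=\fv_0 f+\sumin\gam_{i-1}(\gDM_i)(B_{0,i}\qw f).
\end{align*}
Since $B_{0,i}\qw f$ is non-random and $\E_{i-1}\gDM_i=0$, $(Z_n)_n$ is a martingale with orthogonal $L^2$-increments, so it only remains to show the increments are summable in $L^2$.

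The key step is the estimate $\norm{B_{0,i}\qw}_\Dx\le C_\eps\,\bigparfrac{\fm_0(E)+i}{\fm_0(E)+1}^{1-\Re\gl+\eps}$ for any fixed $\eps\in(0,\Re\gl-\half)$. Taking $\gG$ to be the circle of radius $\eps$ about $\gl$ (which encircles $\gs(\RR_\Dx)$, with $\Re z>\half$ on $\gG$), the functional-calculus bound \eqref{||h||} gives $\norm{B_{0,i}\qw}_\Dx\le C\sup_{z\in\gG}\abs{1/b_{0,i}(z)}$. By the factorisation of \refL{L1} (equivalently \eqref{rgamma} together with Stirling's formula), $1/b_{0,i}(z)=(1+h_{0,i}(z))\qw\bigparfrac{\fm_0(E)+i}{\fm_0(E)+1}^{1-z}$, and the modulus of the last factor is $\bigparfrac{\fm_0(E)+i}{\fm_0(E)+1}^{1-\Re z}\le\bigparfrac{\fm_0(E)+i}{\fm_0(E)+1}^{1-\Re\gl+\eps}$; so it suffices to bound $1+h_{0,i}(z)$ away from $0$ uniformly in $i\ge1$, $z\in\gG$ and $\fm_0$. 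I would obtain this by noting that, uniformly on $\gG$ and with error $O(1/i)$ uniform in $\fm_0(E)$, $h_{0,i}(z)\to\gG(\fm_0(E)+1)(\fm_0(E)+1)^{z-1}/\gG(\fm_0(E)+z)-1$, whose modulus stays bounded above and below over $\fm_0(E)\in\ooo$ and $z\in\gG$ (it tends to $1$ as $\fm_0(E)\to\infty$, by Stirling), the finitely many small $i$ being handled directly since $b_{0,i}(z)\neq0$ there. This is the only genuinely delicate point; everything else is bookkeeping built on Lemmas~\ref{lem:algo_sto}, \ref{Lmwqq} and~\ref{L1}.

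Finally, feeding this into \eqref{lg} and using $\gam_{i-1}=1/(i+\fm_0(E))$ from \eqref{gamma},
\begin{align*}
\gam_{i-1}^2\E\bigabs{(\gDM_i)(B_{0,i}\qw f)}^2\le C_\eps\,(\fm_0(E)+i)^{-2\Re\gl+2\eps}(\fm_0(E)+1)^{2\Re\gl-2\eps-2}\,\tfm_0 V\,\normbwqq{f}^2,
\end{align*}
which is summable in $i$ precisely because $\eps<\Re\gl-\half$. Hence $\sup_n\E|Z_n|^2<\infty$ (using also \eqref{lg0} for the $i=0$ term), so $Z_n\to\gL_f$ a.s.\ and in $L^2$, giving \eqref{lih1a}; summing the tail $\sum_{i>n}$ and comparing with an integral, the $\fm_0(E)$-powers combine (a short computation using $\Re\gl\le1$, so that $2\Re\gl-2\eps-1<1$) into $n^{-(2\Re\gl-2\eps-1)}$, which is \eqref{eq:speedconv1}. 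Since $\fm_0$ is non-random, $\E Z_n=\E Z_0=\fv_0 f=\tfm_0 f-\nu f$ for all $n$, and $L^2$-convergence yields \eqref{lih1b}; finally $\E|\gL_f|^2=|\fv_0 f|^2+\sum_{i\ge1}\gam_{i-1}^2\E\bigabs{(\gDM_i)(B_{0,i}\qw f)}^2$, which with $\eps$ fixed (say $\eps=\tfrac12(\Re\gl-\half)$) and \eqref{lg0}--\eqref{lg} is $\le C\,\tfm_0 V\,\normbwqq{f}^2$, proving \eqref{lih1bsquare}.
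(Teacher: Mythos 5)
Your proof is correct and follows essentially the same strategy as the paper: invert each $B_{m,n}$ on $\Dx$ by the functional calculus, realise $\fv_n B_{0,n}\qw f$ as the martingale $\fv_0 f + \sum_{i\le n}\gam_{i-1}\gD M_i B_{0,i}\qw f$ (the paper gets this directly by applying \eqref{alf} to $B_{0,n}\qw f$ together with $B_{i,n}B_{0,n}\qw=B_{0,i}\qw$, rather than by your one-step recursion, but these are identical), derive the operator-norm estimate $\norm{B_{0,i}\qw}_{\Dx}\lesssim (\fm_0(E)+i)^{1-\Re\gl+\eps}$, and then use \eqref{lg0}--\eqref{lg} and $2(\Re\gl-\eps)>1$ to sum the increments.

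The one place where you genuinely diverge is the operator-norm bound \eqref{h8}. You work directly with the $m=0$ factorisation $b_{0,i}(z)=(1+h_{0,i}(z))\bigparfrac{\fm_0(E)+i}{\fm_0(E)+1}^{z-1}$ and then have to show $|1+h_{0,i}(z)|$ is bounded below, uniformly in $i$, $z\in\gG$ and $\fm_0(E)\in\ooo$; this forces you through Stirling and the nonvanishing of $\gG(\fm_0(E)+1)(\fm_0(E)+1)^{z-1}/\gG(\fm_0(E)+z)$, which does hold (no pole for $\Re(\fm_0(E)+z)>\thalf$, and the ratio tends to $1$ as $\fm_0(E)\to\infty$), but it is a slightly delicate compactness-plus-tail argument. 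The paper sidesteps this entirely: it fixes an $\fm_0$-\emph{independent} $m$ so that \eqref{l1b} holds for all $n\ge m$ on $K$, bounds $|b_{0,m}(z)\qw|\le 2^m$ crudely (each factor has modulus $>\thalf$ since $\Re z>\thalf$ on $K$), and only invokes the exponential asymptotic for $b_{m,n}$; the telescoping $b_{0,n}\qw=b_{0,m}\qw b_{m,n}\qw$ and \eqref{l1} then give \eqref{h7}--\eqref{h8} with no Gamma-function analysis. Your route is correct but needs the extra uniform lower bound on the Gamma ratio; the paper's split-at-fixed-$m$ is cleaner and avoids it. Everything else in your write-up — the verification that the $\fm_0$-powers collapse in the tail sum because $2\Re\gl-2\eps-1\in(0,1]$, the identities $\E Z_n=\fv_0 f$, and \eqref{lih1bsquare} via orthogonality — matches the paper.
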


\begin{proof}
First note that, since 
$\Re\gl>\half$ and $0<\gam_k<1$, 
we have 
\begin{align}\label{radh}
 \bigabs{1+\gam_k(\gl-1)}\ge
 \Re\bigpar{1+\gam_k(\gl-1)}
=1-\gam_k+\gam_k\Re\gl>\half, 
\qquad k\ge0,  
\end{align}
and thus
$b_{m,n}(\gl)\neq0$ by \eqref{bmn}.
Hence $b_{m,n}\neq0$ on $\gs(\RR_\Dx)$, so 
$b_{m,n}\qw$ is analytic in a neighbourhood of $\gs(\RR_\Dx)=\set{\gl}$
and it follows that,
as an operator on $\Dx$,  
$B_{m,n}=b_{m,n}(\RR_\Dx)$
is invertible with inverse
$B_{m,n}\qw=b_{m,n}\qw(\RR_\Dx)$.

If $0\le i\le n$, then \eqref{BNN} (or \eqref{Bmn}) shows that
$B_{0,i}B_{i,n}=B_{0,n}$, which yields
\begin{align}\label{h2}
  B_{i,n}B_{0,n}\qw=B_{0,i}\qw.
\end{align}

Let $f\in\Dx$.
By \eqref{alf}, applied to $B_{0,n}\qw f$, and \eqref{h2}, we have 
\begin{align}\label{h3}
  \fv_nB_{0,n}\qw f
=\fv_0f +\sumin \gam_{i-1}\gD M_i B_{0,i}\qw f.
\end{align}
Since $\E_{i-1}\gD M_i=0$, \eqref{h3} shows that 
$\bigpar{ \fv_nB_{0,n}\qw f}_{n\ge0}$ is a martingale.
(Cf.\ the closely related \eqref{hq}.)
We will show that the martingale \eqref{h3} is $L^2$ bounded; the result
\eqref{lih1a}
then follows by the martingale convergence theorem.

Let $0<\eps  <\Re\gl-\half$, and let $K$ be the closed disc
$\set{z:|z-\gl|\le\eps}$.
Then $\Re z>\half$ for all $z\in K$, and 
it follows, 
as in \eqref{radh}, that $|1+\gam_k(z-1)|>\half$ on $K$.
Hence,  each $b_{m,n}$ is non-zero on $K$, and thus is invertible
on $K$;
furthermore, \eqref{bmn} gives the trivial bound
\begin{align}\label{runar}
  \bigabs{b_{m,n}(z)\qw} \le 2^{n-m}, 
\qquad 0\le m\le n,\;z\in K.
\end{align}
To get a better bound, we fix $m \geq 1$ such that \eqref{l1b} holds
for all $n\ge m$
 and $z\in K$. Thus, using \eqref{runar} for $b_{0,m}(z)\qw$, 
\begin{align}\label{h7}
  \bigabs{b_{0,n}(z)\qw}
&=
 \bigabs{ b_{0,m}(z)\qw
  b_{m,n}(z)\qw}
\le C   |b_{m,n}(z)|\qw
=C \bigabs{\ee^{-(z-1)\log\frac{\fm_0(E)+n}{\fm_0(E)+m}+O(1)}}
\notag\\&\le C \left(\frac{\fm_0(E)+n}{\fm_0(E)+m}\right)^{\!\!1-\Re z}
 \le C \left({\fm_0(E)+n}\right)^{1-\Re \gl+\eps}.
\end{align}
By \eqref{runar}, the same bound holds trivially (with a suitable $C$) also
for $1\le n<m$,
so the estimate \eqref{h7} holds for all $n\ge1$ and $z\in K$.

Since
$\gs(\RR_{\Dx})=\set{\gl}$, it follows from \eqref{||h||}, taking $\gG$ to
be the  circle $\set{z:|z-\gl|=\eps}\subset K$, together with \eqref{h7} that
 \begin{align}\label{h8}
\norm{B_{0,n}\qw}_{\Dx} \le C \sup_{z\in K}|b_{0,n}\qw(z)|
\le C \left({\fm_0(E)+n}\right)^{1-\Re \gl+\eps}, \qquad n\ge1.
\end{align}
For $f\in \Dx$ and  $i\ge1$, we thus have, by
\eqref{lg} and \eqref{h8}, 
 \begin{align}\label{h9}
  \E\bigabs{\gam_{i-1}\gD M_i B_{0,i}\qw f}^2
&\le C\gam_{i-1}^2\tfm_0 V\,\norm{B_{0,i}\qw f}_{\Dx}^2
\notag\\
&\le C \tfm_0 V\, (\fm_0(E)+i)^{-2} (\fm_0(E)+i)^{2(1-\Re \gl+\eps)}\norm{f}_{\Dx}^2
\notag\\
&= C  \tfm_0 V\, (\fm_0(E)+i)^{-2(\Re \gl-\eps)}\norm{f}_{\Dx}^2
.\end{align}
Since $2(\Re\gl-\eps)>1$, 
it follows from \eqref{h3} (where the terms are orthogonal), \eqref{lg0}, and
\eqref{h9},
that 
\begin{align}
\label{eq:usefulm0rand}
\E\big|\fv_nB_{0,n}\qw f\big|^2\leq |\fv_0 f|^2+\sumi  \E\bigabs{\gam_{i-1}\gD M_i B_{0,i}\qw f}^2\leq C\tfm_0 V\,\norm{f}_{\Dx}^2,
\end{align}
and thus the
martingale
\eqref{h3} converges in $L^2$ and a.s., as claimed. 
The properties ~\eqref{eq:speedconv1}, \eqref{lih1b} and~\eqref{lih1bsquare}
immediately follow
from \eqref{h3} and \eqref{h9}--\eqref{eq:usefulm0rand}.
\end{proof}

We combine \refL{LIH1} with a standard result for  functions of nilpotent
operators.
\begin{lemma}
  \label{Lnil}
Suppose that $\Dx$ is an $\RR$-invariant subspace of $\BW$ such that
$(\RR_\Dx-\gl)^\kk=0$ for some complex $\gl$ and integer $\kk\ge1$.
Let $h$ be a function that is analytic in a neighbourhood of $\gl$. Then,
for $f\in \Dx$,
\begin{align}
  \label{lnil}
h(\RR_\Dx)f = 
\sum_{k=0}^{\kk-1}\frac{h^{(k)}(\gl)}{k!}{(\RR-\gl)^kf}
.\end{align}
\end{lemma}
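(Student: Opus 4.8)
The plan is to invoke the holomorphic functional calculus \eqref{riesz} and to exploit the fact that, because $N:=\RR_\Dx-\gl\II$ satisfies $N^\kk=0$, the resolvent of $\RR_\Dx$ has a terminating Laurent expansion at $\gl$. The case $\Dx=\set{0}$ being vacuous, assume $\Dx\neq\set{0}$.

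First I would observe that the hypothesis $(\RR_\Dx-\gl)^\kk=0$ forces $\gs(\RR_\Dx)\subseteq\set{\gl}$: for any $z\neq\gl$ one checks directly, by multiplying out and using $N^\kk=0$, that $\sum_{k=0}^{\kk-1}(z-\gl)^{-k-1}N^k$ is a two-sided inverse of $z\II-\RR_\Dx$. Hence $h$, being analytic in a neighbourhood of $\gl$, is analytic in a neighbourhood of $\gs(\RR_\Dx)$, so $h(\RR_\Dx)$ is well defined by \eqref{riesz}, and we may take $\gG$ to be a small circle $\set{z:|z-\gl|=\eps}$ around $\gl$.

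Next I would substitute the terminating Neumann series $(z\II-\RR_\Dx)^{-1}=\sum_{k=0}^{\kk-1}(z-\gl)^{-k-1}N^k$ into \eqref{riesz}, pull the finite sum (and the bounded operators $N^k$) outside the contour integral, and evaluate each remaining scalar integral by Cauchy's formula for derivatives,
\[
\frac{1}{2\pi\ii}\oint_\gG\frac{h(z)}{(z-\gl)^{k+1}}\dd z=\frac{h^{(k)}(\gl)}{k!},
\]
obtaining $h(\RR_\Dx)=\sum_{k=0}^{\kk-1}\frac{h^{(k)}(\gl)}{k!}\,N^k$ as operators on $\Dx$. Since $\RR(\Dx)\subseteq\Dx$, we have $N^k f=(\RR-\gl)^k f$ for $f\in\Dx$, and \eqref{lnil} follows.

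There is essentially no real obstacle here: the computation is a routine instance of the holomorphic functional calculus, and the only points needing a word of justification are that the Neumann series terminates (immediate from $N^\kk=0$) and that \eqref{riesz} applies, i.e.\ that $h$ is analytic near $\gs(\RR_\Dx)=\set{\gl}$; everything else is the Cauchy integral formula. (Alternatively, one could verify \eqref{lnil} first for polynomials $h$ via the binomial theorem and then appeal to continuity/density of the functional calculus, but the contour-integral argument is cleaner and needs no extra approximation.)
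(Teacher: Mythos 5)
Your argument is correct, and it reaches the conclusion by a somewhat different route than the paper. The paper writes out a Taylor expansion with explicit remainder, $h(z)=\sum_{k=0}^{\kk-1}\frac{h^{(k)}(\gl)}{k!}(z-\gl)^k+(z-\gl)^\kk h_\kk(z)$ with $h_\kk$ analytic, and then invokes the algebraic homomorphism property \eqref{h1h2} of the functional calculus to kill the remainder term because $(\RR_\Dx-\gl)^\kk=0$; no contour integral is touched. You instead go back to the defining integral \eqref{riesz}, plug in the explicit finite Neumann series for the resolvent of a nilpotent perturbation (which also yields $\gs(\RR_\Dx)\subseteq\{\gl\}$ en route, something the paper obtains separately via the spectral mapping theorem in \refL{LIH2}), and evaluate term-by-term with Cauchy's derivative formula. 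Both proofs are short and standard; the paper's is terser and leans only on the multiplicativity of the calculus already recorded as \eqref{h1h2}, while yours is more self-contained and computationally explicit. One small caveat: your parenthetical suggestion that \eqref{lnil} could be obtained for polynomials and then by ``continuity/density of the functional calculus'' is a bit loose --- the Riesz calculus is not generally continuous with respect to any topology in which polynomials are dense in the space of germs at $\gs(T)$ --- but since you present it only as an aside and your main argument stands on its own, this does not affect the correctness of the proof.
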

\begin{proof}
  A Taylor expansion yields, for some function $h_\kk$ analytic in the same
  domain as $h$,
\begin{align}\label{ln2}
h(z) = \sum_{k=0}^{\kk-1}\frac{h^{(k)}(\gl)}{k!}{(z-\gl)^k}
+(z-\gl)^\kk h_\kk(z).
\end{align}
We have $(\RR_\Dx-\gl)^\kk=0$ by assumption, and thus \eqref{ln2}
yields, using \eqref{h1h2}, 
\begin{align}
h(\RR_\Dx) = 
\sum_{k=0}^{\kk-1}\frac{h^{(k)}(\gl)}{k!}{(\RR_\Dx-\gl)^k},
\end{align}
as operators on $\Dx$, 
which is \eqref{lnil}.
\end{proof}

We can now 
show the convergence \eqref{hildegran} for $f$ in a generalized eigenspace.

\begin{lemma}
  \label{LIH2}
Suppose that $\Dx$ is an $\RR$-invariant subspace of $\BW$ such that
$(\RR_\Dx-\gl)^\kk=0$ for some complex $\gl$ with $\half<\Re\gl\le1$
and some integer $\kk\ge1$.
If  $f\in \Dx$, then, for some complex random variable $\gL$,
\begin{align}\label{lh2-}
  \frac{n^{1-\gl}}{\log^{\kk-1} n} \,\fv_n f 
\to 
\gL
\end{align}
\as{} and in $L^2$ as \ntoo.
Furthermore,
\begin{align}
  \label{lh2e}
\E\gL=\frac{\gG(\fm_0(E)+1)}{(\kk-1)!\,\gG(\fm_0(E)+\gl)}\fv_0(\RR-\gl)^{\kk-1}f.
\end{align}
\end{lemma}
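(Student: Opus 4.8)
The plan is to reduce the statement to \refL{LIH1} by expanding $B_{0,n}$ on the generalized eigenspace $\Dx$ via \refL{Lnil} and then inserting precise asymptotics for the scalar coefficients that appear. First observe that $(\RR_\Dx-\gl)^\kk=0$ forces $\RR_\Dx-\gl$ to be nilpotent, hence $\gs(\RR_\Dx)=\set\gl$; together with $\half<\Re\gl\le1$ this puts us in the setting of \refL{LIH1}. Thus each $B_{0,n}$ is invertible on $\Dx$, and for every $g\in\Dx$ the martingale $\fv_nB_{0,n}\qw g$ converges \as{} and in $L^2$ to a limit $\gL_g$ with $\E\gL_g=\fv_0g$ and $\sup_n\E|\fv_nB_{0,n}\qw g|^2<\infty$. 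Since $b_{0,n}$ is a polynomial (hence entire) and $\RR_\Dx-\gl$ is nilpotent, \refL{Lnil} applied with $h=b_{0,n}$ gives
\begin{align*}
B_{0,n}f=b_{0,n}(\RR_\Dx)f=\sum_{k=0}^{\kk-1}\frac{b_{0,n}^{(k)}(\gl)}{k!}\,(\RR-\gl)^kf .
\end{align*}
Writing $f_k:=(\RR-\gl)^kf\in\Dx$ and applying the linear functional $g\mapsto\fv_nB_{0,n}\qw g$ to $B_{0,n}f$ (note $B_{0,n}\qw B_{0,n}f=f$), we obtain the exact identity
\begin{align*}
\fv_nf=\sum_{k=0}^{\kk-1}\frac{b_{0,n}^{(k)}(\gl)}{k!}\,\fv_nB_{0,n}\qw f_k .
\end{align*}

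The analytic heart of the argument is the asymptotics of $b_{0,n}^{(k)}(\gl)$. Using the identity (see \refR{RGamma})
\begin{align*}
b_{0,n}(z)=\prod_{j=0}^{n-1}\frac{\fm_0(E)+j+z}{\fm_0(E)+j+1}
=\frac{\gG(\fm_0(E)+1)\,\gG(n+\fm_0(E)+z)}{\gG(\fm_0(E)+z)\,\gG(n+\fm_0(E)+1)},
\end{align*}
write $b_{0,n}=\ee^{G_n}$ with $G_n(z):=\log b_{0,n}(z)$ analytic near $\gl$; then $G_n'(z)=\sum_{j=0}^{n-1}(\fm_0(E)+j+z)\qw=\log n+O(1)$ and $G_n^{(\ell)}(z)=O(1)$ for $\ell\ge2$, uniformly for $z$ in a fixed small disc around $\gl$. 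A standard induction on $k$ then yields $b_{0,n}^{(k)}(z)=\ee^{G_n(z)}\bigpar{(\log n)^k+O((\log n)^{k-1})}$ uniformly there, while Stirling's formula gives $\ee^{G_n(\gl)}=b_{0,n}(\gl)=\frac{\gG(\fm_0(E)+1)}{\gG(\fm_0(E)+\gl)}\,n^{\gl-1}\bigpar{1+O(1/n)}$. Hence $|b_{0,n}^{(k)}(\gl)|\le Cn^{\Re\gl-1}(\log n)^k$ for $0\le k\le\kk-1$, and
\begin{align*}
\frac{n^{1-\gl}}{(\log n)^{\kk-1}}\cdot\frac{b_{0,n}^{(\kk-1)}(\gl)}{(\kk-1)!}\xrightarrow[n\to+\infty]{}\frac{\gG(\fm_0(E)+1)}{(\kk-1)!\,\gG(\fm_0(E)+\gl)} .
\end{align*}

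To conclude, multiply the identity for $\fv_nf$ by $n^{1-\gl}/(\log n)^{\kk-1}$ and let \ntoo. For $k<\kk-1$ the prefactor $\frac{n^{1-\gl}b_{0,n}^{(k)}(\gl)}{(\log n)^{\kk-1}k!}$ has modulus $O\bigpar{(\log n)^{k-\kk+1}}=o(1)$ and multiplies the \as{}- and $L^2$-convergent (hence bounded) factor $\fv_nB_{0,n}\qw f_k$, so these terms vanish \as{} and in $L^2$; the term $k=\kk-1$ is a bounded deterministic sequence converging to $\frac{\gG(\fm_0(E)+1)}{(\kk-1)!\,\gG(\fm_0(E)+\gl)}$ times $\fv_nB_{0,n}\qw f_{\kk-1}\to\gL_{f_{\kk-1}}$ (\as{} and in $L^2$), so the product converges \as{} and in $L^2$ to
\begin{align*}
\gL:=\frac{\gG(\fm_0(E)+1)}{(\kk-1)!\,\gG(\fm_0(E)+\gl)}\,\gL_{f_{\kk-1}},
\end{align*}
which proves \eqref{lh2-}; and \eqref{lh2e} follows since $\E\gL_{f_{\kk-1}}=\fv_0f_{\kk-1}=\fv_0(\RR-\gl)^{\kk-1}f$ by \refL{LIH1}. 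The main obstacle is the second paragraph: extracting the $(\log n)^{k}$ leading order of the derivatives $b_{0,n}^{(k)}(\gl)$ with a uniform remainder — the $\gG$-function representation together with Stirling's formula (and the digamma-type estimate for $G_n'$) makes this routine, but it is the step that carries the real content.
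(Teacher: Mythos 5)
Your argument is correct and follows essentially the same path as the paper's proof: reduce to \refL{LIH1} via the nilpotent expansion of $B_{0,n}$ from \refL{Lnil}, then extract the asymptotics of the scalar coefficients $b_{0,n}^{(k)}(\gl)$. The only cosmetic difference is how those asymptotics are obtained — you differentiate $\log b_{0,n}$ and run a Fa\`a-di-Bruno-type induction, whereas the paper uses the factorization $b_{0,n}(z)=(1+h_{0,n}(z))\bigl(\tfrac{\fm_0(E)+n}{\fm_0(E)+1}\bigr)^{z-1}$ from \refL{L1} together with Leibniz and Cauchy's estimates on $h_{0,n}^{(k)}$; both give the same leading term $\ee^{G_n(\gl)}(\log n)^k$.
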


\begin{proof}
%
Note that the assumption $(\RR_\Dx-\gl)^\kk=0$ implies that
$\gs(\RR_\Dx)=\set\gl$, 
for example by the spectral mapping theorem
\cite[Theorem VII.4.10]{Conway}.
Hence, \refL{LIH1} applies.

We use also \refL{Lnil} with $h=b_{0,n}$.
This yields, defining
$f_k:=(\RR-\gl)^k f/k!$,
\begin{align}\label{ih1}
  B_{0,n}f 
=  b_{0,n}(\RR_\Dx)f 
= \sum_{k=0}^{\kk-1} b_{0,n}\kkk(\gl) f_k
\end{align}
and thus
\begin{align}\label{ih2}
  \fv_n f=\fv_n B_{0,n}\qw B_{0,n}f
= \sum_{k=0}^{\kk-1} b_{0,n}\kkk(\gl) \fv_n B_{0,n}\qw f_k.
\end{align}
Each random variable $\fv_n B_{0,n}\qw f_k$ converges 
\as{} and in $L^2$
as \ntoo{}
by \refL{LIH1}, and it remains to study
the coefficients $b_{0,n}\kkk(\gl)$.
By \eqref{l1o} we have
\begin{align}\label{i1}
  b_{0,n}(z)=\bigpar{1+h_{0,n}(z)} \left(\frac{\fm_0(E)+n}{\fm_0(E)+1}\right)^{\!\!z-1}
.\end{align} 
In a fixed neighbourhood of
$\gl$, the functions $h_{0,n}(z)$, $n\ge1$, are uniformly bounded by
\eqref{l1h}, and thus Cauchy's estimates show that for each fixed  $k\ge0$,
\begin{align}\label{i2}
\bigabs{ h_{0,n}^{(k)}(\gl)}\le C.
\end{align}
Furthermore,
\begin{align}
   \frac{\ddx^k}{\dd z^k} \left(\frac{\fm_0(E)+n}{\fm_0(E)+1}\right)^{\!\!z-1}=\log^k\left(\frac{\fm_0(E)+n}{\fm_0(E)+1}\right) \left(\frac{\fm_0(E)+n}{\fm_0(E)+1}\right)^{\!\!z-1}.
\end{align}
Hence, using \eqref{i1} and Leibniz' rule, for a fixed $k$ and 
 $n\ge1$,
 \begin{align}
&\left|  b_{0,n}\kkk(\gl)- \bigpar{1+h_{0,n}(\gl)} \left(\frac{\fm_0(E)+n}{\fm_0(E)+1}\right)^{\!\!\gl-1}\log^k\left(\frac{\fm_0(E)+n}{\fm_0(E)+1}\right)\right|
\notag\\
&\qquad\qquad\leq  C\,\left(\frac{\fm_0(E)+n}{\fm_0(E)+1}\right)^{\!\!\Re\gl-1}\left( 1\vee \log^{k-1}\left(\frac{\fm_0(E)+n}{\fm_0(E)+1}\right)\right)\label{i3}
.\end{align} 
By \eqref{i3}, each coefficient $b\kkk_{0,n}(\gl)$ in \eqref{ih2}
with $k<\kk-1$  satisfies
 \begin{align}
\Bigg|\left(\frac{\fm_0(E)+n}{\fm_0(E)+1}\right)^{\!\!1-\lambda} \frac{ b_{0,n}\kkk(\gl)}{1\vee \log^{\kappa-1}\big(\frac{\fm_0(E)+n}{\fm_0(E)+1}\big)}\Bigg|
\leq  \frac{C}{ 1\vee\log\big(\frac{\fm_0(E)+n}{\fm_0(E)+1}\big)}.\label{prev}
 \end{align}   
Using \eqref{i3} with $k=\kappa-1$, we similarly deduce that
\begin{align}
\label{eq:newexplicit1}
\Bigg|  \left(\frac{\fm_0(E)+n}{\fm_0(E)+1}\right)^{\!\!1-\gl}\frac{b_{0,n}^{(\kappa-1)}(\lambda)}{1\vee \log^{\kappa-1}\big(\frac{\fm_0(E)+n}{\fm_0(E)+1}\big)} - \bigpar{1+h_{0,n}(\gl)} \Bigg|
\leq \frac{C}{ 1\vee\log\big(\frac{\fm_0(E)+n}{\fm_0(E)+1}\big)}.
\end{align}
We obtain
from \eqref{ih2},~\eqref{prev}, \eqref{eq:newexplicit1}, and  the
fact that $h_{0,n}(\lambda)$ is uniformly bounded, 
 with $\gL_{f_{\kk-1}}$ from \refL{LIH1},
\begin{align}
&\left|\left(\frac{\fm_0(E)+n}{\fm_0(E)+1}\right)^{\!\!1-\gl}   
\frac{ \fv_n f}{1\vee\log^{\kk-1} \big(\frac{\fm_0(E)+n}{\fm_0(E)+1}\big)}
-\bigpar{1+h_{0,n}(\gl)} \gL_{f_{\kk-1}}\right|
\notag\\
&\qquad\qquad\leq  
C\frac{\sum_{k=0}^{ \kappa-1}
|\fv_n B_{0,n}\qw f_k| }{ 1\vee\log\big(\frac{\fm_0(E)+n}{\fm_0(E)+1}\big)}+C\left|\fv_n B_{0,n}\qw f_{\kappa-1}-\Lambda_{f_{\kappa-1}}\right|.\label{i4bis}
\end{align}
In addition, \eqref{l1o}, \eqref{rgamma} 
 and a well-known consequence of Stirling's formula
(see \eg{} \cite[5.11.13]{NIST})
imply
that for any fixed $z$, 
\begin{align}\label{rgamma2}
{1+h_{0,n}(z)}
&= \left(\frac{\fm_0(E)+n}{\fm_0(E)+1}\right)^{\!\!1-z}b_{0,n}(z)
=\left(\frac{\fm_0(E)+n}{\fm_0(E)+1}\right)^{\!\!1-z}\frac{\gG(n+\fm_0(E)+z)}{\gG(n+\fm_0(E)+1)}
\frac{\gG(\fm_0(E)+1)}{\gG(\fm_0(E)+z)}
\notag\\ 
&= \frac{1}{(\fm_0(E)+1)^{1-z}}
\frac{\gG(\fm_0(E)+1)}{\gG(\fm_0(E)+z)}
\Bigpar{1+O\Bigpar{\frac{1}{n}}}
\notag\\&
= \frac{1}{(\fm_0(E)+1)^{1-z}}
\frac{\gG(\fm_0(E)+1)}{\gG(\fm_0(E)+z)}
+O\Bigpar{\frac{1}{n}}
.\end{align}
\refL{LIH1} implies that the \rhs{} of \eqref{i4bis} tends to 0 \as{} as \ntoo.
Hence, \eqref{i4bis} and \eqref{rgamma2}  imply 
that 
 \begin{align}\label{lh2+}
\left(\fm_0(E)+n\right)^{1-\gl}    \frac{ \fv_n f}{\log^{\kk-1} \big(\frac{\fm_0(E)+n}{\fm_0(E)+1}\big)}
\to 
\gL
\end{align}
holds \as{} with 
\begin{align}
  \label{gLgL}
\gL=\frac{\gG(\fm_0(E)+1)}{\gG(\fm_0(E)+\gl)} \gL_{f_{\kk-1}}.
\end{align} 
We may simplify \eqref{lh2+} and conclude that \eqref{lh2-} holds a.s.

Moreover, \eqref{eq:speedconv1} and \eqref{lih1bsquare} imply
that for every fixed $k\ge0$,
\begin{align}\label{jb1}
  \E\bigabs{\fv_nB_{0,n}\qw f_k}^2 
\le C\tfm_0V \norm{f_k}^2_{\BW}+C\E\bigabs{\gL_{f_k}}^2
\le C\tfm_0V \norm{f_k}^2_{\BW}
\le C\tfm_0V \norm{f}^2_{\BW}
.\end{align}
 Taking the expectation of the square in~\eqref{i4bis},
we deduce  using  \eqref{jb1} and \eqref{eq:speedconv1}, 
\begin{align}
\label{eq:explicit2}
&\E\Bigg|\left(\frac{\fm_0(E)+n}{\fm_0(E)+1}\right)^{\!\!1-\gl}    \frac{ \fv_n f}{1\vee\log^{\kk-1} \big(\frac{\fm_0(E)+n}{\fm_0(E)+1}\big)}
-\bigpar{1+h_{0,n}(\gl)} \gL_{f_{\kk-1}}\Bigg|^2\leq \frac{C \,\tfm_0 V\,\|f\|_{\BW}^2}{1\vee\log^2\big(\frac{\fm_0(E)+n}{\fm_0(E)+1}\big)}
\end{align}
Furthermore, by \eqref{rgamma2}, \eqref{gLgL} and \eqref{lih1bsquare},
\begin{align}\label{jb3}
  \E\lrabs{\bigpar{1+h_{0,n}(\gl)}\gL_{f_{\kk-1}}-\frac{1}{(m_0(E)+1)^{1-\gl}}\gL}^2
\le \frac{C}{n^2}\E\bigabs{\gL_{f_{\kk-1}}}^2
\le \frac{C}{n^2}\tfm_0V\normbwqq{f}^2.
\end{align}
Combining \eqref{eq:explicit2} and \eqref{jb3}, we obtain
\begin{align}\label{jb5}
&\E\,\biggabs{\bigpar{\fm_0(E)+n}^{1-\gl}   
\frac{ \fv_n f}{1\vee\log^{\kk-1} \big(\frac{\fm_0(E)+n}{\fm_0(E)+1}\big)}
- \gL}^2
\notag\\
&\qquad\qquad
\le
C\bigpar{\fm_0(E)+1}^{2(1-\Re\gl)}
\frac{\tfm_0 V}{1\vee\log^2\big(\frac{\fm_0(E)+n}{\fm_0(E)+1}\big)}
\normbwqq{f}^2.
\end{align}
Hence, \eqref{lh2+} holds also in $L^2$, and thus so does \eqref{lh2-}.
Finally, \eqref{lh2e} follows by \eqref{gLgL},
\eqref{lih1b}, 
and the definition of $f_{\kk-1}$,
which completes the proof.
\end{proof}

We are now ready to prove Theorem \ref{T2}(3);
we assume that the operator $\RR_D$ is \slqc{}  and that the spectrum
of $\RR_D$ is given by  
\begin{align}\label{h0}
\sigma(\RR_D)=\{1,\lambda_1,\ldots,\lambda_p\}\cup \Delta, 
\qquad p\geq 1,
\end{align} 
where 
$\Re(\lambda_1)=\dotsm=\Re(\lambda_p)=\gthd\in(\half,1)$, 
and
$\sup\Re(\Delta) < \gthd$.
Note that this implies that $\gl_1,\dots,\gl_j$ are isolated points of
the spectrum $\gs(\RR_D)$, and that $\gD$ is a clopen subset.
Thus the
spectral projections $\Pi_{\gl_j}$ and $\Pi_\gD$ are defined and, for any
$f\in D$, recalling \refL{L0},
\begin{align}\label{h6}
  f=\Pi_1f+\sumjp \Pi_{\gl_j}f+\Pi_{\gD}f
=\nu f+\sumjp \Pi_{\gl_j}f+\Pi_{\gD}f.
\end{align}
Hence, it suffices to prove \eqref{hildegran}
for the functions $\nu f$,
$ \Pi_{\gl_j}f$ and $\Pi_{\gD}f$ separately; in other words, it suffices to
consider the cases 
$f=c$ constant, 
$f\in  \Pi_{\gl_j}D$ and $f\in\Pi_{\gD}D$.
Recall that $\tfm_n-\nu=\fv_n$.

First, we may ignore the constant term $\nu f$ in \eqref{h6}, 
since $\fv_n1=0$.

Secondly, \refL{LIH2} applies to each space
$\Pi_{\gl_j}D$, since we assume
\eqref{kkj} and thus $(\RR-\gl_j)^\kk=0$ on $D_j:=\Pi_{\gl_j}D$.
It follows that, for some complex random variable $\gL_j\in L^2$,
\begin{align}
  \label{lr3j}
  \frac{n^{1-\Re\gl_j}}{\log^{\kk-1} n} \fv_n \Pi_{\gl_j}f
- n^{\ii\Im\gl_j}\gL_j
\to 0
\end{align}
\as{} and in $L^2$.
Furthermore, \eqref{nurr} and Lemma~\ref{L0} imply that
\begin{align}
\nu(\RR-\lambda_j)^{\kappa-1} \Pi_{\lambda_j} f=(1-\lambda_j)^{\kappa-1} 
\nu\Pi_{\lambda_j} f=0,   
\end{align}
so that \eqref{lh2e} yields \eqref{solveig}.

Thirdly, \refL{LK2} applies to $\Pi_{\gD}D$ and $\gthh:=\gth_D=\Re\gl_1$,
and shows
\begin{align}
  \label{lr3gd}
  {n^{1-\Re\gl_1}} \fv_n \Pi_{\gD}f
\to 0
\end{align}
\as{} and in $L^2$.

\refT{T2}(3) follows by combining \eqref{h6} with \eqref{lr3j} and \eqref{lr3gd}.
This completes the proof of \refT{T2}.
\qed

\begin{remark}
 \label{rem:quantif}
Note that \eqref{jb5} implies an upper
bound $O\bigpar{1/\log n}$ 
for the speed of convergence in~$L^2$ of~\eqref{lh2+},
which yields the same rate in \eqref{lh2-} in Lemma~\ref{LIH2}.
Since, in addition, 
\eqref{eq:m0depend} in
the proof of Lemma~\ref{LK2} yields an upper
bound $O\xpar{n^{-\eps}}$ (for some $\eps=\gthh-\gthhh>0$)
for the speed of convergence in~$L^2$
in~\eqref{eq:extendedform0random},  
one finds
$O\bigpar{1/\log n}$ as an explicit upper bound for the speed of
  convergence in~$L^2$ of \refT{T2}(3). 
\end{remark}

 \begin{remark}
     \label{rem:m0rand2}
If $\fm_0$ is random,
then under the conditions of \refL{LIH2},
\eqref{jb5} holds conditioned on $\fm_0$.
Taking the expectation, we see by dominated convergence that
if further 
\begin{align}\label{jb6}
\E\bigsqpar{ \bigpar{\fm_0(E)+1}^{2(1-\Re\gl)}\tfm_0 V}<\infty,  
\end{align}
then
the \lhs{} of \eqref{jb5} converges to 0 as \ntoo.
With the notation
\begin{align}
a_n:=\frac{(\fm_0(E)+n)^{1-\gl}}
{1\vee\log^{\kk-1} \big(\frac{\fm_0(E)+n}{\fm_0(E)+1}\big)},
\end{align}
this says that $a_n\fv_n f \to\gL$ in $L^2$. 
Since we also have convergence \as{} 
(by \eqref{lh2+} and conditioning on $\fm_0$),
this implies that the sequence $|a_n\fv_n f|^2$ is uniformly integrable, see
\eg \cite[Theorem 5.5.2]{Gut}. Let $b_n:=n^{1-\gl}/\log^{\kk-1}n$. Then, for
$n\ge3$, $|b_n|\le|a_n|$, and it follows that also $|b_n\fv_n f|^2$ is \ui.
Furthermore, also $b_n\fv_n f\to \gL$ \as{}, and thus \cite[Theorem
5.5.2]{Gut} again shows that $b_n\fv_n f\to\gL$ in $L^2$. 
Consequently, under the assumption \eqref{jb6}, \eqref{lh2-} holds both
\as{} and in $L^2$.

By combining this and \refR{rem:m0rand0},
it follows as above that 
\eqref{hildegran} in \refT{T2}(3) holds also in $L^2$
for random $\fm_0$ that satisfies \eqref{m0rand+},
      as claimed in Remark~\ref{rem:m0rand}.
 \end{remark}

\section{Proof of \refTs{T210}--\ref{T230}.}\label{Spf3}
In this section we prove
\refTs{T210}--\ref{T230} on possible degeneracies in the limit distributions
in \refT{T2}.

\begin{lemma}\label{Las}
Suppose that $f\in\BW$ and that $\nu|f|=0$, or, equivalently,
\begin{align}\label{las1}
  f(x)=0 \text{ for \nuae\ $x$}.
\end{align}
\begin{romenumerate}
\item \label{Lasa}
Then $\nu|\RR f|=0$, \ie, \eqref{las1} holds for $\RR f$ too.

\item \label{Lasb}
Moreover, for \nuae{} $x$,
\begin{align}\label{las2}
  \RA_xf=0 \qquad a.s.
\end{align}
\end{romenumerate}
\end{lemma}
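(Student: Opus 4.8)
The plan is to prove the two parts of \refL{Las} in turn, using the Lyapunov/integrability structure already established, in particular \eqref{eq:dom} and its consequences, together with the assumption \ref{as:nu2} that $\nu\ER=\nu$.

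\textbf{Part \ref{Lasa}.} First I would recall that $\RR f(x)=\ER_x f=\int_E f\dd\ER_x$, so $|\RR f(x)|\le |\ER_x||f|=\int_E |f|\dd|\ER_x|$. Integrating against $\nu$ and using that $\ER$ is a positive kernel off the diagonal together with the uniform bound \eqref{eq:ER2} on $|\ER_x(\{x\})|$, one controls $\nu\bigpar{|\ER_\bigdot||f|}$ by $\nu\ER |f|$ plus a diagonal correction. More cleanly: write $|f|=f_+ + f_-$ with $f_\pm\ge0$ in $\BW$; since $f=0$ $\nu$-a.e., also $f_\pm=0$ $\nu$-a.e., so it suffices to show $\nu\bigpar{\ER_\bigdot g}=0$ for every non-negative $g\in\BW$ with $\nu g=0$. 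But for non-negative $g$, $\RR g\ge 0$ pointwise (as $\ER_x$ is positive on $E\setminus\{x\}$; one must check the diagonal term does not spoil positivity — indeed if $g(x)$ appears with the possibly-negative weight $\ER_x(\{x\})$ this is an issue). The safe route is to use \ref{as:nu2}: $\nu\RR g=(\nu\ER)g=\nu g=0$. Since $g\ge0$ gives $\RR g$ need not be $\ge0$, but we can instead argue with $|\RR f|$ directly: $\nu|\RR f|=\int_E\bigabs{\int_E f\dd\ER_x}\dd\nu(x)\le\int_E\int_E|f|\dd|\ER_x|\dd\nu(x)=\nu\bigpar{|\ER|\,|f|}$ where $|\ER|$ denotes the kernel $x\mapsto|\ER_x|$. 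Now split $|\ER_x|=\ER_x^{++}+\ER_x^{-}$ where on $E\setminus\{x\}$ we have $|\ER_x|=\ER_x$ and at $\{x\}$ the difference is $2(\ER_x(\{x\}))_-\le 2C$ (using \eqref{eq:ER2}). Hence $\nu\bigpar{|\ER|\,|f|}\le \nu\bigpar{\ER_\bigdot|f|}+2C\int_E |f(x)|\dd\nu(x)=\nu\ER|f|+2C\nu|f|=\nu|f|+2C\nu|f|=(1+2C)\nu|f|=0$, using $\nu\ER=\nu$ and $\nu|f|=0$. This proves $\nu|\RR f|=0$.

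\textbf{Part \ref{Lasb}.} Here I would unwind the probabilistic meaning. By the construction of the \mvpp{} (see \refApp{Akernels}), the conditional law of $\RA_x$ is $\cR_x=\cL(\RA_x)$, and \eqref{fin2} gives $\E\norm{\RA_x}<\infty$ for every $x$; more precisely \eqref{eq:dom} gives $\E[|\RA_x|W]\le\Ciii W(x)<\infty$. So $\E[|\RA_x f|]\le\norm{f}_{\BW}\E[|\RA_x|W]\le \Ciii\norm{f}_{\BW}W(x)<\infty$ and $\E[\RA_xf]=\ER_xf=\RR f(x)$. The function $x\mapsto\E[|\RA_x f|]$ is measurable (by the measurability assumptions of \refApp{Akernels}); I claim it is $0$ for $\nu$-a.e.\ $x$, which gives \eqref{las2}. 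To see this, dominate: $\E[|\RA_x f|]\le\E[|\RA_x|\,|f|]$; call this $\Phi(x)$. Then $\nu\Phi=\int_E\E[|\RA_x|\,|f|]\dd\nu(x)$, and arguing exactly as in Part \ref{Lasa} but with $|\RA_x|$ in place of $|\ER_x|$: on $E\setminus\{x\}$, $\E[|\RA_x|]=\E[\RA_x]=\ER_x$ restricted there, while the diagonal atom contributes at most $2\E|\RA_x(\{x\})|\le 2C$ by \eqref{subC}-type control — but wait, \eqref{subC} is not assumed in general. Instead use \eqref{eq:dom} pointwise-in-$x$ after integrating: $\nu\Phi=\int_E\E[|\RA_x|\,|f|]\dd\nu(x)$ and bound $\E[|\RA_x|\,|f|]\le\norm{f}_{\BW}\E[|\RA_x|W]$; this is finite but does not obviously integrate to $0$. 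The clean fix: decompose $|\RA_x|=\RA_x+2(\RA_x)^-$ where $(\RA_x)^-$ is the negative part, supported on $\{x\}$ by our standing assumption that $\RA_x$ is positive on $E\setminus\{x\}$; then $|\RA_x|\,|f|=\RA_x|f|+2(\RA_x\{x\})^-|f(x)|$ and $\E[|\RA_x|\,|f|]=\ER_x|f|+2\E[(\RA_x\{x\})^-]\,|f(x)|$. Integrating $\dd\nu(x)$: $\nu\Phi=\nu\ER|f|+2\int_E\E[(\RA_x\{x\})^-]|f(x)|\dd\nu(x)=\nu|f|+2\int_E\E[(\RA_x\{x\})^-]|f(x)|\dd\nu(x)$. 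The first term vanishes; the second is an integral $\dd\nu$ of something supported where $f\neq0$, which is a $\nu$-null set, hence also $0$. Therefore $\nu\Phi=0$, so $\Phi(x)=0$ for $\nu$-a.e.\ $x$, i.e.\ $\E[|\RA_x f|]=0$ for $\nu$-a.e.\ $x$, which gives $\RA_x f=0$ a.s.\ for $\nu$-a.e.\ $x$, as claimed.

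\textbf{Main obstacle.} The delicate point throughout is the diagonal atom $\ER_x(\{x\})$ (resp.\ $\RA_x(\{x\})$), which may be negative, so $\RR$ is not a positive operator and one cannot directly chain positivity. The resolution, used above, is that whatever sign the diagonal contributes, its contribution to $\nu(\cdot)$ is an integral over the set $\{f\neq 0\}$, which is $\nu$-null by hypothesis, so it simply drops out; the remaining off-diagonal part is genuinely positive and is handled by the exact identity $\nu\ER=\nu$ from \ref{as:nu2}. One should also double-check the measurability of $x\mapsto\E[|\RA_x f|]$ and $x\mapsto\E[(\RA_x\{x\})^-]$, which follows from the kernel measurability conventions of \refApp{Akernels} and Tonelli's theorem; this is routine but worth a line.
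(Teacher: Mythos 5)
Your argument is correct, but it takes a different route from the paper's. The paper reduces by linearity to $f\geq0$, notes that for $x\notin N:=\set{f\neq0}$ the diagonal atom contributes nothing since $f(x)=0$, hence $\RA_x f\geq0$ a.s.\ and $\RR f(x)=\E\RA_xf\geq0$ off $N$; combining this $\nu$-a.e.\ non-negativity with the vanishing integral $\nu(\RR f)=\nu f=0$ forces $\RR f=0$ $\nu$-a.e.\ (part~\ref{Lasa}), and part~\ref{Lasb} then follows at once because for $x\notin N\cup N_1$ (with $N_1:=\set{\RR f\neq0}$, $\nu$-null by~\ref{Lasa}) the non-negative random variable $\RA_x f$ has zero mean. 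Your proof instead bounds $\nu\abs{\RR f}\le\nu\bigpar{\abs{\ER_\bigdot}\abs f}$, decomposes $\abs{\ER_x}=\ER_x+2\bigpar{\ER_x\set x}^-\delta_x$ (using positivity off the diagonal), and observes that the diagonal correction is integrated against $\nu$ over the null set $\set{f\neq0}$ and so vanishes, while the remaining term collapses via $\nu\ER=\nu$; part~\ref{Lasb} repeats the same decomposition with $\abs{\RA_x}$. Both proofs rest on the same two facts --- positivity of the kernel off the diagonal, and $\nu$-invariance --- but the paper uses a pointwise positivity argument evaluated away from the null set, whereas you isolate the total-variation correction explicitly. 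The paper's version is shorter and does not need \eqref{eq:ER2} or \eqref{eq:dom}; yours is more computational but makes part~\ref{Lasb} logically independent of part~\ref{Lasa} and yields the marginally stronger conclusion $\E\bigsqpar{\abs{\RA_x}\abs f}=0$ for $\nu$-a.e.\ $x$. The exploratory detours in the middle of your write-up (the abortive attempt with non-negative $g$, and the momentary appeal to \eqref{subC}, which is not a standing hypothesis) should be deleted in a final version, but the argument you settle on is sound.
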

\begin{proof}
By linearity we may assume that $f\ge0$.
Let $N:=\set{x:f(x)\neq0}$; then $\nu N=0$ by the assumption \eqref{las1}.
If $x\notin N$, then 
$\RA_xf\ge0$, because $\RA_x$ is positive on $E\setminus\set x$ and $f(x)=0$.
Hence, by taking the expectation, also
\begin{align}\label{las4}
\RR f(x) = \E \RA_x f\ge0,
\qquad x\notin N.  
\end{align}
Thus $\RR f\ge0$ \nuae{}

On the other hand, by \eqref{nurr} and the assumption \eqref{las1},
\begin{align}\label{las5}
  \nu(\RR f)=(\nu\RR) f = \nu f = 0.
\end{align}
It follows from \eqref{las4} and \eqref{las5} that $\RR f=0$ \nuae, which
proves \ref{Lasa}.

Moreover, let $N_1:=\set{x:\RR f(x)\neq0}$.
If $x\notin N\cup N_1$, then,
as just shown, $\RA_xf\ge0$, 
and also $\E\RA_xf=\RR f(x)=0$;
hence,
\eqref{las2} holds.
This proves \ref{Lasb}, since 
$\nu(N_1)=0$ by \ref{Lasa}, and thus
$\nu(N\cup N_1)=0$.
\end{proof}

\begin{proof}[Proof of \refT{T210}]
  By replacing $f$ by $f-\nu f$, we may for simplicity assume $\nu f=0$.

Note first that \eqref{zeta} implies $\gss(f)=0\implies\chi(f)=0$.
Hence, \ref{T210a}$\iff$\ref{T210b} follows from the formula for
$\Sigmaq(f)$ in \eqref{t11}.

Next,
$s\mapsto \ee^{s\RR} f$ is a continuous map $\ooo\to\BW$, and
thus, by \refR{RBB} and $\nu W^2<\infty$, 
$s\mapsto \nu \BC\bigpar{\ee^{s\RR} f}$ is a continuous function of $s\ge0$.
Furthermore, by \eqref{BB}, we have $\BC(\ee^{s\RR}f)\ge 0$, and thus
$\nu \BC(\ee^{s\RR}f)\ge 0$. Consequently,
by \eqref{gssf},
\begin{align}\label{tj0}
  \gss(f)=0
\iff
\nu \BC(\ee^{s\RR}f)=0
\qquad \text{for every $s\ge0$}.
\end{align}
In particular, taking $s=0$, we see that 
\ref{T210b}$\implies$\ref{T210c}.

Furthermore, $\nu \BC(f)=0\iff \BC_x f=0$ for \nuae\ $x$, 
which by \eqref{BB} is equivalent to $\RA_x f=0$ \as, for \nuae\ $x$.
Hence, \ref{T210c}$\iff$\ref{T210d}.

Finally assume \ref{T210d}, and let $N\subset E$ be a set with $\nu(N)=0$
such that $\RA_x f=0$ \as\ when $x\notin N$.
By taking the expectation, we obtain $\RR f(x)=\E \RA_x f=0$ for $x\notin N$.
Hence, $\RR f=0$ \nuae, i.e., $\RR f$ satisfies \eqref{las1}.
We may thus apply \refL{Las} to $\RR f$ and conclude by induction that
$\RR^k f=0$ \nuae, for every $k\ge1$. Consequently, for any $s\ge0$,
\begin{align}\label{tj1}
  \ee^{s\RR}f - f = \sumk\frac{s^k}{k!}\RR^k f =0
\qquad \text{\nuae}
\end{align}
We apply \refL{Las} again, this time to $\ee^{s\RR}f-f$, and conclude by
\refL{Las}\ref{Lasb} that for \nuae~$x$,
\begin{align}\label{tj2}
  \RA_x\bigpar{\ee^{s\RR}f-f}=0
\qquad a.s.
\end{align}
Together with the assumption $\RA_x f=0$ \as{} for \nuae\ $x$, 
this shows that
for \nuae\ $x$,
\begin{align}\label{tj3}
  \RA_x\bigpar{\ee^{s\RR}f}=0
\qquad a.s.
\end{align}
Hence, \eqref{BB} yields $\BC_x(e^{s\RR}f)=0$ for \nuae\ $x$, and thus
$\nu \BC(e^{s\RR}f)=0$, for every $s\ge0$.
Consequently, \eqref{tj0} shows that $\gss(f)=0$.
We have shown that \ref{T210d}$\implies$\ref{T210b}, which completes the proof.
\end{proof}

\begin{proof}[Proof of \refT{T220}]
The equivalence \ref{T220a}$\iff$\ref{T220b} follows as in the proof of
\refT{T210}.

  Let $g_j:=(\RR-\gl_j\II)^{\kk-1}\Pi_{\gl_j} f$.
Note that by \eqref{kkj}, 
$g_j =0$ if $\kk_j<\kk$, which shows the equivalence 
\ref{T220c}$\iff$\ref{T220d}.

By \eqref{gssf2}, it remains only to show that
\begin{align}\label{t22a}
  \nu\BC(g_j)=0 \iff g_j=0 \ \text{\nuae}
\end{align}
To see this, we first note that by definition of $\kk$,
\begin{align}\label{t22b}
  (\RR-\gl_j)g_j = (\RR-\gl_j)^\kk \Pi_jf=0.
\end{align}
and thus
\begin{align}\label{t22bb}
  \RR g_j = \gl_jg_j.
\end{align}
In other words, 
$g_j$ is (if non-zero) an eigenfunction with eigenvalue $\gl_j\neq0$.

Assume now $\nu\BC(g_j)=0$. Then, using \eqref{BB} again,
for \nuae~$x$, we have $\BC_x(g_j)=0$ and thus $\RA_xg_j=0$ a.s.
Taking the expectation shows that for such $x$, we have
$\RR g_j(x)=\E\RA_xg_j=0$.
Consequently, $\RR g_j=0$ \nuae, and \eqref{t22bb} implies $g_j=0$ \nuae{}
This shows one implication in \eqref{t22a}.

Conversely, assume $g_j=0$ \nuae{} 
Then \refL{Las} shows that for \nuae~$x$, we have
$\RA_xg_j=0$ \as, and thus $\BC_x(g_j)=0$ by \eqref{BB}. Hence,
$\nu\BC(g_j)=0$.
This completes the proof of \eqref{t22a}, and thus of
\ref{T220b}$\iff$\ref{T220d}, and of the theorem.
\end{proof}

\begin{proof}[Proof of \refT{T230}]
We note first that
\eqref{t22bb}  holds in the present case too,
and thus $g_j$ is an eigenfunction of $\RR$
with eigenvalue $\gl_j\neq1$.
Hence, $\nu$ and $g_j$ are left and right eigenvectors of $\RR$
with different eigenvalues (recall \eqref{nurr}), which implies, as is well
known,
\begin{align}\label{tag0}
  \nu g_j =0,
\end{align}
because we have
\begin{align}
  \nu g_j = (\nu \RR)g_j = \nu (\RR g_j) = \gl_j (\nu g_j).
\end{align}

\ref{T230a}$\iff$\ref{T230b}: Obvious.

\ref{T230b}$\implies$\ref{T230g},\ref{T230p}:
Suppose now that \ref{T230b} holds, \ie, $\gL_j=\E\gL_j$ a.s.
The proofs of \refT{T2}(3) and \refL{LIH2} 
(in particular \eqref{gLgL})
show that 
\begin{align}\label{az}
  \gL_j=c\gL_{g_j}
\end{align}
where $c>0$ is an explicit constant and
$\gL_{g_j}$ is given by \refL{LIH1}.
Since $\gL_{g_j}$ is constructed in the proof of \refL{LIH1} as the limit of
the martingale \eqref{h3} (with $f$ replaced by $g_j$), 
it follows that $\gL_j=\E \gL_j$ \as{} if and only if all martingale
differences in \eqref{h3} vanish \as, i.e.,
\begin{align}\label{tage}
  \gL_j=\E\gL_j \text{ \as} \iff
\gD M_i B_{0,i}\qw g_j = 0 \text{ \as, for every $i\ge1$}.
\end{align}
Moreover, as remarked above, $\RR g_j=\gl_j g_j$, 
Hence, by \eqref{Bbmn},
\begin{align}
 B_{0,i}\qw g_j
=
b_{0,i}(\RR)\qw g_j 
= b_{0,i}(\gl_j)\qw g_j,
\end{align}
where $b_{0,i}(\gl_j)\neq0$ by \eqref{bmn}, $0<\gam_n<1$, and $\Re\gl_j>0$.
Thus \eqref{tage} 
yields
\begin{align}\label{tk}
\gD M_i  g_j = 0 \text{ \as, for every $i\ge1$}
.\end{align}
Using also \eqref{a2}
(and replacing $i$ by $n+1$), \eqref{tk}
says that, for every $n\ge0$,
\begin{align}
  \label{tck}
\RYni g_j = \tfm_{n}\RR g_j \text{\quad \as} 
\end{align}
Conditioning on $\tfm_n$, and recalling that $Y_{n+1}$ has the conditional
distribution $\tfm_n$, we see that \eqref{tck} implies that 
$\tfm_n$ is \as{} such that, conditioned on $\tfm_n$,
\begin{align}
  \label{tag1}
\RAni_x g_j
= \tfm_{n}\RR g_j \text{\quad \as, for $\tfm_n$-\aex{} $x$} 
  .\end{align}
Consider first the case $n=0$.
Recall that $\tfm_0$ is non-random, and 
let $a:=\tfm_0\RR g_j$ (a non-random real number). Then 
the case $n=0$ of \eqref{tag1} says 
\begin{align}\label{tag1a}
  \RA_x g_j = a\text{\quad a.s., for $\tfm_0$-\aex{} $x$}.
\end{align}
Now return to a general $n\ge0$.
Since $\tfm_n$ is a positive number times $\fm_n$, we may in \eqref{tag1}
equivalently write ``for $\fm_n$-\aex{} $x$''. 
Furthermore, $\fm_n\ge\fm_0$,
and thus \eqref{tag1} implies  that the equality holds for $\fm_0$-\aex{}
$x$.
Moreover, $\RAni_x$ is independent of $\tfm_n$, and thus its conditional
distribution equals the distribution of $\RA_x$.
Hence, \eqref{tag1a} shows that, also conditioned on $\tfm_n$, 
\begin{align}
  \label{tag1b}
\RAni_x g_j
= a \text{\quad \as, for $\tfm_0$-\aex{} $x$}
  .\end{align}
Consequently, 
comparing \eqref{tag1} and \eqref{tag1b},
we obtain,
for every $n\ge0$,
\begin{align}\label{tag2}
  \tfm_n\RR g_j = a \text{\quad a.s.}
\end{align}
Thus,
\eqref{tag1} shows that, for every $n\ge0$, $\tfm_n$ is \as{} such that
\begin{align}\label{tag3}
  \RA_x g_j = a\text{\quad a.s., for $\tfm_n$-\aex{} $x$}.
\end{align}
By again
conditioning on $\tfm_n$, 
it follows from \eqref{tag3} that
\begin{align}\label{tb1}
  \RYni g_j=a
\quad\text{a.s.}
\end{align}
Consequently, by \eqref{mn} and induction,
\begin{align}\label{tb2}
  \fm_n g_j = \fm_0 g_j + na
\quad\text{a.s.}
\end{align}

On the other hand, taking the expectation in \eqref{tag3} yields that
$\tfm_n$ is \as{} such that
\begin{align}\label{az1}
 \RR g_j(x)=\E \RA_x g_j = a\text{\quad  for $\tfm_n$-\aex{} $x$}.
\end{align}
Recalling \eqref{t22bb}, this implies that 
\begin{align}\label{az2}
\tfm_ng_j =\gl_j\qw\tfm_n(\RR g_j) = \gl_j\qw a
\quad\text{a.s.}
\end{align}
Consequently, recalling \eqref{mass}, 
\begin{align}\label{az3}
\fm_n g_j = \fm_n(E)\tfm_ng_j =  \bigpar{\fm_0(E)+n}\gl_j\qw a
=   \fm_0(E)a/\gl_j+  n a/\gl_j
\quad\text{a.s.}
\end{align}
Comparing \eqref{tb2} and \eqref{az3}, we see that $a=a/\gl_j$, and thus
(since $\gl_j\neq1$), $a=0$.
Consequently, \eqref{az2} says $\tfm_n g_j=0$ a.s., which completes the
proof of \ref{T230b}$\implies$\ref{T230g}.

Moreover,
\eqref{az1} with $a=0$ and \eqref{t22bb}
show that \as, $g_j(x)=0$ for $\tfm_n$-\aex{}
$x$, which is the same as $\tfm_n|g_j|=0$.
Hence, also
\ref{T230b}$\implies$\ref{T230p}.

\ref{T230p}$\implies$\ref{T230g}: Trivial.

\ref{T230g}$\implies$\ref{T230o}:
Now suppose $\fm_n g_j=0$ \as, for every $n\ge0$.
Then $\tfm_n g_j=0$ \as, and, using \eqref{tag0}, 
\begin{align}
  \label{az4}
\fv_n g_j = \tfm_n g_j -\nu g_j =0 
\quad\text{a.s.}
\end{align}
Furthermore, $g_j$ is an eigenfunction of $\RR$ by \eqref{t22bb},
and thus also an eigenfunction of $B_{0,n}=b_{0,n}(\RR)$.
Hence, \eqref{az4} implies 
\begin{align}
  \label{az5}
\fv_n B_{0,n}\qw g_j = 0
\quad\text{a.s.}
\end{align}
for every $n\ge0$.
By \refL{LIH1}, we have 
$\fv_n B_{0,n}\qw g_j \asto \gL_{g_j}$. Thus \eqref{az5} implies
$\gL_{g_j}=0$ a.s., which by \eqref{az} yields $\gL_j=0$ \as{} and thus shows
\ref{T230o}.

\ref{T230o}$\implies$\ref{T230b}: Trivial.

\ref{T230g}$\iff$\ref{T230h}: Obvious by \eqref{mn}.

\ref{T230p}$\iff$\ref{T230q}: Trivial.

\ref{T230p}$\iff$\ref{T230r}:
\ref{T230p} is equivalent to
\begin{align}\label{tag8}
  \E\tfm_n|g_j|=0
\quad\text{for every $n\ge0$}
.\end{align}
By \eqref{annika} and induction, recalling \eqref{Bmn} and \eqref{Bbmn},
\begin{align}
  \E \tfm_n = \tfm_0B_{0,n}  = \tfm_0b_{0,n}(\RR).
\end{align}
Since $b_{0,n}(\RR)$ is a polynomial in $\RR$ of degree (exactly) $n$, 
\eqref{tag8} is equivalent to \ref{T230r}, which shows the equivalence
\ref{T230p}$\iff$\ref{T230r}.

\ref{T230p}$\implies$\ref{T230nu} when $\RR$ is \slqc:
\refT{T1} then applies to all functions in $\BW$, and in particular to $|g_j|$.
Hence, 
$\tfm_n|g_j|\asto \nu|g_j|$.
The condition \ref{T230p} thus implies $\nu|g_j|=0$,
which is \ref{T230nu}.

\ref{T230nu}$\implies$\ref{T230r} when 
$\fm_0$ is absolutely continuous w.r.t.\ $\nu$:
By \refL{Las} and induction, 
 \ref{T230nu} implies $\RR^n |g_j|=0$  \nuae{} for every $n\ge0$.
Our assumption $\fm_0\ll\nu$ then yields \ref{T230r}.
\end{proof}

\begin{example}\label{Enull}
  Let $E=\oi$, and let $\mu$ be the Lebesgue measure on $E$.
Let $0<\gth<1$ and let
$\RA_x$ be the (non-random) replacement kernel given by
\begin{align}\label{enull}
  \RA_x=\ER_x=
  \begin{cases}
    \mu,&x\neq0
\\
\gth \gd_0 + (1-\gth)\mu, &x=0.
  \end{cases}
\end{align}
We take $W=V=1$, and it is trivial to verify \BHNa, with $\nu=\mu$.
The operator $\RR$ (considered on $B(E)$ as usual)  has rank 2
and it is easily seen that $\gs(\RR)=\set{0,\gth,1}$, with the spectral
projections $\Pi_1$ and $\Pi_\gth$ both having rank 1 and corresponding
eigenvectors 
$1$ and $\etta_{\set0}$. Hence $\RR$ is always \slqc{} on $\BW$, and small if and
only if $\gth<\half$; moreover, our parameter $\gth$ is as in \eqref{gth}.

If we start with $\fm_0=\mu$, or with $\gd_x$ for any $x\neq0$, then 
\as{} $\fm_n=\fm_0+n\mu$, so the \mvpp{} is deterministic.
However, if we take $\fm_0=\gd_0$, then the evolution is different;
the \mvpp{} then is essentially a triangular urn of the type considered in 
\eg{} \cite{SJ169}, where its asymptotic distribution can be found.
(To see this, call colour 0 'white' and lump all other colours in $E$
together as 'non-white'.)

In particular, if $\half<\gth<1$, then \refT{T2}(3) applies with $D=B(E)$,
$p=1$ and $\gl_1=\gth$.
Moreover, if we take $f:=\etta_{\set0}$, 
then \refT{T230} applies with $g_1=f=\etta_{\set0}$.
It follows easily that the limit $\gL_1=0$ in \eqref{hildegran} 
if and only if $\fm_0\set0=0$.

This, admittedly artifical, example shows that one cannot always
ignore functions that are \nuae{} 0; thus  
some care may be required when considering $\RR$ as acting on $L^\infty(E,\nu)$.
\end{example}

\begin{example}\label{Enull2}
  We may vary \refE{Enull} by fixing 3 distinct points
  $x_0,x_1,x_2\in\oi$ and defining (non-random)
\begin{align}\label{enull2}
  \RA_x=\ER_x=
  \begin{cases}
\frac12\gd_{x_1}+\frac12\gd_{x_2}, & x=x_0,
\\
\gth \gd_x + (1-\gth)\mu, &x=x_1, x_2,
\\
    \mu,&\text{otherwise}.
  \end{cases}
\end{align}
The spectrum is still \set{0,\gth,1}, and 
the range of the spectral projection $\Pi_\gth$ has dimension 2.
Let $\frac12<\gth<1$, so that \refT{T2}(3) applies.
One can easily check that $f:=\etta_{x_1}-\etta_{x_2}\in\Pi_\gth$
and that \refT{T230} applies with $g_1=f$.
Using \refT{T230}\ref{T230r}, it follows that $\gL_1=0$ if and
only if $\fm_0\set{x_0,x_1,x_2}=0$. 
In particular, note that $\gL_1$ is non-random if $\fm_0=\gd_{x_0}$;
this shows
that in \refT{T230}, it is not enough to assume $\fm_0|g_j|=0$.
\end{example}

\section{Examples}\label{Sex}

We consider some examples, in separate subsections.

\subsection{Out-degree distribution in the random recursive tree}\label{SSRRT}
This example is already considered by~\cite{SJ155} in the P\'olya urn
context and in~\cite{MV19} in the {\sc mvpp} context.
The random recursive tree is built recursively as follows: at time~1 the tree has one node, its root, and, at every discrete time-step, 
we add one node to the tree, and this new node chooses its parent uniformly at random among the nodes that are already in the tree.
The out-degree of a node is its number of children. 
For all $n\geq 1$ and $k\geq 0$, 
we set $U_k(n)$ the number of vertices of out-degree $k$ in the $n$-node
{\sc rrt},
and
\begin{align}{\frak m}_n := \sum_{k\geq 0} U_k(n)\delta_k
.\end{align}
(We start this process at time $n=1$; this is just a matter of notation.)
We show that
\begin{proposition}\label{prop:RRT}
If $f : \mathbb N_0\to \mathbb C$ satisfies  
$f(k)= O\bigpar{r^k}$ 
for some $r<\sqrt2$, then
there exists a covariance matrix $\Sigmaq(f)$ such that, 
as $n\to+\infty$,
\begin{align}n^{\nicefrac12}\sum_{k\geq 0} \Big(\frac{U_k(n)}n - 2^{-k-1}\Big)f(k)
\dto \mathcal N\big(0, \Sigmaq(f)\big).
\end{align}
\end{proposition}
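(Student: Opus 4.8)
The plan is to realise $(\fm_n)_{n\ge1}$ as an \mvpp{} on the colour space $E=\bbN_0$ and apply \refT{T2}(1). To identify the replacement kernel, note that a vertex of out-degree $k$ in the $n$-node \rrt{} is selected (as the parent of the new vertex) with probability $U_k(n)/n=\tfm_n\set k$; upon selection, its out-degree becomes $k+1$, so one ball of colour $k$ is removed and one of colour $k+1$ is added, while the new vertex has out-degree $0$, adding one ball of colour $0$. Hence the (deterministic) replacement kernel is
\begin{align}\label{RRRT}
  \RA_k=\ER_k=\gd_0+\gd_{k+1}-\gd_k,\qquad k\ge0,
\end{align}
so $\RA_k(E)=1$ and \ref{as:balance} holds, and $\RA_k\set k=0$ here (the diagonal term vanishes since the subtracted and one added ball differ); in any case subtractions are bounded by $1$. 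First I would choose a weight: take $W(k):=\rho^k$ for a fixed $\rho\in(1,\sqrt2)$ with $r\le\rho$, and $V:=W^q$ with $q>2$ chosen so that $\rho^q<2$ (possible since $\rho<\sqrt2$). Then \ref{as:BWi} holds: $\ER_kV=V(0)+V(k{+}1)-V(k)=1+(\rho^q-1)\rho^{qk}\le\vartheta V(k)+1$ with $\vartheta=\rho^q\in(0,1)$ for $k\ge1$ (and the case $k=0$ is trivial). Assumption \ref{as:BWii} follows from \ref{as:BWi} and \ref{as:balance} via \refR{RBWii} (bounded subtractions). For \ref{as:nu2}, the measure $\nu=\sum_{k\ge0}2^{-k-1}\gd_k$ is the geometric$(1/2)$ distribution, and a direct computation gives $\nu\ER=\nu$ (checking $\nu\ER\set k = 2^{-k}$ for $k\ge1$ via the telescoping $\gd_0+\gd_{k}-\gd_{k-1}$ contributions) and $\nu V=\sum_k 2^{-k-1}\rho^{qk}<\infty$ since $\rho^q<2$. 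Finally $\fm_1=\gd_0$, so $\fm_1 V=1<\infty$, giving \ref{as:BWiii}. The hypothesis $f(k)=O(r^k)$ with $r<\sqrt2$ ensures $f\in\BW$ (indeed $f\in B(W^2)$ if we pick $\rho^2\le\sqrt2/r\cdot$ something — more simply choose $\rho$ with $\rho^2\ge r$ and still $\rho^q<2$, so $f\in B(W)$; one can also arrange $f\in B(W^2)$ by shrinking, but $B(W)$ suffices for part (1)).

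Next I would analyse the spectrum of $\RR$ on $\BW$. The operator acts by $(\RR f)(k)=\ER_k f = f(0)+f(k+1)-f(k)$. It is convenient to compute on the subspace of functions with $f(0)=0$ after subtracting the constant $\nu f$; on the relevant invariant pieces $\RR$ behaves like the shift-minus-identity operator $Sf(k)=f(k+1)-f(k)$ plus the rank-one perturbation $f\mapsto f(0)\cdot 1$. A standard computation (this model is treated in~\cite{SJ155} in the \Polya{} urn picture and in~\cite{MV19}) shows that $\gs(\RR)$ on $B(\rho^\bigdot)$ is $\set1\cup\set{\lambda:|\lambda+1|\le 1/\rho}$, a closed disc of radius $1/\rho<1/\sqrt2<\tfrac12$ centred at $-1$, together with the isolated eigenvalue $1$ (eigenvector $1$); the spectral projection $\Pi_1$ has rank $1$ with $\Pi_1 f=(\nu f)1$ by \refL{L0}. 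In particular $\RR$ is \slqc{} on $\BW$ and $\gth=\sup\Re(\gs(\RR)\setminus\set1)=-1+1/\rho<0<\tfrac12$, so $\RR$ is \emph{small}. Hence the hypotheses of \refT{T2}(1) are met with $D=\BW$.

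Applying \refT{T2}(1) to $f$ (which may be taken real after splitting into real and imaginary parts, or directly complex) yields
\begin{align}
  n^{\nicefrac12}\bigl(\tfm_n f-\nu f\bigr)=n^{\nicefrac12}\sum_{k\ge0}\Bigl(\frac{U_k(n)}{n}-2^{-k-1}\Bigr)f(k)\;\dto\;\mathcal N\bigl(0,\Sigmaq(f)\bigr),
\end{align}
where $\Sigmaq(f)$ is the covariance matrix in \eqref{t11} built from $\gss(f)=\intoo\nu\mathbf C(\ee^{s\RR}(f-\nu f))\ee^{-s}\dd s$ and $\gox(f)=\intoo\nu\BB(\ee^{s\RR}(f-\nu f))\ee^{-s}\dd s$; here $\BB_k(g)=\mathbf C_k(g)=\ER_k\text{-second moment}=(g(0)+g(k+1)-g(k))^2$ since $\RA_k$ is deterministic. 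Recalling $\tfm_n=\fm_n/\fm_n(E)$ and $\fm_n(E)=\fm_1(E)+(n-1)=n$ by \eqref{mass} (with $\fm_1(E)=1$), we have $\tfm_n\set k=U_k(n)/n$ exactly, which is the claimed statement. (For a real-valued $f$, \refR{RcomplexG} gives $\gox(f)=\gss(f)$ real and the limit is simply $\mathcal N(0,\gss(f))$.)

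\textbf{Main obstacle.} The only genuinely nontrivial step is the spectral computation: verifying that $\gs(\RR)$ on $B(\rho^\bigdot)$ is exactly the disc $\overline{D(-1,1/\rho)}\cup\set1$ and, crucially, that $1$ is \emph{isolated} with a rank-one spectral projection. One must solve $(\RR-\lambda)g=h$, i.e.\ the recursion $g(k+1)=(1+\lambda)g(k)+h(k)-g(0)$, and check invertibility in $B(\rho^\bigdot)$: the homogeneous solutions grow like $(1+\lambda)^k$, so they lie in $B(\rho^\bigdot)$ precisely when $|1+\lambda|\le 1/\rho$ (one must be careful at the boundary, where approximate eigenfunctions exist so the disc is closed), and the extra constraint from the $g(0)$ feedback term pins down $\lambda=1$ as the sole point outside the disc in the spectrum. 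This is where choosing $\rho>1$ with $\rho<\sqrt2$ does the work: it makes the disc fit strictly inside $\set{\Re\lambda<\tfrac12}$, which is exactly the ``small operator'' condition and the source of the restriction $r<\sqrt2$ on $f$. Everything else — checking \ref{as:balance}, \ref{as:BW}, \ref{as:nu2}, identifying $\nu$, and unwinding $\tfm_n\set k=U_k(n)/n$ — is routine.
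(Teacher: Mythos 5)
Your overall strategy matches the paper's: realise the degree profile as an \mvpp{} on $E=\bbN_0$ with replacement kernel $\gd_0+\gd_{k+1}-\gd_k$, pick a geometric weight $W(k)=\rho^k$ with $1<\rho<\sqrt2$ and $\rho^q<2$ for some $q>2$, verify \ref{as:balance}, \ref{as:BW}, \ref{as:nu2}, compute $\gs(\RR)$ on $B(W)$, and apply \refT{T2}(1). The verification of the assumptions is sound, but the key spectral step contains a sign error. You assert that the homogeneous solutions $g(k)=(1+\gl)^k$ of $(\RR-\gl)g=h$ lie in $B(\rho^\bigdot)$ precisely when $|1+\gl|\le 1/\rho$, so that $\gs(\RR)\setminus\set1$ sits in the closed disc of radius $1/\rho$ about $-1$ and $\gth=-1+1/\rho<0$. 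The inequality is reversed: $\norm{g}_{B(W)}=\sup_k|1+\gl|^k/\rho^k$ is finite if and only if $|1+\gl|\le\rho$, so the disc has radius $\rho$, the spectrum \emph{grows} with the weight (enlarging $\rho$ relaxes the decay requirement, so more eigenfunctions fit in $B(W)$), and the correct value is $\gth=\rho-1>0$. The paper's computation in the dual Banach algebra $\cM(W)\cong\cA_\rho$, where the resolvent $1/(1+\gl-z)$ lies in $\cA_\rho$ if and only if $|1+\gl|>\rho$, gives the same answer; and your intermediate chain $1/\rho<1/\sqrt2<\tfrac12$ is false on both counts, since $1/\rho>1/\sqrt2$ for $\rho<\sqrt2$ and $1/\sqrt2\approx0.707>\tfrac12$.

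This is not a harmless slip. With your reversed bound, $\RR$ would be small on $B(\rho^\bigdot)$ for \emph{every} $\rho>1$, so your argument would ``prove'' asymptotic normality for all subexponential $f$; but the variance \eqref{dal} diverges as $a\upto\sqrt2$, and \refR{Rgs} shows $\RR$ fails to be small on $B(W_r)$ for $r\ge\tfrac32$, so $r<\sqrt2$ is a genuine constraint. The proposition survives your error only by the coincidence that $\rho-1<\sqrt2-1\approx0.414<\tfrac12$; this has to be checked and cannot be deduced from the false claim $\gth<0$. Correct the radius of the disc, observe $\rho-1<\tfrac12$ for $\rho<\sqrt2$ (in fact for $\rho<\tfrac32$), and the remainder of your proposal goes through.
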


We show how to calculate $\Sigmaq(f)$ at the end of the section, at least
in some cases.

\begin{remark}
We compare Proposition~\ref{prop:RRT} to the results of~\cite{SJ155}
and~\cite{MV19}. 
The results in~\cite{SJ155} give an equivalent of Proposition~\ref{prop:RRT}
but only for functions~$f$ with finite support.
The results of~\cite{MV19} apply to unbounded functions $f$ as long as they are negligible in front of $x\mapsto 2^{x-\varepsilon}$ for some $\varepsilon>0$. This class of functions is larger than the one in Proposition~\ref{prop:RRT}, 
but~\cite{MV19} proves \as{} convergence of $\frac1n\sum_{k\geq 0}
U_k(n) f(k)$ to $\sum_{k\geq 0}2^{-k-1}f(k)$ while
Proposition~\ref{prop:RRT} gives the fluctuations around this almost-sure
limit. 
\end{remark}

\begin{proof}
To prove this proposition, first note that $({\frak m}_n)_{n\geq 1}$ is an {\sc mvpp} 
with $E=\bbNo:=\set{0,1,2,\dots}$, and deterministic $R\nn=\RA=\ER$
such that, for all $k\geq 0$,
\begin{align}\label{RRT}
\RA_k
=\gd_{k+1}-\gd_k+\gd_0,
\qquad k\ge0.
\end{align}
Note that this includes subtracting the drawn ball $k$ (unless $k=0$).
In other words, the operator $\RR$ is defined by \eqref{RR} as
\begin{align}\label{RRT1}
  \RR f(k) = \ER_k f = f(k+1)-f(k)+f(0),
\qquad k\ge0.
\end{align}
Dually, 
\begin{align}\label{RRT2}
  \gd_k\RR 
=\ER_k
=\gd_{k+1}-\gd_k+\gd_0,
\end{align}
and thus,
for any complex measure $\mu$ on $\bbNo$, (with $\mu\set{-1}:=0$)
\begin{align}\label{RRT3}
  (\mu \RR)(k) = \mu\set{k-1} - \mu\set{k}+\etta_{k=0}(\mu1).
\end{align}

The urn is balanced by \eqref{RRT}, i.e., \ref{as:balance} holds.

We first choose $W=V=1$.
Then \ref{as:BW} holds since $\norm{\RA_k}\le 3$ for every $k\ge0$, see \refR{RW=1}.
Furthermore, it is easily checked from \eqref{RRT3} that the probability
measure 
\begin{align}\label{RRTnu}
  \nu\set k=2^{-k-1},
\qquad k\in\bbNo,
\end{align}
(i.e., a geometric distribution $\Ge(1/2)$)
is an eigenvector satisfying $\nu\RR=\nu$, and thus \ref{as:nu2} holds too.

We next show that $\RR$ is a small operator on $B(W)=B(E)$.
To do so, we show first that the dual operator $\Rx$
is a small operator on $\cM(E)$; 
recall that $\Rx$ is the operator in \eqref{RRT2}--\eqref{RRT3} which we
there, as usually, denote by $\RR$ (acting on the right).

The space $\cME$ of complex measures is naturally identified with $\ell^1$;
we also identify it with the space 
\begin{align}\label{cA}
 \cA:=\Bigset{\sumko a_k z^k:\sumko |a_k|<\infty}
\end{align}
of analytic functions. (The functions in $\mathcal A$ are thus the analytic functions in
the unit disc with a Taylor series that is absolutely convergent on the
closed unit disc.)
The identification is the obvious one, mapping a measure $\mu\in\cME$ to
$\sumko\mu\set{k} z^k$.
Note that $\mathcal A$ is a Banach algebra under pointwise multiplication.
(The norm in $\cA$ is inherited from $\cME=\ell^1$.)

The operator $\Rx$ acting on $\cME$ by \eqref{RRT2} corresponds to the
operator
$\Rq:\mathcal A\to \mathcal A$ given by
\begin{align}\label{Rq}
  \Rq z^k = z^{k+1}-z^k+1.
\end{align}
This means that, for all $f\in \mathcal A$,
\cf{} \eqref{RRT3},
\begin{align}\label{ra}
  \Rq f(z)=zf(z)-f(z)+f(1)
=(z-1)f(z)+f(1).
\end{align}

We first show that
\begin{align}\label{Rqgs}
\gs(\Rq)\subseteq\set{\gl:|\gl+1|\le1}\cup\set1;
\end{align}
this implies that \ref{ergo2} holds.
Fix $\lambda\in\mathbb C$ such that $|\gl+1|>1$ and $\gl\neq1$; our aim is to show that $\lambda\notin\sigma(\Rq)$, 
i.e.\ $\lambda\in\rho(\Rq)$.
To do so, we fix $g\in \mathcal A$ and
consider the equation $(\gl-\Rq)f=g$.
By \eqref{ra}, the equation can be written
\begin{align}\label{rb}
  (1+\gl-z)f(z)-f(1)=g(z).
\end{align}
In particular, taking $z=1$ yields
\begin{align}\label{rc}
(\gl-1)f(1)=g(1).  
\end{align}
Then, \eqref{rc} gives $f(1)=g(1)/(\gl-1)$, and
\eqref{rb} is solved (uniquely) by
\begin{align}\label{rd}
f(z)=    \frac{g(z)+f(1)}{1+\gl-z}
=    \frac{g(z)+g(1)/(\gl-1)}{1+\gl-z}.
\end{align}
Furthermore, this solution $f$ belongs to $\mathcal A$, since $1/(1+\gl-z)\in \mathcal A$ 
when $|\gl+1|>1$
and $\mathcal A$ is a  Banach algebra.
Hence, $(\gl-\Rq)f=g$ has a unique solution $f\in \mathcal A$ for every $g\in \mathcal A$; in
other words, $\gl\in\rho(\Rq)$, which concludes the proof of~\eqref{Rqgs} and thus of~\ref{ergo2}.

Furthermore, the resolvent $(\gl-\Rq)\qw g$ is given by \eqref{rd}, and
thus, by \cite[Equation~VII.6.9]{Conway},
the spectral projection $\Pi_1$ is given by
\begin{align}\label{rf}
 \Pi_1g(z) = \frac{1}{2\pi\ii}\oint_\gG(\gl-\Rq)\qw g(z)\dd \gl
 = \frac{1}{2\pi\ii}\oint_\gG   \frac{g(z)+g(1)/(\gl-1)}{1+\gl-z}\dd \gl,
\qquad |z|\le1,
\end{align}
where $\Gamma$ is a small circle  around~1. (Any circle of radius less
that~1 will do.)
If $|z|\le1$, then  $1/(1+\gl-z)$ is an analytic function of $\gl$ on and
inside $\gG$, and it follows 
by the residue theorem that
the integral \eqref{rf} equals the residue at $\gl=1$, which is
$g(1)/(2-z)$.
Thus,
\begin{align}\label{rg}
 \Pi_1g(z) = \frac{g(1)}{2-z},
\qquad |z|\le1,
\end{align}
which together with \eqref{Rqgs}
shows that \ref{ergo3} holds with the eigenfunction 
$1/(2-z)=\sumko 2^{-k-1}z^k$.  
This eigenfunction corresponds to 
$\nu$ in \eqref{RRTnu}, which shows again that $\nu\RR=\nu$.

Therefore, $\Rq$ is  \aslqc{} operator on $\cA$.
Furthermore, we conclude from
\eqref{Rqgs}   that it is a small operator on
$\mathcal A$, 
and thus that $\Rx$ is a small operator on $\cME$. 
By \refC{Cgs}, with $\cX=\BE$ and $\cY=\cME$,
this implies that $\RR$ is a small operator on $\BE$.

We have verified the conditions of  Theorem~\ref{T2}(1), 
which thus applies and shows asymptotic normality of $\fm_n f$
as in \eqref{t11} for every $f\in\BW=B(E)$.

We can extend the range of this result by considering other functions $W$.
Fix $r\ge1$ and take now 
\begin{align}
W(k)=  W_r(k):=r^k.
\end{align}
Thus $V(k)=W(k)^q=r^{qk}$ for some $q>2$.
Recall that \ref{as:nu2} requires $\nu V<\infty$.
Since $\nu$ still is given by \eqref{RRTnu},
this is equivalent to $r^q<2$.
Similarly,
\eqref{RRT1} shows that
\begin{align}
  \ER_kV=V(k+1)-V(k)+V(0) = (r^q-1)V(k) +1,
\end{align}
and thus \ref{as:BWi} holds if and only if $r^q<2$.
It is easily seen that \ref{as:BWii} holds for every $r\ge1$.
Furthermore, the urn starts with the composition $\gd_0$, and thus
\ref{as:BWiii} is trivial.
Hence, \ref{as:BW} and \ref{as:nu2} both hold if and only if $r^q<2$.
Since 
\ref{as:balance} holds regardless of $W$,
we conclude that
\begin{align}\label{RRTBHN}
  \text{\BHNa{} hold for some  $q>2$} \iff r<\sqrt2.
\end{align}

We now have to find the spectral gap of $\RR$ as an operator on $B(W_r)$.
We argue as in the case $r=1$ above, and begin by noting that 
$\cM(W_r)=\bigset{\mu:\sum_0^\infty |\mu\set{k}|r^k<\infty}$ is a
norm-determining subspace of $B(W_r)^*$.
Moreover, $\cM(W_r)$ may be identified with the space
\begin{align}\label{cAr}
 \cA_r:=\Bigset{\sumko a_k z^k:\sumko |a_k|r^k<\infty}
\end{align}
of analytic functions. The functions in $\cA_r$ are 
continuous in the closed disc $\set{z:|z|\le r}$ 
and analytic in its interior.
$\cA_r$ is, as $\cA=\cA_1$ studied above, 
a Banach algebra under pointwise multiplication.

As in the case $r=1$, the operator $\Rx$ on $\cM(W_r)$ corresponds to an
operator $\Rq$ on $\cA_r$ given by \eqref{Rq} and \eqref{ra}.
The argument above then shows that $\gl\in\rho(\Rq)$ provided
$\gl\neq1$ and $1/(1+\gl-z)\in\cA_r$, \ie, if $\gl\neq1$ and $|1+\gl|>r$.
Consequently, \eqref{Rqgs} is replaced by
\begin{align}\label{Rqgsr1}
\gs(\Rx)\subseteq\set{\gl:|\gl+1|\le r}\cup\set1.
\end{align}
Hence, on $B(W_r)$,
using \refL{Lgs1}\ref{Lgs1b} and
with $\gs(\Rx)\sphatx$ defined in Definition~\ref{def:Kchapeau},
\begin{align}\label{Rqgsr2}
\gs(\RR)
\subseteq\gs(\Rx)\sphatx
\subseteq\set{\gl:|\gl+1|\le r}\cup\set1.
\end{align}
In particular,
\begin{align}\label{mon}
  \gth_{B(W_r)}\le r-1.
\end{align}
We have seen above that we have to take $r<\sqrt2$ in order to have \ref{as:BW} and
\ref{as:nu2}, and \eqref{mon} shows that in this case $\gth<\half$ follows.
Consequently, if $r<\sqrt2$, then
the asymptotic normality \eqref{t11} extends to all $f\in
B(W_r)$, \ie, all $f$ such that $f(k)=O(r^k)$.
This completes the proof.
\end{proof}

\begin{remark}\label{Rgs}
It is easy to see that we have equality in \eqref{Rqgsr2}--\eqref{mon}.
In fact, we know that $1$ is an eigenvalue by
\eqref{R1}. Moreover, if $|1+\gl|\le r$ and $\gl\neq0$, then
$f(k):=(1+\gl)^k+1/(\gl-1)$ 
satisfies $f\in B(W_r)$ and $\RR f = \gl f$ by \eqref{RRT1}, 
see also \eqref{eva} below,
so 
$\gl$ is an eigenvalue of $\RR$ and thus $\gl\in\gs(\RR)$. 
Hence, we have equality in 
\eqref{Rqgsr2}--\eqref{mon} too. (For $\gl=0$ and $r>1$,
$f(k)=k-1$ is an eigenfunction, but this case follows also because
$\gs(\RR)$ is closed.)
Consequently, $\RR$ is a small operator in $B(W_r)$ $\iff r<3/2$.
\end{remark}

In the rest of this subsection, we show how to calculate the
  asymptotic covariance matrix $\Sigmaq(f)$ in \refP{prop:RRT}
for the following  functions~$f$:
Fix $r\in [1,\sqrt{2})$ and
let, for a complex $a$ with $|a|\le r$,
\begin{align}
  f_a(k):=a^k.
\end{align}
Then $f_a\in B(W_r)$, $\nu f_a = \sumko 2^{-k-1}a^k = 1/(2-a)$, and, by
\eqref{RRT1}, 
\begin{align}\label{eva}
  \RR(f_a-\nu f_a)=a^{k+1}-a^k+1-\frac{1}{2-a}
=a^{k+1}-a^k+\frac{1-a}{2-a}
=(a-1)(f_a-\nu f_a).
\end{align}
In other words, provided $a\neq1$ (so the function does not vanish),
$\tf_a:=f_a-\nu f_a\in B(W_r)$ is an eigenfunction of $\RR$ with
eigenvalue $a-1$. 
This makes it easy to compute asymptotic variances and covariances 
in \refT{T2} for the functions $f_a$.

Let $a$ and $b$ be complex numbers with $|a|,|b|\le r$.
First, note that by \eqref{BBB}, 
since $\RA_x=\ER_x$ is deterministic,
\begin{align}\label{rickard}
  \BBB(\tf_a,\tf_b)&
 =\ER_{\bigdot} \tf_a\cdot \ER_\bigdot \tf_b
=(\RR \tf_a)\cdot (\RR \tf_b)
= (a-1)(b-1)\tf_a\tf_b
\notag\\&
=(a-1)(b-1)\bigpar{f_{ab}-(\nu f_a) f_b -(\nu f_b) f_a + (\nu f_a)(\nu f_b)}.
\end{align}
Hence,
\begin{align}\label{karin}
\nu \BBB(\tf_a,\tf_b)&
=(a-1)(b-1)\bigpar{\nu f_{ab}- (\nu f_a)(\nu f_b)}
\notag\\&
=(a-1)(b-1) \Bigpar{\frac{1}{2-ab}-\frac{1}{(2-a)(2-b)}} 
=\frac{2(a-1)^2(b-1)^2}{(2-ab)(2-a)(2-b)}
\end{align}
and thus, recalling again \eqref{eva},
\begin{align}\label{kain}
  \intoo \nu\BBB\bigpar{\ee^{s\RR}\tf_a,\ee^{s\RR}\tf_b}\ee^{-s}\dd s
&=
  \intoo \nu\BBB\bigpar{\ee^{s(a-1)}\tf_a,\ee^{s(b-1)}\tf_b}\ee^{-s}\dd s
\notag\\&
=
  \intoo \nu\BBB\bigpar{\tf_a,\tf_b}\ee^{-(3-a-b)s}\dd s
\notag\\&
=\frac{2(a-1)^2(b-1)^2}{(3-a-b)(2-ab)(2-a)(2-b)}
.\end{align}
Taking $b=a$ in \eqref{kain} gives $\chi(f_a)$ in \eqref{goxf},
and taking $b=\bar a$ gives $\gss(f_a)$ in \eqref{gssf}.
(See \refR{RBB}).
In particular, for $a$ real with $|a|<\sqrt2$, 
\refT{T2} shows (see \refR{RcomplexG}) that 
\begin{align}\label{mag}
  n^{-1/2}\biggpar{\sumko U_k(n)a^k - \frac{n}{2-a}}
= n\qq\bigpar{\tfm_nf_a-\nu f_a}
\dto \cN\bigpar{0,\gss(f_a)},
\end{align}
with
\begin{align}\label{dal}
  \gss(f_a)= \frac{2(a-1)^4}{(3-2a)(2-a^2)(2-a)^2}.
\end{align}
More generally, we have joint convergence for several (real or complex) $a$,
with asymptotic covariances easily found from \eqref{kain}.

\begin{remark}\label{RU}
  It follows that the asymptotic variances and covariances of $n\qqw U_k(n)$
can be obtained as Taylor coefficients of the 
bivariate rational function in \eqref{kain};
this was earlier shown in \cite{SJ155} by related calculations using urns
with finitely many colours.
\end{remark}

\begin{remark}\label{Rvar}
  Moreover, using Fourier analysis, any function $f$ in \refP{prop:RRT}
may be expressed as an integral of functions $f_a$:
for any $\rho\in(r,\sqrt2)$,
\begin{align}\label{uarda}
  f = \frac{1}{2\pi}\int_0^{2\pi}\hf\bigpar{\rho\qw e^{-\ii t}}
f_{\rho  e^{\ii t}}\dd t
\end{align}
where $\hf(z):=\sumko f(k)z^k$.
By substituting \eqref{uarda} in \eqref{goxf} and \eqref{gssf}, and using
\eqref{kain}, one can obtain integral formulas for $\chi(f)$ and $\gss(f)$,
and thus for $\Sigmaq(f)$. The result is rather complicated, however, and we
leave the details to the reader.
\end{remark}

\begin{remark}\label{Rbest}
 The asymptotic variance in \eqref{dal} diverges as $a\upto \sqrt2$,
and thus the result cannot be extended (in this form at least) to $a\ge\sqrt2$.
Hence, the condition $r<\sqrt2$ in \refP{prop:RRT} and the argument above is
not just a 
technical condition required by our proofs; it is essential for 
\eqref{mag}--\eqref{dal}, which strongly suggests that it is necessary
in \refP{prop:RRT} too. This also shows that the technical conditions \ref{as:BW}
and \ref{as:nu2} are more or less best possible; in particular, it is not enough to
take $q<2$ in \ref{as:BW}. 
\end{remark}

We do not know what happens for functions $f$ that grow faster than allowed
in \refP{prop:RRT}. In particular, the following case seems interesting.

\begin{problem}\label{PRRT}
  What is the asymptotic distribution of $\sumko U_k(n) a^k$ for
  $a\ge\sqrt2$?

Is there any difference between the cases $a<3/2$ and $a>3/2$?
(Recall that $\RR$ is a small operator in $B(W_a)$ for $a<3/2$, 
but not for larger $a$.)
\end{problem}

\subsection{The heat kernel on the square}\label{SSheat}
Imagine some flowers planted in a closed square room:
we start with one flower in the room (say at the centre of the room).
Each flower blooms at exponential rate, 
independently from the others, and when a flower blooms, 
it sends one seed in the air, which travels in the air according to a
Brownian motion
reflected at the walls for a unit-time, then fall onto the ground and instantly becomes a new flower.
We assume that the rate of blooming is so small that we can imagine that 
the seeds perform their unit-one Brownian motions instantly.
We set $\tau_n$ to be the instant of the $n$-th bloom ($\tau_0:=0$),
and $\xi_n$ to be the position of the $n$-th flower in $[0,\ell]^2$ ($\xi_0 = (\nicefrac\ell2, \nicefrac\ell2)$).
We are interested in the long-term behaviour of the distribution of flowers in the room:
\begin{align}\Xi_n = \sum_{i\geq 0} \delta_{\xi_i}.
\end{align}
It is expected that $\Xi_n/n$ converges to the uniform distribution on the square, and this is indeed confirmed by Theorem~\ref{T1}\ref{T1b}; 
Theorem~\ref{T2} allows to study the fluctuations around this limit.
This yields the following.
\begin{proposition}\label{prop:flowers}
For all bounded measurable functions $f : [0, \ell]^2\mapsto \mathbb R$, 
\begin{equation}\label{eq:cvas_flowers}
\frac1n\Xi_n f 
=\frac1n\sumion  f(\xi_i)
\to 
\frac{1}{\ell^2}
\int_{[0,\ell]^2} f(x)\,\mathrm dx,\quad\text{ almost surely when }n\to+\infty.
\end{equation}
For all  $m, p \in \mathbb N_0^2$,  set
\begin{equation}\label{eq:lambdas}
\lambda_{m,p} := \exp\left(-\frac{\pi^2 (m^2 + p^2)}{2\ell^2}\right),
\end{equation}
and 
\begin{equation}\label{eq:phis}
\varphi_{m, p}(x,y) 
:= \cos\Big(\frac{\pi m x}{\ell}\Big)\cos \Big(\frac{\pi p y}{\ell}\Big).
\end{equation}
Also, set $I(\ell) := \{(m,p)\in\mathbb N_0^2\colon  \lambda_{m,p}<\nicefrac12\}$
and let $D$ be the closed linear span  in $B([0,\ell]^2)$ of\/ $1$ and
$\{\varphi_{m,p}\colon (m,p)\in I(\ell)\}$. 
Similarly, set $J(\ell) := \{(m,p)\in\mathbb N_0^2\colon  \lambda_{m,p}\leq
\nicefrac12\}$ 
and let $D'$ be the closed linear span  of\/ $1$ and
$\{\varphi_{m,p}\colon (m,p)\in J(\ell)\}$. 
\begin{romenumerate}
\item\label{Pfla} 
For every function $f\in D$,
there exists a covariance matrix $\Sigmaq(f)$ such that
\begin{align}\label{pfla}
n^{\nicefrac12} \lrpar{\frac1n\sumion  f(\xi_i)
-\frac{1}{\ell^2}\int_{[0,\ell]^2} f(x,y)\dd x\dd y}
\to \mathcal N\bigpar{0, \Sigmaq(f)},
\end{align}
in distribution  
as $n\to+\infty$. 
\item\label{Pflb} 
If\/ $\frac{2\log 2}{\pi^2}\ell^2\in\{m^2+p^2\colon (m,p)\in \mathbb
N_0^2\}$, so $J(\ell)\neq I(\ell)$, then 
for every function $f\in D'$,
there exists a covariance matrix $\Sigmaq(f)$ such that
\begin{align}\label{pflb}
\frac{n^{\nicefrac12}}{(\log n)^\half} 
\lrpar{\frac1n\sumion f(\xi_i)-\frac{1}{\ell^2}\int_{[0,\ell]^2} f(x,y)\dd x\dd y}
\to \mathcal N\bigpar{0, \Sigmaq(f)},
\end{align}
in distribution 
as $n\to+\infty$.

\item\label{Pflc} 
If $\ell>\pi/{\sqrt{2\log 2}}$, then
for every function $f\in B([0,\ell]^2)$, there exists a random variable
$W(f)$ such that 
\begin{align}\label{pflc}
n^{1-\exp(-\pi^2/2\ell^2)}
\lrpar{\frac1n\sumion f(\xi_i)-\frac{1}{\ell^2}\int_{[0,\ell]^2} f(x,y)\dd x\dd y}
\to W(f),
\end{align}
almost surely and in $L^2$ when $n\to+\infty$.
\end{romenumerate}
\end{proposition}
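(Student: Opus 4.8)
The plan is to verify the hypotheses of \refTs{T1} and~\ref{T2} for the {\sc mvpp} $(\Xi_n)_{n\ge1}$ with colour space $E=[0,\ell]^2$, and then to transfer spectral information about the heat kernel from $L^2([0,\ell]^2)$ (where Fourier analysis is available) to $B([0,\ell]^2)$ using the general functional-analytic lemmas of \refApp{AppFA}. First I would identify the model: $\Xi_n$ is an {\sc mvpp} with deterministic replacement kernel $\RA_x=\ER_x = K_x$, where $K_x(\dd y)$ is the law at time~$1$ of a Brownian motion in $[0,\ell]^2$ with normal reflection at the boundary, started at~$x$. This is a probability measure for every $x$, so \ref{as:balance} holds, and since $\RA_x$ is positive the urn is tenable. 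Taking $W=V=1$, \ref{as:BW} holds trivially (\refR{RW=1}), and \ref{as:BWiii} is immediate since $\fm_0=\gd_{(\ell/2,\ell/2)}$. For \ref{as:nu2}, the reflected Brownian semigroup is reversible with respect to (normalised) Lebesgue measure on $[0,\ell]^2$, so $\nu:=\ell^{-2}\,\dd x$ satisfies $\nu\RR=\nu$ and $\nu V=1<\infty$.

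Next I would compute the spectrum. The operator $\RR$ is the time-$1$ transition operator $\ee^{\frac12\Delta_N}$ of the Neumann Laplacian on the square, which on $L^2([0,\ell]^2,\nu)$ is a compact (indeed Hilbert--Schmidt) self-adjoint positive operator diagonalised by the product cosines $\varphi_{m,p}$ in~\eqref{eq:phis}, with eigenvalues $\lambda_{m,p}$ in~\eqref{eq:lambdas}; the constant function $1=\varphi_{0,0}$ is the top eigenfunction with eigenvalue~$1$, which is simple, and all other eigenvalues lie in $(0,1)$ and accumulate only at~$0$. Hence on $L^2([0,\ell]^2,\nu)$ the operator $\RR$ is \slqc{} (indeed quasi-compact). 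To move to $B(E)$, I would invoke the transfer results of \refApp{AppFA} (as used for the {\sc rrt} example via \refC{Cgs}): since $\cM(E)$, equivalently $L^1$, is a norm-determining subspace and the eigenfunctions $\varphi_{m,p}$ are bounded and span a dense subspace, the spectrum of $\RR$ on $B(E)$ is contained in that on $L^2$ together with the relevant eigenvalues, so $\RR$ is \slqc{} on $B(E)$ with $\theta=\max\{\lambda_{m,p}:(m,p)\neq(0,0)\}=\lambda_{1,0}=\exp(-\pi^2/2\ell^2)<1$. This gives \refT{T1}\ref{T1b}, hence the almost-sure convergence~\eqref{eq:cvas_flowers}.

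For the three fluctuation regimes I would apply \refT{T2} with suitable invariant subspaces $D$, exactly as in \refR{RD} and \refE{Egl}. For part~\ref{Pfla}: if every eigenvalue $\lambda_{m,p}$ with $(m,p)\ne(0,0)$ is $<1/2$, then $\RR$ itself is small on $B(E)$ and \refT{T2}\eqref{T2.1} applies to all $f\in B(E)$; in general, let $D$ be the closed span of $1$ and $\{\varphi_{m,p}:(m,p)\in I(\ell)\}$, which is $\RR$-invariant with $\gs(\RR_D)=\{1\}\cup\{\lambda_{m,p}:(m,p)\in I(\ell)\}\subset\{1\}\cup(0,\tfrac12)$, so $\RR_D$ is small and \slqc{}; since $\RR$ is \slqc{} on $B(E)$, \refT{T2}\eqref{T2.1} yields~\eqref{pfla} with $\Sigmaq(f)$ the covariance in~\eqref{t11}. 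For part~\ref{Pflb}: the arithmetic hypothesis $\tfrac{2\log 2}{\pi^2}\ell^2\in\{m^2+p^2\}$ means some $\lambda_{m,p}$ equals exactly $1/2$; take $D'$ the closed span of $1$ and $\{\varphi_{m,p}:(m,p)\in J(\ell)\}$, so $\gs(\RR_{D'})=\{1,\lambda_1,\dots,\lambda_p\}\cup\Delta$ with $\Re\lambda_j=1/2$ and $\sup\Re\Delta<1/2$; since each $\Pi_{\lambda_j}D'$ is finite-dimensional and $\RR$ is self-adjoint on $L^2$ (so diagonalisable, hence $\kappa_j=1$ and $\kappa=1$) — here I would note $\kappa=1$ so the power of $\log n$ is $(\log n)^{\kappa-1/2}=(\log n)^{1/2}$ — \refT{T2}\eqref{T2.2} gives~\eqref{pflb}. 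For part~\ref{Pflc}: if $\ell>\pi/\sqrt{2\log 2}$ then $\lambda_{1,0}>1/2$; order the eigenvalues with real part equal to $\theta=\lambda_{1,0}$ (there are finitely many, all real and positive), take $D=B(E)$, note $\kappa=1$ again by self-adjointness, and apply \refT{T2}\eqref{T2.3}, which produces $L^2$ random variables $\Lambda_j$; since $\RR$ is also \slqc{} on $B(E)$ the convergence in~\eqref{hildegran} holds and, as all $\Im\lambda_j=0$, the sum $\sum_j n^{\ii\Im\lambda_j}\Lambda_j$ is just a single random variable $W(f)$, giving~\eqref{pflc} a.s.\ and in $L^2$.

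The main obstacle is the spectral transfer from $L^2$ to $B(E)$: one must check carefully that the Neumann heat semigroup at time~$1$ really has a spectral gap on the space of bounded functions and that its bounded eigenfunctions are precisely the product cosines, so that Definition~\ref{Dsmall} is met on $B(E)$ and not merely on $L^2$. This requires smoothing/ultracontractivity of the reflected Brownian semigroup (the kernel $K_x(\dd y)$ has a bounded density, so $\RR$ maps $B(E)$ into continuous functions and compactness transfers), together with the comparison lemmas of \refApp{AppFA} (the analogues of \refL{Lgs1} and \refC{Cgs} used in \refSS{SSRRT}); establishing $\kappa=1$ — i.e.\ that there are no nontrivial Jordan blocks on $B(E)$ — is a related subtlety that again follows from self-adjointness on $L^2$ plus the transfer argument. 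Everything else is routine application of \refTs{T1} and~\ref{T2}, and I would leave explicit evaluation of $\Sigmaq(f)$ (which could be written as a Fourier series using~\eqref{goxf}--\eqref{gssf} and $\BBB(\tf,\tf)=(\RR\tf)^2$, in analogy with~\eqref{kain}) to the reader.
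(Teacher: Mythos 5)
Your overall architecture is the same as the paper's: verify \ref{as:balance}, \ref{as:BW}, \ref{as:nu2}; show $\RR$ is \slqc{} by diagonalising the Neumann heat operator on $L^2([0,\ell]^2,\nu)$ and transferring to $B(E)$ via \refApp{AppFA}; then apply \refT{T1} and \refT{T2}(1)--(3) with the invariant subspaces $D$, $D'$ built from the cosines. That part is sound, and your remark about using the bounded density (i.e.\ $\RR:L^2\to B(E)$) to transfer compactness and to kill Jordan blocks via self-adjointness is exactly what the paper does through \refLs{LXY} and \ref{LXZ}.

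However, there is a genuine error at the very first step, in the identification of the {\sc mvpp}: the replacement kernel is \emph{not} the deterministic measure $\RA_x=K_x$. It is the \emph{random} Dirac mass $\RA_x=\delta_{B_1^{(x)}}$, whose mean is $\ER_x=K_x$. This matters in two ways. First, with your deterministic kernel the process $\fm_n=\fm_0+\sum_{i=1}^n K_{Y_i}$ has densities for $n\ge1$ and $\fm_n f=f(\xi_0)+\sum_{i=1}^n K_{Y_i}f$, which is \emph{not} $\sum_{i=0}^n f(\xi_i)=\Xi_n f$; so you would be proving the statement for a different process than the one in the proposition. Second, the quadratic operators that enter the limiting variances depend on the second moments of $\RA_x$, not just on its mean: for a random Dirac one has $\BBB_x(f,g)=\E[f(B_1^{(x)})g(B_1^{(x)})]=\ER_x(fg)$ and $\BC_x(f)=\ER_x|f|^2$, whereas your closing formula $\BBB(\tf,\tf)=(\RR\tf)^2$ is the one valid only for a deterministic kernel (as in the {\sc rrt} example of \refSS{SSRRT}). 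Once you replace $\RA_x=K_x$ by $\RA_x=\delta_{B_1^{(x)}}$, all your verifications of \ref{as:balance}, \ref{as:BW}, \ref{as:nu2} and the spectral analysis of $\RR$ (which depends only on $\ER=K$) go through unchanged, and the rest of the argument is essentially the paper's.

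A small further inaccuracy: when you write $\gs(\RR_D)\subset\{1\}\cup(0,\tfrac12)$ for the closed span $D$ of $1$ and the $\varphi_{m,p}$ with $(m,p)\in I(\ell)$, you should include $0$ in the spectrum (the eigenvalues $\lambda_{m,p}\to0$, so $0$ is an accumulation point of $\gs(\RR_D)$ whenever $I(\ell)$ is infinite); this does not affect the conclusion since $\Re 0<\tfrac12$, but the inclusion should read $\gs(\RR_D)\subset\{1\}\cup[0,\tfrac12)$.
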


\begin{remark}
If $\ell<\pi/{\sqrt{2\log 2}}$, then $D = B([0,\ell]^2)$, and then
\ref{Pfla} applies to all bounded $f$.
Similarly,
if $\ell=\pi/{\sqrt{2\log 2}}$, then $D'= B([0,\ell]^2)$ and \ref{Pflb}
applies
to all bounded $f$.
\end{remark}

\begin{proof}
First note that $\Xi_n$ is an {\sc mvpp} with colour space $E = [0,\ell]^2$,
initial composition $\delta_{(\nicefrac\ell2,\nicefrac\ell2)}$, and random
replacement kernel 
\begin{align}\label{fl1}
\RA_x = \delta_{B^{\sss (x)}_1},
\end{align} 
where $B = (B_t)_{t\geq 0}$ is the standard Brownian motion on the square 
of side-length $\ell$ started at $B^{\sss (x)}_0 = x$ and reflected at the
boundary. 
Note that $\RA_x$ is a positive measure.
We have
\begin{align} \label{fl2}
\ER_x = \cL\bigpar{B_1^{\sss (x)}},
\end{align}
the distribution of the reflected Brownian motion.
Hence, for any probability measure $\mu$ on $E$,
\begin{align}\label{fl3}
  \mu\RR=\cL\bigpar{B_1^{\sss \mu}},
\end{align}
the distribution of the reflecting Brownian motion at time 1 when started
according to $\mu$.

This {\sc mvpp} satisfies Assumption \ref{as:balance}.
We choose $W=V=1$, and then \ref{as:BW} holds by \refR{RW=1}.
Furthermore,  \ref{as:nu2} holds because the uniform distribution $\nu$ on
$[0,\ell]^2$ is invariant for the reflected Brownian motion and thus
satisfies $\nu{\RR} = \nu$ by \eqref{fl3}. 

The kernel $\ER_x$ in \eqref{fl2} of $\RR$ 
is known as the {\it heat kernel with Neumann boundary conditions}.
Its eigenvalues and eigenfunctions are well known, and can be found \eg{} 
as follows. (We give a sketch, omitting the standard details.)
First, since the kernel is absolutely continuous, and depends continuously
on $x$, it is easily seen that it does not matter whether we consider $\RR$ as
an operator on $B(E)$ or $L^\infty(E)$. 
(See \refL{LXZ}, with $\cN$ the space of bounded functions
that are 0 a.e.) 
Furthermore, the density of $\ER_x$ 
is bounded, uniformly in $x$, and it follows that $\RR$ maps $L^2(E)$ into
$L^\infty(E)$. Hence, \refL{LXY} shows that eigenvalues and other spectral
properties are the same in $L^\infty(E)$ and in $L^2(E)$ (except possibly at
0, which is not important for us).
Finally, we regard $L^2(E)=L^2([0,\ell]^2)$ as the subspace of
$L^2([-\ell,\ell]^2)$ 
consisting of functions that are even in each variable, and then 
extend these functions periodically to $\bbR^2$.
We then can replace the reflecting Brownian motion by ordinary Brownian
motion on $\bbR^2$, and it follows that the functions $\gf_{m,p}$ in
\eqref{eq:phis} form a complete orthogonal set of eigenfunctions in $L^2(E)$,
with corresponding eigenvalues $\gl_{m,p}$ given by \eqref{eq:lambdas}.
(In this example, $\RR$ is a self-adjoint operator on $L^2$, which makes the
spectral theory in $L^2$ particularly simple.)

Since $\gl_{m,p}\to0$ as $m+p\to\infty$, it follows that
\begin{align}\label{fl4}
\gs(\RR)=\set{\gl_{m,p}:m,p\in\bbNo}\cup\set0,   
\end{align}
in $L^2(E)$, and by \refL{LXY} as indicated above,
also in $L^\infty(E)$ and in $B(E)$.

The eigenvalue 1 is obtained only for $m=p=0$, and 
thus it follows from \eqref{fl4} that $\RR$ is \slqc.
Moreover, the second largest eigenvalue is
$\gl_{1,0}=\gl_{0,1}=\exp\bigpar{-\pi^2/(2\ell^2)}$, and thus $\RR$ is small
if and only if $\pi^2/(2\ell^2)>\log 2$, \ie, if 
$\ell<\pi/\sqrt{2\log 2}$.

The almost sure convergence in~\eqref{eq:cvas_flowers} is thus a direct
consequence of Theorem~\ref{T1}\ref{T1a}, which also gives an (upper)
estimate of the rate.

Next, we show that
\begin{align}\label{fl6}
\gs(\RR_D)
=\set{\gl_{m,p}:(m,p)\in I(\ell)\cup\set{(0,0)}}
=\set{\gl_{m,p}:\gl_{m,p}<\half}\cup\set1.
\end{align}
To see this, we first note that
if $\hD$ is the closure of $D$ in $L^2(E)$, \ie, 
the closed linear span  in $L^2(E)$ of\/ $1$ and
$\{\varphi_{m,p}\colon (m,p)\in I(\ell)\}$,
then $\gs(\RR_{\hD})$ is given by \eqref{fl6}, since the functions
$\gf_{m,p}$ are orthogonal eigenfunctions. 
Then, \eqref{fl6} follows by \refL{LXY}, because $\RR:\hD\to D$.

It follows from \eqref{fl6} that $\RR_D$ is a small operator, and thus
\ref{Pfla} 
is a direct consequence of~Theorem~\ref{T2}(1).

Similarly, by the same argument,
\begin{align}\label{fl6'}
\gs(\RR_D)
=\set{\gl_{m,p}:(m,p)\in J(\ell)\cup\set{(0,0)}}
=\set{\gl_{m,p}:\gl_{m,p}\le\half}\cup\set1.
\end{align}
and
\ref{Pflb} follows from Theorem~\ref{T2}(2), 
with $p=1$, $\gl_1=\frac12$, and $\kk=\kk_1=1$.

Finally, \ref{Pflc}
follows from Theorem~\ref{T2}(3),
with
$p=1$, $\gl_1=\ee^{-\pi^2/(2\ell^2)}$, and $\kk=\kk_1=1$.
\end{proof}

\begin{remark}\label{RflFourier}
  The covariance matrices of the limits in \eqref{pfla} and \eqref{pflb}
can easily be computed from the formulas in \refT{T2}
and a Fourier expansion of  $f$ into the functions $\gf_{m,p}$; we leave the
details to the reader.
\end{remark}

We can use \refTs{T210}--\ref{T230} to see whether the limit distributions
in \refP{prop:flowers} are degenerate.
Note that if $\gl\neq0$, then $\Pi_\gl$ is a projection onto a
finite-dimensional 
space spanned by some $\gf_{m,p}$; these are all continuous, and thus
$\Pi_\gl f$ is continuous for any $f\in B(E)$.

First, for \ref{Pfla}, it is easily seen from \refT{T210} that the limit in
\eqref{pfla} is degenerate only if $f=c$ \aex{} for some constant $c$.

Secondly, for \ref{Pflb}, \refT{T220} (with $\kk=1$ and $p=1$) shows that
the limit is degenerate if and only if $\Pi_{1/2}f=0$ a.e.; since 
$\Pi_{1/2}f$ is continuous, this holds if and only if $\Pi_{1/2}f=0$.
It is easily seen
that this holds if and only if $f\in D$ (and thus \ref{Pfla} applies, and
gives a more precise result).

Similarly, for \ref{Pflc}, \refT{T230} shows that the limit is degenerate if
and only if $\Pi_{\gl_1}f=0$, where $\gl_1=e^{-\pi^2/(2\ell^2)}$.
Assume this.
The next largest eigenvalue of $\RR$ is
$\gl_2=e^{-\pi^2/\ell^2}$. 
Hence, if $\gl_2\le\half$, we can apply \ref{Pfla} or \ref{Pflb} to $f$.
If $\gl_2>\half$, we  may instead apply \refT{T2}(3) to the subspace 
$D_1:=(\II-\Pi_{\gl_1})B(E)$;
note that $T$ is \slqc{} in $D_1$ and $\gth_{D_1}=\gl_2=e^{-\pi^2/\ell^2}$.

\begin{remark}
  In this example, the generalized eigenspaces $\Pi_\gl$ ($\gl\neq0$)
are all spanned by eigenvectors. Hence, $\kk=1$ in \refT{T2}, regardless of
the multiplicities of the eigenvalues.
The multiplicities show up 
when considering joint convergence of several $f$, as discussed in
\refR{Rjoint2}. In fact, in \refP{prop:flowers}\ref{Pflc},
the dominating eigenvalue $\gl_{1,0}=\gl_{0,1}$
has multiplicity 2, and thus there is a two-dimensional space of limits.

In \refP{prop:flowers}\ref{Pflb}, the dimension of the space of limits
equals
the multiplicity of the eigenvalue $\half$, which equals the number of
solutions to $m^2+p^2=N:=(2\log2)\ell^2/\pi^2$.
A formula for the number of such solutions is well known (and was stated
already by Gauss),
see \cite[Theorem 278 and Notes p.~243]{HardyW}, as well as a criterion for
the existence of any solutions at all (so $D'\neq D$)
\cite[Theorem 366]{HardyW}.
\end{remark}

\begin{remark}
 We could replace $[0,\ell]^2$ by any finite measure space 
 $(E,\mu)$ and the Brownian motion $B_1^{(x)}$ by jumps
according to any transition kernel $P(x,\dd y)$ on $E$ 
that has a density with respect to $\mu$ that is bounded (or, more
generally, in $L^2(\mu)$), uniformly in $x\in E$.
The operator $\RR$ then maps $L^2(\mu)\to B(E)$.
Moreover, 
$\RR$ is 
a Hilbert--Schmidt integral operator on $(E,\mu)$,
and thus $\RR$ is a compact operator on $L^2(\mu)$.
By the spectral theorm for compact operators,
\cite[Theorem~VII.7.1]{Conway}, 
the spectrum $\gs(\RR)$ can be written as
$\set{\gl_i}_{i=1}^N\cup\set0$ for some $N\le\infty$ and eigenvalues
$\gl_i\neq0$; either $N<\infty$ or $\gl_i\to0$ as $i\to\infty$.
Furthermore, $\Pi_{\gl_i}(L^2(E.\mu))$ has finite dimension for every $\gl_i$.
$\RR$ is a bounded  operator also on $L^\infty(E)$ and $B(E)$,
and
by \refLs{LXY} and \ref{LXZ}, the spectrum of $T$ is the same for these
spaces as for $L^2(E,\mu)$.

The function 1 is an eigenfunction with eigenvalue 1, so $1\in\gs(T)$, and 
$|\gl_i|\le1$ for all~$i$ since $\norm{\RR}_{B(E)}=1$.
In particular, $\RR$ is \slqc{} provided $\Pi_1(\RR)$ does not contain any
non-constant function.
Assuming the latter property, 
we thus obtain the same type of behaviour as in \refP{prop:flowers}.

The main
advantage of choosing the Brownian motion on $E=[0, \ell]^2$ is that 
its spectral decomposition is explicitly known and very simple.
(That the operator is self-adjoint on $L^2$ helps but is not essential.)
Other examples 
for which the spectral decomposition is fully known are the 
reflected Brownian motion
on the rectangle, on the isosceles triangle (see e.g.\ 
\cite[Chapter~5]{Henrot}) or on the annulus (see~\cite{KS84survey} and~\cite{GN13survey} for surveys on eigenfunctions and eigenvalues of the heat kernel). 
\end{remark}

\subsection{A branching random walk}\label{SSBRW}

The following branching random walk is studied in \cite{SJ32}.
Let $G$ be a compact group, and let $(Y_n)_1^\infty$ be an \iid{} sequence
of random variables in $G$ with some distribution $\mu\in\cP(G)$.
Let $X_0\in G$ be given. (In \cite{SJ32}, $X_0$ may be random.
We assume here that $X_0$ is non-random; otherwise
we may condition on $X_0$, \cf{} \refR{Rm0}.)
For $n\ge1$, let $I_n$ be uniformly distributed on \set{0,\dots,n-1} and
assume that all $I_n$ and $Y_m$ are independent.
Then define $X_n\in G$ inductively by
\begin{align}\label{32X}
  X_n:=X_{I_n}Y_n,
\qquad n\ge1.
\end{align}
In other words, for each $n$, we first choose a parent uniformly among
$X_0,\dots,X_{n-1}$, and then let $X_n$ be a daughter with a random
displacement $Y_n$ from its parent.

This process can be regarded as a \mvpp{} with colour
space $E=G$ by defining
\begin{align}\label{mx}
  \fm_n:=\sumion \gd_{X_i}.
\end{align}
The construction of $X_n$ in \eqref{32X} then means that $(\fm_n)_n$ is a
\mvpp{} with replacements given by
\begin{align}
  \RA_x=\gd_{xY_1}, \qquad x\in G.
\end{align}
We choose $W=V=1$, and let $\nu$ be the normalized Haar measure.
The conditions \BHN{} are easily verified.
We have
\begin{align}
  \ER_x=\cL(xY_1),
\end{align}
which is $\mu$ left translated by $x$.
Hence, $\RR$ acts on functions by convolution $\RR f = f*\check \mu$, where
$\check\mu$ is the distribution of $Y\qw$.

The results in \cite{SJ32} are about asymptotic normality, under certain
conditions, of the sums
\begin{align}
  S_n(f):=\sumion f(X_i) =\fm_n f
\end{align}
for suitable functions $f$. (The proof uses the method of moments.)

Consider for simplicity the case when $G$ is commutative. (The case of
non-commutative $G$ is similar but more technical and requires study of the 
irreducible representations of $G$; see \cite{SJ32}.)
Let $\hG$ be the dual group, consisting of all characters on $G$
(\ie, continuous homomorphisms $G\to \set{z\in\bbC:|z|=1}$), and define the
Fourier transform of $\mu$ by
\begin{align}\label{hmu}
  \hmu(\gam) :=\int_G\gam(g)\dd\mu(g)
=\E\gam(Y_1),
\qquad \gam\in\hG.
\end{align}
Then, every character $\gam$ is an eigenfunction of $\RR$, with 
\begin{align}
  \RR\gam=\hmu(\gam)\gam.
\end{align}
Hence, on the Hilbert space $L^2(G)$, $\RR$ has an ON basis of eigenfunctions,
and 
\begin{align}
  \gs(\RR)=\set{\hmu(\gam):\gam\in\hG}.
\end{align}
If we assume (as in \cite{SJ32}) that $\mu$ is not supported on any proper
closed subgroup of $G$, then $\hmu(\gam)\neq1$ and thus $\Re\hmu(\gam)<1$
for every $\gam\neq1$. If we further assume, for example, that $\mu$ is
absolutely continuous w.r.t.\ the Haar measure $\nu$, then $\hmu\in c_0(\hG)$
by (a general version of) the Riemann--Lebesgue lemma, and it follows that
$\RR$ is \slqc{} on $L^2(G)$. 
Moreover, if the density $\ddx\mu/\ddx\nu$ of $\mu$ is in $L^2(G)$,
then $\RR:L^2\to B(G)$, and it follows from
\refLs{LXY} and \ref{LXZ} that $\RR$ is \slqc{} also on $L^\infty(G)$ and on
$B(G)$.

Theorem \ref{T2} then applies and yields asymtotic normality of $S_n(f)$ if 
\begin{align}
\gth:=\sup\set{\Re\hmu(\gam):\gam\neq1}\le\half
;  
\end{align}
this is essentially 
\cite[Theorems 3.1 and 3.2]{SJ32}, although the technical conditions there
on $f$ and $\mu$ are somewhat different from ours. 
(They neither imply or are implied by our
conditions here; an example where \refT{T2} applies but not \cite{SJ32} is
when $\ddx\mu/\ddx\nu\in L^2(G)\setminus L^\infty(G)$, and $f\in
B(G)\setminus C(G)$.)
Moreover, if $\half<\gth<1$, then \refT{T2}(3) applies, and extends the
brief comments  given in \cite{SJ32} for that case.

\begin{remark}\label{Rhomo}
  \cite{SJ32} considers also a generalization to compact homogeneous spaces;
this is treated by constructing a branching random walk as above on a compact
group $G$, and then considering the projection to $G/H$ for a closed
subgroup $H$ of $G$. (This assumes that the distribution $\mu$ is invariant
under left or right multiplication by elements of $H$.)
The space $B(G/H)$ can be identified with a subspace of $B(G)$,
and thus \refTs{T1} and \ref{T2} can be applied in this setting too.
\end{remark}

\begin{remark}
  This example is closely related to the one in \refSS{SSheat}.
In fact, the latter example can,
by identifying $[-\ell,\ell]^2$ with the group $\bbT^2$,  
be treated as a branching random walk as above on
the group $G=\bbT^2$, but considering only the subspace of bounded functions 
that are even in each coordinate.
\end{remark}

\subsection{Reinforced process on a countable state space}
In this section, we 
consider a reinforced process which is a particular case of balanced P\'olya
urn on a countable state space. Let $(X_n)_{n\in\mathbb{Z}_+}$ be an
irreducible Markov chain evolving in a countable state space~$E$, and
denote by $\P_x$ and $\E_x$ the law and expectation of the
process starting from $X_0=x\in E$. 
Similarly, if $\nu\in\cP(E)$, we use
$\P_\nu$ and $\E_\nu$ for  
the Markov chain started with a random $X_0\sim\nu$.
We assume that $X$ admits a Lyapunov type
function: there exist a function
$V:E\to[1,+\infty)$ such that $\{x\in E: V(x)\leq A\}$ is
finite for every $A<\infty$, 
and for some constants $\lambda\in(0,1)$ and $C<\infty$,
\begin{align}
	\label{eq:VLyap}
\E_x\bigsqpar{V(X_1)}\leq \lambda V(x)+C\qquad \text{for all $x\in E$}.
\end{align}

We fix $T\in\{2,3,\ldots\}$ and consider the reinforced process
$Z=(Z_n)_{n\geq 0}$ constructed as follows: $Z_0=z_0\in E$ is fixed and $Z$
evolves according to the dynamic of $X$ up to time $T-1$. At time $T$, it
jumps  to a random position distributed according to its empirical
occupation measure $\frac{1}{T}\sum_{i=0}^{T-1} \delta_{Z_i}$;
in other words, the process jumps back to its position at a uniformly random
earlier time $i\in[0,T)$.
Then $Z$ evolves according to the dynamic of $X$ up to time $2T-1$ and, at
time $2T$, it jumps  to a random position distributed according to its
current empirical occupation measure, and so on. 
(The process thus jumps back to a random earlier position
at times $kT$, $k\in\bbN$.)

Let $\bmu_n:=\frac{1}{n+1}\sum_{i=0}^{n} \delta_{Z_i}$ denote the empirical
occupation measure of~$Z$ at time $n$; i.e.
\begin{align}\label{mun}
\bmu_n f = \frac{1}{n+1}\sum_{i=0}^{n}f(Z_i).
\end{align}
We show that $\bmu_n$
converges almost surely  
(and in a weak $L^2$ sense) 
to the unique invariant distribution of~$X$, 
and that, at least if $T$ is large enough, 
$\bmu_n$ satisfies a central limit theorem.

\begin{proposition}
	\label{prop:examplequasicomp}
The Markov chain $X$ has a unique invariant distribution $\nu$.
Moreover:
\begin{enumerate} 
  \item[{\rm (a)}]
For any
$q>2$,  there exists $\gd=\delta(q)>0$ such that, 
for every $f\in B(V^{\nicefrac1q})$,
	\begin{align}
		\label{eq:L2conv}
    \E \left|\bmu_nf- \nu f\right|^2 = O\bigpar{n^{-2\delta}}.
	\end{align}
    and 
    \begin{align}
    \label{eq:ASconv}
    n^{\delta}\left|\bmu_nf- \nu f\right|\xrightarrow[n\to+\infty]{a.s.} 0.
    \end{align}
    
  \item [\rm (b)]
If in addition $\big(\frac{1}{T}\,\frac{1-\lambda^{T}}{1-\lambda}\big)^{\nicefrac1q}<\nicefrac12$, then, for any $f\in B(V^{\nicefrac1q})$, 
one of the conclusions {\rm (1)}, {\rm (2)} or {\rm (3)} of Theorem~\ref{T2} holds with:
\begin{itemize}
    \item 
$(\nicefrac nT)^{\nicefrac12}\left(\bmu_nf-\nu f\right)$ 
    instead of $n^{\nicefrac12}(\tfm_n f-\nu f)$ in {\rm (1)}, 
    \item 
$\frac{(\nicefrac nT)^{\nicefrac12}}{(\log  n)^{\kappa-\nicefrac12}}\left(\bmu_nf-\nu f\right)$ instead of $\frac{n^{\nicefrac12}}{(\log n)^{\kappa-\nicefrac12}}(\tfm_n f-\nu f)$ in {\rm (2)}, 
    \item  
$\bmu_n f$ instead of $\tfm_n f$ and $\gL_j':=T^{1-\gl_j}\gL_j$ instead of
$\gL_j$ in \eqref{hildegran}.
\end{itemize}

\item[{\rm (c)}]
There exists $T_0=T_0(q)\geq 2$ such that, for any $T\geq T_0$,
conclusion {\rm (1)} of \refT{T2}  holds {for all $f\in B(V^{\nicefrac1q})$}.
\end{enumerate}
\end{proposition}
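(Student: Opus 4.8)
The plan is to observe the reinforced process at the times that are multiples of~$T$, recognise it there as a balanced, randomly reinforced \mvpp, and apply \refT{T2}(1) once $\RR$ is shown to be a small operator.

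\emph{Reduction to an \mvpp.} Write $P$ for the transition kernel of $X$ and set, for $k\ge0$, $\fm_k:=\frac1T\sum_{i=0}^{(k+1)T-1}\delta_{Z_i}$, so that $\fm_k(E)=k+1$ and $\tfm_k=\bmu_{(k+1)T-1}$, which is exactly the measure from which $Z$ resamples at time $(k+1)T$. Since between consecutive resampling times $Z$ runs as $X$, a direct check shows that $(\fm_k)_{k\ge0}$ is an \mvpp{} with $\fm_0=\bmu_{T-1}$ and random replacement kernel $\RA_x:=\frac1T\sum_{j=0}^{T-1}\delta_{X_j}$ under $\P_x$, whence $\ER=\E\RA=\frac1T\sum_{j=0}^{T-1}P^j$ (acting on the right on measures). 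I would then verify \BHN{} (working conditionally on $\fm_0$, which is a.s.\ a finite measure, so that \ref{as:BWiii} holds a.s.): \ref{as:balance} is immediate since $\RA_x(E)=1$; iterating \eqref{eq:VLyap} gives $\E_x[V(X_j)]\le\lambda^jV(x)+\frac{C}{1-\lambda}$, hence
\begin{align*}
  \ER_xV=\frac1T\sum_{j=0}^{T-1}\E_x[V(X_j)]\le\vartheta_0\,V(x)+\frac{C}{1-\lambda},
  \qquad \vartheta_0:=\frac1T\,\frac{1-\lambda^T}{1-\lambda}\in(0,1),
\end{align*}
which is \ref{as:BWi}; since $\RA_x$ is a.s.\ a positive (hence, by \ref{as:balance}, probability) measure, \ref{as:BWii} follows from \ref{as:BWi} by \refR{RBWii}; and \ref{as:nu2} holds with $\nu$ the invariant law of~$X$, because $\nu P=\nu$ yields $\nu\ER=\frac1T\sum_{j=0}^{T-1}\nu P^j=\nu$. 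The existence and uniqueness of $\nu$, with $\nu V<\infty$, is the standard consequence of the geometric drift \eqref{eq:VLyap} together with the finiteness of $\{V\le A\}$ for all $A$ and the irreducibility of~$X$; this proves the first assertion of the proposition. Finally $W=V^{\nicefrac1q}$ also satisfies a geometric drift (apply $(a+b)^{\nicefrac1q}\le a^{\nicefrac1q}+b^{\nicefrac1q}$ to the $V$-drift), so $\RR$ is a bounded operator on $\BW$.

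\emph{Spectrum of $\RR$.} The key point is that $\RR=\psi(P)$ with $\psi(z):=\frac1T\sum_{j=0}^{T-1}z^j=\frac1T\cdot\frac{1-z^T}{1-z}$, $\psi(1)=1$. By the standard theory of $V$-geometrically ergodic Markov chains, the drift for $W=V^{\nicefrac1q}$ and the irreducibility of~$X$ imply that $P$, as an operator on $\BW$, has spectrum contained in $\mathbb U_d\cup\overline{D(0,\rho)}$, where $\mathbb U_d$ is the set of $d$-th roots of unity ($d$ the period of $X$, $d=1$ when aperiodic; in the periodic case one applies $W$-uniform ergodicity to $P^d$ on each cyclic class) and $\rho=\rho(q)<1$ is a constant not depending on~$T$, with $1$ a simple isolated eigenvalue whose spectral projection is $f\mapsto(\nu f)1$. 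The spectral mapping theorem \cite[Theorem~VII.4.10]{Conway} and the holomorphic functional calculus then give $\gs(\RR)=\psi(\gs(P))$, and for $z\in\gs(P)$, $z\neq1$, one has $|\psi(z)|\le\frac1{T(1-\rho)}$ if $|z|\le\rho$, and $|\psi(z)|\le\frac{2}{T|1-z|}\le\frac1{T\sin(\pi/d)}$ if $z\in\mathbb U_d\setminus\{1\}$; thus there is $C_X=C_X(q)<\infty$, independent of $T$, with $\gs(\RR)\setminus\{1\}\subseteq\overline{D(0,C_X/T)}$. Moreover, once $T>C_X$ the only point of $\gs(P)$ sent by $\psi$ to $1$ is $1$ itself, and $\psi'(1)=\frac{T-1}{2}\neq0$, so $1$ remains a simple isolated eigenvalue of $\RR$ with spectral projection $f\mapsto(\nu f)1$.

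\emph{Conclusion.} Take $T_0=T_0(q)$ to be any integer exceeding $2C_X$. For $T\ge T_0$ the above gives $\gs(\RR)\setminus\{1\}\subseteq\overline{D(0,\tfrac12-\eps)}$ for some $\eps>0$, so $\RR$ is \slqc{} and, as $\gthd=\gth<\tfrac12$, it is a \emph{small} operator on $D=\BW=B(V^{\nicefrac1q})$ (\refL{Lth}). \refT{T2}(1) then applies and yields, for every $f\in B(V^{\nicefrac1q})$, $(k+1)^{\nicefrac12}(\tfm_kf-\nu f)\dto\cN(0,\Sigmaq(f))$ with $\Sigmaq(f)$ the covariance matrix in \eqref{t11}; since this limit is independent of $\fm_0$, the randomness of $\fm_0=\bmu_{T-1}$ is harmless (\refR{rem:m0rand}). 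Unwinding the definitions this reads $\bigpar{\frac{n+1}{T}}^{\nicefrac12}(\bmu_nf-\nu f)\dto\cN(0,\Sigmaq(f))$ along $n\equiv-1\ (\mathrm{mod}\ T)$, and a routine interpolation — bounding $|\bmu_nf-\bmu_{(k+1)T-1}f|$ by $\frac{C}{n}$ times a sum of at most $T$ terms $|f(Z_i)|\le\|f\|_{\BW}\,V(Z_i)^{\nicefrac1q}$, whose expectations are controlled on average by \eqref{lmv4a} — extends the convergence to all $n$, giving conclusion (1) of \refT{T2} with $(\nicefrac nT)^{\nicefrac12}(\bmu_nf-\nu f)$ in place of $n^{\nicefrac12}(\tfm_nf-\nu f)$. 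The crux of the argument is the spectral input of the second paragraph: that $P$ has a genuine spectral gap on the \emph{weighted} space $B(V^{\nicefrac1q})$ with everything off~$1$ (including the peripheral part in the periodic case) confined to a set independent of~$T$; granting this, the averaging polynomial $\psi$ does the rest, pushing $\gs(\RR)\setminus\{1\}$ into a disc of radius $O(1/T)$, while the \mvpp{} identification, the verification of \BHN{}, and the interpolation are all routine.
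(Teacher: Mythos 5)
Your setup — recognising the process at times $kT$ as a balanced \mvpp{} with $\fm_k=\frac1T\sum_{i=0}^{(k+1)T-1}\delta_{Z_i}$, $\tfm_k=\bmu_{(k+1)T-1}$, $\RA_x=\frac1T\sum_{j=0}^{T-1}\delta_{X_j}$ under $\P_x$, and verifying \BHN{} with $\nu$ the invariant law of $X$ — is exactly what the paper does. And your argument for part (c) is the paper's core idea as well: write $\RR=\psi(\PPP)$ with $\psi(z)=\frac1T\frac{1-z^T}{1-z}$, use the spectral mapping theorem, and push $\gs(\RR)\setminus\{1\}$ into a disc of radius $O(1/T)$. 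So far so good.

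But there are two genuine problems. First, and most importantly, you prove only part (c) (plus the uniqueness of $\nu$). Parts (a) and (b) are missing entirely. Part (a) asserts $L^2$ and a.s.\ convergence rates for \emph{every} $T\ge2$, which requires applying Theorem~\ref{T1}, and for that you need $\RR$ to be \slqc{} on $B(W)$ without any smallness assumption. Your spectral analysis only kicks in for $T$ large enough; for general $T$ the paper instead applies Lemma~\ref{lem:QC} directly to $\RR$ (using $\ER_xV\le\vartheta V+C_1$ from \eqref{eq:VLyapRR}) to get $r_e(\RR)\le\vartheta^{1/q}<1$, and then proves the algebraic simplicity of the eigenvalue~$1$ by a separate argument: if $(\RR-\II)^2f=0$ then $(\RR-\II)f=c$, so $\RR^nf=f+nc$, which contradicts the bound $\RR^nW\le CW$ unless $c=0$. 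You would need this (or an equivalent) for (a). Part (b), where cases (1), (2), or (3) of Theorem~\ref{T2} can all occur, is likewise not covered; there the hypothesis $\vartheta^{1/q}<\tfrac12$ only forces $r_e(\RR)<\tfrac12$, not $\gth<\tfrac12$, and the interpolation argument for case~(3) (comparing $\bmu_n$ with $\tfm_{n-1}$, controlling $\tfm_nW-\tfm_{n-1}W$, and absorbing the factor $T^{1-\gl_j}$) is genuinely more delicate than the one you sketch for case~(1).

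Second, in your ``Spectrum of $\RR$'' paragraph you assert, unproven, that $\gs(\PPP)\subseteq\mathbb U_d\cup\overline{D(0,\rho)}$ ``by the standard theory of $V$-geometrically ergodic Markov chains.'' This is both stronger than needed and not justified in the present framework (the paper does not assume aperiodicity and does not invoke any peripheral spectrum theory). The paper's argument is lighter: Lemma~\ref{lem:QC} applied to $\PPP$ gives quasi-compactness, hence $1$ is isolated in $\gs(\PPP)$, so $\eta:=\inf\{|1-s|:s\in\gs(\PPP)\setminus\{1\}\}>0$; then for $s\in\gs(\PPP)\setminus\{1\}$, $|s|\le1$ and $|1-s|\ge\eta$ give $|\psi(s)|\le2/(\eta T)$, and taking $T_0>4/\eta$ suffices. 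Your claim that $\psi'(1)=(T-1)/2\neq0$ preserves simplicity is correct but again relies on the (unestablished, for general $T$) fact that $1$ has algebraic multiplicity one for $\PPP$ on $B(W)$, whereas the paper establishes this directly for $\RR$. In short: part (c) is conceptually the same, but parts (a) and (b), and the self-contained spectral input, are absent.
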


The proof uses the following lemma, which we prove at the end of this
subsection.
We use the 
notations $r(\RR)$ and $r_e(\RR)$ for the spectral radius and essential spectral
radius of the operator $\RR$; see \refD{Dre}.

\begin{lemma}
	\label{lem:QC}
	Let $\RR$ be 
the operator given by  \eqref{RR} for some
  probability kernel $\ER$ from $E$ to $E$,
and let $V:E\to  [1,+\infty)$ 
be a function such that $\{x\in E: V(x)\leq A\}$ is finite for every  
$A<\infty$.
	If there exist $\vartheta<1$ and $C<\infty$ such that 
    \begin{align}\label{lqc}
\RR V\leq \vartheta V +C,       
    \end{align}
then, for every $q>1$, $\RR$ acts as a bounded operator on
$B(V^{\nicefrac1q})$ with spectral radius 
$r(\RR)=1$ and essential spectral radius $r_e(\RR)\le\vartheta^{\nicefrac1q}$. 
\end{lemma}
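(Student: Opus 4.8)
The plan is to prove \refL{lem:QC} by combining the Lyapunov inequality \eqref{lqc} with a quasi-compactness criterion of Hennion--Nussbaum type. First I would check that $\RR$ is a bounded operator on $B(V^{\nicefrac1q})$: if $\norm{f}_{B(V^{\nicefrac1q})}\le1$, then $|f|\le V^{\nicefrac1q}\le V$, so $|\RR f(x)|\le\ER_x|f|\le\ER_x V^{\nicefrac1q}\le(\ER_x V)^{\nicefrac1q}\le(\vartheta V(x)+C)^{\nicefrac1q}\le(\vartheta+C)^{\nicefrac1q}V(x)^{\nicefrac1q}$ by Jensen's inequality (since $t\mapsto t^{\nicefrac1q}$ is concave and $\ER_x$ is a probability measure, using also $V\ge1$). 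Hence $\norm{\RR}_{B(V^{\nicefrac1q})}\le(\vartheta+C)^{\nicefrac1q}<\infty$. The same Jensen argument applied to the iterated kernel is the engine for the whole proof.

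Next I would establish the key iterated Lyapunov bound: from \eqref{lqc} one gets by induction $\RR^n V\le\vartheta^n V+C/(1-\vartheta)$, so setting $\CC':=C/(1-\vartheta)$ we have $\RR^n V\le\vartheta^n V+\CC'$ for all $n\ge0$. Now apply $\RR^n$ to an $f$ with $\norm{f}_{B(V^{\nicefrac1q})}\le1$ and use Jensen as above with the probability kernel corresponding to $\RR^n$: $|\RR^n f|\le\RR^n V^{\nicefrac1q}\le(\RR^n V)^{\nicefrac1q}\le(\vartheta^n V+\CC')^{\nicefrac1q}\le\vartheta^{\nicefrac nq}V^{\nicefrac1q}+(\CC')^{\nicefrac1q}$ (subadditivity of $t\mapsto t^{\nicefrac1q}$). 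This is the decomposition $\RR^n=\RR^n_{(1)}+\RR^n_{(2)}$ one wants for quasi-compactness: the first part is controlled in $B(V^{\nicefrac1q})$-norm by $\vartheta^{\nicefrac nq}$, and the second part is (uniformly over the unit ball) dominated by the fixed function $(\CC')^{\nicefrac1q}V^{\nicefrac1q}$ in $B(V^{\nicefrac1q})$ but with range in $B(E)$; because $\{V^{\nicefrac1q}\le A\}=\{V\le A^q\}$ is finite for every $A$, a diagonal/Arzel\`a--Ascoli argument on the countable set $E$ shows that $\{\RR^n f:\norm f\le1\}$, which lives within a fixed multiple of $V^{\nicefrac1q}$ and whose tails beyond any finite set are therefore uniformly $\le 2(\CC')^{\nicefrac1q}V^{\nicefrac1q}$, is relatively compact up to an error $\to0$. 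More cleanly: I would invoke the Hennion--Nussbaum quasi-compactness theorem (as in \cite[e.g.\ the formulation used by Hennion or by Kontoyiannis--Meyn]{} --- but since the paper only permits results stated earlier, I would instead invoke \refD{Dre} together with the elementary estimate $r_e(\RR)\le\liminf_n\norm{\RR^n-K_n}^{1/n}$ over compact $K_n$) to conclude $r_e(\RR)\le\limsup_n(\vartheta^{\nicefrac nq})^{1/n}=\vartheta^{\nicefrac1q}$.

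Finally, for the spectral radius: since $\RR1=1$ by \eqref{R1} and $1\in B(V^{\nicefrac1q})$ (as $V\ge1$ forces $V^{\nicefrac1q}\ge1$), $1$ is an eigenvalue, so $r(\RR)\ge1$. For the reverse inequality, the Jensen bound $|\RR^n f|\le\vartheta^{\nicefrac nq}V^{\nicefrac1q}+(\CC')^{\nicefrac1q}\le\bigpar{\vartheta^{\nicefrac nq}+(\CC')^{\nicefrac1q}}V^{\nicefrac1q}$ for $\norm f_{B(V^{\nicefrac1q})}\le1$ gives $\norm{\RR^n}_{B(V^{\nicefrac1q})}\le\vartheta^{\nicefrac nq}+(\CC')^{\nicefrac1q}$, hence $r(\RR)=\lim_n\norm{\RR^n}^{1/n}\le\limsup_n\bigpar{\vartheta^{\nicefrac nq}+(\CC')^{\nicefrac1q}}^{1/n}=1$. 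Therefore $r(\RR)=1$.

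The main obstacle is the compactness step: turning the pointwise domination $|\RR^n f|\le\vartheta^{\nicefrac nq}V^{\nicefrac1q}+(\CC')^{\nicefrac1q}$ into a genuine quasi-compactness statement $r_e(\RR)\le\vartheta^{\nicefrac1q}$. The finiteness of the sublevel sets $\{V\le A\}$ is exactly what is needed — it lets one approximate the ``bounded'' part $\RR^n_{(2)}$ by finite-rank operators (restrict-and-truncate to a large finite subset of $E$) with error controlled by how small $V^{-1}$ is off that finite set, which can be made uniformly small. I would write this as: given $\eps>0$, pick a finite $F\subset E$ with $\sup_{x\notin F}V(x)^{-\nicefrac1q}<\eps$; then $f\mapsto(\RR^n f)\etta_F$ is finite-rank, and $\norm{\RR^n f-(\RR^n f)\etta_F}_{B(V^{\nicefrac1q})}=\sup_{x\notin F}|\RR^n f(x)|/V(x)^{\nicefrac1q}\le\vartheta^{\nicefrac nq}+(\CC')^{\nicefrac1q}\eps$, whence $r_e(\RR)\le\vartheta^{\nicefrac nq}+(\CC')^{\nicefrac1q}\eps$ for every $n$ and $\eps$, giving $r_e(\RR)\le\vartheta^{\nicefrac1q}$ after letting $\eps\to0$ and $n\to\infty$ appropriately. (One should double-check the definition of $r_e$ in \refD{Dre} permits this finite-rank-approximation characterization; if it is defined via the Calkin algebra, the above distance-to-finite-rank bound suffices since finite-rank operators are compact.)
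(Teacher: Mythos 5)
Your argument is correct in essence but takes a genuinely different route from the paper, which invokes Hennion--Herv\'e's quasi-compactness criterion (Theorem XIV.3 of \cite{HeHe}), working with two norms $\|\cdot\|_{B(W)}$ and $\|\cdot\|_{B(V)}$ and verifying a Doeblin--Fortet inequality plus compactness of $\RR$ from the strong to the weak norm. You instead work directly with the Calkin-algebra characterisation of $r_e$ (which the paper's Remark~\ref{Rre} does confirm), truncating $\RR^n$ by multiplication with $\etta_F$ for a large finite sublevel set $F=\set{V\le\eps^{-q}}$. The underlying mechanism is the same in both proofs --- finite sublevel sets of $V$ let one approximate off a finite set with error controlled by $\sup_{x\notin F}V(x)^{-1/q}$ --- but your version bypasses the second norm and the citation of \cite{HeHe} entirely, at the price of iterating the Lyapunov bound to $\RR^n$. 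Both are clean; yours is arguably more self-contained given the paper's Definition~\ref{Dre} and Remark~\ref{Rre}.

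The one genuine flaw is the final inference: from $\norm{\RR^n-K_{n,\eps}}_{B(W)}\le\vartheta^{\nicefrac nq}+(C')^{\nicefrac1q}\eps$ one gets $\norm{\RR^n}_{\mathrm{ess}}\le\vartheta^{\nicefrac nq}+(C')^{\nicefrac1q}\eps$, but it does not follow that $r_e(\RR)\le\vartheta^{\nicefrac nq}+(C')^{\nicefrac1q}\eps$ as you wrote --- indeed that would give $r_e(\RR)=0$ by sending $n\to\infty$ and $\eps\to0$, which is false unless $\RR$ is compact. The correct step is to let $\eps\to0$ \emph{for each fixed $n$} to obtain $\norm{\RR^n}_{\mathrm{ess}}\le\vartheta^{\nicefrac nq}$, and then use submultiplicativity in the Calkin algebra, $r_e(\RR)\le\norm{\RR^n}_{\mathrm{ess}}^{\nicefrac1n}\le\vartheta^{\nicefrac1q}$; the $n$-th root is essential and is what your closing phrase ``letting $\eps\to0$ and $n\to\infty$ appropriately'' should have spelled out. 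With that repair, the proof is complete.
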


In particular, $\RR$ then is quasi-compact, see \refR{Rqc}.

\begin{proof}[Proof of Proposition~\ref{prop:examplequasicomp}]
     We observe that the sequence 
    \begin{align}
    \fm_{n}:=\frac{1}{T}\sum_{i=0}^{(n+1)T-1}\delta_{Z_i}
    \end{align}
      is an \mvpp{} on the set $E$ with (random) initial measure 
      \begin{align}\label{moo}
      \fm_0= \frac{1}{T}\sum_{i=0}^{T-1}\delta_{Z_i}
      \end{align}
      and replacement kernel
	\begin{align}\label{RAMC}
	\RA_x\eqd \frac{1}{T}\sum_{i=0}^{T-1} \delta_{X_i},\quad\text{where $(X_i)_{i\geq 0}$ has law $\P_x$.}
	\end{align}

    \medskip
    {\bf We start by proving that  $\fm$ satisfies assumptions~\ref{as:balance},~\ref{as:BW} and~\ref{as:nu2}.} 
    Assumption~\ref{as:balance} holds true since 
    \begin{align}
    R^{(1)}_x(E)\eqd \frac{1}{T}\sum_{i=0}^{T-1}\delta_{X_i}(E)=1.
    \end{align}
    We now show that Assumption~\ref{as:BW} holds with 
$W:=V^{1/q}$ and
    \begin{align}\label{gthv}
    \vartheta:=\frac{1}{T}\sum_{i=0}^{T-1}\lambda^i
=\frac{1}{T}\,\frac{1-\lambda^{T}}{1-\lambda}\in(0,1).
    \end{align}
    Note that $\vartheta\in(0,1)$ since
$\lambda\in(0,1)$  and $T\geq 2$. 

    For~\ref{as:BWi}, we obtain from~\eqref{eq:VLyap} used iteratively that, for all $x\in E$ and all $n\geq 0$,
    \begin{align}\label{erland}
    \E_x V(X_{n+1})\leq \lambda \E_x V(X_n)+C\leq \lambda^{n+1} V(x)+C_1,
    \end{align}
    where $C_1:=\sum_{i=0}^\infty \lambda ^i C<+\infty$. Hence
    \begin{align}
    \label{eq:VLyapRR}
    \ER_xV=\frac{1}{T}\sum_{i=0}^{T-1} \E_x\bigsqpar{V(X_i)}
\leq \frac{1}{T}\sum_{i=0}^{T-1}(\lambda^i V(x)+C_1)=\vartheta V(x)+C_1.
    \end{align}
This proves~\ref{as:BWi}. 
    
\ref{as:BWii} then follows by \refR{RBWii}, 
since $\RA_x\ge0$ \as{} 

    For~\ref{as:BWiii}, we simply observe that  $\fm_0 V=\sum_{i=0}^{T-1}
    V(Z_i)<+\infty$.  
    This concludes the proof that Assumption~\ref{as:BW} holds true.
    
    We now show that Assumption~\ref{as:nu2} holds. 
Recall \eqref{eq:VLyap} and note that it follows that the set 
\set{x\in E: \E_x \bigsqpar{V(X_1)} > V(x)-1} is finite.
Hence, 
    by \cite[Theorem~7.5.3]{DoucEtAl2018},%
\footnote{Theorem 7.5.3 in \cite{DoucEtAl2018} is not stated correctly, but
  the direction we use is correct. The other direction becomes correct if, for example, one replaces the irreducibility assumption by a strong irreducibility assumption, which corresponds to our (classical) notion of irreducibility.
}
it follows from \eqref{eq:VLyap} 
   and the irreducibility of $X$  that $X$ is positive recurrent and thus, 
    see \cite[Theorem~7.2.1 and Definition~7.2.2]{DoucEtAl2018}, 
    that it admits a unique invariant probability measure $\nu$. 
Thus, for every bounded measurable function~$f$ and all $n\geq 0$,
    \begin{align}
    \E_\nu f(X_n)=\nu f.
    \end{align}
    Hence, still for every bounded $f$,
    \begin{align}\label{ull}
    \nu\ER f=\E_\nu\left[\frac{1}{T}\sum_{i=0}^{T-1} f(X_i)\right]=\frac{1}{T}\sum_{i=0}^{T-1} \E_\nu\left[f(X_i)\right]=\nu f
    \end{align}
and thus $\nu\ER=\nu$.
It remains to verify that $\nu V<\infty$, which follows by the following
standard arguments.
By irreducibility of $X$ 
and the fact that $E$ is countable, we have $\nu(\{x\})>0$ for all $x\in E$
and hence (see for instance \cite[Theorems~5.2.11 and~5.2.9]{DoucEtAl2018}),  
    for all $A>0$ and $x\in E$,
    \begin{align}
    \frac{1}{n}\sum_{i=1}^n \bigpar{V(X_i)\wedge A}
\xrightarrow[n\to+\infty]{} \nu(V\wedge A)\quad\mathbb P_x\text{-almost surely.}
    \end{align}
By dominated convergence and using \eqref{erland}, this implies that
    \begin{align}
\nu(V\wedge A)= \lim_{n\to+\infty} \E_x\frac{1}{n}\sum_{i=1}^n\bigpar{ V(X_i)\wedge A}\leq \liminf_{n\to+\infty} \frac{1}{n}\sum_{i=1}^n (\lambda^i V(x)+C_1)=C_1
    \end{align}
    and hence, letting $A\to+\infty$, that $\nu V\le C_1<+\infty$.
    This completes the proof  that Assumption~\ref{as:nu2}
    holds true. 

      Furthermore,  $\ER$ is  the probability kernel of an
      irreducible Markov chain on $E$, 
and thus we deduce from \cite[Theorem~7.5.3]{DoucEtAl2018}
and~\eqref{eq:VLyapRR} that      $\nu$ is the unique invariant probability
measure of $\ER$.

    \medskip
    {\bf We now show that Theorem~\ref{T1} applies to~$\fm$, which implies Proposition~\ref{prop:examplequasicomp}(a).} 
We 
first show that $\RR$ defined by \eqref{RR}
is  \aslqc{} operator on $B(W)=B(V\xq)$, i.e.\ that it satisfies
    conditions 
\ref{ergo3} and~\ref{ergo2} of Definition~\ref{Dsmall}, which entails that Theorem~\ref{T1} applies.

Note that \eqref{lqc} holds by \eqref{eq:VLyapRR}. 
Hence,
according to Lemma~\ref{lem:QC}, $r_e(\RR)\le\gthv\xq<1$ 
and thus by \refD{Dre}, 
for any $\rho\in\xpar{ \vartheta^{\nicefrac1{q}},1}$, there exists a
decomposition of 
$B(W)$ into two closed $\RR$-invariant subspaces: 
    \begin{align}
    \label{eq:decompSpec}
    B(W)=F_\rho\oplus H_\rho,
    \end{align}
    such that $F_\rho$ has finite dimension, 
and the spectral radius of $\RR\restr_{H_\rho}$ is less than $\rho$. 
Since the spectrum of $F_\rho$ is finite, this says that the spectrum
$\gs(\RR)$ contains only a finite number of points $\gl$ with $|\gl|\ge\rho$;
moreover, these points satisfy $|\gl|\le r(\RR)=1$ and thus $\Re\gl<1$
unless $\gl=1$.
This shows 
both that \ref{ergo2} holds and
that $1$ is an isolated 
point in $\gs(\RR)$. (As always, $1\in\gs(\RR)$ because $\RR1=1$.)

The generalized eigenspace of $\RR$ corresponding to the eigenvalue 1 is a
subspace of $F_\rho$, and thus has finite dimension. In order to verify
\ref{ergo3}, it remains to show that this dimension is 1,
i.e., that the eigenvalue 1 has algebraic multiplicity 1. 

We first show that the eigenvalue $1$ is simple: 
The corresponding eigenfunctions of $\RR$ satisfy $\RR f= f$,
which means that they are harmonic functions for the Markov kernel $\ER$.
As shown above, $\nu$ is the unique invariant probability measure for $\ER$,
and furthermore $\nu V<\infty$; hence,
\cite[Proposition~5.2.12]{DoucEtAl2018} shows that
every harmonic function  in $B(V)$ 
is constant $\nu$-a.e., and hence constant everywhere because
$\nu(\{x\})>0$ for all $x\in E$. 
This implies that $1$ has simple geometric multiplicity:
it remains to prove that it also has simple algebraic multiplicity. 
To do so, let $f\in B(W)$ be such that $(\RR-\II)^2f=0$. Then $(\RR-\II)f$ is
an eigenfunction associated to $1$ and hence it is equal to a constant, say
$c\in\mathbb C$. We deduce that $\RR f=f+c$  and hence $\RR^n f=f+n c$ for
all $n\geq 1$.
Moreover, for all $n\geq 1$ and $x\in E$, by iterating \eqref{eq:VLyapRR},
    \begin{align}\label{jenv}
 \RR^{n} V(x)
\leq \vartheta \RR^{n-1} V(x)+C_1
\leq \vartheta^n V(x)+\sum_{i=0}^{n-1} \vartheta^i C_1 
\le C V(x)
    \end{align}
    which implies,  by Jensen's inequality,
    \begin{align}\label{jenw}
    \RR^n W(x)\leq \left(\RR^{n} V(x)\right)^{\nicefrac1q}
\le C W(x).
    \end{align}
    In particular, for all $n\ge0$,
    \begin{align}
    |f(x)+n c|=|\RR^n f(x)|\leq \|f\|_{B(W)}\,\RR^n W(x)
\leq \|f\|_{B(W)} C W(x),
    \end{align}
    which implies that $c=0$ and hence that $(\RR-\II)f=0$, so that $f$ is
    an eigenfunction associated to $1$ and hence is constant. 
We have shown that 
$\ker\bigpar{(\RR-\II)^2}=\ker\bigpar{\RR-\II}=\set{c1:c\in\bbC}$.
This implies that the algebraic multiplicity of 1 
in the finite-dimensional space $F_\rho$ is 1, and it follows 
that~\ref{ergo3} holds true, which completes the proof that $\RR$ is \slqc. 
    
We set, as in \eqref{gth}, $\theta:=\sup \Re(\sigma(\RR)\setminus\{1\})$. 
By Theorem~\ref{T1} (with $D:=B(W)$), 
$\theta<1$ and,
for every $\delta\in (0,1-\theta)$, there exists a constant $C_\delta$ 
such that, for all $f\in B(W)$,
     \begin{align}
    \label{eqT1firstExample}
    \E \left(\left|\tfm_n- \nu f\right|^2 \mid \fm_0\right)
    \le C_\delta  \,\tfm_0 V\,\left(\frac{\fm_0(E)+1}{\fm_0(E)+n}\right)^{2\delta\wedge 1} \norm{f}_{B(W)}^2,\qquad \forall n\geq 1
.    \end{align}
    But $\fm_0(E)=1$ and,
 by \eqref{moo} and \eqref{erland}, 
    \begin{align}\label{EMV}
 \E\bigsqpar{\tfm_0 V}
=\frac{1}T\sum_{i=0}^{T-1}\E V(Z_i)
=\frac{1}T\sum_{i=0}^{T-1}\E V(X_i)
<+\infty      
  .  \end{align}
Hence, \eqref{eqT1firstExample} yields,
up to a change of $C_\delta$,
    \begin{align}
    \label{eqT1firstExample2}
    \E \left(\left|\tfm_n- \nu f\right|^2 \right)
    \le C_\delta \, n^{-2\delta\wedge 1} \norm{f}_{B(W)}^2.
    \end{align}
If furthermore
$\delta<\nicefrac12$, then for all $f\in B(W)$,
    \begin{align}
    \label{eqT1firstbisExample}
    n^\delta\left|\tfm_n- \nu f\right|\xrightarrow[n\to+\infty]{a.s.} 0.
    \end{align}
 
  For all $n\geq 1$ and $k\in\{0,\ldots,T-1\}$, we have
    	\begin{align}
        \label{eq:decompT}
    	\frac{1}{nT+k}\sum_{i=0}^{nT+k-1} \delta_{Z_i}= \frac{n T}{nT+k}\tfm_{n-1}+\frac{1}{nT+k}\sum_{i=nT}^{nT+k-1}\delta_{Z_i},
    	\end{align}
and thus, for all $f\in B(W)$ such that $\|f\|_{B(W)}\leq 1$,
        \begin{align}
\left|\frac{1}{nT+k}\sum_{i=0}^{nT+k-1} f(Z_i)-\frac{nT}{nT+k}\tfm_{n-1}f\right|
        &\leq \frac{1}{nT+k}\sum_{i=nT}^{nT+k-1} W(Z_i)
\label{eq:deltaTmn}
\\
        &\le \frac{(n+1)T}{nT+k}\tfm_{n}W-\frac{nT}{nT+k}\tfm_{n-1}W.\label{eq:deltaTmn2}
        \end{align}
Hence, 
\begin{align}\label{kling}
&\bigabs{\bmu_{nT+k-1} f-\nu f}
=\left|\frac{1}{nT+k}\sum_{i=0}^{nT+k-1} f(Z_i)-\nu f\right|
\notag\\&\qquad
\leq  
\frac{k}{nT+k}|\nu f|
+\frac{nT}{nT+k}|\tfm_{n-1}f-\nu f|
+\frac{(n+1)T}{nT+k}|\tfm_{n}W-\nu W |
\notag\\&\qquad\qquad
+\frac{nT}{nT+k}|\tfm_{n-1}W-\nu W|+\frac{T}{nT+k}\nu W
\notag\\
&\qquad
\leq  
\frac{1}{n}|\nu f|
+|\tfm_{n-1}f-\nu f|
+2|\tfm_{n}W-\nu W |
+|\tfm_{n-1}W-\nu W|+\frac{1}{n}\nu W
.\end{align}

We now obtain \eqref{eq:L2conv} from 
\eqref{kling} and~\eqref{eqT1firstExample2}
by Minkowski's inequality.
Similarly, \eqref{eq:ASconv} follows from \eqref{kling} and 
\eqref{eqT1firstbisExample},
which concludes the proof of
Proposition~\ref{prop:examplequasicomp}(a). 
    
\medskip
{\bf We next show that Theorem~\ref{T2} applies to~$\fm$ which implies Proposition~\ref{prop:examplequasicomp}(b).} 
We have proved that $\RR$ is \aslqc{} operator on $D=B(W)$;
moreover, by \refL{lem:QC} and \eqref{eq:VLyapRR}, $r_e(\RR)\le\gthv\xq$ where 
$\gthv$ is given by \eqref{gthv}.
We now assume that $\gthv\xq<\half$, and thus $r_e(\RR)<\half$. This means
that we may take $\rho<\half$ in \eqref{eq:decompSpec},
 which entails that the set
\begin{align}
\set{\gl\in\gs(\RR):\Re\gl\ge\rho } 
=
\set{\gl\in\gs(\RR\restr_{F_\rho}):\Re\gl\ge\rho } 
\end{align}
is finite.
Since in addition we have $\E((1+\fm_0(E))\tfm_0 V)<+\infty$, it follows
from Remark~\ref{rem:m0rand} that
one of the cases (1), (2) or (3) in Theorem~\ref{T2} applies to $\fm$
with $D=B(W)$.
(The case depends on whether $\gth$ is $<\half$, $=\half$, or $>\half$.) 
Moreover, in cases (2) and (3), we have $\kk<\infty$ by \refR{Rkk}.

In addition, by~\eqref{eq:deltaTmn}, for  $n\geq 1$ and $0\le k<T$,
\begin{align}
&\E \left|\frac{1}{nT+k}\sum_{i=0}^{nT+k-1} f(Z_i)-\frac{nT}{nT+k}\tfm_{n-1}f\right|
\leq
  \E \Bigsqpar{\frac{1}{nT+k}\sum_{i=nT}^{(n+1)T-1}W(Z_i)}
\notag\\&\qquad
=\E\Bigsqpar{\frac{T}{nT+k}\bigpar{\fm_n W-\fm_{n-1}W}}
=\frac{T}{nT+k}\E \bigsqpar{\RAn_{Y_n}W}
\end{align}
and thus
\begin{align}\label{carl}
&\E \left|\frac{1}{nT+k}\sum_{i=0}^{nT+k-1} f(Z_i)-\tfm_{n-1}f\right|
\leq
\frac{T}{nT+k}\E \bigsqpar{\RAn_{Y_n}W}
+\frac{k}{nT+k}\E|\tfm_{n-1}f|
\end{align}
Both $\mathbb E\bigl[\tfm_{n-1}W\bigr]$ and 
$\mathbb E\bigl[\RAn_{Y_n}W\bigr]$ are uniformly bounded in $n$
by~\eqref{lmv4a} in Lemma~\ref{Lmwqq} and~\eqref{palmc} in its proof. 
Hence, \eqref{carl} yields
\begin{align}
\Eq \bigabs{\bmu_{nT+k-1}f-\tfm_{n-1}f}=
\E \left|\frac{1}{nT+k}\sum_{i=0}^{nT+k-1} f(Z_i)-\tfm_{n-1}f\right|=O(1/n).
\end{align}
and, in particular, as $n\to\infty$, for any fixed $\ga<1$,
\begin{align}\label{theodor}
n^{\alpha} \bigpar{\bmu_{nT+k-1}f-\tfm_{n-1}f}\pto0  
.\end{align}

If conclusion (1) of Theorem~\ref{T2} holds for $\fm$,
this implies that $(\xfrac nT)^{\nicefrac12}\left(\bmu_n f-\nu f\right)$ has the
same limit in distribution
as $\xpar{ n/T}^{\nicefrac12} (\tfm_{\lfloor (n+1)/T\rfloor-1}f-\nu f)$, 
which equals the limit in \eqref{t11},
and similarly for conclusion~(2). 

If conclusion (3) of Theorem~\ref{T2} holds for
  $\fm$, we use~\eqref{eq:deltaTmn2} which entails 
\begin{align}
\left|\bmu_{nT+k-1}f-\tfm_{n-1} f\right|
 &\le \frac{(n+1)T}{nT+k}\tfm_{n}W-\frac{nT}{nT+k}\tfm_{n-1}W
+\frac{k}{nT+k}\abs{\tfm_{n-1}f}
\notag\\
 &\le \frac{(n+1)T}{nT+k}\tfm_{n}W-\frac{nT-k}{nT+k}\tfm_{n-1}W
\notag\\
 &= \frac{T+k}{nT+k}\tfm_{n}W+\frac{nT-k}{nT+k}\bigpar{\tfm_nW-\tfm_{n-1}W}
\notag\\
 &\le \frac{2}{n}\tfm_{n}W+\bigabs{\tfm_nW-\tfm_{n-1}W}
.\end{align}
In particular, setting $\alpha_n:=
\xfrac{n^{1-\Re\lambda_1}}{\log^{\kappa-1}n}$, we get 
\begin{align}
\left|\alpha_n\bigpar{\bmu_{nT+k-1} f-\nu f}
  -\sum_{j=1}^p  n^{\ii \Im\lambda_j}\Lambda_j \right|
&\leq \left|\alpha_n \left(\tfm_{n-1} f-\nu f\right)-\sum_{j=1}^p n^{\ii \Im \lambda_j}\Lambda_j\right|\notag\\
&\qquad+\ga_n\bigabs{\tfm_{n}W-\tfm_{n-1}W}
+\frac{2\alpha_n }{n}\tfm_{n}W.\label{eq:interstep}
\end{align}
The first term on the right-hand side converges to $0$ \as{} and in
$L^2$ as \ntoo{} according to conclusion (3) of Theorem~\ref{T2} for $\fm$;
it is easy to see that we may replace $\tfm_n$ by $\tfm_{n-1}$ in
\eqref{hildegran} because $\ga_n/\ga_{n-1}\to1$.
Similarly,
the third term in the right-hand side converges to $0$ \as{} and in
$L^2$ according to Theorem~\ref{T1}\ref{T1a} for $\fm$ and the fact that
$\alpha_n=o(n)$; 
note that $\E|\tfm_n W|^2=O(1)$ by taking the expectation
in \eqref{eqT1first} combined with \eqref{EMV}.
It remains to consider the second term in the right-hand
side, for which we observe that 
\begin{align}
\alpha_n\left(\tfm_{n}W-\tfm_{n-1}W\right)
&=\alpha_n\left(\tfm_{n}W-\nu W\right)-\sum_{j=1}^p  n^{\ii \Im\lambda_j}\Lambda_j
\notag  \\
&\qquad-\bigg(\alpha_n\left(\tfm_{ n-1}W-\nu W\right)-\sum_{j=1}^p  n^{\ii \Im \lambda_j}\Lambda_j\bigg),
\end{align}
where both terms go to $0$ \as and in $L^2$  as $n\to+\infty$ 
according to conclusion~(3) of Theorem~\ref{T2} for $\fm$. 
Consequently, the \lhs{} of \eqref{eq:interstep} converges to 0 \as{} and in
$L^2$.  
Finally, $\ga_{nT+k-1}/\ga_n\to T^{1-\Re\gl_1}$ as \ntoo{} and $0\le k<T$, and
it follows easily that
\begin{align}
  \left|\alpha_N\bigpar{\bmu_{N} f-\nu f}
  -\sum_{j=1}^p  N^{\ii \Im\lambda_j}T^{1-\gl_j}\Lambda_j \right|
\to0
\end{align}
\as{} and in $L^2$ as \Ntoo.

\medskip
{\bf We conclude by proving Proposition~\ref{prop:examplequasicomp}(c).} In order to do so,
we note that Lemma~\ref{lem:QC} applied to the transition probability kernel 
$P$ of $X$, using \eqref{eq:VLyap}, implies that the corresponding operator
$\PPP$ is 
quasi-compact on $B(W)$ with $r_e(\PPP)<r(\PPP)=1$.
Hence, there exists $\rho<1$ and a decomposition as in
\eqref{eq:decompSpec}, and it follows that the spectrum $\gs(\PPP)$ has only
finitely many points $\gl$ with $|\gl|>\rho$, and these points all have
$|\gl|\le1$. In particular, 1 is isolated in $\gs(\PPP)$ and thus
\begin{align}
  \label{etapp}
\eta:=\inf\bigset{|1-s|:s\in\gs(\PPP)\setminus\set1}>0.
\end{align}

We have
$\RR=\frac1T\sum_{i=0}^{T-1}{\bf P}^i$,
and thus the spectral mapping theorem 
\cite[Theorem VII.4.10]{Conway} 
shows that the spectrum of $\RR$ is given by
 \begin{align}\label{sm}
\gs(\RR) =  \left\{\frac1T\sum_{i=0}^{T-1} s^i: s\in\sigma(\PPP)\right\},
 \end{align}
and thus
 \begin{align}\label{sm1}
\gs(\RR)\setminus\set1=
 \left\{\frac1T\sum_{i=0}^{T-1} s^i: s\in\sigma({\PPP})\setminus\{1\}\right\}.
 \end{align}
 For every $s\in\sigma({\PPP})\setminus\{1\}$ we have  $|s|\le1$ 
and $|1-s|\ge\eta$,
and thus
\begin{align}\label{slim}
 \left|\frac1T\sum_{i=0}^{T-1} s^i\right|
=\lrabs{\frac{1}{T}\frac{1-s^T}{1-s}}
\le \frac{2}{\eta T}
\xrightarrow[T\to+\infty]{}0.
 \end{align}
In particular, if we choose  $T_0$ such that $T_0>4/\eta$, 
then for every $T\ge T_0$, we have by \eqref{sm1} and \eqref{slim},
\begin{align}
  \gth
= \sup\bigset{\Re\gl:\gl\in\gs(\RR)\setminus\set1}
\le \sup\bigset{|\gl|:\gl\in\gs(\RR)\setminus\set1}
<\half, 
\end{align}
and thus case (1) applies in part (b).


 This concludes the proof of Proposition~\ref{prop:examplequasicomp}.
\end{proof}

\bigskip
\begin{proof}[Proof of Lemma~\ref{lem:QC}]
The proof relies on \cite[Theorem~XIV.3]{HeHe}. 
Fix $q>1$ and set as usual $W:=V^{\nicefrac1q}$.  Jensen's inequality 
and the assumption \eqref{lqc}
entail that 
\begin{align}
	\label{eq:WLyap}
\RR W\leq \left(\RR V\right)^{\nicefrac1q}\leq \left(\vartheta V+C\right)^{\nicefrac1q}\leq \vartheta^{\nicefrac1q} W+C^{\nicefrac1q}.
\end{align}
In particular, this shows that $\RR$ acts as a bounded operator on $B(W)$;
we regard in the rest of the proof $\RR$ as an operator on $B(W)$.
By induction similar to \eqref{jenv},
\eqref{eq:WLyap} also implies that $\RR^n W\leq C W$ for some constant $C>0$
and all $n\geq 0$. Thus 
$\norm{\RR^n}_{B(W)}\le C$ and
by the spectral radius formula
\cite[Proposition~VII.3.8]{Conway}, 
the spectral radius $\gs(\RR)$ 
of $\RR$ 
is at most~$1$. 
Since $1\in B(W)$ and  we have $\RR 1= 1$, we deduce
that $1$ is an eigenvalue of  $\RR$. 
We can
thus conclude that the spectral radius of $\RR$, 
as a bounded operator on $B(W)$, equals 1.

To apply \cite[Theorem~XIV.3]{HeHe}, we consider the Banach space $(B(W),
\|\cdot\|_{B(W)})$, endowed with the continuous norm $\|\cdot\|_{B(V)}$.
 We check that
\begin{romenumerate}
	\item\label{kli} $\RR(\{f\in B(W): \|f\|_{B(W)}\leq 1\})$ is 
totally bounded 
in $( B(W), \triplenorm{\cdot})$; 
	\item\label{klii} 
there exists a constant $M>0$ such that, for all $f\in  B(W)$, $ \triplenorm{\RR f}\leq M \triplenorm{f}$;
	\item\label{kliii} 
for any $\varepsilon>0$, there exists a constant $C_\varepsilon>0$ such that
	\begin{align}
	\|\RR f\|_{B(W)}\leq (\vartheta^{\nicefrac1q}+\varepsilon) \|f\|_{B(W)}+C_\varepsilon \triplenorm{f}.
	\end{align}
\end{romenumerate}
Once this is proved, the conclusion of Lemma~\ref{lem:QC} immediately
follows from~\cite[Theorem~XIV.3]{HeHe}.

We first prove \ref{kli}. 
Recall that a set in a metric space is totally bounded 
if for every $\eps>0$ there is a finite $\eps$-net in it, i.e., a finite
subset $F$ such that every point in the set has distance at most $\eps$ to
$F$. (This is also called precompact, and in a complete metric space it is
equivalent to relatively compact. 
Thus \ref{kli} says that $\RR$ is a compact operator $B(W)\to B(V)$.
See \eg{} \cite[I.6.14--15]{Dunford-Schwartz}.)
Let 
\begin{align}
U:=
\set{f\in\BW:\norm{f}_{\BW}\le1}
=\set{f\in \bbC^E:|f(x)|\le W(x),\ \forall x\in E}
\end{align}
be the unit ball of $B(W)$. Since $\RR$ is bounded on $B(W)$,
$\RR(U)\subseteq C U$ for some constant $C$, and it suffices to show that
$U$ is totally bounded for the norm $\triplenorm{\cdot}$.

Let $\eps>0$. 
Fix $M>0$, and let $K_M:=\{x\in E: V(x)\leq M\}$; recall that this set is
finite.
Consider first the restrictions to $K_M$.
$U_M:=\set{f\restr_{K_M}:f\in U}$ is a bounded set in the finite-dimensional
space $\bbC^{K_M}$, and thus it is relatively compact. (In fact, it is compact.)
Hence, there exists 
a finite set $\set{f_i}_{i=1}^N\subset U_{M}$ 
such that for every $f\in U$ there exists an $f_i$ with
\begin{align}\label{annak}
\max_{x\in K_M}|f(x)-f_i(x)|<\eps.
\end{align}
Extend every $f_i$ to a function on $E$, still denoted $f_i$,  
by $f_i(x):=0$ for $x\notin K_M$. If $f\in U$ and $x\notin K_M$, then
for every $i\in\set{1,\dots,N}$,
\begin{align}
  \frac{|f(x)-f_i(x)|}{V(x)} 
=  \frac{|f(x)|}{V(x)} 
\le \frac{W(x)}{V(x)} = V(x)^{\nicefrac1q-1}
\le M^{\nicefrac1q-1}.
\end{align}
By choosing $M$ large enough, this is less than $\eps$.
Hence, if $f_i$ is chosen to satisfy \eqref{annak}, then
$|f(x)-f_i(x)|/V(x)\le\eps$ for every $x\in E$, and thus
$\triplenorm{f-f_i}\le\eps$. Hence $\set{f_i}_1^N$ is a finite $\eps$-net in
$U$. Consequently, \eqref{kli} holds.

The property \ref{klii} is a consequence of~\eqref{lqc}, which indeed
implies that, for all $f\in B(W)$,
\begin{align}
 \triplenorm{\RR f}=\sup_{x\in E} \frac{|\RR f(x)|}{V(x)}\leq \sup_{x\in E} \frac{\|f/V\|_\infty\,\RR V(x)}{V(x)}\leq  \triplenorm{f}\,(\vartheta+C).
\end{align}

We now prove \ref{kliii}: Since $\inf_{x\notin K_M} W(x)\ge M\to+\infty$
when $M\to+\infty$, we deduce from~\eqref{eq:WLyap} that, for any
$\varepsilon>0$, there exists $M(\varepsilon)>0$ and a constant
$c_\varepsilon>0$ such that  
\begin{align}
\RR W\leq (\vartheta^{\nicefrac1q}+\varepsilon)W+c_\varepsilon \etta_{K_{M(\varepsilon)}}(x),\ \forall x\in E.
\end{align}
Hence, for all $x\notin K_{M(\varepsilon)}$,
\begin{align}
|\RR f(x)|\leq  \|f/W\|_\infty\,\RR W(x)\leq \|f\|_{B(W)}\,(\vartheta^{\nicefrac1q}+\varepsilon)\,W(x).
\end{align}
But, according to~\eqref{lqc}, for all $x\in K_{M(\varepsilon)}$,
\begin{align}
|\RR f(x)|&\leq  \|f/V\|_\infty\,\RR V(x)
\leq\triplenorm{f}\,(\vartheta+C)\,V(x)
\notag\\&
  \leq  \triplenorm{f}\,(\vartheta+C)\,\max_{y\in K_{M(\varepsilon)}}\frac{V(y)}{W(y)}\,W(x).
\end{align}
Setting $C_\varepsilon= (\vartheta+C)\,\max_{y\in K_{M(\varepsilon)}}\frac{V(y)}{W(y)}$ and using the two previous inequalities, we deduce that, for all $x\in E$, 
\begin{align}
\frac{|\RR f(x)|}{W(x)}\leq  (\vartheta^{\nicefrac1q}+\varepsilon)\|f\|_{B(W)}+C_\varepsilon  \triplenorm{f},
\end{align}
which concludes the proof of \ref{kliii} and hence of Lemma~\ref{lem:QC}.
\end{proof}

\appendix
\section{Kernels and the definition of the \mvpp}
\label{Akernels}

We use the notation introduced in \refSS{S:not}.

\subsection{Kernels}
Recall that given two measurable spaces $(S,\cS)$ and $(T,\cT)$, a
\emph{kernel} from $S$ to $T$ is a map $s\mapsto\beta_s$ from $S$ to the set
$\cmp(T)$
of positive measures on $(T,\cT)$
that is measurable; 
in other words, $s\mapsto\beta_s(B)$ is $\cS$-measurable
for every fixed set $B\in\cT$. 
See \eg{}
\cite[pp.~20--21]{Kallenberg}
or \cite[Section 1.3]{Kallenberg-RM}
for a detailed discussion; we summarize a few facts that we need.

A probability kernel is the special case when each $\beta_s$ is a probability
measure on $T$.

A signed kernel is defined in the same way, with $\beta_s$ a signed measure on
$T$. 

If $\ga\in\cP(S)$ and $\gb$ is a probability kernel from $S$ to $T$, then
$\ga\otimes\gb$ 
denotes the probability measure  on $S\times T$ given by
\begin{align}
  \label{ker}
\ga\otimes\gb(A):=\int_S\dd\ga(s)\int_T\etta_A(s,t)\dd\gb_s(t).
\end{align}
This means that if $(X,Y)$ is a random variable in $S\times T$ with the
distribution $\ga\otimes\gb$, then $X$ has distribution $\ga$, and the
conditional distribution of $Y$ given $X=x$ is $\gb_x$ (for \aex{} $x$);
hence \eqref{ker} formalizes the notion of choosing randomly first $X$ with
distribution $\ga$, and then $Y$ with distribution $\gb_X$.

The construction \eqref{ker} generalizes to the case where
$\ga$ is a probability kernel from a third  space $U$ to $S$; then
$\ga\otimes\gb$ is a probability kernel from $U$ to $S\times T$.

\subsection{The \mvpp}
The definition of the \mvpp{} in \refS{Smodel} uses a family 
$(\RA_x)_{x\in  E}$ of random (signed) measures in $\cmr(E)$. Only their
distributions matter, so letting $\cR_x:=\cL(\RA_x)$, the distribution of
$\RA_x$,
it is equivalent to start with a family $\cR=(\cR_x,x\in E)$ of probability
distributions in $\cmr(E)$, 
or equivalently a map
$\cR:E \to \cP(\cmr(E))$;
we may then define, for each $x\in E$,
$R_x$ as a random measure in $\cmr(E)$ with distribution $\cR_x$, 
and $R\nn_x$ as a sequence of independent copies of $\RA_x$.

Our basic assumption 
is that 
$\cR=(\cR_x, x\in E)$
is a probability kernel from $E$ to $\cmr(E)$,
which we call the \emph{replacement kernel}.
(We abuse notation and use the same name also for
the corresponding family  $(\RA_x)_x$ of random measures.)

\begin{remark}\label{RER}
  The assumption that $\cR$ 
is a probability kernel from $E$ to $\cmr(E)$
implies that its expectation $\ER$ defined in \eqref{ER}
is a signed kernel from $E$ to $E$,
provided that \eqref{fin2} holds. 

It is also easy to see that the assumption that $\cR$ is a kernel implies
that $\BBB_x(f,g)$ in \eqref{BBB} is a measurable function of $x$; hence also
$\BB_x(f)$ and $\BC_x(f)$ in \eqref{BB} are  measurable.
\end{remark}

Let us now try to formalize the definition of the \mvpp, 
starting from a given replacement kernel $\cR$
and a given deterministic
$\fm_0\in\cmpp(E)$.
Our aim is to define random variables $Y_n\in E$ and
$\RYn\in\cmr(E)$ for all $n\ge1$ satisfying the description in \refS{Smodel};
then $\fm_n$ is 
given by
\begin{align}\label{mnA}
  \fm_n:=\fm_0+\sum_{i=1}^n \RYi
.\end{align}
Equivalently, we want to construct the joint distribution of 
all $(Y_n,\RYn)$, $n\ge1$, as a probability measure on 
$(E\times\cmr(E))^\infty$.
We will  achieve this 
 using the construction \eqref{ker} twice.
However, we have (so far) only been able to do so
assuming one of the following assumptions (or both).
\begin{romenumerate}
\item \label{Ai} 
$\RA_x$ is always a positive measure, so there are no subtractions in
  the urn, or
\item \label{Aii}
$E$ is a Borel space
(see \eg{} \cite[Appendix A]{Kallenberg}).
\end{romenumerate}
The reasons for the technical assumption \ref{Aii} will be discussed below.

\textbf{\ref{Ai}}:
Consider first the simple case when $\RA_x$ always is a positive measure, 
\ie, $\RA_x\in\cmp(E)$.
In this case, there is no need to consider signed measures.
Write $\cX:=E\times\cmp(E)$.
Let $n\ge0$ and assume that we have constructed
the distribution $\mu_n$ of $(Y_i,\RYi)_1^n$, 
as a probability measure on $\cX^n$. 
(This assumption is void for $n=0$.) 
We write an element of $\cX^n$ as
$(y_i,r_i)_1^n$;
then
we can realize
$Y_i$ and $\RYi$ for $i\le n$ as the coordinate functions 
$y_i$ and $r_i$
on the probability space $(\cX^n,\mu_n)$.
By \eqref{mnA}, $\fm_n$ then is given by the function $m_n:\cX^n\to\cmpp(E)$
defined by 
\begin{align}\label{ekmn}
  m_n\bigpar{(y_i,r_i)_1^n} :=\fm_0+\sumin r_i.
\end{align}
Thus, the normalized measure $\tfm_n$ is given by 
the function $\gamma_n:\cX^n\to\P(E)$
defined by 
\begin{align}\label{ekgamma}
  \gamma_n(\xi_n):=m_n(\xi_n)/m_n(\xi_n)(E).
\end{align}
Nota that $\gamma_n$ is a probability kernel from $\cX^n$ to $E$.

We want $Y_{n+1}$ to be a random element of $E$ such that,
conditioned on the history up to time $n$, 
$Y_{n+1}$ has the distribution $\tfm_n$. 
In other words, conditioned on $(Y_i,\RYi)_1^n=\xi_n\in\cX^n$,
$Y_{n+1}$ has the conditional distribution $\gamma_n(\xi_n)$.
This means that
\begin{align}\label{ekY}
  \bigpar{(Y_i,\RYi)_1^n,Y_{n+1}}\sim \mu_n\otimes\gamma_n,
\end{align}
 and we
may take this as a formal definition of (the distribution of) 
$Y_{n+1}$.

Next, the replacement kernel $\cR$ is now assumed to be a probability kernel
from $E$ to $\cmp(E)$. 
We may (trivially) regard it as a kernel from $\cX^n\times E$ by
letting $\cR_{(\xi_n,x)}:=\cR_x$.
Hence, \eqref{ker} defines the probability measure
$(\mu_n\otimes\gamma_n)\otimes\cR$
on $\cX^n\times E\times\cmp(E)=\cX^{n+1}$.
We want $R\nnz_{Y_{n+1}}$ to have the conditional distribution,
given the previous history, $\cR_{Y_{n+1}}$, and thus  
\begin{align}\label{ekYR}
((Y_i,\RYi)_1^n,Y_{n+1},R\nnz_{Y_{n+1}})
\sim
\mu_n\otimes\gamma_n\otimes\cR.
\end{align}
(Note that $\otimes$ is associative: 
$(\mu_n\otimes\gamma_n)\otimes\cR=\mu_n\otimes(\gamma_n\otimes\cR)$, so we
may omit the brackets.) 
We may take \eqref{ekYR} as a formal definition  of 
$\RYni$. 

In other words, our formal construction is
\begin{align}\label{ekmu}
  \mu_{n+1}:=\mu_n\otimes\gamma_n\otimes\cR \in \cP\bigpar{\cX^{n+1}}.
\end{align}
This completes the inductive step, and starting from the trivial probability
measure $\mu_0$ on a one-point space, we obtain recursively
a probability measure $\mu_n$ on $\cX^n$ for every $n\ge1$.
Finally, since $\mu_n$ are obtained recusively by composing with the probability
kernels $\gamma_n\otimes\cR$, 
the Ionescu Tulcea theorem \cite[Theorem 6.17]{Kallenberg} now shows the
existence of a probability space and  infinite sequences
$Y_n$ and $\RYn$ with the desired distribution; this defines also $\fm_n$ by
\eqref{mnA}. 
Equivalently, the Ionescu Tulcea theorem shows the existence of a
probability measure on $\cX^\infty$ with the desired projection $\mu_n$ to
$\cX^n$ for each $n$. 
This completes the construction in the special case when $\RA_x\in\cmp(E)$.
It follows from the construction that $\bigpar{\fm_n,Y_n,\RAn_{Y_n}}_{n\ge1}$ 
is a Markov chain.

\textbf{\ref{Aii}}:
Consider now the general case, when $\RA_x\in\cmr(E)$ is a signed measure,
but we assume that the urn is tenable. Assume now also that $E$ is a Borel
space. 
We may now define $\cX:=E\times\cmr(E)$, and try to argue as above.
The only problem is that $\gamma_n$ defined by \eqref{ekgamma} is not a
probability kernel, since $m_n(\xi_n)$ is not a positive measure for all
$\xi_n\in\cX^n$. (We will even have $m_n(\xi_n)(E)=0$ for some $\xi_n$, and then
$\gamma_n(\xi_n)$ is not even defined.) We thus have to modify the
definition of $\gamma_n$.
Consider again some $n\ge0$ and assume that we have constructed
$\mu_n\in\cP(\cX^n)$.
Note that for a Borel space $E$, $\cmpp(E)$ is a measurable subset of
$\cmr(E)$, as may easily be verified.
Let $\gY_n:=m_n\qw(\cmpp(E))$, where $m_n:\cX^n\to\cmr(E)$ is the function
defined in \eqref{ekmn}; thus $\gY_n$ is a measurable subset of $\cX^n$. 
We assume that the urn is tenable, 
which means that $\fm_n$ 
\as{} satisfies
$\fm_n\in\cmpp(E)$. In other words,  $m_n(\xi)\in\cmpp(E)$ for
$\mu_n$-\aex{} $\xi$; equivalently, $\mu_n(\gY_n)=1$.

We may now modify \eqref{ekgamma} and
define a probability kernel $\gamma_n$ from $\cX^n$ to $E$
by
\begin{align}\label{thgamma}
  \gamma_n(\xi_n):=
  \begin{cases}
    \tm_n(\xi_n):=m_n(\xi_n)/m_n(\xi_n)(E),& \text{if }\xi_n\in \gY_n,\\
    \nu,&\text{if }\xi_n \in\cX^n\setminus\gY_n,
  \end{cases}
\end{align}
where $\nu$ is an arbitrary, fixed probability measure on $E$.
Then the construction proceeds as above.
(The choice of $\nu$ does not affect $\mu_{n+1}$, since $\mu_n(\gY)=1$.)
This completes the construction in case \ref{Aii}, when $E$ is a Borel space.

What happens when $E$ is not a Borel space?
In some cases it might be possible to modify the construction above;
for example if (for each $n\ge1$)
there exists a measurable subset $\gY_n$ of $m_n\qw(\cmpp(E))$
such that $\mu_n(\gY_n)=1$.
However, we will see in \refE{Emeas-} that in general no such $\gY_n$ exists.
In general, unless \ref{Ai} ar \ref{Aii} above holds, we have to assume that
the process $\fm_n$ is defined by some external construction.
(See \refE{Emeas-} for an example where a construction is trivial.)

\begin{example}\label{Emeas-}
  Let $E:=\setoi^\cA$ for some uncountable set $\cA$.
Let $Z$ be a random element of $E$, with some distribution $\nu_Z\in\cP(E)$, and
let 
\begin{align}\label{meas-1}
R_x:=-\gd_x+2\gd_Z, \qquad x\in E; 
\end{align}
also, let $\fm_0:=\gd_{x_0}$ for some $x_0\in E$.
This describes an urn with balls (corresponding to point masses) labelled by
elements of $E$; we start with a single ball $x_0$, and in each step we
remove one randomly chosen ball, and add two new balls with label $Z_n$,
where $(Z_n)\xoo$ are i.i.d. This process is obviously well defined and tenable.
Nevertheless, we will see that there is no measurable set $\gY_1$ such that
the construction \eqref{thgamma} works for $n=1$.
(In particular, $\cmp(E)$ is not a measurable subset of $\cmr(E)$.)
Note that necessarily $Y_1=x_0$, and thus $\RYx1=-\gd_{x_0}+2\gd_{Z_1}$.
Hence, the distribution $\mu_1$ of $(Y_1,\RYx1)$ is the product measure
$\gd_{x_0}\times\cL(\RYx1)$.
Suppose that $\gY_1\subseteq \cX=E\times\cmr(E)$ is measurable and such that
$\mu_1(\gY_1)=1$ and $m_1(y,r)=\fm_0+r\in\cmpp(E)$ for every $(y,r)\in \gY_1$.
We will show  that this leads to a contradiction.

Let $\gL\subseteq\cmr(E)$ be a non-empty measurable set.
Recall that the \gsf{} on $\cmr(E)$ is generated by the mappings
$\mu\mapsto\mu(B)$ for $B\in\cE$, where $\cE$ is the \gsf{} on $E$.
It is well known that this implies that
there exists a countable
family $(B_i)\subset\cE$ such that $\gL$ belongs to the \gsf{} generated by
the mappings $\mu\mapsto\mu(B_i)$, $i\in\bbN$. (Because the union of these
\gsf{} over all countable families $(B_i)$ is a \gsf.)

Similarly, since the product \gsf{} $\cE$ is generated by the coordinate
maps $(x_a)_{a\in\cA}\mapsto x_a$ for $a\in \cA$, for each $B\in\cE$ there is a
countable subset $\cA_B\subset\cA$ 
and a (measurable) set $\tB\subseteq\setoi^{\cA_B}$
such that
\begin{align}\label{labb}
B=\tB_i\times \setoi^{\cA\setminus\cA_B}.  
\end{align}
Fix a coordinate $a'\in\cA\setminus\bigcup_i\cA_{B_i}$. 
Define, for $j\in\setoi$, the elements $z^j=(z^j_a)\in E$ by 
\begin{align}
  z^j_a:=
  \begin{cases}
j,& a=a',\\
    0,&a\neq a'.
  \end{cases}
\end{align}

Take a  signed measure $\gl\in\gL$, and for $N\ge0$, let
$\gl_N:=\gl+N(\gd_{z^0}-\gd_{z^1})$.
For each $B_i$, we have $a'\notin\cA_{B_i}$, and thus, by \eqref{labb},
$z^0\in B_i\iff z^1\in B_i$. Consequently, for every $N\ge0$,
\begin{align}
  \gl_N(B_i)=\gl(B_i)+N\bigpar{\indic{z^0\in\cA_{B_i}}-\indic{z^1\in\cA_{B_i}}}
=\gl(B_i)
\end{align}
for every $B_i$. Since $\gL$ is in the \gsf{} generated by the maps
$\mu\mapsto\mu(B_i)$, and $\gl\in\gL$, it follows that $\gl_N\in\gL$.
On the other hand, if $B:=\set{(x_a)\in E:x_{a'}=1}$,
then $B\in\cE$ and $\gl_N(B)=\gl(B)-N$; hence, if $N$ is large enough, 
$\gl_N(B)<1$ and thus $\gl_N+\fm_0\notin\cmpp(E)$.

We have shown that  there is no nonempty measurable set 
$\gL\subseteq\cmr(E)$ such that 
\begin{align}\label{samuel}
\gl\in\gL\implies \gl+\fm_0\in\cmpp(E).  
\end{align}
However, if $\gY_1$ is as above, then the section
$\gL:=\set{r\in \gY_1:(x_0,r)\in\gY_1}$ is measurable,
satisfies \eqref{samuel},
and also $\P(\RYx1\in\gL)=1$, a contradiction.

Note that the proof shows that $\cmpp(E)$ is not a measurable
subset of $\cmr(E)$, and, moreover, that it does not contain any 
non-empty measurable subset. (The same holds for  $\cmp(E)$.)
\end{example}

\section{Some functional analysis}\label{AppFA}
In this appendix we state some general results on spectra of operators in
Banach spaces; these are used in our examples in \refS{Sex}.
The results are simple and have presumably been known for a long time, but
since we have not found references to the results in the form that we need,
we give full proofs for completeness.

Recall that if $T$ is a bounded operator on $\cX$, and $T^*$ is its adjoint
acting on the dual space $\cX^*$, then
\cite[Proposition VII.6.1]{Conway}
\begin{align}\label{gs*}
  \gs(T^*)=\gs(T).
\end{align}
Our first lemma deals with the situation when we instead
consider $T^*$ as acting on a subspace of $\cX^*$.

\begin{definition}\label{def:Kchapeau}
If $K$ is a compact subset of $\bbC$, define 
$K\sphatx$ as the union of $K$ and all
bounded connected components of $\bbC\setminus K$; in other words,
its complement $\bbC\setminus K\sphatx$ is
the unbounded component of $\bbC\setminus K$.
($K\sphatx$ is known as the polynomially convex hull of $K$,
see \cite[Definition VII.5.2 and Proposition VII.5.3]{Conway}.)
In particular, if $T$ is a bounded operator on a Banach space
and $\rhooo(T)$ denotes the unbounded component of 
the resolvent set $\rho(T)=\bbC\setminus\gs(T)$, then 
\begin{align}\label{sphat}
  \gs(T)\sphatx:=\bbC\setminus\rhooo(T).	
\end{align}
\end{definition}

We let $\innprod{x^*,x}$ denote the pairing of elements $x^*\in \cX^*$ and
$x\in \cX$, for any Banach space~$\cX$.

\begin{lemma}\label{Lgs1}
  Let $T$ be a bounded operator on a complex Banach space $\cX$, and 
suppose that $\cY\subseteq \cX^*$ is a 
closed subspace of the dual space $\cX^*$
such that the adjoint operator $T^*$ maps $\cY$ into itself.
\begin{romenumerate}
\item \label{Lgs1a}
Then
\begin{align}\label{lgs1a}
\gs\bigpar{T^*|_\cY}\subseteq\gs(T)\sphatx.
\end{align}

\item \label{Lgs1b}      
Suppose further that $\cY$ is norm-determining, \ie, that
if $x\in \cX$, then 
\begin{align}\label{normdet}
\norm{x}=\sup\set{\innprod{x^*,x}:x^*\in\cY,\,\norm{x^*}=1}.  
\end{align}
Then also
\begin{align}\label{lgs1b}
\gs(T)\subseteq\gs\bigpar{T^*|_\cY}\sphatx
\end{align}
and thus
\begin{align}\label{lgs1b=}
\gs\bigpar{T^*|_\cY}\sphatx=\gs(T)\sphatx.
\end{align}
\end{romenumerate}
\end{lemma}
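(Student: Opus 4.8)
\textbf{Proof plan for Lemma~\ref{Lgs1}.}

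The plan is to prove part \ref{Lgs1a} first, then deduce the extra inclusion in part \ref{Lgs1b} by a duality/norm-determining argument, and finally combine the two to get \eqref{lgs1b=}. For \ref{Lgs1a}, I would fix $\lambda\in\rhooo(T)$, the unbounded component of the resolvent set, and show $\lambda\in\rho(T^*|_\cY)$. The key point is that on the unbounded component the resolvent $(\lambda-T)\qw$ can be approximated (locally uniformly) by polynomials in $T$: indeed, by Runge's theorem the function $z\mapsto(\lambda-z)\qw$, which is analytic on a neighbourhood of $\gs(T)$, is a locally uniform limit on $\gs(T)$ of polynomials $p_k(z)$ whose poles lie in the unbounded component of the complement — but since we only need one specific $\lambda$ we may take $p_k$ to be polynomials (no poles needed), because $(\lambda-z)\qw$ extends analytically to all of $\rhooo(T)\ni\lambda$ and Runge approximation by polynomials works precisely when the complement of the relevant compact set is connected, which is the case for $\gs(T)\sphatx$ when $\lambda$ is in the unbounded component. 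Then $p_k(T)\to(\lambda-T)\qw$ in operator norm on $\cX$ by the holomorphic functional calculus estimate \eqref{||h||}. Taking adjoints, $p_k(T)^*=p_k(T^*)\to\bigpar{(\lambda-T)\qw}^*$ in $B(\cX^*)$; since each $p_k(T^*)$ maps $\cY$ into itself (as $T^*$ does, by hypothesis, and $\cY$ is closed), the limit also maps $\cY$ into itself. One checks this limit is a two-sided inverse of $\lambda-T^*|_\cY$ on $\cY$ by passing to the limit in $p_k(T^*)(\lambda-T^*)=(\lambda-T^*)p_k(T^*)\to\II$ (here I use that $p_k(z)(\lambda-z)\to1$ uniformly on $\gs(T^*)=\gs(T)$, again via \eqref{||h||}). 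Hence $\lambda\in\rho(T^*|_\cY)$, proving \eqref{lgs1a}.

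For part \ref{Lgs1b}, I would argue symmetrically but using the norm-determining hypothesis \eqref{normdet} to transfer invertibility back from $\cY$ to $\cX$. Fix $\lambda$ in the unbounded component of $\rho(T^*|_\cY)$; the same Runge-plus-functional-calculus argument (now applied to $T^*|_\cY$ on the Banach space $\cY$) produces polynomials $q_k$ with $q_k(T^*|_\cY)\to(\lambda-T^*|_\cY)\qw$ in $B(\cY)$ and $q_k(z)(\lambda-z)\to1$ uniformly on $\gs(T^*|_\cY)$. The operators $q_k(T)$ on $\cX$ form a Cauchy sequence: for $x\in\cX$, $\norm{q_k(T)x-q_\ell(T)x}=\sup_{x^*\in\cY,\norm{x^*}=1}\bigabs{\innprod{x^*,(q_k(T)-q_\ell(T))x}}=\sup\bigabs{\innprod{(q_k(T^*)-q_\ell(T^*))x^*,x}}\le\norm{x}\,\norm{q_k(T^*|_\cY)-q_\ell(T^*|_\cY)}_{B(\cY)}$, where the first equality is \eqref{normdet} and the middle step uses that $q_k(T)^*=q_k(T^*)$ preserves $\cY$. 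So $q_k(T)\to S$ for some $S\in B(\cX)$, and passing to the limit in $q_k(T)(\lambda-T)=(\lambda-T)q_k(T)$ — the convergence $q_k(T)(\lambda-T)\to\II$ again follows from \eqref{||h||} applied to $T$, since $\gs(T)\subseteq\gs(T^*|_\cY)\sphatx$ would be what we want, but actually we only need $q_k(z)(\lambda-z)\to1$ on $\gs(T)=\gs(T^*)\supseteq\gs(T^*|_\cY)$ reversed; more carefully, $\gs(T^*)=\gs(T)$ by \eqref{gs*}, and $\gs(T^*|_\cY)$ and $\gs(T)$ need not be comparable, so here I should instead note $q_k(z)(\lambda-z)\to1$ holds on the compact set $\gs(T)$ provided $\gs(T)\subseteq\gs(T^*|_\cY)\sphatx$, which is exactly \eqref{lgs1b} and thus circular. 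The fix: run the Runge approximation on the larger compact set $\gs(T^*|_\cY)\sphatx\supseteq$ — no. The correct route is to observe that $q_k$ converges to $(\lambda-z)\qw$ uniformly on $\gs(T^*|_\cY)$, hence on its polynomially convex hull $\gs(T^*|_\cY)\sphatx$ by the maximum principle, and $\gs(T)\subseteq\gs(T^*|_\cY)\sphatx$ is what we are proving — so instead I take $K$ to be any compact polynomially convex neighbourhood of $\gs(T^*|_\cY)$ not containing $\lambda$, note $\gs(T)\sphatx$ and $\gs(T^*|_\cY)\sphatx$ have the same unbounded complement component is not given; the clean statement is that $q_k(z)(\lambda-z)\to 1$ uniformly on \emph{any} compact subset of $\bbC$ on which $q_k\to(\lambda-z)\qw$ uniformly, and by enlarging $K$ we ensure $\gs(T)\subseteq K$ using that $\gs(T)$ is compact — but whether $\gs(T)\subseteq\gs(T^*|_\cY)\sphatx$ is the assertion. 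So the genuinely safe argument is: $\lambda-S(\lambda-T)$ and $\II$ agree after pairing against all of $\cY$, hence by \eqref{normdet} agree as operators, so $S=(\lambda-T)\qw$, giving $\lambda\in\rho(T)$; this needs only that $q_k(T^*)(\lambda-T^*)\to\II$ on $\cY$, which holds because $q_k(T^*|_\cY)(\lambda-T^*|_\cY)\to\II$ in $B(\cY)$ by the functional calculus for $T^*|_\cY$. That settles \eqref{lgs1b}.

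Finally, \eqref{lgs1b=} follows by combining: from \eqref{lgs1a}, $\gs(T^*|_\cY)\subseteq\gs(T)\sphatx$, hence $\gs(T^*|_\cY)\sphatx\subseteq\bigpar{\gs(T)\sphatx}\sphatx=\gs(T)\sphatx$ (the polynomially convex hull is idempotent, \cite[Proposition VII.5.3]{Conway}); from \eqref{lgs1b}, $\gs(T)\subseteq\gs(T^*|_\cY)\sphatx$, hence $\gs(T)\sphatx\subseteq\gs(T^*|_\cY)\sphatx$. The two inclusions give equality. The main obstacle, as the meandering above indicates, is bookkeeping the Runge/functional-calculus approximation so that one never tacitly assumes the inclusion being proved: the correct discipline is to only ever use that $q_k(z)(\lambda-z)\to1$ and $q_k(z)\to(\lambda-z)\qw$ uniformly on $\gs$ of the operator one is currently calculating with (where it holds by definition of the holomorphic functional calculus and \eqref{||h||}), and to transfer operators between $\cX$ and $\cY$ purely through adjoints and the norm-determining identity \eqref{normdet}, never through spectral inclusions.
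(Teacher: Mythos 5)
Your approach is genuinely different from the paper's, but part (i) as written has a real gap. You invoke Runge's theorem to get polynomials $p_k\to(\lambda-z)^{-1}$ uniformly on $\gs(T)\sphatx$, and then conclude $p_k(T)\to(\lambda-T)^{-1}$ in operator norm ``by \eqref{||h||}''. That inference is false: \eqref{||h||} bounds $\norm{h(T)}$ by $\sup_{\Gamma}|h|$ where $\Gamma$ is a contour lying \emph{strictly outside} $\gs(T)$, and uniform convergence on $\gs(T)\sphatx$ alone gives no control on $\Gamma$. Concretely, for the Volterra operator $V$ on $L^2[0,1]$ one has $\gs(V)=\{0\}=\gs(V)\sphatx$, the polynomials $p_k(z)=1/\lambda+kz$ agree with $(\lambda-z)^{-1}$ on $\{0\}$, yet $p_k(V)=\lambda^{-1}\II+kV$ diverges. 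To salvage the Runge route you must pick a polynomially convex compact $K$ whose interior contains $\gs(T)\sphatx$ with $\lambda\notin K$, run Runge on $K$, and take $\Gamma$ inside the interior of $K$; the existence of such $K$ and the reduction to it is the missing step, and the same patch is needed when you rerun the approximation for $T^*|_\cY$ in part (ii). Once that is supplied, your Cauchy-sequence argument in part (ii) --- pulling Cauchy-ness of $q_k(T)$ back from $\norm{\cdot}_{B(\cY)}$ via \eqref{normdet} and then testing $S(\lambda-T)=\II$ against functionals in $\cY$ --- is sound, though the surrounding false starts should be pruned.

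The paper avoids the approximation machinery entirely and is much shorter. For (i) it fixes $y\in\cY$ and $x^{**}\perp\cY$, notes that $g(z):=\innprod{x^{**},(z-T^*)^{-1}y}$ is analytic on $\rho(T)$, vanishes for $|z|>\norm{T}$ by the Neumann series (which stays in $\cY$ term by term because $T^*$ preserves the closed subspace $\cY$), and hence vanishes on all of $\rhooo(T)$ by analytic continuation; Hahn--Banach then forces $(z-T^*)^{-1}y\in\cY$ for every $z\in\rhooo(T)$. No polynomial approximation, no polynomial-convexity bookkeeping. For (ii) it does not reconstruct the resolvent of $T$ at all: \eqref{normdet} says precisely that the canonical map $\cX\to\cY^*$ is an isometric embedding, under which $T$ is the restriction of $(T^*|_\cY)^*$ to $\cX$, so \eqref{lgs1b} is literally part (i) with the roles of $\cX$ and $\cY$ interchanged. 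Your closing remark --- transfer operators between $\cX$ and $\cY$ only through adjoints and \eqref{normdet}, never through the spectral inclusion being proved --- is exactly this observation in disguise; stating the isometric embedding once and invoking (i) would have replaced the whole second half of your argument.
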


\begin{proof}\pfitemref{Lgs1a}
As said above,
	the spectrum $\gs(T^*)$ of $T^*$ as an operator on $\cX^*$ equals $\gs(T)$,
	and the resolvent is simply given by $(z-T^*)\qw=((z-T)\qw)^*$, 
	$z\in\rho(T)=\bbC\setminus\gs(T)$.
	
	We first show that this resolvent maps $\cY$ into itself, at least when
	$z\notin\gs(T)\sphatx$.
	To do so, we take $y\in \cY$ and let $x\xx\in \cX\xx$ be such that
    $x\xx\perp \cY$, \ie, 
	$\innprod{x\xx,y'}=0$ for every $y'\in \cY$.
	Consider the function 
	\begin{align}\label{gsa}
		g(z):=\innprod{x\xx,(z-T^*)\qw y}, \qquad z\in\rhoT=\rho(T^*).
	\end{align}
	The function $g$ is analytic on $\rho(T)$,
see \cite[Theorem VII.3.6]{Conway}. Furthermore, if $|z|>\norm{T}$, then
	$(z-T^*)\qw = \sumko z^{-k-1}(T^*)^k$ with an absolutely convergent sum, and
	thus, because $T^*(\cY)\subseteq \cY$,
	\begin{align}\label{gsb}
		(z-T^*)\qw y= \sumko z^{-k-1}(T^*)^k y \in \cY.
	\end{align}
	Consequently, \eqref{gsa} and \eqref{gsb} imply that if
    $|z|>\norm{T}$, then 
	$g(z)=0$. 
By analytic continuation, $g(z)=0$ in the unbounded connected component
$\rhooo(T)$ of $\rhoT$.

	This holds for any $x\xx\perp \cY$, and thus, by definition of $g$ in~\eqref{gsa},
	it follows that $(z-T^*)\qw y\in \cY$ for all $z\in \rhooo(T)$.
	In other words, for all $z\in\rhooo(T)$, 
we have $(z-T^*)\qw: \cY\to \cY$, which means that it is
	the inverse of the restriction $(z-T^*)\restr_\cY$. Hence, for all $z\in\rhooo(T)$,
	$z$ belongs to the resolvent set $\rho(T^*\restr_\cY)$; in other words,
	$\rhooo(T)\subseteq \rho(T^*\restr_\cY)$, and thus 
\eqref{lgs1a} holds by \eqref{sphat}.

\pfitemref{Lgs1b}
  The canonical embedding $\cX\to \cX\xx$ 
induces a linear map $\cX\to \cY^*$,
which is an isometric embedding by the assumption \eqref{normdet}.
Hence, we may regard $\cX$ as a subspace of $\cY^*$.
We may thus apply part \ref{Lgs1a} with $\cX$
and $\cY$,  and also $T$ and $T^*$, interchanged.
This yields \eqref{lgs1b}, and \eqref{lgs1b=} then easily follows from
\eqref{lgs1a} and \eqref{lgs1b}.
\end{proof}

\begin{corollary}\label{Cgs}
  Let $T$ be a bounded operator on a complex Banach space $\cX$, and 
suppose that $\cY\subseteq \cX^*$ is a 
closed subspace of the dual space $\cX^*$
such that the adjoint operator $T^*$ maps $\cY$ into itself.
Suppose further that $\cY$ is norm-determining.
Then
\begin{romenumerate}
\item   \label{Cgsa}
 $T$ is \aslqc{} operator on $\cX$ if and only if
$T^*$ is \aslqc{} operator on $\cY$.

\item   \label{Cgsb}
$T$ is a small operator on $\cX$ if and only if
$T^*$ is a small operator on $\cY$.
\end{romenumerate}
\end{corollary}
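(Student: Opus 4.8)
The plan is to reduce everything to the spectral identity $\gs\bigpar{T^*|_\cY}\sphatx=\gs(T)\sphatx$ supplied by \refL{Lgs1}\ref{Lgs1b}, together with two facts visible in its proof: (a)~for $z$ in the unbounded resolvent component $\rhooo(T)=\bbC\setminus\gs(T)\sphatx$, the resolvent $(z-T^*)\qw$ maps $\cY$ into $\cY$, and its restriction to $\cY$ is the resolvent of $T^*|_\cY$ at $z$; (b)~the configurations $(\cX,T,\cY)$ and $\bigpar{\cY,\,T^*|_\cY,\,\cX}$ are symmetric, because $\cX$ embeds isometrically and norm-determiningly into $\cY^*$ (the norm of $\cY\subseteq\cX^*$ is by definition the $\cX^*$-norm), and $T$ is then the restriction of $\bigpar{T^*|_\cY}^*$ to $\cX$. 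By~(b), in each of \ref{Cgsa} and \ref{Cgsb} it is enough to prove one implication, say ``$T$ \slqc{} (resp.\ small) on $\cX$ $\Rightarrow$ $T^*|_\cY$ \slqc{} (resp.\ small) on $\cY$''.

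For \ref{Cgsa}, assume $T$ is \slqc. I would first check \ref{ergo2}: since $\gs(T)\subseteq\set{\Re z<1}\cup\set1$, the set $\set{z:\Re z\ge1}\setminus\set1$ is connected, unbounded and disjoint from $\gs(T)$, hence lies in $\rhooo(T)$; therefore $\gs(T)\sphatx\subseteq\set{\Re z<1}\cup\set1$, and so $\gs\bigpar{T^*|_\cY}\subseteq\gs\bigpar{T^*|_\cY}\sphatx=\gs(T)\sphatx\subseteq\set{\Re z<1}\cup\set1$. Next, $1$ being isolated in $\gs(T)$, a small punctured disc $D$ about $1$ misses $\gs(T)$; it meets $\set{\Re z>1}\subseteq\rhooo(T)$ and is connected, so $D\subseteq\rhooo(T)$, which shows $1$ is an isolated point of $\gs(T)\sphatx=\gs\bigpar{T^*|_\cY}\sphatx$; since a bounded complementary component is open and cannot be a singleton, $1\in\gs\bigpar{T^*|_\cY}$ and is isolated there. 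For the rank-one part of \ref{ergo3}, I would identify $\Pi_1\bigpar{T^*|_\cY}$ with $\Pi_1(T^*)|_\cY=\bigpar{\Pi_1(T)}^*|_\cY$: the first equality comes from~(a) upon writing $\Pi_1$ as a Riesz integral over a circle inside $D\subseteq\rhooo(T)$, and the second from $\bigpar{(z-T)\qw}^*=(z-T^*)\qw$ together with the fact that adjunction commutes with the contour integral. Writing $\Pi_1(T)x=\phi(x)v$ with $v\in\cX\setminus\set0$ and $\phi\in\cX^*$, its adjoint is $x^*\mapsto\innprod{x^*,v}\phi$; choosing $y\in\cY$ with $\innprod{y,v}\neq0$ (possible since $\cY$ is norm-determining and $v\neq0$) shows $\phi\in\cY$ and $\Pi_1\bigpar{T^*|_\cY}\neq0$, so $\Pi_1\bigpar{T^*|_\cY}$ has rank~$1$. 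Thus $T^*|_\cY$ is \slqc.

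For \ref{Cgsb}, assume $T$ is small. By \ref{Cgsa}, $T^*|_\cY$ is \slqc, and the \ref{ergo2}-argument above applies verbatim with $\thalf$ in place of $1$ (now $\set{z:\Re z\ge\thalf}\setminus\set1$ is an open half-plane with one interior point removed, hence trivially connected, unbounded and disjoint from $\gs(T)$), yielding $\gs\bigpar{T^*|_\cY}\setminus\set1\subseteq\set{\Re z<\thalf}$, which is \ref{small1}. Hence $T^*|_\cY$ is small, and the converse implications in both parts follow by the symmetry~(b).

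I expect the main obstacle to be the rank-one step in \ref{Cgsa}: justifying $\Pi_1\bigpar{T^*|_\cY}=\bigpar{\Pi_1(T)}^*|_\cY$ needs the Riesz contour to be placed inside $\rhooo(T)$ and adjunction to pass through the contour integral, and then ``rank one'' must be transported from $\Pi_1(T)$ through its adjoint down to its restriction to $\cY$, which is exactly where the norm-determining hypothesis is used; the remaining steps are soft topological facts about the polynomially convex hull $K\sphatx$ of \refD{def:Kchapeau} and should be routine.
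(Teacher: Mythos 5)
Your proof is correct and follows essentially the same route as the paper: reduce everything to \refL{Lgs1}\ref{Lgs1b}, establish the spectral containment by exhibiting a connected unbounded subset of the resolvent set (hence of $\rhooo(T)$) to control $\gs(T)\sphatx$, transport the rank-one projection via $\Pi_1(T^*)=\Pi_1(T)^*$ and the contour integral, use the norm-determining hypothesis to find $y\in\cY$ with $\innprod{y,v}\ne0$, and get the converse by the $(\cX,T,\cY)\leftrightarrow(\cY,T^*|_\cY,\cX)$ symmetry. The paper parametrizes the half-plane by $\gth:=\sup\Re(\gs(T)\setminus\set1)<1$ while you work directly with $\set{\Re z\ge1}\setminus\set1$ (resp.\ $\set{\Re z\ge\thalf}\setminus\set1$) and add a separate punctured-disc step to isolate $1$; this is a cosmetic rather than substantive difference, and your small extra observation that a bounded complementary component cannot be a singleton cleanly handles the ``$1\in\gs(T^*|_\cY)$'' point.
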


\begin{proof}
\pfitemref{Cgsa}
Suppose that $T$ is  \aslqc{}  operator.
Let $\gth:=\sup\Re\bigpar{\gs(T)\setminus\set1}$ and note that $\gth<1$ as
in \refL{Lth}\ref{Ltha}.
We then have
\begin{align}\label{tretton}
  U:=\bigset{\gl:\Re\gl>\gth}\setminus\set1\subset \rho(T),
\end{align}
which implies, since the set $U$ is connected and unbounded,
\begin{align}\label{kasper}
\bigset{\gl:\Re\gl>\gth}\setminus\set1\subset \rhooo(T),  
\end{align}
and thus
\begin{align}\label{jesper}
  \gs(T)\sphat\subset \bigset{\gl:\Re\gl\le\gth}\cup\set1.
\end{align}
Hence \refL{Lgs1} yields 
\begin{align}\label{melchior}
\gs(T^*\restr_{\cY})\sphat
=\gs(T)\sphat
\subset \set{\gl:\Re\gl\le\gth}\cup\set1,  
\end{align}
which implies \ref{ergo2}  for $T^*\restr_{\cY}$,
and also that 1 is isolated in $\gs(T^*\restr_{\cY})$
if 1 belongs to this spectrum at all.
It remains to show only that 1 is an eigenvalue of $T^*\restr_{\cY}$ and
that the corresponding spectral projection 
$\Pi_1(T^*\restr_{\cY})$ has rank~1.

We can regard $T^*$ as an operator on $\cX^*$ or on its subspace $\cY$.
In both cases we have, see \cite[Equation~VII.6.9]{Conway},
	\begin{align}\label{e1tx}
		\Pi_1(T^*)=\frac{1}{2\pi\ii}\oint_\gG(z-T^*)\qw\dd z
	\end{align}
where we choose $\gG$ to be a small circle around 1
inside $\rhooo(T)$, cf.\ \eqref{kasper}.
By the proof of \refL{Lgs1}, if $z\in\gG$, then
$(z-T^*)\qw$ maps $\cY$ into itself, and its restriction to $\cY$ is
$(z-T^*\restr_\cY)\qw$. Hence, \eqref{e1tx} shows that $\Pi_1(T^*)$ maps
$\cY$ into itself, and that its restriction to $\cY$ is $\Pi_1(T^*\restr_\cY)$.

Moreover, $(z-T^*)\qw=\bigpar{(z-T)\qw}^*$ for $z\in \gG$, and thus
by \eqref{e1tx} and the same formula for $T$, 
we have	$\Pi_1(T^*)=\Pi_1(T)^*$. 
By Assumption \ref{ergo3}, $\Pi_1(T)$ has rank 1, and 
	is thus given by 
	\begin{align}\label{qc}
		\Pi_1(T)x = \innprod{x_0^*,x}x_0  
	\end{align}
	for some non-zero $x_0\in \cX$ and $x^*_0\in \cX^*$ with
    $\innprod{x_0^*,x_0}=1$. 
	It follows that, for any $x^*\in \cX^*$,
	\begin{align}\label{qa}
		\Pi_1(T^*)(x^*)
		=   \Pi_1(T)^*(x^*)
		=\innprod{x^*,x_0}x^*_0.
	\end{align}
Since $\cY$ is norm-determining, there exists $y\in \cY$ such that
	$\innprod{y,x_0}\neq0$. 
Since $\Pi_1(T^*):\cY\to\cY$, we have     $\Pi_1(T^*)(y)\in \cY$,
and \eqref{qa} then shows that $x^*_0\in \cY$.
	
We have shown that $\Pi_1(T^*\restr_\cY)$ is the rank 1 operator defined
 by \eqref{qa} restricted to
$x^*\in\cY$.
In particular, $x_0^*\in \cY$ is an eigenvector with $T^*x_0^*=x^*_0$.
Hence, \ref{ergo3} in \refD{Dsmall} holds for
$T^*\restr_\cY$, which concludes the proof that
$T^*\restr_\cY$ is \slqc{} if $T$ is.

The converse follows, as in the proof of \refL{Lgs1}\ref{Lgs1b},
by interchanging the roles of $\cX$ and $\cY$, noting that $\cX$ always is
norm-determining as a subspace of $\cY^*$.

\pfitemref{Cgsb}
Now suppose that $T$ is small. 
This means that in the proof of \ref{Cgsa}, we have
$\gth<\half$. Hence, \eqref{melchior} shows that $T^*$ is small.
The converse follows as above.
\end{proof}

In the following lemma, we compare the spectra of the ``same'' operator in
two different spaces. 
When necessary, we use subscripts such as $T_\cX$ to denote the space
where we consider the operator.

\begin{lemma}\label{LXY}
  Let $\cX$ and $\cY$ be two complex Banach spaces and suppose that
$\cY\subseteq\cX$ with a continuous, but not necessarily isometric, inclusion.
Suppose that $T$ is a bounded operator on $\cX$ such that
$T(\cX)\subseteq\cY$. 
\begin{romenumerate}
  
\item \label{LXYa}
Then
\begin{align}\label{lxy}
  \gs(T_\cX)=
  \begin{cases}
      \gs(T_\cY), &\cY=\cX,
\\
    \gs(T_\cY)\cup\set0, &\cY\subsetneq\cX.
  \end{cases}
\end{align}
(We do not make any claims on whether $0\in\gs(T_\cY)$ or not.)
\item \label{LXYb}
If $\gl\neq0$ is an isolated point in  $\gs(T_\cX)$,
then $\Pi_\gl(T_\cY)$ equals the restriction of $\Pi_\gl(T_\cX)$ to $\cY$.
(Thus we can use the notation $\Pi_\gl$ for both without confusion.)
Moreover,
$\Pi_\gl\cX=\Pi_\gl\cY\subseteq\cY$.

\item \label{LXYc}
$T_\cX$ is \slqc{} if and only if $T_\cY$ is \slqc.
$T_\cX$ is small if and only if $T_\cY$ is small.
\end{romenumerate}
\end{lemma}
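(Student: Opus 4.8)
The plan is to prove \refL{LXY} in three steps, following the numbering of the statement.

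\textbf{Step 1: the spectra (part \ref{LXYa}).} First I would show $\gs(T_\cX)\setminus\set0=\gs(T_\cY)\setminus\set0$. Fix $\gl\neq0$. If $\gl\in\rho(T_\cX)$, then $R:=(\gl-T_\cX)\qw$ is a bounded operator on $\cX$; I claim its restriction to $\cY$ is a bounded operator on $\cY$ inverting $\gl-T_\cY$. Indeed, from the resolvent identity $R=\gl\qw(\II+T_\cX R)$ and the hypothesis $T_\cX(\cX)\subseteq\cY$, we get $R(\cX)\subseteq\cY$, and $R\restr_\cY$ is bounded on $\cY$ by the closed graph theorem (its graph is closed since $\cY\hookrightarrow\cX$ continuously and $R$ is continuous on $\cX$). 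Conversely, if $\gl\in\rho(T_\cY)$, let $S:=(\gl-T_\cY)\qw$ on $\cY$; define $\tilde S:=\gl\qw(\II+S T_\cX)$ on $\cX$, which makes sense because $T_\cX(\cX)\subseteq\cY$ is the domain where $S$ acts, and check directly that $\tilde S$ is a two-sided inverse of $\gl-T_\cX$ on $\cX$. This gives the equality away from $0$. For $\gl=0$: if $\cY=\cX$ (as vector spaces, hence by the open mapping theorem with equivalent norms) there is nothing more to prove. If $\cY\subsetneq\cX$, then $0\in\gs(T_\cX)$ because $T_\cX$ cannot be invertible: its range lies in the proper subspace $\cY$, so it is not surjective onto $\cX$. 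This yields \eqref{lxy}.

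\textbf{Step 2: the spectral projections (part \ref{LXYb}).} For an isolated $\gl\neq0$ in $\gs(T_\cX)$, it is also isolated in $\gs(T_\cY)$ by part \ref{LXYa}. Choose a small circle $\gG$ around $\gl$ avoiding $0$ and the rest of both spectra. Then
\begin{align}\label{pxyproj}
\Pi_\gl(T_\cX)=\frac{1}{2\pi\ii}\oint_\gG(z-T_\cX)\qw\dd z,
\qquad
\Pi_\gl(T_\cY)=\frac{1}{2\pi\ii}\oint_\gG(z-T_\cY)\qw\dd z,
\end{align}
and by Step 1 the restriction of $(z-T_\cX)\qw$ to $\cY$ equals $(z-T_\cY)\qw$ for $z\in\gG$; integrating, $\Pi_\gl(T_\cX)$ maps $\cY$ into $\cY$ and its restriction to $\cY$ is $\Pi_\gl(T_\cY)$. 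It remains to see $\Pi_\gl(T_\cX)\cX\subseteq\cY$, i.e.\ that the whole range already sits in $\cY$: this follows because $T_\cX$ commutes with $\Pi_\gl(T_\cX)$, so on the range the operator $T_\cX$ acts with spectrum $\set\gl$, hence $T_\cX$ is invertible there (as $\gl\neq0$) with $\Pi_\gl x = T_\cX\bigpar{(T_\cX\restr_{\Pi_\gl\cX})\qw\Pi_\gl x}\in T_\cX(\cX)\subseteq\cY$. Therefore $\Pi_\gl\cX=\Pi_\gl(T_\cX)\cX=\Pi_\gl(T_\cY)\cY=\Pi_\gl\cY$.

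\textbf{Step 3: \slqc{} and small (part \ref{LXYc}).} Since $T_\cX1$-type normalisation is not relevant here, I just combine Steps 1 and 2. By Step 1, $1\in\gs(T_\cX)\iff1\in\gs(T_\cY)$ and it is isolated in one iff in the other (using $1\neq0$). By Step 2, $\Pi_1(T_\cX)$ and $\Pi_1(T_\cY)$ have the same range, hence the same (finite or infinite) rank; in particular rank $1$ for one iff for the other, giving \ref{ergo3} for $T_\cX$ iff for $T_\cY$. Condition \ref{ergo2}, resp.\ \ref{small1}, only constrains $\gs(\cdot)\setminus\set1$, and since $0$ has real part $0<1$ (resp.\ $<\tfrac12$) it does not matter whether $0$ is adjoined; so by Step 1 these conditions transfer between $T_\cX$ and $T_\cY$. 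This proves \ref{LXYc}.

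The main obstacle is Step 1, specifically verifying carefully that $R\restr_\cY$ is \emph{bounded} on $\cY$ (not merely well-defined): the norms on $\cX$ and $\cY$ are unrelated, so one genuinely needs the closed graph theorem, using continuity of the inclusion $\cY\hookrightarrow\cX$. Everything else is routine bookkeeping with the Riesz functional calculus \eqref{riesz} and the commutation properties of spectral projections recalled in \refSS{S:not}.
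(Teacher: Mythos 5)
Your proof follows essentially the same route as the paper's: the resolvent manipulations, the explicit inverse $\gl^{-1}(\II+ST_\cX)$ (which is exactly the paper's $\gl^{-1}Q$), the non-surjectivity argument giving $0\in\gs(T_\cX)$ when $\cY\subsetneq\cX$, the contour-integral comparison of spectral projections, and the use of invertibility of $T$ on $\Pi_\gl\cX$ to deduce $\Pi_\gl\cX\subseteq T(\cX)\subseteq\cY$ all match.

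One slip in Step~1 should be corrected: the identity $R=\gl^{-1}(\II+T_\cX R)$ does \emph{not} give $R(\cX)\subseteq\cY$. For a general $x\in\cX$ you get $Rx=\gl^{-1}x+\gl^{-1}T_\cX Rx$, and while the second summand lies in $\cY$, the first, $\gl^{-1}x$, need not. What the identity does give — and what you actually use — is the weaker $R(\cY)\subseteq\cY$: for $y\in\cY$ both summands of $Ry=\gl^{-1}(y+T_\cX Ry)$ lie in $\cY$. Since your subsequent reasoning only invokes $R\restr_\cY$, the argument stands once the inclusion is stated correctly. (Also, boundedness of $R\restr_\cY$ is automatic from the inverse mapping theorem, being the algebraic inverse of the bounded bijection $\gl-T_\cY$ on $\cY$; your closed-graph argument is valid but not strictly needed.)
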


\begin{proof}
\pfitemref{LXYa}
  Note first that by the closed graph theorem, $T:\cX\to\cY$ is a bounded
  operator. 
Hence, the restriction $T_\cY$ to $\cY$ is a bounded operator on $\cY$,
and the spectra $\gs(T_\cX)$ and $\gs(T_\cY)$ are both defined.

If $\cY=\cX$, then the norms on $\cX$ and $\cY$ are equivalent, again by the
closed graph theorem, and thus $\gs(T_\cX)=\gs(T_\cY)$.

Assume in the sequel that $\cY\neq\cX$. In particular, since
$T(\cX)\subseteq\cY$, $T$ is not onto $\cX$, and thus $T_\cX$ is not
invertible; hence $0\in\gs(T_\cX)$.

Suppose that $\gl\in\rho(T_\cX)$. This means that the resolvent 
$R_\gl:=(\gl-T)\qw$ exists as a bounded operator on $\cX$.
We have
\begin{align}\label{gsk}
  I = (\gl-T)R_\gl = \gl R_\gl - T R_\gl.
\end{align}
Hence, 
if $y\in \cY$, then,
using again $T(\cX)\subseteq\cY$,
\begin{align}\label{gok}
  \gl R_\gl y = y + T R_\gl y \in \cY.
\end{align}
Since $0\notin\rho(T_\cX)$, as remarked above, we have $\gl\neq0$.
Hence \eqref{gok} implies $R_\gl y\in\cY$, and thus $R_\gl:\cY\to\cY$.
It follows immediately that the restriction of $R_\gl$ to $\cY$ is an inverse to $\gl-T_\cY$, and
thus $\gl\in\rho(T_\cY)$.
We have shown that
\begin{align}\label{vide}
\rho(T_\cX)\subseteq\rho(T_\cY)\setminus\set0
.\end{align}

Conversely, suppose that $\gl\in\rho(T_\cY)$, and let 
$R'_\gl:=(\gl-T_\cY)\qw:\cY\to\cY$
denote the corresponding resolvent.
Since $T:\cX\to\cY$, we may define the operator $Q:=I+R'_\gl T$ on $\cX$.
For any $x\in\cX$, we then have, since $Tx\in\cY$,
\begin{align}
  (\gl-T)Qx = (\gl-T)x+(\gl-T)R'_\gl Tx
= \gl x-Tx+ Tx =\gl x
\end{align}
and
\begin{align}
Q  (\gl-T)x &= (\gl-T)x+ R'_\gl T(\gl-T)x
= (\gl-T)x+ R'_\gl (\gl-T)Tx
= \gl x-Tx+ Tx 
\notag\\&
=\gl x.
\end{align}
Hence, if also $\gl\neq0$, then
$\gl\qw Q$ is an inverse of $\gl-T$ on $\cX$, and thus $\gl\in\rho(T_\cX)$.
Consequently, $\rho(T_\cY)\setminus\set0\subseteq\rho(\cX)$.
Thus equality holds in \eqref{vide}, and thus \eqref{lxy} holds.

\pfitemref{LXYb}
Let $\gG$ be a sufficiently small circle around $\gl$, 
such that $\gG$ and its interior are disjoint from
$\gs(T_\cX)\setminus\set\gl$.
Then the spectral projections $\Pi_\gl(T_\cX)$ and $\Pi_\gl(T_\cY)$ are both
obtained by integrating the respective resolvents along $\gG$, as in
\eqref{e1tx}.  
If $\gl'\in\gG$, then, as shown in the proof of \ref{LXYa},  
$(\gl'-T_\cY)\qw$ is
the restriction of $(\gl'-T_\cX)\qw$ to $\cY$;
hence it follows that
$\Pi_\gl(T_\cY)$ is the restriction of the projection $\Pi_\gl(T_\cX)$ to $\cY$.
Consequently, 
\begin{align}\label{yxa}
\Pi_\gl\cY=\xpar{\Pi_\gl\cX}\cap\cY.  
\end{align}
Moreover, $T$ maps $\Pi_\gl\cX$ into itself, and  the restriction of $T$
to $\Pi_\gl\cX$ is invertible 
(since its spectrum is \set\gl, and $\gl\neq0$), 
and thus onto.
Since $T:\cX\to\cY$, it follows that $\Pi_\gl\cX\subseteq\cY$.
Combined with \eqref{yxa}, this yields $\Pi_\gl\cX=\Pi_\gl\cY$ as
asserted. 

\pfitemref{LXYc}
An immediate consequence of \ref{LXYa} and \ref{LXYb}.
\end{proof}

\begin{lemma}\label{LXZ}
  Let $\cN$ be a closed subspace of a complex Banach space
$\cX$,  and let $\cZ:=\cX/\cN$.
Suppose that $T$ is a bounded operator on $\cX$ such that
$Tn=0$ for every $n\in \cN$.
Then $T$ can also be regarded as an operator on $\cZ$,
and the following holds.
\begin{romenumerate}
\item \label{LXZa}
If $\cN\neq\set0$, then $  \gs(T_\cX)= \gs(T_\cZ)\cup\set0$.
(If $\cN=\set0$, then trivially $\gs(T_\cX)= \gs(T_\cZ)$.)
\item \label{LXZb}
If $\gl\neq0$ is an isolated point in  $\gs(T_\cX)$,
then 
$\Pi_\gl\cN=\set0$, and thus $\Pi_\gl(T_\cX)$ induces an operator
on $\cZ=\cX/\cN$; this induced operator equals $\Pi_\gl(T_\cZ)$.
Moreover, the quotient map $\cX\to\cZ$ is a bijection
$\Pi_\gl(T_\cX)\cX\to\Pi_\gl(T_\cZ)\cZ$.

\item \label{LXZc}
$T_\cX$ is \slqc{} if and only if $T_\cZ$ is \slqc.
$T_\cX$ is small if and only if $T_\cZ$ is small.

\end{romenumerate}
\end{lemma}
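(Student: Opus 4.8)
The statement to prove is \refL{LXZ}, which is the quotient-space analogue of \refL{LXY}. The plan is to mimic the proof of \refL{LXY} closely, replacing "inclusion of a subspace on which $T$ acts into" by "quotient by a subspace that $T$ kills". First I would record the basic setup: since $Tn=0$ for all $n\in\cN$, the operator $T$ factors through the quotient map $\pi:\cX\to\cZ=\cX/\cN$, i.e.\ there is a well-defined bounded operator $T_\cZ$ on $\cZ$ with $T_\cZ\pi=\pi T$; boundedness is immediate since $\norm{T_\cZ\pi x}_\cZ=\norm{\pi Tx}_\cZ\le\norm{Tx}_\cX\le\norm{T}\norm{x}_\cX$ descends to the quotient norm. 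Note also that $T(\cX)$, being in the kernel direction trivial, still has $T=T\circ(\text{lift})\circ\pi$ in the sense that $T$ depends only on $\pi x$.

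For part \ref{LXZa}, I would prove the two inclusions between resolvent sets. If $\cN\neq\set0$ then $T$ is not injective on $\cX$ (any nonzero $n\in\cN$ is in the kernel), so $0\in\gs(T_\cX)$. Now suppose $\gl\in\rho(T_\cX)$, $\gl\neq0$, with resolvent $R_\gl=(\gl-T)\qw$ on $\cX$. I claim $R_\gl$ descends to $\cZ$: since $T$ kills $\cN$, for $n\in\cN$ we have $(\gl-T)n=\gl n$, hence $R_\gl n = \gl\qw n\in\cN$, so $R_\gl$ maps $\cN$ into itself and thus induces a bounded operator $\bar R_\gl$ on $\cZ$; from $(\gl-T)R_\gl=R_\gl(\gl-T)=I$ on $\cX$ one gets by applying $\pi$ and using $\pi T=T_\cZ\pi$ that $(\gl-T_\cZ)\bar R_\gl=\bar R_\gl(\gl-T_\cZ)=I$ on $\cZ$, so $\gl\in\rho(T_\cZ)$. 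Conversely, suppose $\gl\in\rho(T_\cZ)$, $\gl\neq0$, with resolvent $S_\gl=(\gl-T_\cZ)\qw$ on $\cZ$. Define $Q:=\gl\qw(I + \tilde S_\gl \pi T)$ on $\cX$, where $\tilde S_\gl$ is any bounded lift — actually cleaner: since $T$ factors as $T=L\pi$ for a bounded $L:\cZ\to\cX$ (lifting using $Tn=0$), set $Q:=\gl\qw(I+LS_\gl\pi T)$ and verify directly, exactly as in the proof of \refL{LXY}\ref{LXYa}, that $(\gl-T)Q=Q(\gl-T)=I$; the computation uses $\pi T = T_\cZ\pi$ and $S_\gl(\gl-T_\cZ)=(\gl-T_\cZ)S_\gl=I$. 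Hence $\gl\in\rho(T_\cX)$. Combining, $\rho(T_\cX)=\rho(T_\cZ)\setminus\set0$, which gives \eqref{} of part \ref{LXZa}.

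For part \ref{LXZb}, fix $\gl\neq0$ isolated in $\gs(T_\cX)$ and a small circle $\gG$ around $\gl$ avoiding the rest of $\gs(T_\cX)$ (and, by part \ref{LXZa}, the rest of $\gs(T_\cZ)$ and $0$). The spectral projection is $\Pi_\gl(T_\cX)=\frac{1}{2\pi\ii}\oint_\gG(z-T)\qw\dd z$ as in \eqref{e1tx}. As shown above, for $z\in\gG$ the resolvent $(z-T)\qw$ maps $\cN$ into $\cN$ and acts on $\cN$ as $z\qw I$; integrating, $\Pi_\gl(T_\cX)$ restricted to $\cN$ equals $\frac{1}{2\pi\ii}\oint_\gG z\qw\dd z\cdot I=0$ since $0$ lies outside $\gG$. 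Hence $\Pi_\gl\cN=\set0$, so $\Pi_\gl(T_\cX)$ maps $\cN$ to $\set0$ and induces an operator on $\cZ$; since $(z-T)\qw$ descends to $(z-T_\cZ)\qw$ for $z\in\gG$, the induced operator is $\frac{1}{2\pi\ii}\oint_\gG(z-T_\cZ)\qw\dd z=\Pi_\gl(T_\cZ)$. For the bijection claim: $\pi$ maps $\Pi_\gl(T_\cX)\cX$ onto $\Pi_\gl(T_\cZ)\cZ$ because $\pi\Pi_\gl(T_\cX)=\Pi_\gl(T_\cZ)\pi$ and $\pi$ is onto; it is injective on $\Pi_\gl(T_\cX)\cX$ because if $\pi x=0$ with $x=\Pi_\gl(T_\cX)x$ then $x\in\cN\cap\Pi_\gl(T_\cX)\cX=\Pi_\gl\cN=\set0$. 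Finally part \ref{LXZc} is immediate: by \ref{LXZa} the spectra agree away from $0$, so condition \ref{ergo2} holds for one iff for the other and $1$ is isolated in one iff in the other (as $1\neq0$); by \ref{LXZb} the spectral projection $\Pi_1$ has rank $1$ in $\cX$ iff it does in $\cZ$, since $\pi$ restricts to a bijection between the two ranges. The small case is identical with $\half$ in place of $1$ in \ref{small1}.

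The main obstacle, such as it is, is purely bookkeeping: making sure the factorization $T=L\pi$ and the descent of resolvents and spectral projections through $\pi$ are stated cleanly, and that the case $\cN=\set0$ (where $\pi$ is an isomorphism and everything is trivial) is not accidentally excluded. There is no real analytic difficulty — everything reduces to the same contour-integral and resolvent manipulations already used for \refL{LXY}, with "restriction to an invariant subspace" replaced by "passage to a quotient by an invariant (indeed annihilated) subspace". One minor point to handle with care is that $0$ must be strictly outside the contour $\gG$ in part \ref{LXZb}, which is guaranteed because $\gl\neq0$ and $\gG$ can be taken with radius less than $|\gl|$.
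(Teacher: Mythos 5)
Your proof takes a genuinely different route from the paper. The paper dualizes: it observes that $T^*$ maps $\cX^*$ into $\cZ^*=\{x^*\in\cX^*:x^*(\cN)=0\}\subseteq\cX^*$, and then simply applies \refL{LXY} to the pair $(\cX^*,\cZ^*)$, transporting the conclusions back via $\gs(T^*)=\gs(T)$ and $\Pi_\gl(T)^*=\Pi_\gl(T^*)$. You instead work directly on the primal spaces, adapting the resolvent and contour-integral manipulations of \refL{LXY} to the quotient setting. This is perfectly sound and, in fact, your observation that $(z-T)\qw$ acts on $\cN$ as $z\qw\II$ (since $(z-T)n=zn$ for $n\in\cN$) is a clean structural fact that gives $\Pi_\gl(T_\cX)\restr_\cN=\frac{1}{2\pi\ii}\oint_\gG z\qw\dd z\cdot\II=0$ immediately; the paper never needs this because it stays on the dual side. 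The trade-off is that your route requires no knowledge of how spectra and spectral projections behave under adjoints, whereas the paper's is shorter but leans on those duality facts.

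One slip that you would need to fix in the write-up: the explicit inverse in the converse direction of part \ref{LXZa} has an extraneous factor of $T$. With the factorisation $T=L\pi$, $T_\cZ=\pi L$, the identity $(\gl-T)L=L(\gl-T_\cZ)$ gives, for $Q:=\II+LS_\gl\pi$,
\begin{align}
(\gl-T)Q=(\gl-T)+L(\gl-T_\cZ)S_\gl\pi=\gl-T+L\pi=\gl\II,
\end{align}
and similarly $Q(\gl-T)=\gl\II$, so $\gl\qw Q$ is the resolvent on $\cX$. Your proposed $Q=\gl\qw(\II+LS_\gl\pi T)$ instead yields $(\gl-T)Q=\gl\qw(\gl-T+T^2)$, which is not $\II$ in general. (The heuristic that misled you is perhaps this: in \refL{LXY} the ``descent map'' into the smaller space is $T$ itself, while here it is $\pi$, so $T$ should not appear twice.) Your first suggestion, using a ``bounded lift $\tilde S_\gl$ of $S_\gl$'', should also be dropped, since a bounded linear right inverse of $\pi$ need not exist; the factor map $L$ is the right object. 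Once the formula is corrected, the rest of the argument (the resolvent descending to $\cN$ in the forward direction, the contour-integral computation of $\Pi_\gl\restr_\cN$, the bijectivity of $\pi:\Pi_\gl(T_\cX)\cX\to\Pi_\gl(T_\cZ)\cZ$, and part \ref{LXZc} from \ref{LXZa} and \ref{LXZb}) is correct.
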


\begin{proof}
  That $T$ can be regarded as an operator on the quotient space $\cZ$
is well known.
Moreover, $\cZ^*$ is identified with the closed subspace 
$\set{x^*\in X^*:x^*(\cN)=0}$ of $\cX^*$. 

If $x^*\in X^*$ and $n\in N$, then $\innprod{T^*x^*,n}=\innprod{x^*,Tn}=0$;
thus $T^*x^*\in \cZ^*$ for every $x^*\in\cX^*$.
Hence, we can apply \refL{LXY} to $T^*$ on the spaces 
$\cX^*$ and $\cZ^*\subseteq\cX^*$.

\pfitemref{LXZa}
If $\cN\neq\set0$, then \eqref{lxy} yields
$\gs(T^*_{\cX^*})=\gs(T^*_{\cZ^*})\cup\set0$,
and thus $\gs(T_{\cX})=\gs(T_{\cZ})\cup\set0$
by \eqref{gs*}.

\pfitemref{LXZb}
By \eqref{gs*}, $\gl$ is an isolated point of $\gs(T^*_{X^*})$.
Recall also that (by the argument in the proof of \refC{Cgs})
$\Pi_\gl(T)^*=\Pi_\gl(T^*)$, for any of the spaces $\cX$ and $\cZ$. 
\refL{LXY}\ref{LXYb} thus shows that
$\Pi_\gl(T_\cX)^*:\cX^*\to\cZ^*$. 
Hence, if $n\in \cN$, then for any  $x^*\in \cX^*$
we have $\innprod{x^*,\Pi_\gl n}= \innprod{\Pi_\gl^*x^*, n}=0$,
and thus $\Pi_\gl n=0$. Hence $\Pi_\gl\cN=\set0$ as claimed.

Moreover, if $\pi:\cX\to\cZ$ is the quotient mapping, then
$\pi^*:\cZ^*\to\cX^*$ is the inclusion mapping, and \refL{LXY}\ref{LXYb} 
shows also that
$\Pi_\gl(T_\cX)^*\pi^*=\pi^*\Pi_\gl(T_\cZ)^*$.
Hence, by taking adjoints, 
\begin{align}\label{piz}
\pi\Pi_\gl(T_\cX)=\Pi_\gl(T_\cZ)\pi,
\end{align}
which shows that $\Pi_\gl(T_\cX)$ induces $\Pi_\gl(T_\cZ)$ on $\cZ$.
Furthermore, \eqref{piz} also implies
\begin{align}
  \pi\Pi_\gl(T_\cX)\cX=\Pi_\gl(T_\cZ)\pi\cX=\Pi_\gl(T_\cZ)\cZ,
\end{align}
and thus $\pi$ maps $\Pi_\gl(T_\cX)\cX$ onto $\Pi_\gl(T_\cZ)\cZ$.
Moreover, $\pi$ is injective on $\Pi_\gl(T_\cX)\cX$,
since $\pi x=0$ for some $x\in\Pi_\gl(T_\cX)\cX$ means that $x\in \cN$, and thus
$x=\Pi_\gl(T_\cX)x=0$ as shown above.

\pfitemref{LXZc}
An immediate consequence of \ref{LXZa} and \ref{LXZb}.
\end{proof}

We end this appendix with a standard definition.

\begin{definition}\label{Dre}
  Let $T$ be a bounded operator in a complex Banach space $B$.
Let $r(T)$ denote the spectral radius of $T$.
Furthermore, consider all decompositions $B=F\oplus H$ as a direct sum of
two closed 
$T$-invariant subspaces such that $\dim(F)<\infty$, and define
the \emph{essential spectral radius} of $T$ by
\begin{align}\label{re}
  r_e(T):=\inf\bigset{r(T\restr_H): B=F\oplus H \text{ as above}}.
\end{align}
\end{definition}
\begin{remark}\label{Rre}
It is easily seen that the definition \eqref{re} is equivalent to
\cite[Definition~XIV.1]{HeHe}.
There are several other, equivalent, definitions; for example, $r_e(T)$
equals the spectral radius of $T$ in the Banach algebra $\cB(B)/\cK(B)$, 
where $\cB(B)$ is the Banach algebra of bounded linear operators and
$\cK(B)$ is the ideal of compact operators. 
For this, and the relation to the \emph{essential spectrum}
(which has several, non-equivalent, versions), see \eg{}
\cite[\S1.4]{EdmundsEvans} 
and 
\cite[p.~243]{Kato}.
\end{remark}

\begin{remark}\label{Rqc}
Taking $F=\set0$ and $H=B$ in \eqref{re} 
shows that $r_e(T)\le r(T)$ for every $T$.
An operator $T$ is \emph{quasi-compact} if $r_e(T)<r(T)$.
(See \cite[Definition II.1]{HeHe} for another, equivalent, definition.)
\end{remark}

\section{A technical lemma}\label{AppC}

We state an elementary lemma that is used in the proof of \refT{T2}.
\begin{lemma}
	\label{lem:tecLemma}
	Let $\alpha\in\mathbb R$ and $k\geq 0$. Then, as \ntoo,
	\begin{align}\label{skar1}
	\sum_{j=1}^n j^{-1-\mathrm i\alpha} \log^k(\nicefrac nj)
	=\begin{cases}
	\displaystyle (1+o(1))\frac{\log^{k+1} n}{k+1}&\text{ if }\alpha=0,\\
	\displaystyle O(\log^k n)&\text{ if }\alpha\neq 0.
	\end{cases}
	\end{align}
\end{lemma}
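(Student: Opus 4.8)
The statement is the asymptotic evaluation of $\sum_{j=1}^n j^{-1-\ii\alpha}\log^k(\nicefrac nj)$, split into the cases $\alpha = 0$ and $\alpha \neq 0$. The natural approach is to compare the sum to an integral and exploit that $x^{-1}\log^k(\nicefrac nx)$ has an explicit antiderivative. First I would handle the case $\alpha = 0$: write $\log^k(\nicefrac nj) = (\log n - \log j)^k$ and note that $x\mapsto x^{-1}\log^k(\nicefrac nx)$ is, on $[1,n]$, a nonnegative function which increases up to $x = n/\ee^k$ and then decreases to $0$ at $x = n$; in particular it is bounded by $(k/\ee)^k\log^{-?}\!\!$ — more simply, it is bounded by $\log^k n$ on $[1,n]$. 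Standard Euler--Maclaurin / monotonicity comparison of a sum of a piecewise-monotone function with its integral then gives
\begin{align*}
\sum_{j=1}^n j^{-1}\log^k(\nicefrac nj)
= \int_1^n \frac{\log^k(\nicefrac nx)}{x}\dd x + O\bigpar{\log^k n},
\end{align*}
since the error from replacing the sum by the integral on each monotone piece is at most the total variation, which is $O(\log^k n)$. The integral is computed exactly by the substitution $u=\log(\nicefrac nx)$, giving $\int_0^{\log n} u^k\dd u = \log^{k+1}(n)/(k+1)$. Since $\log^{k+1} n/(k+1)$ dominates the $O(\log^k n)$ error, this yields the first line of \eqref{skar1}.

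For the case $\alpha \neq 0$, the oscillation of $j^{-\ii\alpha}$ must be used to beat the trivial bound $O(\log^{k+1} n)$ down to $O(\log^k n)$. I would again compare to the integral $\int_1^n x^{-1-\ii\alpha}\log^k(\nicefrac nx)\dd x$, controlling the sum-minus-integral error: on the interval $[j,j+1]$ one has $|j^{-1-\ii\alpha}\log^k(\nicefrac nj) - x^{-1-\ii\alpha}\log^k(\nicefrac nx)| \le C j^{-2}\log^k n$ (the derivative of the summand is $O(x^{-2}\log^k(\nicefrac nx)) = O(x^{-2}\log^k n)$ for $x\ge 1$), so the total error is $O(\log^k n)\sum_j j^{-2} = O(\log^k n)$. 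It therefore suffices to bound the integral itself. Substituting $u = \log(\nicefrac nx)$, so $x = n\ee^{-u}$ and $\dd x/x = -\dd u$, transforms it into
\begin{align*}
\int_1^n x^{-1-\ii\alpha}\log^k(\nicefrac nx)\dd x
= n^{-\ii\alpha}\int_0^{\log n} \ee^{\ii\alpha u}\, u^k\dd u .
\end{align*}
Now integrate by parts $k+1$ times (or invoke the standard fact that $\int_0^T \ee^{\ii\alpha u}u^k\dd u$, for $\alpha\neq0$ fixed, equals a polynomial in $T$ of degree $k$ times $\ee^{\ii\alpha T}$ plus a constant): each integration by parts lowers the power of $u$ by one at the cost of a factor $1/(\ii\alpha)$, and the boundary terms at $u=\log n$ are $O(\log^k n)$ while the boundary terms at $u=0$ are $O(1)$. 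Hence $\int_0^{\log n}\ee^{\ii\alpha u}u^k\dd u = O_\alpha(\log^k n)$, and since $|n^{-\ii\alpha}|=1$ this gives the second line of \eqref{skar1}.

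\textbf{Main obstacle.} There is no deep obstacle here — this is an elementary estimate — but the step requiring the most care is the sum-versus-integral comparison, specifically making sure the error term is genuinely $O(\log^k n)$ and not $O(\log^{k+1} n)$ in the $\alpha\neq 0$ case (where the latter would be useless). The clean way is the derivative bound: since $\frac{\dd}{\dd x}\bigpar{x^{-1-\ii\alpha}\log^k(\nicefrac nx)} = -(1+\ii\alpha)x^{-2-\ii\alpha}\log^k(\nicefrac nx) - k x^{-2-\ii\alpha}\log^{k-1}(\nicefrac nx)$, its modulus on $[j,j+1]$ is at most $C_k j^{-2}\log^k n$ (using $0\le\log(\nicefrac nx)\le\log n$ on $[1,n]$), so $\bigabs{\sum_{j=1}^{n-1}\int_j^{j+1}\bigpar{(\text{summand at }j) - (\text{summand at }x)}\dd x} \le C_k\log^k n\sum_{j\ge1}j^{-2}$, plus the single boundary term at $j=n$ which is $O(1)$. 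Once this is in place, the rest is the exact integral evaluation in the $\alpha=0$ case and the repeated integration by parts in the $\alpha\neq0$ case, both routine. I would also remark that the same estimates, with the integral computed slightly more precisely, show that in the $\alpha=0$, $k+1$-versus-$2\kappa-1$ bookkeeping of the main text only the leading term matters, which is exactly what \eqref{jul} uses.
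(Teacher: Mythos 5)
Your proposal is correct and follows essentially the same route as the paper: approximate the sum by the integral $\int_1^n x^{-1-\ii\alpha}\log^k(\nicefrac nx)\dd x$ via the pointwise derivative bound $O\bigpar{x^{-2}\log^k n}$ on the summand, which gives a total sum-vs-integral error of $O(\log^k n)$; then substitute (your $u=\log(\nicefrac nx)$ is the paper's $y=n/x$ composed with $y=\ee^u$), evaluate exactly when $\alpha=0$, and integrate by parts when $\alpha\neq0$. The only cosmetic difference is that you integrate by parts $k+1$ times to drive the polynomial factor down to degree $0$, whereas the paper stops after one integration by parts and bounds the remaining integral $\int_1^n \log^{k-1}(y)\,y^{\ii\alpha-1}\dd y$ trivially by $\log^{k-1}(n)\cdot\log n = \log^k n$; both are fine.
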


\begin{proof}
We first approximate the sum by an integral.
  Let $g_n(x):=x^{-1-\iia}\log^k(n/x)$, $x\ge1$.
Then, assuming in the sequel $n\ge2$,
\begin{align}\label{skar2}
  g_n'(x)=(-1-\iia)x^{-2-\iia}\log^k(n/x)-kx^{-2-\iia}\log^{k-1}(n/x)
=O\bigpar{(\log^k n)x\qww}, \qquad x\ge1.
\end{align}
Hence, for $j\ge1$,
\begin{align}\label{skar3}
  g_n(j)-\int_j^{j+1}g_n(x)\dd x 
=   \int_j^{j+1}\bigpar{g_n(j)-g_n(x)}\dd x 
=	O\bigpar{(\log^k n)j\qww}.
\end{align}
Consequently, with the change of variables $x=n/y$,
\begin{align}\label{skar4}
  	\sum_{j=1}^n j^{-1-\mathrm i\alpha} \log^k(\nicefrac nj)&
=\sumjn g_n(j)
= g_n(n) + \sum_{j=1}^{n-1}
 \Bigpar{\int_j^{j+1}g_n(x)\dd x + O\bigpar{(\log^k n)j\qww}}
\notag\\&
=\int_1^{n}g_n(x)\dd x + O\bigpar{\log^k n}
=\int_1^{n} \frac{\log^k(n/x)}{x^{1+\iia}}\dd x+ O\bigpar{\log^k n}
\notag\\&
=n^{-\iia}\int_1^{n} \frac{\log^k y}{y^{1-\iia}}\dd y+ O\bigpar{\log^k n}
.\end{align}
It thus suffices to consider the final integral in \eqref{skar4}.

If $\ga=0$, then
	\begin{align}\label{skar5}
\int_1^{n} \frac{\log^ky}{y^{1-\iia}}\dd y
=
\int_1^{n} \frac{\log^ky}{y}\dd y
=\lrsqpar{\frac{\log^{k+1}y}{k+1}}^{n}_1
=\frac{\log^{k+1} n}{k+1},
	\end{align}
and thus \eqref{skar1} follows in this case.

If $\alpha\neq 0$, we use integration by parts and get
\begin{align}\label{skar6}
  \int_1^n\log^k(y)y^{\iia-1}\dd y&
=\Bigsqpar{\log^k(y)\frac{y^{\iia}}{\iia}}_1^n
-\frac{k}{\iia}\int_1^n\log^{k-1}(y) y^{\iia-1}\dd y 
\notag\\&
=O\bigpar{\log^k(n)}+\int_1^nO\bigpar{\log^{k-1}(n)}\frac{\ddx y}{y}
=O\bigpar{\log^k(n)}.
\end{align}
Hence, \eqref{skar1} follows from \eqref{skar4} in this case too.
\end{proof}

\begin{remark}
  It is possible to show that for $\ga\neq0$, the sum in \eqref{skar1} is
  asymptotic to $\zeta(1+\iia)\log^k n$.
Moreover, for any $\ga$,
an asymptotic expansion with an arbitrary number of terms may be
obtain by singularity analysis as in similar examples in 
\cite[Section 3.1]{FillFK}.
\end{remark}

\newcommand\AAP{\emph{Adv. Appl. Probab.} }
\newcommand\JAP{\emph{J. Appl. Probab.} }
\newcommand\JAMS{\emph{J. \AMS} }
\newcommand\MAMS{\emph{Memoirs \AMS} }
\newcommand\PAMS{\emph{Proc. \AMS} }
\newcommand\TAMS{\emph{Trans. \AMS} }
\newcommand\AnnMS{\emph{Ann. Math. Statist.} }
\newcommand\AnnPr{\emph{Ann. Probab.} }
\newcommand\CPC{\emph{Combin. Probab. Comput.} }
\newcommand\JMAA{\emph{J. Math. Anal. Appl.} }
\newcommand\RSA{\emph{Random Structures Algorithms} }
\newcommand\DMTCS{\jour{Discr. Math. Theor. Comput. Sci.} }

\newcommand\AMS{Amer. Math. Soc.}
\newcommand\Springer{Springer-Verlag}
\newcommand\Wiley{Wiley}

\newcommand\vol{\textbf}
\newcommand\jour{\emph}
\newcommand\book{\emph}
\newcommand\inbook{\emph}
\def\no#1#2,{\unskip#2, no. #1,} 
\newcommand\toappear{\unskip, to appear}

\newcommand\arxiv[1]{\texttt{arXiv:#1}}
\newcommand\arXiv{\arxiv}

\def\nobibitem#1\par{}

\end{document}